\newcommand{\one}{\mathds{1}}
\newcommand{\N}{\mathbb N} 
\newcommand{\R}{\mathbb R} 
\newcommand{\C}{\mathbb C} 
\newcommand{\T}{\mathbb T} 
\newcommand{\E}{\mathbb E} 
\newcommand{\Hc}{\mathcal H}
\newcommand{\Mcal}{\mathcal M}
\newcommand{\Sb}{\mathbb S} 
\newcommand{\Sc}{\mathfrak S}
\newcommand{\Wc}{\mathcal W}
\newcommand{\Sh}{\mathscr{S}} 
\newcommand{\vareps}{\varepsilon}
\newcommand{\im}{\mathrm{i \,}}
\def\<{\left\langle}
\def\>{\right\rangle}
\DeclareMathOperator*{\opt}{opt}
\DeclareMathOperator*{\supp}{supp}
\DeclareMathOperator*{\ima}{Im}
\DeclareMathOperator*{\rea}{Re}
\newcommand{\scal}[1]{\left\langle #1 \right\rangle}
\DeclareMathOperator*{\Tr}{Tr}
\DeclareMathOperator*{\lo}{lo}
\DeclareMathOperator*{\hi}{hi}
\DeclareMathOperator*{\id}{Id}
\numberwithin{equation}{section}
\newtheorem{theorem}{Theorem}[section]
\newtheorem{lemma}[theorem]{Lemma}
\newtheorem{proposition}[theorem]{Proposition}
\newtheorem{definition}[theorem]{Definition}
\newtheorem{assumption}[theorem]{Assumption}
\newtheoremstyle{remarkstyle}
{}{}{
}{ }{\bfseries}{.}{ }{\thmname{#1}\thmnumber{ #2}\thmnote{ (#3)}}
\theoremstyle{remarkstyle}
\newtheorem{remark}{Remark}[section]
\def\subsubsection{\@startsection{subsubsection}{3}%
	\z@{.5\linespacing\@plus.7\linespacing}{-.5em}%
	{\normalfont\bfseries}}
\def\@tocline#1#2#3#4#5#6#7{\relax
  \ifnum #1>\c@tocdepth 
  \else
    \par \addpenalty\@secpenalty\addvspace{#2}%
    \begingroup \hyphenpenalty\@M
    \@ifempty{#4}{%
      \@tempdima\csname r@tocindent\number#1\endcsname\relax
    }{%
      \@tempdima#4\relax
    }%
    \parindent\z@ \leftskip#3\relax \advance\leftskip\@tempdima\relax
    \rightskip\@pnumwidth plus4em \parfillskip-\@pnumwidth
    #5\leavevmode\hskip-\@tempdima
      \ifcase #1
       \or\or \hskip 1em \or \hskip 2em \else \hskip 3em \fi%
      #6\nobreak\relax
      \dotfill
      \hbox to\@pnumwidth{\@tocpagenum{#7}}
    \par
    \nobreak
    \endgroup
  \fi}
\date{January, 2023}
\title[Invariant Gibbs measure]{Invariant Gibbs measures for 1D NLS in a trap} 
\author[V. D. Dinh and N. Rougerie]{Van Duong Dinh and Nicolas Rougerie}
\address[V. D. Dinh]{Ecole Normale Supérieure de Lyon \& CNRS, UMPA (UMR 5669), Lyon, France}
\email{contact@duongdinh.com}
\address[N. Rougerie]{Ecole Normale Supérieure de Lyon \& CNRS, UMPA (UMR 5669), Lyon, France}
\email{nicolas.rougerie@ens-lyon.fr}
\keywords{Nonlinear Sch\"odinger Equation; Invariant Gibbs measure; Harmonic oscillator}
\subjclass[2020]{35Q55}
\begin{document}

\begin{abstract}
	We consider the one dimensional cubic nonlinear Schr\"odinger equation with trapping potential behaving like $|x|^s$ ($s>1$) at infinity. We construct Gibbs measures associated to the equation and prove that the Cauchy problem is globally well-posed almost surely on their support. Consequently, the Gibbs measure is indeed invariant under the flow of the equation. We also address the construction and invariance of canonical Gibbs measures  (conditioned on the $L^2$ mass) and make remarks regarding higher non-linearities than cubic. 
\end{abstract}

\maketitle

\tableofcontents

\section{Introduction}
\label{sec:intro}
\setcounter{equation}{0}

We study the statistical mechanics of the trapped 1D cubic nonlinear Schr\"odinger equation
\begin{equation}\label{eq:intro NLS}
\im  \partial_t u = h u \pm |u|^2 u  
\end{equation}
on $\R$ with the Schr\"odinger operator 
\[ 
h := - \partial_x ^2 + V(x),
\]
where $V(x) \underset{|x| \to \infty}{\rightarrow} +\infty$ is a trapping potential. Namely, we construct the associated Gibbs probability measures given \emph{formally} by 
\begin{equation}\label{eq:intro gc}
d\mu^{\rm{gc}}_\nu (u) =  z_\nu^{-1} \exp\left( -  \left\langle u, (h + \nu) u \right\rangle_{L^2} \mp \frac{1}{2}\int_{\R} |u|^4  \right) du 
\end{equation}
and 
\begin{equation}\label{eq:intro c}
d\mu^{\rm c}_m (u) =   z_m ^{-1} \one_{\left\{ \int_{\R} |u |^2 = m \right\}} \mu^{\rm gc}_0 (du)
\end{equation}
and prove their invariance under the flow of~\eqref{eq:intro NLS}. These measures correspond, for this nonlinear model, to the well known grand-canonical ensemble (\eqref{eq:intro gc} with chemical potential $\nu \in \R$) and canonical ensemble (\eqref{eq:intro c} with particle number/mass $m>0$) of statistical mechanics. Our main physical motivation comes from the mean-field approximation of Bose gases and Bose--Einstein condensates~\cite{LieSeiSolYng-05,Rougerie-EMS,Schlein-08}, whence our restriction to the cubic equation, corresponding to short-range pair interactions between particles. In this context, the measure~\eqref{eq:intro gc} was rigorously derived from many-body quantum mechanics in~\cite{LewNamRou-14d,LewNamRou-17,FroKnoSchSoh-16,FroKnoSchSoh-17,RouSoh-22}.

The question of the invariance of such measures under the associated Hamiltonian flow has a rich history, elements of which we recall below. As is well-known, the main issue is that, once rigorously defined (in particular, in the focusing case, a $L^2$-mass cutoff is necessary in~\eqref{eq:intro gc}), the above measures live on functional spaces of regularity/integrability levels at which it is challenging or impossible to construct the flow of~\eqref{eq:intro NLS}. Instead of relying on such a deterministic flow, one often constructs a probabilistic Cauchy theory, taking advantage of the formal invariance of the Gibbs measures to substitute for conservation laws. The main datum of the problem, setting the regularity/integrability level, is in our context the growth at infinity of the potential $V$. We assume that it is polynomial, of order $s>1$, which is the threshold for the measure~\eqref{eq:intro gc} to make sense (see below). 

\begin{assumption} \label{assu-V}
Let $V \in C^\infty(\R, \R_+)$ satisfy for some $s>1$:
	
\medskip
	
\noindent (i) There exists $C\geq 1$ so that for all $|x|\geq 1$, $\frac{1}{C} \<x\>^s\leq V(x) \leq C\<x\>^s$.

\medskip

\noindent (ii) For any $j \in \N$, there exists $C_j>0$ so that $|\partial^jV(x)|\leq C_j \scal{x}^{s-j}$.
\end{assumption} 

The reader may think throughout that 
\begin{equation}\label{eq:model pot}
V(x) = \left(1 + |x|^2\right)^{s/2},
\end{equation}
although we do not need such an exact formula. We construct a global-in-time Cauchy theory on the support of the measures~\eqref{eq:intro gc} for any $s>1$ in the defocusing case ($+$ sign in~\eqref{eq:intro NLS}) and any $s>8/5$ in the focusing case ($-$ sign in~\eqref{eq:intro NLS}). There is a noticeable dichotomy at $s=2$ (the harmonic oscillator). Indeed, for $s>2$, one can essentially construct the flow deterministically at the appropriate level of regularity (mostly based on tools from~\cite{YajZha-01,YajZha-04,ZhaYajLiu-06}), whereas for $s\leq 2$, we must rely on the randomization of initial data. We shall thus be particularly interested in the case $s\leq 2$, where the measures do not live on $L^2$ and thus in particular~\eqref{eq:intro c} must be interpreted in a renormalized sense (vaguely, $m = \infty - m'$ with $m' \in \R$).

Our results generalize known theorems and allow to simplify the proofs of some of them. Indeed, for the periodic NLS ($s=+\infty$ formally, restriction to a compact setting) the Cauchy theory was constructed in~\cite{Bourgain-94}, and the invariance of~\eqref{eq:intro gc} was deduced. The canonical measure~\eqref{eq:intro c} was then constructed and proved to be invariant in~\cite{OhQua-13} (see also~\cite{Brereton}). In~\cite{BurThoTzv-10} (see also~\cite{BouDebFuk-18}), the invariance of~\eqref{eq:intro gc} was obtained for $s=2$. In all other cases (in particular for the canonical measure~\eqref{eq:intro c} on $\R$ with any kind of trap), our results seem new. In particular, they answer a question raised after~\cite[Theorem 1.1]{BurThoTzv-10} concerning more general potentials than~\eqref{eq:model pot} for $s=2$, having the same behavior at infinity.

One of our motivations for considering the more general case of $s\neq +\infty, 2$ is that all approaches of the topic at hand that we are aware of rely on the spectral problem for the linear operator $h= - \partial_x ^2 + V(x)$ being exactly soluble. A detailed explicit knowledge of the eigenfunctions is used to construct the measure and the flow. This is certainly the case for $s=+\infty$ (plane waves~\cite{Bourgain-94,OhQua-13}) and $s=2$ (link with Hermite polynomials~\cite{BurThoTzv-10,Deng,RobSeoTolWan-22}) but also in other cases considered in the literature, like radial NLS on the disk or sphere (link with Bessel functions~\cite{BouBul-14a,BouBul-14b,Tzvetkov-06,Tzvetkov-08}). All (non-radial) known results in higher dimension~\cite{Bourgain-96,Bourgain-97,DenNahYue-19,DenNahYue-21,DenNahYue-22} seem to rely on plane waves. 

In this paper, we propose a softer approach to the Cauchy theory (in particular, the probabilistic one, for $s\leq 2$), which relies less on exact formulae and allows the aforementioned generalizations to $s < 2$. A price to pay is that we do not prove multilinear estimates, and consequently our results are restricted to small nonlinearities. We can allow more general non-linearities than cubic (namely, behaving like $|u|^{\kappa-2} u$ for more general, $s$-dependent, $\kappa>2$) but we only state remarks in this direction for brevity, and because the cubic nonlinearity is the most physically relevant one.

Another motivation to consider a general $s$ is that, in experiments with cold alkali gases, the trapping potential can be quite general. The link between the first-principles, many-body, description of these experiments and the above formalism was made in~\cite{LewNamRou-14d,LewNamRou-17,LewNamRou-20,FroKnoSchSoh-16,FroKnoSchSoh-20,FroKnoSchSoh-22,Sohinger-22} at the static level of equilibrium states and in~\cite{FroKnoSchSoh-17,RouSoh-22} for the dynamics. The Cauchy theory on the support of the (defocusing) measures we consider for $2<s<+\infty$ was used as a working assumption in~\cite{FroKnoSchSoh-17}, whose results it would be interesting to generalize to $s\leq 2$ in view of ours and~\cite{BurThoTzv-10}.

In the next section, we state our results and related remarks precisely. The rest of the paper is devoted to proofs.

\bigskip

\noindent\textbf{Acknowledgments.} This work was supported by the European Union's Horizon 2020 Research and Innovation Programme (Grant agreement CORFRONMAT No. 758620). After posting a first version of this paper, we had interesting exchanges with T. Robert, K. Seong, L. Tolomeo, and Y. Wang regarding the connections to their recent paper~\cite{RobSeoTolWan-22}. This has triggered the inclusion of several remarks in the present version the text. 

\section{Main results}
\label{sec:main results}
\setcounter{equation}{0}

\subsection{Random data Cauchy theory} In a finite dimensional setting, one may consider a system of ODEs 
\[
\left\{
\renewcommand*{\arraystretch}{1.3}
\begin{array}{rcl}
\partial_t x_j &=& \frac{\partial H}{\partial \xi_j}, \\
\partial_t \xi_j &=& -\frac{\partial H}{\partial x_j},
\end{array}
\quad j=1,\cdots, n
\right.
\]
with the Hamiltonian $H(x,\xi) = H(x_1,\cdots, x_n, \xi_1,\cdots, \xi_n)$. The Gibbs measure associated to the system is given by
\[
d\mu(x,\xi)=\frac{1}{Z} e^{-H(x,\xi)} dxd\xi,
\]
where $Z>0$ is a normalization constant. By the invariance of Lebesgue measure $dxd\xi=\prod_{j=1}^n dx_j d\xi_j$ (thanks to Liouville's theorem) and the conservation of the Hamiltonian $H$, the Gibbs measure $\mu$ is invariant under the time evolution of the system, i.e., for any measurable set $A \subset \R^{2n}$, $\mu(A)=\mu(\Phi(t)(A))$ for all $t\in \R$, where $\Phi(t)$ is the solution map.

Equation \eqref{eq:intro NLS} also has a Hamiltonian structure, namely
\[
\partial_t u = - i \frac{\partial H}{\partial \overline{u}},
\]
where $H=H(u)$ is the Hamiltonian given by
\[
H(u) = \langle u, h u\rangle_{L^2} \pm \frac{1}{2} \int_{\R} |u|^4. \tag{Hamiltonian}
\]
This Hamiltonian is conserved under the dynamics of \eqref{eq:intro NLS} as well as the mass
\[
M(u) =\int_{\R}|u|^2. \tag{Mass} 
\]
Following the rationale of the finite dimensional case, one expects the Gibbs measure of the form
\begin{align} \label{eq:Gibbs}
d\mu(u) = \frac{1}{Z} e^{-H(u)} du
\end{align}
to be invariant under the dynamics of \eqref{eq:intro NLS}. However, the above expression is formal since there is no infinite dimensional Lebesgue measure. 

In the seminal work \cite{LebRosSpe-88}, Lebowitz, Rose, and Speer studied, by means of techniques from constructive quantum field theory~\cite{Simon-74,GliJaf-87}, the normalizability and non-normalizability of Gibbs measures for 1D periodic NLS. More precisely, by rewriting \eqref{eq:Gibbs} as 
\begin{equation}\label{eq:Gibbs bis}
d\mu(u) = \frac{1}{Z} e^{\mp \frac{1}{2}\int_{\R}|u|^4} e^{-\langle u, h u\rangle_{L^2} } du, 
\end{equation}
they defined $\mu$ as an absolutely continuous probability measure with respect to the Gaussian measure 
\[
d\mu_0(u)=\frac{1}{Z_0} e^{-\langle u, h u\rangle_{L^2} } du.
\]
Here $h$ should be understood as $-\partial^2_x +1$ on $\T$. The above can be defined as a probability measure on $H^\theta(\T)$ for any $\theta <\frac{1}{2}$, where $H^\theta(\T)$ is the Sobolev space on the torus. For the focusing nonlinearity, the Gibbs measure is constructed with a mass cutoff, namely
\[
d\mu(u) = \frac{1}{Z} e^{\frac{1}{2}\int_{\R}|u|^4} \mathds{1}_{\left\{\int_{\R^2}|u|^2 \leq m\right\}} d\mu_0(u).
\]
It was claimed in \cite{LebRosSpe-88} that this measure is normalizable (i.e. the partition function $Z$ is a finite positive number) if $2<p<6$ for any mass cutoff $m>0$, and if $p=6$ for any $m<\|Q\|^2_{L^2}$, where $Q$ is the unique (up to symmetries) optimizer for the Gagliardo--Nirenberg inequality
\[
\|u\|^6_{L^6(\R)} \leq C_{\opt} \|\partial_x u\|^2_{L^2(\R)} \|u\|^4_{L^2(\R)}.
\]
The probabilistic proof for the normalizability presented in \cite{LebRosSpe-88} however contains a gap, as pointed out and repaired for $p<6$ in \cite{CarFroLeb-16}  (see \cite[Introduction]{OhSoTo-22} for more details). Later, Bourgain \cite{Bourgain-94} gave an alternative proof, using an analytic Fourier approach, for the normalizability when $2<p<6$ with any $m>0$, and when $p=6$ with $m>0$ sufficiently small. Recently, Oh, Sosoe, and Tolomeo \cite{OhSoTo-22} provided a proof for the normalizability when $p=6$ and  $m \leq \|Q\|^2_{L^2}$, thus fully filling the gap in~\cite{LebRosSpe-88} and, remarkably, extending the result to a value of the $L^2$ mass at which blow-up occurs for NLS dynamics. 

In \cite{Bourgain-94}, Bourgain pursued the study of Gibbs measures for 1D periodic NLS and proved the invariance of $\mu$ under the NLS flow. Here, by invariance, we mean that $\mu(A)=\mu(\Phi(t)(A))$ for any measurable set $A \subset H^\theta(\T)$ with some $\theta<\frac{1}{2}$ and any $t\in \R$, where $\Phi(t)$ is the solution map. Moreover, there exists a set of full $\mu$-measure such that the solution exists globally in time for all initial data belonging to this set. Such a result is usually referred to as almost sure global existence. In this context, the invariant Gibbs measure serves as a substitute for conserved quantities to control the growth in time of solutions. This allows to extend local in time solutions to global ones almost surely. 

There are many other works devoted to invariant Gibbs measure and almost-sure global existence on the support of Gibbs measure for other dispersive equations (see e.g., \cite{Zhidkov-91, Bourgain-96} for periodic NLS, \cite{Tzvetkov-06, Tzvetkov-08} for NLS on the disc, \cite{Friedlander-85, Zhidkov-94, BurTzv-08b, BurTzv-07, Suzzoni-11, BouBul-14a} for nonlinear wave equations, \cite{Oh-DIE, Oh-SIAM} for KdV-type systems, and \cite{NOBS, ThoTzv-10} for 1D derivative NLS,...). 

In the above-mentioned works, invariant Gibbs measures were constructed in compact settings (torus or ball). There are much fewer works addressing the invariant Gibbs measure on non-compact frameworks (see e.g., \cite{BurThoTzv-10, Deng, RobSeoTolWan-22} for NLS with harmonic potential, \cite{CacSuz-14} for 1D NLS with cubic nonlinearity multiplied by a sufficiently smooth and integrable function, and also \cite{Thomann-09, PRT} for almost sure well-posedness with (an)-harmonic potentials). 

\subsection{Grand-canonical measures}

The first goal of this paper is to extend the result of Burq, Thomann, and Tzvetkov \cite{BurThoTzv-10} to the 1D cubic NLS with potentials satisfying Assumption \ref{assu-V}. We first recall the rigorous definition of the measure~\eqref{eq:intro gc}, setting $\nu = 0$ for simplicity of notation.

We start with the definition of the Gaussian measure. Under Assumption~\ref{assu-V}, the linear operator $h$ is Hermitian and has compact resolvent. We write its' spectral decomposition as 
\begin{equation}\label{eq:spectral}
h = \sum_{j\geq 1} \lambda_j |u_j\rangle \langle u_j|
\end{equation}
with a non-decreasing sequence of positive eigenvalues $\lambda_j \to \infty$ and the associated eigenfunctions $u_j$. For $\theta \in \R$, we introduce the Sobolev space associated to $h$ as
\begin{align} \label{eq:H-theta}
\Hc^\theta :=\left\{ u =\sum_{j\geq 1} \alpha_j u_j : \|u\|_{\Hc^\theta} := \Big(\sum_{j\geq 1} \lambda_j^\theta |\alpha_j|^2\Big)^{1/2}<\infty \right\}
\end{align}
with 
\begin{align} \label{eq:alpha-j}
\alpha_j = \scal{u_j, u}_{L^2}.
\end{align} 
For $\beta \geq 0$ and $1\leq p \leq \infty$, we define 
\begin{align} \label{eq:Sobo-h}
\Wc^{\beta, p} :=\left\{u \in \Sh'(\R) : h^{\beta/2} u\in L^p(\R)\right\}
\end{align}
which is equipped with the norm 
\[
\|u\|_{\Wc^{\beta,p}} := \|h^{\beta/2} u\|_{L^p}.
\]
Here $\Sh'(\R)$ is the space of tempered distributions on $\R$. When $p=2$, we actually have $\Wc^{\beta,2} \equiv \Hc^\beta$.

\begin{definition}[\textbf{Gaussian measure}]\label{def:gaussian}\mbox{}\\
Let $\Lambda \geq \lambda_1$. On 
\begin{align} \label{eq:E Lamb}
E_{\leq \Lambda} := \mathrm{span} \left\{u_j : \lambda_j \leq \Lambda\right\},
\end{align}
we define the finite-dimensional Gaussian measure 
\begin{align} \label{eq:mu-0-Lamb}
d\mu_0^{\leq \Lambda} (u) := \prod_{\lambda_j\leq \Lambda} \frac{\lambda_j}{\pi} e^{-\lambda_j |\alpha_j|^2} d \alpha_j,
\end{align}
where $\alpha_j$ is as in \eqref{eq:alpha-j} and $d\alpha_j = d\rea(\alpha_j) d\ima(\alpha_j)$ is the Lebesgue measure on $\C$. 

The sequence of measures $\{\mu_0^{\leq \Lambda}\}_{\Lambda \geq \lambda_1}$ is tight in the Hilbert space $\Hc^\theta$ for any $\theta < \frac{1}{2}-\frac{1}{s}$ with $s>1$ as in Assumption~\ref{assu-V}. Consequently, it defines a probability measure $\mu_0$ on this space, having $\mu_0^{\leq \Lambda}$ as cylindrical projection on $E_{\leq \Lambda}$.
\end{definition}

The tightness is proved in~\cite[Example~3.2]{LewNamRou-14d}. We recall the main argument in Appendix~\ref{sec:app} below for the convenience of the reader. The existence of the infinite-dimensional measure then follows from~\cite[Lemma 1]{Skorokhod-74}. 

For the interacting measures, we have the following result.

\begin{proposition}[\textbf{Grand-canonical measures}]\label{pro:gc}\mbox{}\\
Let $s>1$, $V$ satisfy Assumption~\ref{assu-V}, and $\mu_0$ as defined above. 

\noindent (1) \textbf{Defocusing case.} For any $s>1$, the map $u\mapsto e^{- \frac{1}{2}\int_\R |u|^4}$ is in $L^1(d\mu_0)$. Consequently, 
\begin{equation}\label{eq:gc defoc}
d\mu (u) :=  \frac{1}{Z} e^{- \frac{1}{2}\int_\R |u|^4} d\mu_0 (u)
\end{equation}
makes sense as a probability measure.

\noindent (2) \textbf{Focusing case, $s>2$.} For any $s>2$ and any $m>0$, the map $u\mapsto e^{\frac{1}{2}\int_\R |u|^4}\one_{\left\{\int_\R |u|^2 \leq m\right\}}$ is in $L^1(d\mu_0)$. Consequently,
\begin{equation}\label{eq:gc foc 1}
d\mu (u) :=  \frac{1}{Z} e^{ \frac{1}{2}\int_\R |u|^4} \one_{\left\{\int_\R |u|^2 \leq m\right\}} d\mu_0 (u)
\end{equation}
makes sense as a probability measure.

\noindent (3) \textbf{Focusing case, $\frac{8}{5}<s \leq 2$.} For any $1<s\leq 2$, the sequence $\{\Mcal_{\leq \Lambda}(u)\}_{\Lambda \geq \lambda_1}$, with
\[ 
\Mcal_{\leq \Lambda}(u) := \sum_{\lambda_j\leq \Lambda} \left( |\alpha_j| ^2 - \int |\alpha_j| ^2 d\mu_0 (u)\right),
\]
is Cauchy in $L^2 (d\mu_0)$. It has thus a limit (the renormalized mass), denoted by $\Mcal(u)$. In addition, for any $\frac{8}{5}<s\leq 2$ and any $m>0$, the map $u\mapsto e^{\frac{1}{2}\int_\R |u|^4} \one_{\left\{| \Mcal (u)| \leq m\right\}}$ is in $L^1 (d\mu_0)$, hence
\begin{equation}\label{eq:gc foc 2}
d\mu (u) :=  \frac{1}{Z} e^{ \frac{1}{2}\int_\R |u|^4} \one_{\left\{ |\Mcal (u)| \leq m \right\}} d\mu_0 (u)
\end{equation}
makes sense as a probability measure.
\end{proposition}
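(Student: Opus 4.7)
I reduce all three parts to almost-sure $L^4$-regularity of the Gaussian field $u$ under $\mu_0$, plus Gaussian integrability estimates for suitable seminorms, and add the Wiener-chaos convergence argument needed in part~(3).

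\emph{Step 1 (a.s.~$L^4$-regularity).} Writing $u=\sum_j \alpha_j u_j$ with $\alpha_j$ independent centered complex Gaussians of variance $\lambda_j^{-1}$, the Wick/Isserlis formula applied to the fourth Gaussian chaos $\|u\|_{L^4}^4$ gives
\[
\int \|u\|_{L^4}^4\, d\mu_0 \;=\; 2\int_\R G(x,x)^2\, dx, \qquad G(x,x) := \sum_j \frac{|u_j(x)|^2}{\lambda_j},
\]
where $G(x,y)$ is the Schwartz kernel of $h^{-1}$. Pointwise heat-kernel/semi-classical estimates yield $G(x,x)\lesssim \scal{x}^{-s/2}$, so $\int_\R G^2<\infty$ precisely when $s>1$. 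Hence $u\in L^4(\R)$ $\mu_0$-a.s., and moments of $\|u\|_{L^4}^4$ of all orders are finite by Gaussian hypercontractivity on fourth chaoses.

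\emph{Step 2 (part~(1), defocusing, $s>1$).} By Step~1, $\int|u|^4\in[0,\infty)$ $\mu_0$-a.s., so the density $\exp(-\tfrac12\int|u|^4)\in(0,1]$ trivially lies in $L^1(d\mu_0)$ and normalizes to a probability measure.

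\emph{Step 3 (part~(2), focusing, $s>2$).} Use an $h$-adapted Gagliardo--Nirenberg/H\"older inequality of the form
\[
\int |u|^4 \leq C\,\|u\|_{L^2}^{a}\,\|u\|_Y^{b},
\]
where $Y$ is a Sobolev-type space (e.g.\ $\Hc^\sigma$ for some $\sigma\in(0,\tfrac12-\tfrac1s)$, or $\Wc^{\beta,p}$ with appropriate $\beta,p$) in which $\mu_0$ assigns full measure and $\|\cdot\|_Y$ is a Gaussian seminorm, and $b<2$. The mass cutoff yields $\int|u|^4\leq C(m)\|u\|_Y^{b}$ on the event $\{M(u)\leq m\}$, and Fernique's theorem applied to the Gaussian seminorm $\|u\|_Y$ delivers $\exp(\tfrac12\int|u|^4)\,\one_{\{M(u)\leq m\}}\in L^1(d\mu_0)$. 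The hypothesis $s>2$ is exactly what makes positive Sobolev regularity available under $\mu_0$, so that such an interpolation is usable.

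\emph{Step 4 (part~(3), focusing, $\tfrac{8}{5}<s\leq 2$).} For the renormalized mass, independence of the $\alpha_j$ and $\mathrm{Var}_{\mu_0}(|\alpha_j|^2)=\lambda_j^{-2}$ give
\[
\big\|\Mcal_{\leq\Lambda_1}-\Mcal_{\leq\Lambda_2}\big\|_{L^2(d\mu_0)}^{2} \;=\; \sum_{\Lambda_2<\lambda_j\leq\Lambda_1}\lambda_j^{-2}\;\xrightarrow[\Lambda_1,\Lambda_2\to\infty]{}\;0,
\]
since the Weyl-type asymptotic $\lambda_j\asymp j^{2s/(s+2)}$ makes $\sum_j \lambda_j^{-2}$ summable for all $s>\tfrac23$. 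The sequence $\{\Mcal_{\leq\Lambda}\}$ is therefore $L^2(d\mu_0)$-Cauchy and defines $\Mcal\in L^2(d\mu_0)$. For $L^1$-integrability of $\exp(\tfrac12\int|u|^4)\,\one_{\{|\Mcal(u)|\leq m\}}$, I would establish on that event an inequality
\[
\int |u|^4 \;\leq\; C(m) + C\,\|u\|_X^{\kappa}
\]
with $\|\cdot\|_X$ a Gaussian seminorm (adapted to $h$) having $\mu_0$-tails compatible with Fernique and $\kappa<2$. Exponential integrability then follows from Fernique's theorem. The quantitative threshold $s>\tfrac{8}{5}$ comes out of matching the Gagliardo--Nirenberg/H\"older exponents in such a splitting against the integrability window $\sigma<\tfrac12-\tfrac1s$ imposed by the Gaussian reference measure $\mu_0$.

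\textbf{Main obstacle.} The hard step is Step~4: finding a deterministic inequality that plays the role of the classical Lebowitz--Rose--Speer trick $M(u)^2\leq|\Omega|\int|u|^4$ (valid only on a bounded spatial domain $\Omega$) in the present non-compact setting, with only a \emph{renormalized} mass cutoff available. Balancing the two exponents in $\int|u|^4\leq C(m)+C\|u\|_X^\kappa$ so that $\kappa<2$ and $\|u\|_X$ is still Fernique-integrable under $\mu_0$ is what forces $s>\tfrac{8}{5}$. Parts~(1)--(2) and the Wiener-chaos convergence of $\Mcal_{\leq\Lambda}$ in Step~4 are comparatively routine once the Green-function decay $G(x,x)\lesssim\scal{x}^{-s/2}$ and the eigenvalue asymptotic $\lambda_j\asymp j^{2s/(s+2)}$ are available.
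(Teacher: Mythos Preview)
Your treatment of parts~(1) and the Cauchy convergence of $\Mcal_{\leq\Lambda}$ in part~(3) matches the paper. For part~(2) your route is actually \emph{simpler} than the paper's: the paper uses a layer-cake representation and a low/high frequency splitting (Sobolev embedding on $P_{\leq\Lambda_0}u$ plus a tail bound $\mu_0(\|P_{>\Lambda_0}u\|_{L^4}>R)\leq Ce^{-c\Lambda_0^\rho R^2}$), whereas your direct combination of the Br\'ezis--Mironescu inequality $\|u\|_{L^4}^4\le C\|u\|_{L^2}^{4(1-\delta)}\|u\|_{\Wc^{\beta,p}}^{4\delta}$ with $4\delta<2$ and the Fernique bound $\int e^{c\|u\|_{\Wc^{\beta,p}}^2}d\mu_0<\infty$ (which the paper proves for all $s>1$) yields integrability after a Young inequality. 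One remark: your explanation that ``$s>2$ is what makes positive Sobolev regularity available'' is not the real reason; the interpolation and Fernique bounds hold for all $s>1$, and $s>2$ enters only because it is the regime where $\|u\|_{L^2}^2<\infty$ $\mu_0$-a.s., so the cutoff set has positive measure.

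The genuine gap is in the integrability claim of part~(3). The deterministic inequality you propose, $\int|u|^4\le C(m)+C\|u\|_X^\kappa$ on $\{|\Mcal(u)|\le m\}$ with $\kappa<2$ and $\|\cdot\|_X$ a Fernique-integrable Gaussian seminorm, \emph{cannot} hold. The renormalized-mass cutoff does not control $\|u\|_{L^2}$ (which is infinite a.s.\ for $s\le2$), so there is no pathwise Gagliardo--Nirenberg inequality available on that event. The paper's argument is irreducibly probabilistic and frequency-dependent: in the layer-cake representation one picks $\Lambda_0=(\log\lambda)^l$ and writes
\[
\|P_{\leq\Lambda_0}u\|_{L^2}^2=\Mcal(u)-\Mcal_{>\Lambda_0}(u)+\sum_{\lambda_j\le\Lambda_0}\lambda_j^{-1},
\]
where the partial trace grows like $\Lambda_0^\nu$ for any $\nu>\tfrac1s-\tfrac12$. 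One then restricts to the good event $\{|\Mcal_{>\Lambda_0}(u)|\le\Lambda_0^\nu\}$ (whose complement has measure $\le Ce^{-c\Lambda_0^{\gamma+\nu}}$ by a separate tail estimate), applies Br\'ezis--Mironescu to $P_{\leq\Lambda_0}u$ using this $L^2$ bound and the Fernique bound on $\|P_{\leq\Lambda_0}u\|_{\Wc^{\beta,p}}$, and handles $\|P_{>\Lambda_0}u\|_{L^4}$ by the high-frequency tail bound $\le Ce^{-c\Lambda_0^\rho R^2}$. The restriction $s>\tfrac85$ arises from simultaneously forcing the three resulting exponents $\rho l+\tfrac12$, $(\gamma+\nu)l$, and $1+2\beta-\tfrac2p-l\nu(1+4\beta-\tfrac4p)$ above $1$, which requires $\tfrac{s}{s-1}<\tfrac{2s}{3(2-s)}$. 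Your outline identifies the correct ingredients (Gagliardo--Nirenberg and Fernique) but misses that they must be applied \emph{after} a $\lambda$-dependent frequency cutoff, with the complement events controlled by separate large-deviation estimates; this splitting is the missing idea.
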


The defocusing case is dealt with in~\cite[Section~5]{LewNamRou-14d} for $s>2$ and in~\cite[Section~3]{LewNamRou-17} for $s>1$, generalizing~\cite[Section~3]{BurThoTzv-10} for $s=2$. We recall the arguments below. The definition of the focusing measure for $s\neq 2, +\infty$ is new. For the harmonic potential $V(x)=|x|^2$, the construction of the measure was performed in~\cite[Section~3]{BurThoTzv-10} with a continuous cut-off $\zeta(\Mcal(u))$ instead of the rough one in~\eqref{eq:gc foc 2}. We vindicate that the above definitions make sense in Section~\ref{sec:gc measures} below.

We have the following result for the evolution of these measures under a suitably defined flow.

\begin{theorem}[\textbf{Invariance of grand-canonical measures}]\label{theo-Gibbs-meas}\mbox{}\\
Let $s>1$ and $V$ satisfy Assumption \eqref{assu-V}. Assume in addition that $s>\frac{8}{5}$ for the focusing nonlinearity. Then there exist $\theta<\frac{1}{2}-\frac{1}{s}$ and a set $\Sigma \subset \Hc^\theta$ such that:
	
\noindent (1) $\mu(\Sigma)=1$;
	
\noindent (2) Equation \eqref{eq:intro NLS} is globally well-posed for initial data $f\in \Sigma$ with flow $\Phi(t):\Sigma \mapsto \Sigma$;
	
\noindent (3) $\mu$ is invariant under the flow of \eqref{eq:intro NLS}, $\mu (\Phi(t)A) = \mu (A)$ for all measurable sets $A\subset \Sigma$;
	
\noindent (4) There exists $C>0$ such that for any $f\in \Sigma$, 
		\[
		\|\Phi(t) f \|_{\Hc^\theta} \leq C \left(\omega(f)+\log^{\frac{1}{2}} (1+|t|)\right), \quad \forall t\in \R
		\]
		for some constant $\omega(f)$ depending on $f$.
\end{theorem}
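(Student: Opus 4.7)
The plan is to implement Bourgain's invariant measure globalization scheme, adapted to the spectral setup of $h = -\partial_x^2 + V$. The starting point is a spectral Galerkin truncation. For each $\Lambda \geq \lambda_1$, let $P_{\leq \Lambda}$ denote the orthogonal projection onto $E_{\leq \Lambda}$ and consider the truncated equation
\[
\im \partial_t u = h u \pm P_{\leq \Lambda}\bigl(|P_{\leq \Lambda} u|^2 P_{\leq \Lambda} u\bigr).
\]
On $E_{\leq \Lambda}$ this is a finite-dimensional Hamiltonian ODE, hence defines a global flow $\Phi_\Lambda(t)$. By Liouville's theorem applied to the canonical symplectic coordinates $(\mathrm{Re}\,\alpha_j,\mathrm{Im}\,\alpha_j)_{\lambda_j \leq \Lambda}$ together with conservation of the truncated Hamiltonian and mass (and the $L^2$-cutoff, respectively renormalized-mass cutoff, in the focusing case), the truncated Gibbs measure $d\mu_{\leq \Lambda}$ obtained by projecting~\eqref{eq:gc defoc}–\eqref{eq:gc foc 2} onto $E_{\leq \Lambda}$ is invariant under $\Phi_\Lambda(t)$. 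The high modes are not touched by $\Phi_\Lambda$, so the invariance extends to the full measure $\mu$ viewed on $\Hc^\theta$ via its cylindrical factorization.

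The second step is a probabilistic local Cauchy theory, uniform in $\Lambda$. Starting from $f \in \Hc^\theta$ in the support of $\mu$, write $u = e^{-\im t h} f + v$ and solve the Duhamel fixed point for $v$ in a stronger space $\Hc^{\theta'}$ with $\theta' > \theta$, using Strichartz-type estimates for the propagator $e^{-\im t h}$. Here the paper's ``soft'' approach replaces exact eigenfunction expansions by operator-theoretic bounds on $h$ consistent with Assumption~\ref{assu-V}; the gain over the deterministic regularity is produced by the randomization, which upgrades the integrability of $e^{-\im t h} f$ in space-time via a Khintchine/Wiener-chaos argument on the Gaussian coefficients $\alpha_j$. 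This yields a local time of existence $\tau = \tau(R)$ whenever the relevant random Strichartz norm of $e^{-\im t h} f$ is bounded by $R$, and a stability statement $\Phi_\Lambda(t) f \to \Phi(t) f$ in $\Hc^\theta$ on $[-\tau,\tau]$ as $\Lambda \to \infty$.

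The third step is Bourgain's globalization argument. For $T \geq 1$, $\epsilon > 0$ and $i \in \Z \cap [-T/\tau, T/\tau]$, introduce the ``good'' set
\[
\Sigma_{T,\epsilon,\Lambda} := \bigcap_{i} \Phi_\Lambda(-i\tau)\bigl\{ f : \|e^{-\im\cdot h} f\|_{\mathrm{Strichartz}([0,\tau])} \leq R \bigr\},
\]
with $R = R(T,\epsilon)$ to be chosen. By the invariance of $\mu_{\leq\Lambda}$ under $\Phi_\Lambda$ and a Gaussian tail bound $\mu(\|e^{-\im \cdot h} f\|_{\mathrm{Strichartz}([0,\tau])} > R) \lesssim e^{-c R^2}$ (a consequence of the exponential integrability of Wiener-chaos random variables), one gets
\[
\mu_{\leq \Lambda}(\Sigma_{T,\epsilon,\Lambda}^c) \lesssim \frac{T}{\tau} e^{-c R^2},
\]
and the right-hand side is $\leq \epsilon$ for $R \sim \sqrt{\log(T/\epsilon)}$. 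Iterating the local theory on subintervals of length $\tau$ yields that $\Phi_\Lambda(t) f$ exists for $|t| \leq T$ with $\|\Phi_\Lambda(t) f\|_{\Hc^\theta} \lesssim R$ on $\Sigma_{T,\epsilon,\Lambda}$. Passing to the limit $\Lambda \to \infty$ via the stability from step two, and then taking $\Sigma := \bigcap_{T \in \N} \bigcup_{n \in \N} \Sigma_{T, 1/n}$, gives a full-measure set on which the global flow $\Phi(t)$ is defined, $\mu$ is invariant, and
\[
\|\Phi(t) f\|_{\Hc^\theta} \leq C\bigl(\omega(f) + \log^{1/2}(1+|t|)\bigr)
\]
holds with $\omega(f)$ the smallest $n$ such that $f \in \Sigma_{\lceil |t|\rceil, 1/n}$.

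The main obstacle is the probabilistic local Cauchy theory of step two, because the paper forgoes explicit spectral formulae: one must establish bilinear/Strichartz bounds for $e^{-\im t h}$ together with the randomization gain, using only the structural assumptions on $V$ and the distributional properties of the $\alpha_j$. The focusing restriction $s > 8/5$ enters here as the threshold at which the renormalized-mass cutoff $\one_{\{|\Mcal(u)| \leq m\}}$ (needed for Proposition~\ref{pro:gc}(3)) is compatible with the fixed point in the relevant space; the nonlinear term has to be decomposed into a resonant part absorbed by the renormalization and a remainder controlled by the Strichartz norm, and this decomposition imposes the $8/5$ condition. All other steps (Liouville, Gaussian tails, iteration) are standard once the local theory is in place.
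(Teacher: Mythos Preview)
Your overall Bourgain globalization scheme is right, and matches the paper's architecture. But several of the specific mechanisms you describe are not what the paper does, and two of them are genuine misidentifications.

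First, the restriction $s>\tfrac{8}{5}$ has nothing to do with the Cauchy theory. It arises entirely in the \emph{construction of the focusing measure} (Proposition~\ref{pro:gc}, Item~(3)): one needs the Gagliardo--Nirenberg interpolation $\|u\|_{L^4}\lesssim\|u\|_{L^2}^{1-\delta}\|u\|_{\Wc^{\beta,p}}^{\delta}$ (Lemma~\ref{lem-sobo-ineq}) together with the $L^p$-support estimate~\eqref{eq:boun-beta-p} and the renormalized-mass tail~\eqref{eq:boun-M-Lamb-hi} to show $e^{\frac12\|u\|_{L^4}^4}\one_{\{|\Mcal(u)|\le m\}}\in L^1(d\mu_0)$, and matching the various exponents forces $s>\tfrac{8}{5}$. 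The local and global Cauchy theory then works for all $s>1$; there is no resonant/non-resonant decomposition of the nonlinearity anywhere in the paper, and none is needed in one dimension since $\|u\|_{L^4}<\infty$ almost surely without renormalization.

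Second, the probabilistic smoothing is not obtained via a Khintchine/hypercontractivity argument on the coefficients $\alpha_j$. The paper's ``soft'' mechanism is operator-theoretic: the density-matrix inequality~\eqref{eq:mu-0-k} reduces moments of $\|u\|_{\Wc^{\beta,p}}$ to powers of $\int (h^{\beta-1}(x,x))^{p/2}dx$, and the latter is finite by a Lieb--Thirring/Kato--Seiler--Simon argument (Lemma~\ref{lem-Lp}). This yields the Fernique-type bound of Lemma~\ref{lem-expo-p}, from which the probabilistic Strichartz estimates~\eqref{prob-str-est-1}--\eqref{prob-str-est-2} follow simply because $\mu_0$ is invariant under $e^{-\im th}$. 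No individual eigenfunction bounds or bilinear estimates are used; indeed the paper stresses that avoiding these is the point.

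Two further discrepancies worth noting: the paper uses smoothened projectors $Q_\Lambda=\chi(h/\Lambda)$ rather than the sharp $P_{\le\Lambda}$, since $L^p$-boundedness of the truncation (Appendix~\ref{sec:app3}) is needed in the nonlinear estimates; and the paper splits the local theory into a \emph{deterministic} LWP for $s>2$ (Proposition~\ref{prop-dete-lwp}, using Yajima--Zhang Strichartz with loss) and a \emph{probabilistic} LWP for $1<s\le2$ (Proposition~\ref{prop-almo-sure-lwp}), rather than treating all $s$ uniformly.
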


An  ingredient of our proof is the following estimate~(see e.g., \cite{LewNamRou-14d, LewNamRou-17, LewNamRou-20}) on the $k$-particle density matrix associated to $\mu_0$
\begin{align} \label{eq:mu-0-k-intro}
\int |u^{\otimes k}\rangle \langle u^{\otimes k}| d\mu_0(u) \leq k! (h^{-1})^{\otimes k}, \quad \forall k\geq 1.
\end{align}
Another observation (see Lemma \ref{lem-Lp}) is that for $0\leq \beta<\frac{1}{2}$, the function 
\begin{equation}\label{eq:Green Lp}
x\mapsto h^{\beta-1}(x,x) \in L^p(\R) 
\end{equation}
for all $\max\left\{1,\frac{2}{s(1-2\beta)}\right\} <p \leq \infty$, where
\begin{align} \label{eq:ker-h}
h^{-1}(x,y)=\sum_{j\geq 1} \lambda_j^{-1} u_j(x) \overline{u}_j(y)
\end{align}
is the integral kernel of $h^{-1}$. This property together with \eqref{eq:mu-0-k-intro} enable us to prove (see Lemma \ref{lem-expo-p}) that the Gaussian measure $\mu_0$ is supported on Sobolev spaces $\Wc^{\beta,p}$. Moreover, there exist $C,c>0$ such that
\[
\int e^{c\|u\|^2_{\Wc^{\beta, p}}} d\mu_0 (u) \leq C
\]
provided that $0\leq \beta <\frac{1}{2}$ and $p>\max\left\{2,\frac{4}{s(1-2\beta)}\right\}$ an even integer. As a result, we are able to define the defocusing Gibbs measure for all $s>1$. Combining with a Gagliardo--Nirenberg inequality from~\cite{BreMir-19}, we define the focusing measure for $s>\frac{8}{5}$ (a restriction which is perhaps technical). Our proof of~\eqref{eq:Green Lp} uses Lieb--Thirring-type inequalities from~\cite{DolFelLosPat-06} and standard inequalities for operators in Schatten ideals (H\"older and Kato--Seiler--Simon~\cite{Simon-79}) rather than $L^p$-bounds on individual eigenfunctions. This makes it applicable to general potentials. Since $\mu_0$ is invariant under the linear Schr\"odinger flow associated with $h$, we also have that for any time $t$, 
\[
\|e^{-\im t h} u\|_{\Wc^{\beta, p}}  < \infty \quad \mu_0 \mbox{ almost surely}.
\]
Combining with fractional chain rules, we directly recover estimates e.g., like 
\[ 
\|(e^{-\im t h} u)^2\|_{\Hc^\beta}  < \infty \quad \mu_0 \mbox{ almost surely}
\]
at $s=2$, which was originally obtained as~\cite[Equation~(1.2)]{BurThoTzv-10} using bilinear estimates for Hermite functions. The above probabilistic Strichartz estimates are important tools in our construction of a local Cauchy theory.  

For super-harmonic potentials, i.e., $s>2$, since the Gibbs measure is supported on $L^2$-based Sobolev spaces of positive indices, there is hope to obtain a deterministic local well-posedness on its' support. This is indeed feasible, using Strichartz estimates with a loss of derivatives proved by Yajima and Zhang~\cite{YajZha-04}. For (sub)-harmonic potentials, i.e., $1<s\leq 2$, the Gibbs measure lives on $L^2$-based Sobolev spaces of negative indices, so it is difficult to obtain a satisfying deterministic local theory on its' support. In \cite{BurThoTzv-10}, such a deterministic result was proved by using a delicate multilinear estimate which relies heavily on $L^p$-bound of derivatives of eigenfunctions obtained in \cite{KocTat-05}. Here we give a softer argument and prove an almost sure local well-posedness for the equation. 

\subsection{Canonical measures}

We next deal with the construction and invariance of canonical Gibbs measures~\eqref{eq:intro c} conditioned on the mass. This type of measures has so far been constructed only for the 1D periodic NLS~\cite{OhQua-13,CarFroLeb-16,CarFroLebWan-19} and for the 1D periodic derivative NLS~\cite{Brereton}. To the best of our knowledge, there is no work constructing canonical Gibbs measures in non-compact settings.

To give rigorous meaning to~\eqref{eq:intro c}, we follow the basic strategy of~\cite{OhQua-13}. First, we construct a Gaussian measure conditioned on the $L^2$ mass. For $s\leq 2$ it is infinite almost surely, so that we need to use the renormalized mass as defined in Item (3) of Proposition~\ref{pro:gc}. To unify the presentation we always condition the Gaussian measure on the renormalized mass, keeping in mind that when $s>2$, the $L^2$ mass is finite almost surely, equal to the renormalized one plus its finite expectation with respect to the Gaussian measure.

More precisely, for $m \in \R$ and $m>-{\Tr}[h^{-1}]$ if $s>2$, we will take the limit $\vareps \to 0^+$ of the approximate canonical Gaussian measure
\begin{equation}\label{eq:mu-0-m-eps}
d\mu_0^{m,\vareps}(u) = \frac{1}{Z_0^{m,\vareps}} \mathds{1}_{\left\{m-\vareps<\Mcal(u)<m+\vareps\right\}} d\mu_0(u),
\end{equation}
where $\Mcal(u)$ is as in Proposition~\ref{pro:gc}. Since we aim at conditioning on a zero-probability event, the normalization constant 
\[
Z_0^{m,\vareps} = \mu_0\left(m-\vareps<\Mcal(u)<m+\vareps\right)
\]
converges to zero as $\vareps \to 0^+$, hence taking the limit in~\eqref{eq:mu-0-m-eps} is not straightforward. We however prove that 
\[
\lim_{\vareps \to 0^+} \frac{1}{2\vareps} Z_0^{m,\vareps} > 0
\]
so that the definition 
\[
 d\mu_0^{m}(u) := \lim_{\vareps \to 0^+} d\mu_0^{m,\vareps}(u)
\]
indeed yields a probability measure. Then the interacting canonical Gibbs measure is defined by
\[
d\mu^m(u) = \frac{1}{Z^m} e^{\mp \frac{1}{2}\int_{\R}|u|^4} d\mu_0^m(u).
\]
Its' invariance with respect to the NLS flow is a direct consequence of the invariance of the standard Gibbs measure in Theorem~\ref{theo-Gibbs-meas}. We summarize this in the following theorem.

\begin{theorem}[\textbf{Canonical Gibbs measures}]\label{theo-Gibbs-fix-mass}\mbox{}\\
	Let $s>1$, $V$ satisfy Assumption \ref{assu-V}, $m\in \R$, and assume $m>-{\Tr}[h^{-1}]$ if $s>2$. Assume in addition that $s>\frac{8}{5}$ for the focusing nonlinearity. Then $\mu^m$ makes sense as a probability measure. In addition, it is invariant under the flow of~\eqref{eq:intro NLS} (as defined in Theorem~\ref{theo-Gibbs-meas}).
\end{theorem}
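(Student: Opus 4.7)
The proof proceeds in three stages: construct the conditional Gaussian measure $\mu_0^m$, incorporate the $L^4$-interaction to define $\mu^m$, and deduce invariance from Theorem~\ref{theo-Gibbs-meas}.

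The first stage is the substantive new one. Since $\{\alpha_j\}_{j\geq 1}$ are independent complex Gaussians with variance $\lambda_j^{-1}$, each $|\alpha_j|^2$ is exponentially distributed with mean $\lambda_j^{-1}$. The characteristic function of $\Mcal(u)=\sum_j(|\alpha_j|^2-\lambda_j^{-1})$ under $\mu_0$ thus factors as
\[
\phi(\xi)=\prod_{j\geq 1}\frac{\lambda_j}{\lambda_j-\im\xi}\,e^{-\im\xi/\lambda_j}.
\]
Weyl's asymptotics $\lambda_j\sim j^{2s/(s+2)}$, a standard consequence of Assumption~\ref{assu-V}, yield $|\phi(\xi)|\leq \exp\bigl(-c|\xi|^{(s+2)/(2s)}\bigr)$ for $|\xi|\gg 1$. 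Since $(s+2)/(2s)>\tfrac12$ when $s>1$, $\phi$ is a Schwartz function and Fourier inversion produces a smooth density $\rho$ for $\Mcal$. An analyticity or log-concavity argument then shows $\rho>0$ on its natural support: all of $\R$ when $1<s\leq 2$, and $(-\Tr[h^{-1}],+\infty)$ when $s>2$ (the lower bound reflecting $M(u)\geq 0$). In particular $\tfrac{1}{2\vareps}Z_0^{m,\vareps}\to\rho(m)>0$, and $\mu_0^{m,\vareps}$ converges weakly to a well-defined probability measure $\mu_0^m$.

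For the second stage, one verifies that $u\mapsto e^{\mp\frac12\int|u|^4}\in L^1(d\mu_0^m)$. The defocusing case is immediate since the density is bounded by $1$. In the focusing case, writing
\[
\int e^{\frac12\int|u|^4}\,d\mu_0^m(u)=\lim_{\vareps\to 0^+}\frac{1}{Z_0^{m,\vareps}}\int e^{\frac12\int|u|^4}\one_{\{|\Mcal(u)-m|<\vareps\}}\,d\mu_0(u)
\]
reduces the bound to Proposition~\ref{pro:gc}(3) provided one chooses the mass cutoff there to exceed $|m|+\vareps$; Fatou then passes to the limit with help from stage one. This defines $\mu^m$ as a probability measure.

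For invariance, the flow $\Phi(t)$ of Theorem~\ref{theo-Gibbs-meas} preserves $\Mcal$ almost surely, as the truncated approximations used to construct $\Phi$ conserve the $L^2$-mass exactly and the renormalization is a deterministic constant. Combined with invariance of $\mu$, this gives, for any measurable $A\subset\Hc^\theta$,
\[
\mu\bigl(\Phi(t)A\cap\{|\Mcal-m|<\vareps\}\bigr)=\mu\bigl(A\cap\{|\Mcal-m|<\vareps\}\bigr).
\]
Dividing by $Z^{m,\vareps}\sim 2\vareps\,\rho(m)\,Z$ and passing to the limit $\vareps\to 0^+$ gives $\mu^m(\Phi(t)A)=\mu^m(A)$. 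The main obstacle is concentrated in stage one, where one must extract quantitative decay of the infinite product $\phi$ from rather soft spectral input and establish pointwise positivity of $\rho$; a secondary difficulty in stage three is to check that the almost-sure conservation of $\Mcal$ truly survives the Bourgain-type limiting procedure underlying $\Phi(t)$, without discarding a $\mu$-positive portion of initial data concentrated on the level set $\{\Mcal=m\}$.
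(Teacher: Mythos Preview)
Your three-stage outline matches the paper's architecture, and the log-concavity route to positivity of $\rho$ (as a convolution of shifted exponentials) is cleaner than the paper's contradiction argument in Lemma~\ref{lem-f0-m}. But there is a genuine gap in stage~2 for the focusing nonlinearity. Proposition~\ref{pro:gc}(3) yields only $\int e^{\frac12\|u\|_{L^4}^4}\one_{\{|\Mcal(u)|<|m|+\vareps\}}\,d\mu_0\leq C$, a bound uniform in $\vareps$. Since $Z_0^{m,\vareps}\sim 2\vareps\rho(m)\to 0$, the ratio you form is $O(\vareps^{-1})$ and Fatou gives nothing. What is actually needed is $\int e^{\frac12\|u\|_{L^4}^4}\one_{\{|\Mcal(u)-m|<\vareps\}}\,d\mu_0=O(\vareps)$, equivalently a bound on $\int e^{\frac12\|u\|_{L^4}^4}\,d\mu_0^{m,\vareps}$ uniform in small $\vareps$. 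This does not follow from the grand-canonical estimate; the paper re-derives the tail bounds for $\Mcal_{>\Lambda}$, $\|P_{>\Lambda}u\|_{L^4}$, and $\|P_{\leq\Lambda}u\|_{\Wc^{\beta,p}}$ directly under $\mu_0^{m,\vareps}$ (Lemmas~\ref{lem-mu-fix-renor}, \ref{lem-L4-fix-renor}, \ref{lem-beta-p-fix-renor}), exploiting that the density of $\Mcal_{\leq\Lambda}$ (resp.\ $\Mcal_{>\Lambda}$) is bounded so that $\tfrac{1}{Z_0^{m,\vareps}}\int_{m-\vareps-\vartheta}^{m+\vareps-\vartheta}g_\Lambda\leq C$ uniformly. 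Only then can the argument of Proposition~\ref{pro:gc}(3) be rerun conditionally.

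A related issue already appears in stage~1. Smoothness and positivity of the density of $\Mcal$ give $\tfrac{1}{2\vareps}Z_0^{m,\vareps}\to\rho(m)>0$, but this alone does not produce a weak limit of $\mu_0^{m,\vareps}$ on $\Hc^\theta$: regular conditional probabilities are defined only $\rho$-a.e.\ in $m$, and conditioning on a null set needs more structure to be meaningful at a specified $m$. The paper supplies this by analyzing the densities $f_\Lambda$ of $\Mcal_{>\Lambda}$ for every $\Lambda$ (not just $\Lambda=0$), building the cylindrical projections $\mu_0^{m,\leq\Lambda}$ explicitly, and proving tightness via a second-moment bound (Lemma~\ref{lem-Cm}). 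Your characteristic-function estimate extends to $\phi_\Lambda$ without difficulty, so the repair is routine, but it is genuine work your sketch omits and, crucially, it is the same circle of density estimates that plugs the stage~2 gap above.
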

%
Canonical measures were constructed before in~\cite{OhQua-13,Brereton}. In these references the normalization of the canonical Gibbs measure is proved by using a large deviation principle, a $L^2$-based Sobolev embedding (for instance, $\Hc^{1/4}\subset L^4(\R)$), and a sum over dyadic pieces. Adapting this argument to our context results in an unnecessary restriction to $s>4$ because of the use of the $L^2$-based Sobolev embedding. Our approach is different, based primarily on the almost sure $L^p$-regularity alluded to in the discussion below Theorem~\ref{theo-Gibbs-meas}. This allows us to define the canonical Gibbs measure as soon as the grand-canonical one is defined.

\subsection{Possible extensions of the main results}
 Before turning to the more technical parts of the paper, we emphasize that the method we present therein can be adapted to 1D NLS with anharmonic potential (as in Assumption \ref{assu-V}) and more general nonlinearities, namely
\begin{align}\label{eq:NLS-general}
	\im  \partial_t u = h u \pm |u|^{\kappa-2} u, \quad (t,x) \in \R \times \R
\end{align}
with $\kappa>2$. More precisely, we have the following observations:

\medskip

\noindent \textbf{1}. Regarding the construction of (grand)-canonical Gibbs measures (see Remarks \ref{rem-Gibbs-general} and \ref{rem-meas-inva-mass-general}), the following cases can be covered by straightforward adaptations of our arguments: 

\medskip

 $\bullet$ defocusing case with  $\max\left\{2,\frac{4}{s}\right\}<\kappa<\infty$.

\medskip

 $\bullet$ focusing case with $s>2$ and $2<\kappa<6$, $m>0$ or with $\frac{14}{9}<s\leq 2,m>0$ and 
 $$\frac{4}{s}<\kappa<\frac{3s+2+\sqrt{9s^2 +4s -28}}{2}.$$ 

\medskip

\noindent \textbf{2}. Assuming the (grand)-canonical Gibbs measure can be constructed (in particular, in the cases above), our methods prove its' invariance under the flow in the following cases (see Remarks \ref{rem-meas-inva-sup}, \ref{rem-meas-inva-sub}, and \ref{rem-meas-inva-mass-general}) : 

\medskip

 $\bullet$ super-harmonic potential $s>2$ with $2<\kappa<4+s$.

\medskip

 $\bullet$ (sub)-harmonic potential $s\leq2$ with $\frac{4}{s}<\kappa<6$.
 
 \medskip

In a recent work, Robert et al. \cite{RobSeoTolWan-22} studied the focusing Gibbs measure with harmonic potential $V(x)=|x|^2$. They proved that the measure is normalizable if and only if $2<\kappa<6$ and $m>0$. The proof of the normalizability in \cite{RobSeoTolWan-22} is based on the Bou\'e-Dupuis variational formula. When $s=2$, our method also gives normalizability of the focusing Gibbs measure for any $2<\kappa<6$ and $m>0$, thus providing an alternative proof for part of the results from~\cite{RobSeoTolWan-22}. In turn, if supplemented with some of our estimates from Section~\ref{sec:gc measures}, the arguments of~\cite{RobSeoTolWan-22} allow\footnote{We learned this from Yuzhao Wang and Leonardo Tolomeo.} to extend the construction of the measure to the case $s<2$ and $\frac{4}{s} < \kappa < 2s+2$. As mentioned above, our construction of the global probabilistic Cauchy theory extends to this case. 

For large powers of the nonlinearity ($\kappa \geq 4+s$ when $s>2$, or $\kappa\geq 6$ when $s\leq 2$), the invariance of the defocusing Gibbs measures remains an open problem, except for $s=2$, cf. ~\cite{BurThoTzv-10} .  


\subsection{Organization of the paper} 
Section \ref{sec:gc measures} is devoted to the construction of Gibbs measures associated to \eqref{eq:intro NLS}. We also define and prove some properties of approximate measures which are needed for the proof of measure invariance. In Section~\ref{sec:invari super}, we consider the Cauchy problem in the case of super-harmonic potential, $s >2$. The more difficult Cauchy problem for (sub)-harmonic potentials, $s\leq 2$, will be addressed in Section~\ref{sec:invari sub}. The construction as well as the invariance of canonical measures conditioned on the mass are considered in Section~\ref{sec:cano measures}. Some estimates and inequalities from the literature, used throughout the paper are recalled in appendices for the convenience of the reader.

\section{Grand-canonical measures}
\label{sec:gc measures}
\setcounter{equation}{0}

\subsection{Basic estimates and the defocusing case}

The definition of the Gaussian measure $\mu_0$ was recalled in Definition~\ref{def:gaussian}. Several of our estimates will be based on the following observation. We quote it from~\cite{LewNamRou-14d} but it is probably well-known, see e.g.,~\cite{FroKnoSchSoh-16,FroKnoSchSoh-17} and references therein.
	
	\begin{lemma}[\textbf{Density matrices of the Gaussian measure}] \label{lem-mu-0}\mbox{}\\
		Let $s>1$, $V$ satisfy Assumption \ref{assu-V}, and $\theta<\frac{1}{2}-\frac{1}{s}$. Then there exists a unique measure $\mu_0$ living over $\Hc^\theta$ such that for every $\Lambda \geq \lambda_1$, $\mu_0^{\leq \Lambda}$ is the cylindrical projection of $\mu_0$ on $E_{\leq\Lambda}$. Moreover, for every integer $k\geq 1$,
		\begin{align} \label{eq:mu-0-k}
		\int |u^{\otimes k}\rangle \langle u^{\otimes k}| d\mu_0(u) \leq k! (h^{-1})^{\otimes k}
		\end{align}
		as operators. In particular, for any self-adjoint operator $A$, 
		\begin{align} \label{eq:mu-0-k-A}
		\int |(Au)^{\otimes k}\rangle \langle (Au)^{\otimes k}| d\mu_0(u) \leq k! (Ah^{-1}A)^{\otimes k}.
		\end{align}
	\end{lemma}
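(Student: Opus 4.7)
The existence and uniqueness of $\mu_0$ on $\Hc^\theta$ with the prescribed cylindrical projections is already encoded in Definition~\ref{def:gaussian}: it follows from tightness of $\{\mu_0^{\leq\Lambda}\}_{\Lambda\geq\lambda_1}$ in $\Hc^\theta$ for $\theta<\frac{1}{2}-\frac{1}{s}$ (proved in the Appendix) combined with Skorokhod's extension lemma~\cite{Skorokhod-74}. So the real content is the density-matrix bound~\eqref{eq:mu-0-k}; the estimate \eqref{eq:mu-0-k-A} then follows immediately since $(Au)^{\otimes k}=A^{\otimes k}u^{\otimes k}$ and conjugation by $A^{\otimes k}$ preserves operator order.

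First I would prove \eqref{eq:mu-0-k} at the level of the finite-dimensional projection $\mu_0^{\leq\Lambda}$. Under this measure the coefficients $\alpha_j=\scal{u_j,u}_{L^2}$ with $\lambda_j\leq\Lambda$ are independent circular complex Gaussians, with covariance
\[
\int \alpha_i\bar\alpha_j\,d\mu_0^{\leq\Lambda} = \frac{\delta_{ij}}{\lambda_j}\one_{\lambda_j\leq\Lambda}, \qquad \int \alpha_i\alpha_j\,d\mu_0^{\leq\Lambda}=0.
\]
Wick/Isserlis' formula for circular complex Gaussians then gives
\[
\int \alpha_{i_1}\cdots\alpha_{i_k}\bar\alpha_{j_1}\cdots\bar\alpha_{j_k}\,d\mu_0^{\leq\Lambda} \;=\; \sum_{\sigma\in S_k}\prod_{l=1}^{k}\frac{\delta_{i_l,j_{\sigma(l)}}}{\lambda_{i_l}}.
\]
Expanding $u^{\otimes k}=\sum_{i_1,\dots,i_k}\alpha_{i_1}\cdots\alpha_{i_k}\,u_{i_1}\otimes\cdots\otimes u_{i_k}$ and summing, the expression for $\int|u^{\otimes k}\rangle\langle u^{\otimes k}|\,d\mu_0^{\leq\Lambda}$ collapses to $\sum_{\sigma\in S_k}(h^{-1}P_{\leq\Lambda})^{\otimes k}U_\sigma$, where $P_{\leq\Lambda}$ is the spectral projector onto $E_{\leq\Lambda}$ and $U_\sigma$ permutes tensor factors. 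Since $(h^{-1}P_{\leq\Lambda})^{\otimes k}$ commutes with every $U_\sigma$, it also commutes with the bosonic symmetrizer $P_+^{(k)}=\frac{1}{k!}\sum_\sigma U_\sigma$, so this reduces to $k!\,P_+^{(k)}(h^{-1}P_{\leq\Lambda})^{\otimes k}$. As $P_+^{(k)}$ is an orthogonal projection commuting with the positive operator $(h^{-1})^{\otimes k}$, this is bounded above (as an operator on $\bigotimes^k L^2(\R)$) by $k!\,(h^{-1})^{\otimes k}$.

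Finally I would pass to the limit $\Lambda\to\infty$. For any positive finite-rank operator $\Gamma$ on $\bigotimes^k L^2(\R)$ with smooth range, the functional $u\mapsto \Tr[\Gamma|u^{\otimes k}\rangle\langle u^{\otimes k}|]$ is a polynomial in finitely many $\alpha_j$'s of total degree $2k$; the cylindrical-projection property of $\mu_0$ and the Gaussian moment bound provided by Step~1 (uniform in $\Lambda$) allow passing to the limit by dominated convergence, yielding
\[
\Tr\!\left[\Gamma \int |u^{\otimes k}\rangle\langle u^{\otimes k}|\,d\mu_0(u)\right] \leq k!\,\Tr\!\left[\Gamma\,(h^{-1})^{\otimes k}\right],
\]
which is \eqref{eq:mu-0-k}. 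I expect the only subtle point to be this limit interchange, since $u\mapsto\|u^{\otimes k}\|^2$ is unbounded on $\Hc^\theta$; but testing against finite-rank $\Gamma$ reduces everything to polynomial moments in a finite number of coordinates, which are controlled uniformly by the finite-dimensional identity above. The bound \eqref{eq:mu-0-k-A} follows at once by sandwiching between $A^{\otimes k}$ on both sides and using self-adjointness of $A$.
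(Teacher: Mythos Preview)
Your proposal is correct and follows essentially the same route as the paper: compute the $k$-particle density matrix exactly via Wick's theorem to obtain $k!\,P_+^{(k)}(h^{-1})^{\otimes k}P_+^{(k)}$, and then drop the symmetric projector using that it commutes with $(h^{-1})^{\otimes k}$. The paper is terser---it simply cites \cite[Lemma~3.3]{LewNamRou-14d} for the Wick identity and does not spell out the passage $\Lambda\to\infty$---whereas you make both the Wick calculation and the limit explicit; in particular your observation that testing against rank-one $\Gamma$ supported on finitely many eigenmodes reduces the limit to a purely finite-dimensional statement is the right way to handle it.
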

	
		As explained in~\cite[Lemma 3.3]{LewNamRou-14d}, Wick's theorem for Gaussian measures yields
		\[
		\int |u^{\otimes k}\rangle \langle u^{\otimes k}| d\mu_0(u) =k! P^k_s (h^{-1})^{\otimes k} P^k_s,
		\]
		where $P^k_s$ is the orthogonal projection on $k$-symmetric functions, i.e.,
		\[
		P^k_s v(x_1,\ldots,x_k) = \frac{1}{k!} \sum_{\sigma \in \Pi(k)} v\left(x_{\sigma(1)}, \ldots, x_{\sigma(k)}\right)
		\]
		with $\Pi(k)$ the group of all permutations of $\{1,\cdots, k\}$. Since $P^k_s$ commutes with $(h^{-1})^{\otimes k}$, we have
		\[
		(h^{-1})^{\otimes k} = P^k_s (h^{-1})^{\otimes k}P^k_s + (1-P^k_s) (h^{-1})^{\otimes k} (1-P^k_s)
		\]
		and~\eqref{eq:mu-0-k} follows. 
		
		As a direct consequence of density matrices, we have the following decay of $\Hc^\theta$-norms with respect to the Gaussian measure $\mu_0$.
		
		\begin{lemma}[\textbf{$L^2$-Regularity on the support of the Gaussian measure}] \label{lem-H-theta}\mbox{}\\ 
			Let $s>1$, $V$ satisfy Assumption \ref{assu-V}, and $\theta <\frac{1}{2}-\frac{1}{s}$. Then there exist $C,c>0$ such that 
			\begin{align} \label{eq:expo-H-theta}
			\int e^{c\|u\|^2_{\Hc^\theta}} d\mu_{0}(u) \leq C.
			\end{align}
			In particular, for all $\lambda>0$
			\begin{align} \label{eq:boun-H-theta}
			\mu_{0}\left(u \in \Hc^\theta : \|u\|_{\Hc^\theta}>\lambda\right) \leq Ce^{-c\lambda^2}.
			\end{align}
		\end{lemma}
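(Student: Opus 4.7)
The strategy is to bound integer moments of $\|u\|_{\Hc^\theta}^2$ via the density-matrix inequality \eqref{eq:mu-0-k}, then sum to obtain the exponential moment; the pointwise tail \eqref{eq:boun-H-theta} will then follow by Markov's inequality.

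First I would rewrite the norm as a trace. Since $\|u\|_{\Hc^\theta}^2 = \sum_j \lambda_j^\theta |\alpha_j|^2 = \scal{u, h^\theta u}_{L^2}$, one has for every integer $k \geq 1$
\[
\|u\|_{\Hc^\theta}^{2k} = \scal{u,h^\theta u}^k = \Tr\bigl[(h^\theta)^{\otimes k}\,|u^{\otimes k}\rangle\langle u^{\otimes k}|\bigr].
\]
Integrating against $\mu_0$ and using the density matrix bound \eqref{eq:mu-0-k} (which holds as an operator inequality against the nonnegative operator $(h^\theta)^{\otimes k}$) gives
\[
\int \|u\|_{\Hc^\theta}^{2k}\,d\mu_0(u)
\leq k!\,\Tr\bigl[(h^\theta)^{\otimes k}(h^{-1})^{\otimes k}\bigr]
= k!\,\bigl(\Tr[h^{\theta-1}]\bigr)^k.
\]

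Second, I would establish that $\Tr[h^{\theta-1}] < \infty$ under the range $\theta < \tfrac12 - \tfrac1s$. Under Assumption \ref{assu-V}, a Weyl-type lower bound for the eigenvalues of $h = -\partial_x^2 + V$ with $V(x) \sim \scal{x}^s$ yields
\[
\lambda_j \gtrsim j^{\frac{2s}{s+2}},
\]
so $\sum_j \lambda_j^{\theta-1}$ converges precisely when $(1-\theta)\tfrac{2s}{s+2} > 1$, i.e., $\theta < \tfrac12 - \tfrac1s$. This spectral input is the essential technical point; it may be quoted from \cite[Example~3.2]{LewNamRou-14d} or the appendix (where the tightness of $\{\mu_0^{\leq \Lambda}\}$ is derived).

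Third, I would exponentiate via the series expansion. Choose $c > 0$ small enough that $c\,\Tr[h^{\theta-1}] < 1$. Then by monotone convergence and the moment bound,
\[
\int e^{c\|u\|_{\Hc^\theta}^2}\,d\mu_0(u)
= \sum_{k=0}^\infty \frac{c^k}{k!}\int \|u\|_{\Hc^\theta}^{2k}\,d\mu_0(u)
\leq \sum_{k=0}^\infty \bigl(c\,\Tr[h^{\theta-1}]\bigr)^k =: C < \infty,
\]
which is \eqref{eq:expo-H-theta}. Finally, \eqref{eq:boun-H-theta} follows immediately from Markov's inequality applied to $e^{c\|u\|_{\Hc^\theta}^2}$.

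The main obstacle is the spectral estimate $\Tr[h^{\theta-1}] < \infty$ at the precise threshold $\theta < \tfrac12 - \tfrac1s$; once this is granted, the remainder is a short computation exploiting the factorial cancellation between the $\tfrac{1}{k!}$ in the exponential series and the $k!$ produced by Wick's theorem via \eqref{eq:mu-0-k}.
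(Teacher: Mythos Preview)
Your proposal is correct and follows essentially the same approach as the paper: expand the exponential as a power series, bound the $k$-th moment of $\|u\|_{\Hc^\theta}^2$ by $k!\,(\Tr[h^{\theta-1}])^k$ via the density-matrix inequality \eqref{eq:mu-0-k}, sum the resulting geometric series for $c\,\Tr[h^{\theta-1}]<1$, and conclude the tail bound by Markov. The only cosmetic difference is that the paper implements the trace step by writing $\|u\|_{\Hc^\theta}^{2k}$ as a $k$-fold spatial integral and testing \eqref{eq:mu-0-k} against (mollified) delta functions, whereas you trace directly against $(h^\theta)^{\otimes k}$; and the paper invokes Lemma~\ref{lem-trac-h-p} (a Lieb--Thirring type bound) for $\Tr[h^{\theta-1}]<\infty$ rather than the Weyl asymptotics you sketch.
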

		
		\begin{proof}
			It suffices to prove \eqref{eq:expo-H-theta} since \eqref{eq:boun-H-theta} follows from \eqref{eq:expo-H-theta} and the Chebyshev inequality. We write
			\[
			\int e^{c\|u\|^2_{\Hc^\theta}} d\mu_{0}(u) = \sum_{k\geq 0} \frac{c^k}{k!} \int \|u\|^{2k}_{\Hc^\theta} d\mu_{0}(u)
			\]
			and observe that
			\begin{align*}
			\int \|u\|^{2k}_{\Hc^\theta} d\mu_{0}(u) &= \int \underbrace{\|u\|^2_{\Hc^\theta} \cdots \|u\|^{2}_{\Hc^\theta}}_{k \text{ times}} d\mu_{0}(u) \\
			&=\int \cdots \int \Big(\int |h^{\theta/2} u(x_1)|^2 \cdots |h^{\theta/2}u(x_k)|^2 d\mu_{0}(u)\Big) dx_1 \cdots dx_k.
			\end{align*}
			Denote $\delta^{(\eta)}_x$ a mollification of the Dirac delta function at $x$ so that 
			$$ \delta^{(\eta)}_x \underset{\eta \to 0}{\to} \delta_x$$
			as measures. Identifying it with the associated multiplication operator we have
			\begin{multline*}
			\int |h^{\theta/2} u(x_1)|^2 \cdots |h^{\theta/2}u(x_k)|^2 d\mu_{0}(u) \\
			= \lim_{\eta \to 0} {\Tr}\left[\delta_{x_1} ^{(\eta)} \otimes \cdots \otimes \delta_{x_k} ^{(\eta)} \left(\int |(h^{\theta/2}u)^{\otimes k}\rangle \langle (h^{\theta/2}u)^{\otimes k}| d\mu_{0}(u)\right)\delta_{x_k} ^{(\eta)} \otimes \cdots \otimes \delta_{x_1} ^{(\eta)}\right].
			\end{multline*}
			But~\eqref{eq:mu-0-k} implies  
			\begin{multline*} 
			\Tr\left[\delta_{x_1} ^{(\eta)} \otimes \cdots \otimes \delta_{x_k} ^{(\eta)} \left(\int |(h^{\theta/2}u)^{\otimes k}\rangle \langle (h^{\theta/2}u)^{\otimes k}| d\mu_{0}(u)\right)\delta_{x_k} ^{(\eta)} \otimes \cdots \otimes \delta_{x_1} ^{(\eta)}\right]\\ \leq k! {\Tr}\left[\delta_{x_1} ^{(\eta)} \otimes \cdots \otimes \delta_{x_k} ^{(\eta)} \left(h^{\theta-1}\right)^{\otimes k} \delta_{x_k} ^{(\eta)} \otimes \cdots \otimes \delta_{x_1} ^{(\eta)} \right]
			\end{multline*}
            and since 
            $$ \Tr \left[ \delta ^{(\eta)}_x h^{\theta-1} \delta ^{(\eta)}_x \right] = \iint \delta ^{(\eta)}_x (y_1) h^{\theta-1}(y_1, y_2) \delta ^{(\eta)}_x (y_2) dy_1 dy_2 $$
            we may let $\eta \to 0$ to deduce 
            $$ \int |h^{\theta/2} u(x_1)|^2 \cdots |h^{\theta/2}u(x_k)|^2 d\mu_{0}(u) \leq k! h^{\theta-1}(x_1, x_1) \cdots h^{\theta-1}(x_k,x_k).$$
			This implies
			\[
			\int \|u\|^{2k}_{\Hc^\theta} d\mu_{0}(u) \leq k! \Big(\int_{\R} h^{\theta-1}(x,x)dx\Big)^k = k!\left({\Tr}[h^{-(1-\theta)}]\right)^k.
			\]
			In particular, we obtain
			\[
			\int e^{c\|u\|^2_{\Hc^\theta}} d\mu_{0}(u) \leq \sum_{k\geq 0} \left(c{\Tr}[h^{-(1-\theta)}]\right)^k \leq C
			\]
			provided that $c {\Tr}[h^{-(1-\theta)}]<1$. This proves \eqref{eq:expo-H-theta}. Note that ${\Tr}[h^{-(1-\theta)}]<\infty$ due to $\theta<\frac{1}{2}-\frac{1}{s}$ (see Lemma \ref{lem-trac-h-p}). 
		\end{proof}
	
		Regularity properties of typical samples of the Gaussian measure are, as per \eqref{eq:mu-0-k}, connected to properties of the covariance $h^{-1}$ (Green function of the Schr\"odinger operator). When $s>2$, it is relatively easy to construct the (defocusing, at least) interacting measures since $h^{-1}$ is a trace-class operator, see~\cite[Example~5.2]{LewNamRou-14d} and Appendix~A. For $s\leq 2$, the following lemma plays a key role in our analysis. It generalizes estimates from~\cite[Section~3]{LewNamRou-17}.

\begin{lemma}[\textbf{Integrability of the density}]\label{lem-Lp}\mbox{}\\
	Let $s>1$, $V$ satisfy Assumption \ref{assu-V}, and $0\leq \beta <\frac{1}{2}$. Then $x \mapsto h^{\beta-1}(x,x)$ is in $L^p(\R)$ for all
	\[
	\max\left\{1,\frac{2}{s(1-2\beta)}\right\} <p \leq \infty,
	\]
	where $h^{\beta-1}(x,y)$ is the integral kernel of $h^{\beta-1}$ defined as in~\eqref{eq:ker-h}.
\end{lemma}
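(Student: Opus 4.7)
The plan is to represent $h^{\beta-1}$ via the heat-semigroup subordination formula
\begin{equation*}
h^{\beta-1} = \frac{1}{\Gamma(1-\beta)}\int_0^\infty t^{-\beta}\, e^{-th}\, dt,
\end{equation*}
valid for $\beta<1$, and then to control the diagonal $e^{-th}(x,x)$ in $L^p_x$ by interpolating between a pointwise bound and a trace bound, before integrating against $t^{-\beta}$. This bypasses any direct $L^p$-bound on individual eigenfunctions.

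For the pointwise estimate, I would use the Trotter product formula together with $V\geq 0$ (which forces each factor $e^{-tV/n}\leq 1$) to get the pointwise kernel domination $e^{-th}(x,y)\leq p_t(x,y)$, where $p_t$ is the free heat kernel on $\R$; in particular $e^{-th}(x,x)\leq (4\pi t)^{-1/2}$. For the trace estimate, Golden--Thompson yields
\begin{equation*}
\Tr\bigl[e^{-th}\bigr] \leq \Tr\bigl[e^{t\partial_x^2}\,e^{-tV}\bigr] = \frac{1}{\sqrt{4\pi t}}\int_\R e^{-tV(x)}\,dx,
\end{equation*}
and Assumption~\ref{assu-V} (namely $V(x)\geq c\langle x\rangle^s-C$) combined with a simple rescaling bounds the last integral by $C't^{-1/s}$ for $t\in(0,1]$; for $t\geq 1$ the spectral gap $h\geq \lambda_1>0$ upgrades the trace bound to exponential decay $\Tr[e^{-th}]\lesssim e^{-\lambda_1 t/2}$.

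H\"older's inequality $\|f\|_{L^p}^p\leq \|f\|_{L^\infty}^{p-1}\|f\|_{L^1}$ then yields, for every $p\in[1,\infty]$ and $t\in(0,1]$,
\begin{equation*}
\bigl\|e^{-th}(x,x)\bigr\|_{L^p_x}\leq C\,t^{-1/2-1/(ps)},
\end{equation*}
with exponential decay on $t\geq 1$. Plugging into the subordination formula and applying Minkowski's inequality (legitimate since $e^{-th}(x,x)\geq 0$), I obtain
\begin{equation*}
\bigl\|h^{\beta-1}(x,x)\bigr\|_{L^p_x}\leq C\int_0^1 t^{-\beta-1/2-1/(ps)}\,dt + C\int_1^\infty t^{-\beta} e^{-ct}\,dt,
\end{equation*}
where the tail is always finite and the small-$t$ part converges precisely when $\beta+1/2+1/(ps)<1$, equivalently $p>2/(s(1-2\beta))$ (and necessarily $\beta<1/2$). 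The main technical point is the trace estimate: Golden--Thompson combined with the trapping growth extracts the $t^{-1/s}$ factor that encodes the semiclassical volume of the classically-allowed region at energy $\sim 1/t$; the remaining steps are routine interpolation and integration.
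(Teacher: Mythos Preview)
Your proof is correct and complete, but it follows a genuinely different route from the paper's argument. The paper proceeds by duality: it tests $h^{\beta-1}(x,x)$ against $\chi^2\in L^q$ (with $1/p+1/q=1$), rewrites the resulting pairing as the Hilbert--Schmidt norm $\|h^{(\beta-1)/2}\chi\|_{\Sf^2}^2$, and then factors $h^{(\beta-1)/2}\chi = h^{\alpha+(\beta-1)/2}\cdot\bigl(h^{-\alpha}(1-\partial_x^2)^\alpha\bigr)\cdot\bigl((1-\partial_x^2)^{-\alpha}\chi\bigr)$. The three factors are controlled respectively by a Lieb--Thirring-type trace bound, operator monotonicity of $x\mapsto x^{2\alpha}$, and the Kato--Seiler--Simon inequality, and the pieces are glued by H\"older in Schatten classes. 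Your argument replaces all of this by the heat-kernel subordination $h^{\beta-1}=\Gamma(1-\beta)^{-1}\int_0^\infty t^{-\beta}e^{-th}\,dt$, the Feynman--Kac pointwise domination $e^{-th}(x,x)\leq(4\pi t)^{-1/2}$, and the Golden--Thompson trace bound, followed by elementary $L^\infty$--$L^1$ interpolation. Your approach is arguably more elementary (no Schatten machinery, no operator monotonicity) and makes the semiclassical origin of the exponent $1/(ps)$ transparent; it even yields the borderline $p=1$ when $\beta<\tfrac12-\tfrac1s$, which the paper's duality argument cannot reach since it needs $q<\infty$. The paper's Schatten-ideal decomposition, on the other hand, is a flexible template that recurs in the many-body derivation literature and adapts readily to more general covariances.
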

\begin{proof}
	It suffices to prove that for any multiplication operator $\chi\geq 0$ satisfying $\chi^2 \in L^q(\R)$ with $\frac{1}{p}+\frac{1}{q}=1$, we have
	\begin{equation}\label{eq:tsoin}
	\int_{\R} \chi^2(x) h^{\beta-1}(x,x) dx = {\Tr}[\chi h^{\beta-1} \chi]=\|h^{(\beta-1)/2} \chi\|^2_{\Sc^2} \leq C \|\chi^2\|_{L^q(\R)},
	\end{equation}
	where for any $1 \leq p \leq \infty$,
	\[
	\Sc^p:= \left\{A : \|A\|_{\Sc^p}:=\left({\Tr}[(A^*A)^{p/2}]\right)^{1/p}<\infty \right\}
	\]
	is the $p$-th Schatten class of operators on a Hilbert space~\cite{Schatten-60,Simon-79} ($p=1,2,\infty$ correspond respectively to trace-class, Hilbert-Schmidt, and compact operators).  
	
	For $0<\alpha <\frac{1-\beta}{2}$, we write
	\[
	h^{(\beta-1)/2} \chi = h^{\alpha+(\beta-1)/2} \left(h^{-\alpha}(1-\partial^2_x)^\alpha\right) \left((1-\partial^2_x)^{-\alpha} \chi\right).
	\]
	We have
	\[
	h^{\alpha+(\beta-1)/2} \in \Sc^{2p} \Longleftrightarrow {\Tr} \left[h^{-2p\left(\frac{1-\beta}{2}-\alpha\right)}\right] <\infty,
	\]
	which holds provided that (see Lemma \ref{lem-trac-h-p})
	\[
	2p\left(\frac{1-\beta}{2} -\alpha\right) >\frac{1}{2}+\frac{1}{s}.
	\]
	Since $h\geq \frac{1}{2}(-\partial^2_x+\lambda_1)$ with $\lambda_1>0$ the lowest eigenvalue of $h$, we infer that $h\geq C(1-\partial^2_x)$ for some $C>0$. Since $\alpha < \frac{1}{2}$ The operator monotonicity of $x\mapsto x^{2\alpha}$ (see \cite[Theorem 2.6]{Carlen-10}) implies
	\[
	h^{2\alpha} \geq C^{2\alpha} (1-\partial^2_x)^{2\alpha}
	\]
	or 
	\[
	h^{-\alpha}(1-\partial^2_x)^{2\alpha} h^{-\alpha} \leq \frac{1}{C^{2\alpha}}
	\]
	hence $h^{-\alpha}(1-\partial^2_x)^\alpha$ is a bounded operator for $0<\alpha<(1-\beta)/2$.
	
	By the Kato--Seiler--Simon inequality (see e.g., \cite[Theorem 4.1]{Simon-79}) for $1\leq r <\infty$,
	\[ 
	\|f (-\im \nabla) g(x) \|_{\Sc^{r}} \leq \|f\|_{L^{r}} \|g\|_{L^{r}},
	\]
	we have
	\[
	\|(1-\partial^2_x)^{-\alpha} \chi\|_{\Sc^{2q}} \leq \left(\int_{\R}  \frac{d\xi}{(1+|\xi|^2)^{2\alpha q}}\right)^{1/2q} \|\chi\|_{L^{2q}(\R)} \leq C\|\chi\|_{L^{2q}(\R)}
	\]
	provided that $4\alpha q>1$. Here we need $q<\infty$ hence $p>1$.
	
	Combining these estimates, the H\"older inequality in Schatten spaces (see \cite[Theorem 2.8]{Simon-79}) yields
	\begin{align}\nonumber
	\|h^{(\beta-1)/2} \chi\|^2_{\Sc^2} &\leq \|h^{\alpha+(\beta-1)/2}\|_{\Sc^{2p}}^2 \|h^{-\alpha}(1-\partial^2_x)^\alpha\|^2_{\Sc^\infty} \|(1-\partial^2_x)^{-\alpha} \chi\|^2_{\Sc^{2q}} \\
	&\leq C\|\chi\|^2_{L^{2q}(\R)} = C\|\chi^2\|_{L^q(\R)}\label{eq:pouet}.
	\end{align}
	This estimate holds true if the following conditions are fulfilled:
	\begin{align} \label{eq:cond-alph}
	2p\left(\frac{1-\beta}{2}-\alpha\right) >\frac{1}{2}+\frac{1}{s}, \quad 4\alpha q>1.
	\end{align}
	For $0\leq \beta<1/2$ and $\max\left\{1,\frac{2}{s(1-2\beta)}\right\}<p\leq \infty$, we pick $0<\alpha<\frac{1-\beta}{2}$ such that
	\[
	\frac{1}{4q}<\alpha<\frac{1-\beta}{2}-\frac{1}{2p}\left(\frac{1}{2}+\frac{1}{s}\right),
	\]
	we see that~\eqref{eq:cond-alph} is satisfied and the result follows by inserting~\eqref{eq:pouet} into~\eqref{eq:tsoin}.
\end{proof}

Applying the above, we will deduce that $\mu_0$ is supported on Sobolev spaces $\Wc^{\beta,p}$ based on $h$ as in~\eqref{eq:Sobo-h}. In fact, we have the following Fernique-type estimate giving the decay of such norms.

\begin{lemma}[\textbf{$L^p$-Regularity on the support of the Gaussian measure}] \label{lem-expo-p}\mbox{}\\
	Let $s>1$, $V$ satisfy Assumption \ref{assu-V}, $0\leq \beta<\frac{1}{2}$, and $p>\max\left\{2,\frac{4}{s(1-2\beta)}\right\}$ be an even integer. Then there exist $C,c>0$ such that 
	\begin{align} \label{eq:expo-p}
		\int e^{c\|u\|^2_{\Wc^{\beta,p}}} d\mu_0(u) \leq C.
	\end{align}
	In particular, for $\theta<\frac{1}{2}-\frac{1}{s}$ and all $\lambda>0$,
	\begin{align} \label{eq:boun-beta-p}
		\mu_0\left( u \in \Hc^\theta : \|u\|_{\Wc^{\beta, p}}>\lambda\right) \leq C e^{-c\lambda^2}.
	\end{align}
\end{lemma}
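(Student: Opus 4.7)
The strategy mirrors the proof of Lemma \ref{lem-H-theta}, replacing the $L^2$-norm in $x$ by an $L^p$-norm. The bound \eqref{eq:boun-beta-p} will again follow from \eqref{eq:expo-p} by Chebyshev's inequality, so the task is to estimate moments $\int \|u\|^{2k}_{\Wc^{\beta,p}} d\mu_0(u)$ and sum the Taylor series of the exponential.

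Writing $p = 2\ell$ with $\ell\in \N$ and using that $\|u\|_{\Wc^{\beta,p}}^{p} = \int_{\R} |h^{\beta/2}u(x)|^{p}dx$, the first step is to compute moments whose exponent is a multiple of $p$. For every $m\in \N$, expanding the $m$-th power of this integral yields
\begin{equation*}
\int \|u\|_{\Wc^{\beta,p}}^{mp} d\mu_0(u) = \int_{\R^m}\left(\int \prod_{i=1}^{m} |h^{\beta/2}u(x_i)|^{2\ell}\, d\mu_0(u)\right) dx_1 \cdots dx_m.
\end{equation*}
Each factor $|h^{\beta/2}u(x_i)|^{2\ell}$ is a product of $\ell$ copies of $h^{\beta/2}u(x_i)$ and $\ell$ copies of its conjugate, so by the mollification argument of Lemma \ref{lem-H-theta} together with \eqref{eq:mu-0-k-A} applied to $A = h^{\beta/2}$ at order $k = m\ell$, the inner expectation is bounded by
\begin{equation*}
(m\ell)!\, \prod_{i=1}^{m} (h^{\beta-1})(x_i,x_i)^{\ell}.
\end{equation*}
Integrating over $x_1,\ldots, x_m$ and invoking Lemma \ref{lem-Lp} (valid precisely because $\ell = p/2 > \max\{1,\tfrac{2}{s(1-2\beta)}\}$) gives
\begin{equation*}
\int \|u\|_{\Wc^{\beta,p}}^{mp}\, d\mu_0(u) \leq (m\ell)!\, K^{m}, \qquad K := \int_{\R} h^{\beta-1}(x,x)^{\ell}\, dx <\infty.
\end{equation*}

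For a general even integer $N = 2k$, choose $m = \lceil N/p\rceil$ so that $mp\geq N$. Since $\mu_0$ is a probability measure, H\"older's inequality gives
\begin{equation*}
\int \|u\|_{\Wc^{\beta,p}}^{2k}\, d\mu_0 \leq \left(\int \|u\|_{\Wc^{\beta,p}}^{mp}\, d\mu_0\right)^{2k/(mp)} \leq \bigl[(m\ell)!\bigr]^{k/(m\ell)}K^{2k/p}.
\end{equation*}
Using Stirling in the form $[(m\ell)!]^{1/(m\ell)} \leq C m\ell$ and $m\ell \leq k + \ell$, we get $\int \|u\|_{\Wc^{\beta,p}}^{2k} d\mu_0 \leq C^{k}(k+\ell)^{k}K^{2k/p}$. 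Then
\begin{equation*}
\int e^{c\|u\|^2_{\Wc^{\beta,p}}}\, d\mu_0(u) = \sum_{k\geq 0}\frac{c^{k}}{k!}\int \|u\|^{2k}_{\Wc^{\beta,p}}\, d\mu_0 \leq \sum_{k\geq 0}\frac{(cCK^{2/p})^{k}(k+\ell)^{k}}{k!},
\end{equation*}
and a final Stirling estimate $(k+\ell)^{k}/k!\leq e^{\ell}\,e^{k}/\sqrt{2\pi k}$ shows that the series converges for $c$ small enough, establishing \eqref{eq:expo-p}. The Chebyshev bound $\mu_0(\|u\|_{\Wc^{\beta,p}}>\lambda)\leq e^{-c\lambda^2}\int e^{c\|u\|^2_{\Wc^{\beta,p}}}d\mu_0$ then yields \eqref{eq:boun-beta-p}.

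The main technical point is the moment identity: one must keep careful track of how the pointwise factors $|h^{\beta/2}u(x_i)|^{2\ell}$ correspond, under the density matrix bound \eqref{eq:mu-0-k-A}, to placing $\ell$ mollified Dirac masses at each $x_i$ on each side of $(h^{\beta-1})^{\otimes m\ell}$. Once this is set up, the tensor-product structure of the bound immediately produces the factor $\prod_i h^{\beta-1}(x_i,x_i)^{\ell}$, and Lemma \ref{lem-Lp} plays exactly the role previously played by the trace-class property of $h^{-(1-\theta)}$.
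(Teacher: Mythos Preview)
Your proposal is correct and follows essentially the same approach as the paper: expand the exponential, bound the $p$-th moment powers via the density-matrix inequality \eqref{eq:mu-0-k-A} and Lemma~\ref{lem-Lp}, then interpolate to general $k$. The one technical difference is in the interpolation step for $2k$ not a multiple of $p$: the paper uses a two-sided H\"older between the moments of order $pm$ and $p(m+1)$ and then proves the combinatorial inequality $\bigl((\tfrac{pm}{2})!\bigr)^{\delta}\bigl((\tfrac{p(m+1)}{2})!\bigr)^{1-\delta}\le (\tfrac{p}{2})!\,k!$, yielding the clean bound $\int\|u\|^{2k}_{\Wc^{\beta,p}}\,d\mu_0\le (\tfrac{p}{2})!\,k!\,B_{\beta,p}^k$ and hence convergence as soon as $cB_{\beta,p}<1$; you instead bound only by the moment of order $p\lceil 2k/p\rceil$ and finish with Stirling estimates. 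Your route avoids the combinatorial claim at the cost of a slightly less explicit admissible range for $c$, but both arguments are equivalent in strength.
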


\begin{proof}
	It suffices to prove \eqref{eq:expo-p}. To this end, we write
	\[
	\int e^{c\|u\|^2_{\Wc^{\beta,p}}} d\mu_0(u) = \sum_{k\geq 0} \frac{c^k}{k!} \int \|u\|^{2k}_{\Wc^{\beta,p}} d\mu_0(u)
	\]
	and estimate the summands.
	
	 For $2k=pm$ with $m\geq 0$ an integer, we have
	\begin{align*}
	\int \|u\|^{2k}_{\Wc^{\beta,p}} d\mu_0(u) &= \int \underbrace{\|u\|^p_{\Wc^{\beta,p}} \cdots \|u\|^p_{\Wc^{\beta,p}}}_{m \text{ times}} d\mu_0(u) \\
	&=\int_{\R} \cdots \int_{\R} \Big(\int |h^{\beta/2} u(x_1)|^p \cdots |h^{\beta/2}u(x_m)|^p d\mu_0(u)\Big) dx_1 \cdots dx_m.
	\end{align*}
	Using \eqref{eq:mu-0-k-A}, we see that\footnote{Strictly speaking the Dirac delta functions must be regularity as in the proof of Lemma~\ref{lem-H-theta}.}
	\begin{align*}
	&\int |h^{\beta/2} u(x_1)|^p \cdots |h^{\beta/2}u(x_m)|^p d\mu_{0}(u) \\
	&= {\Tr} \Big[(\delta_{x_1})^{\otimes n} \otimes \cdots \otimes (\delta_{x_m})^{\otimes n} \int |(h^{\beta/2}u)^{\otimes (mn)} \rangle \langle (h^{\beta/2} u)^{\otimes (mn)}| d\mu_{0}(u) (\delta_{x_1})^{\otimes n} \otimes \cdots \otimes (\delta_{x_m})^{\otimes n}\Big]  \\
	&\leq (mn)!{\Tr} \Big[(\delta_{x_1})^{\otimes n} \otimes \cdots \otimes (\delta_{x_m})^{\otimes n} (h^{\beta-1})^{\otimes (mn)} (\delta_{x_1})^{\otimes n} \otimes \cdots \otimes (\delta_{x_m})^{\otimes n}\Big]  \\
	&=(mn)! {\Tr} \Big[(\delta_{x_1})^{\otimes n} \otimes \cdots \otimes (\delta_{x_m})^{\otimes n} (h^{\beta-1}\otimes \cdots \otimes h^{\beta-1})^{\otimes n} (\delta_{x_1})^{\otimes n} \otimes \cdots \otimes (\delta_{x_m})^{\otimes n}\Big]  \\
	&=(mn)!{\Tr} \Big[\Big((\delta_{x_1}\otimes \cdots \otimes \delta_{x_m}) (h^{\beta-1}\otimes \cdots \otimes h^{\beta-1}) (\delta_{x_1} \otimes \cdots \otimes \delta_{x_m})\Big)^{\otimes n}\Big] \\
	&\leq (mn)!\Big( {\Tr} \Big[(\delta_{x_1}\otimes \cdots \otimes \delta_{x_m}) (h^{\beta-1}\otimes \cdots \otimes h^{\beta-1}) (\delta_{x_1} \otimes \cdots \otimes \delta_{x_m})\Big]\Big)^n \\
	&\leq (mn)! \left( h^{\beta-1}(x_1,x_1)\cdots h^{\beta-1}(x_m,x_m)\right)^n,
	\end{align*}
	where $p=2n$. Thus we get
	\[
	\int \|u\|^{2k}_{\Wc^{\beta,p}} d\mu_{0}(u)\leq k! B^k_{\beta,p},
	\]
	where
	\begin{align} \label{eq:B-beta-p}
	B_{\beta,p}:=\left(\int_{\R} \left(h^{\beta-1}(x,x)\right)^{p/2} dx\right)^{2/p}.
	\end{align}
	Note that $B_{\beta,p}$ is finite thanks to Lemma \ref{lem-Lp} and the assumptions on $p$ and $\beta$.
	
	For $pm<2k<p(m+1)$ with $m\geq 0$ an integer, we use H\"older's inequality to get
	\begin{align*}
	\int \|u\|^{2k}_{\Wc^{\beta, p}} d\mu_0(u) & \int \left(\|u\|^{pm}_{\Wc^{\beta,p}}\right)^\delta \left(\|u\|^{p(m+1)}_{\Wc^{\beta,p}}\right)^{1-\delta} d\mu_0(u) \\
	&\leq \left( \int \|u\|^{pm}_{\Wc^{\beta,p}} d\mu_0(u)\right)^\delta \left( \int \|u\|^{p(m+1)}_{\Wc^{\beta,p}} d\mu_0(u)\right)^{1-\delta} \\
	&\leq \left( \left(\frac{pm}{2}\right)! B_{\beta,p}^{\frac{pm}{2}} \right)^\delta \left( \left(\frac{p(m+1)}{2}\right)! B_{\beta,p}^{\frac{p(m+1)}{2}}\right)^{1-\delta} \\
	&= \left( \left(\frac{pm}{2}\right)! \right)^\delta \left( \left(\frac{p(m+1)}{2}\right)! \right)^{1-\delta} B_{\beta,p}^k,
	\end{align*}
	where $\delta \in (0,1)$ satisfying $2k=pm\delta + p(m+1)(1-\delta)$ or $\delta = m+1-\frac{2k}{p}$. We claim that
	\begin{align} \label{eq:claim-p}
	\left( \left(\frac{pm}{2}\right)! \right)^\delta \left( \left(\frac{p(m+1)}{2}\right)! \right)^{1-\delta} \leq \left(\frac{p}{2}\right)! k!
	\end{align}
	Assuming this claim for the moment, we get
	\[
	\int \|u\|^{2k}_{\Wc^{\beta,p}} d\mu_0(u) \leq \left(\frac{p}{2}\right)! k! B_{\beta,p}^k
	\]
	hence
	\[
	\int e^{c\|u\|^2_{\Wc^{\beta,p}}} d\mu_0(u) \leq \left(\frac{p}{2}\right)! \sum_{k\geq 0} (cB_{\beta,p})^k \leq C
	\]
	provided that $c>0$ is chosen such that $cB_{\beta,p}<1$. This proves \eqref{eq:expo-p}. 
	
	It remains to prove the claim \eqref{eq:claim-p}. We write $2k=pm+2l$ with $1\leq l\leq n-1$. In particular, we have $\delta = 1-\frac{l}{n}$ and $1-\delta =\frac{l}{n}$. We also have
	\begin{align*}
	\left( \left(\frac{pm}{2}\right)! \right)^\delta \left( \left(\frac{p(m+1)}{2}\right)! \right)^{1-\delta} &= \left((k-l)!\right)^\delta \left((k-l+n)!\right)^{1-\delta} \\
	&= \left(k! \frac{1}{k\cdots (k-l+1)}\right)^{1-\frac{l}{n}} \left(k! (k+1)\cdots (k-l+n)\right)^{\frac{l}{n}} \\
	&=k! \left( \frac{(k+1)^l \cdots (k-l+n)^l}{k^{n-l} \cdots (k-l+1)^{n-l}}\right)^{\frac{1}{n}} \\
	&=k! \left(\underbrace{\frac{(k+1)\cdots (k-l+n)}{k^{n-l}} \cdots \frac{(k+1)\cdots (k-l+n)}{(k-l+1)^{n-l}}}_{l \text{ times}} \right)^{\frac{1}{n}}.
	\end{align*}
	We observe that each factor inside the bracket is of the form
	\[
	\frac{(a+j)\cdots(a+j+n-l-1)}{a^{n-l}} 
	\]
	with $j=1,\cdots, l$ and $a\geq 1$. We can bound this term as
	\[
	\left(1+\frac{j}{a}\right)\cdots \left(1+\frac{j+n-l-1}{a}\right) \leq (1+j) \cdots (j+n-l) \leq n!
	\]
	for all $j=1,\cdots, l$. As a result, we obtain
	\[
	\left( \left(\frac{pm}{2}\right)! \right)^\delta \left( \left(\frac{p(m+1)}{2}\right)! \right)^{1-\delta} \leq k! (n!)^{\frac{l}{n}} \leq n!  k!
	\]
	This proves~\eqref{eq:claim-p}.
\end{proof}

Given the above estimates, the construction of the defocusing Gibbs measure is straightforward. 
	
	\begin{proof}[Proof of~Proposition~\ref{pro:gc}, Item (1)]
	It suffices to prove that 
	\[
	Z =\int e^{-\frac{1}{2}\|u\|^4_{L^4}} d\mu_0(u) \in (0,\infty).
	\] 
	Since $\mu_0$ is a probability measure, we obviously have $Z \leq 1$. As in the proof of Lemma \ref{lem-expo-p}, we have for any $s>1$,
	\[
	\int \|u\|^4_{L^4} d\mu_0(u) \leq 2! B_{0,4}^2<\infty,
	\]
	where $B_{0,4}$ is as in \eqref{eq:B-beta-p}. By Jensen's inequality
	\[
	\int e^{-\frac{1}{2} \|u\|^4_{L^4}} d\mu_0(u) \geq \exp \left(-\frac{1}{2} \mathlarger{\int}\|u\|^4_{L^4} d\mu_0(u) \right),
	\]
	we infer that $Z>0$.
 	\end{proof}

\subsection{Focusing measure}

We next tackle the more subtle definition of the focusing Gibbs measure. For $u \in \Hc^\theta$ and $\Lambda \geq \lambda_1$, we denote
\begin{align} \label{eq:P-Lamb}
P_{\leq \Lambda} u = \sum_{\lambda_j \leq \Lambda} \alpha_j u_j, \quad P_{>\Lambda} u =\sum_{\lambda_j>\Lambda} \alpha_j u_j
\end{align}
the projections on the low and high frequencies respectively.
	
	\begin{lemma}[\textbf{Decay estimates for the $L^4$ norm}] \label{lem-L4-Lambda}\mbox{}\\
		Let $s>1$, $V$ satisfy Assumption \ref{assu-V}, and $0\leq \rho <\frac{s-1}{2s}$. Then there exist $C,c>0$ such that for $\theta<\frac{1}{2}-\frac{1}{s}$ and all $\Lambda, R>0$,
		\begin{align} \label{eq:boun-L4-Lamb}
		\mu_0\left(u \in \Hc^\theta : \|P_{>\Lambda}u\|_{L^4}>R\right) \leq C e^{-c\Lambda^\rho R^2}.
		\end{align}
	\end{lemma}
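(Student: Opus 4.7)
The plan is to prove a Fernique-type estimate of the form
\[
\int e^{c\Lambda^\rho \|P_{>\Lambda}u\|_{L^4}^2}\,d\mu_0(u) \leq C,
\]
for all $0\leq \rho <\frac{s-1}{2s}$, and then deduce the tail bound \eqref{eq:boun-L4-Lamb} from Chebyshev's inequality. This mirrors the derivation of \eqref{eq:boun-beta-p} from \eqref{eq:expo-p} in Lemma~\ref{lem-expo-p}, but with the extra spectral weight $\Lambda^\rho$ coming from the projection $P_{>\Lambda}$.

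For the first step, I would expand
\[
\int e^{c\Lambda^\rho \|P_{>\Lambda}u\|_{L^4}^2}\,d\mu_0(u) = \sum_{k\geq 0} \frac{(c\Lambda^\rho)^k}{k!} \int \|P_{>\Lambda}u\|_{L^4}^{2k}\,d\mu_0(u)
\]
and focus on the moments with $2k = 4m$ (the case $pm < 2k < p(m+1)$ with $p=4$ is handled by the same Hölder interpolation as in Lemma~\ref{lem-expo-p}). For such $k$,
\[
\int \|P_{>\Lambda}u\|_{L^4}^{4m}\,d\mu_0(u) = \int\!\!\cdots\!\!\int \Bigl(\int |P_{>\Lambda}u(x_1)|^4\cdots |P_{>\Lambda}u(x_m)|^4\,d\mu_0(u)\Bigr)\,dx_1\cdots dx_m,
\]
and applying Lemma~\ref{lem-mu-0} with $A=P_{>\Lambda}$ (after suitable mollification of the Dirac masses) yields, as in the proof of Lemma~\ref{lem-expo-p},
\[
\int |P_{>\Lambda}u(x_1)|^4\cdots |P_{>\Lambda}u(x_m)|^4\,d\mu_0(u) \leq (2m)!\,\prod_{j=1}^m \bigl(P_{>\Lambda}h^{-1}P_{>\Lambda}\bigr)(x_j,x_j)^2.
\]

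The key new input is the pointwise bound on the diagonal: since $h\geq \Lambda$ on the range of $P_{>\Lambda}$, we have the operator inequality $P_{>\Lambda}h^{-1}P_{>\Lambda} \leq \Lambda^{-\rho} P_{>\Lambda} h^{\rho-1} P_{>\Lambda} \leq \Lambda^{-\rho} h^{\rho-1}$, and in particular
\[
\bigl(P_{>\Lambda}h^{-1}P_{>\Lambda}\bigr)(x,x) = \sum_{\lambda_j>\Lambda}\lambda_j^{-1}|u_j(x)|^2 \leq \Lambda^{-\rho}\,h^{\rho-1}(x,x).
\]
Combining the above gives
\[
\int \|P_{>\Lambda}u\|_{L^4}^{4m}\,d\mu_0(u) \leq (2m)!\,\Lambda^{-2m\rho}\,\tilde{B}^m, \qquad \tilde{B}:= \int_\R h^{\rho-1}(x,x)^2\,dx,
\]
so that $\int \|P_{>\Lambda}u\|_{L^4}^{2k}\,d\mu_0(u)\leq k!\,\Lambda^{-k\rho}\,B^k$ with $B=\tilde B^{1/2}$.

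The crucial finiteness of $\tilde B$ is exactly Lemma~\ref{lem-Lp} with $\beta=\rho$ and $p=2$, whose hypothesis $\frac{2}{s(1-2\beta)}<2$ reads $\rho<\frac{1}{2}-\frac{1}{2s}=\frac{s-1}{2s}$; this is the only place the restriction on $\rho$ enters, and it is the main (minor) obstacle, since everything else is essentially a weighted repetition of the already established arguments. Summing the series,
\[
\int e^{c\Lambda^\rho \|P_{>\Lambda}u\|_{L^4}^2}\,d\mu_0(u) \leq \sum_{k\geq 0}(cB)^k \leq C
\]
whenever $cB<1$, and Chebyshev's inequality $\mu_0(\|P_{>\Lambda}u\|_{L^4}>R)\leq e^{-c\Lambda^\rho R^2}\int e^{c\Lambda^\rho \|P_{>\Lambda}u\|_{L^4}^2}\,d\mu_0(u)$ yields \eqref{eq:boun-L4-Lamb}.
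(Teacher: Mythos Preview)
Your proof is correct and follows essentially the same route as the paper: both expand the exponential moment, bound the $L^4$ moments via the density-matrix inequality \eqref{eq:mu-0-k-A} as in Lemma~\ref{lem-expo-p}, use the pointwise estimate $(P_{>\Lambda}h^{-1}P_{>\Lambda})(x,x)\leq \Lambda^{-\rho}h^{\rho-1}(x,x)$, and invoke Lemma~\ref{lem-Lp} with $\beta=\rho$, $p=2$ to identify the constraint $\rho<\frac{s-1}{2s}$. The only cosmetic difference is that the paper carries a free parameter $t$ in the Chebyshev step and sets $t=\nu\Lambda^\rho$ at the end, whereas you fix $c\Lambda^\rho$ from the start.
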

	
	\begin{proof}
		Let $t>0$ be a positive constant to be chosen later. We estimate
		\begin{align*}
		\mu_0\left(\|P_{> \Lambda}u\|_{L^4}>R\right) &\leq e^{-tR^2} \int e^{t\|P_{>\Lambda}u\|_{L^4}^2} d\mu_0(u) \\
		&=e^{-tR^2} \sum_{k\geq 0} \frac{t^k}{k!} \int \|P_{>\Lambda}u\|^{2k}_{L^4} d\mu_0(u).
		\end{align*}
		By the same argument as in the proof of Lemma \ref{lem-expo-p}, we have
		\[
		\int \|P_{>\Lambda} u\|^{2k}_{L^4} d\mu_0(u) \leq 2! k! B_{\Lambda,0,4}^k, \quad \forall k\geq 0,
		\]
		where
		\begin{align}\label{eq:B-Lamb}
		B_{\Lambda,0,4}:= \left( \int_{\R} \left( (P_{>\Lambda} h)^{-1}(x,x)\right)^2 dx\right)^{1/2}
		\end{align}
		with 
		\[
		(P_{>\Lambda} h)^{-1}(x,x) = \sum_{\lambda_j>\Lambda} \lambda_j^{-1} |u_j(x)|^2.
		\]
		It follows that
		\[
		\int e^{t\|P_{>\Lambda} u\|^2_{L^4}} d\mu_0(u) \leq 2! \sum_{k\geq 0} (tB_{\Lambda,0,4})^k.
		\]
		For $0\leq \rho <\frac{s-1}{2s}$, we have
		\begin{align*}
		(P_{>\Lambda} h)^{-1}(x,x) &=\sum_{\lambda_j >\Lambda} \lambda_j^{-1} |u_j(x)|^2  \\
		&\leq \Lambda^{-\rho} \sum_{\lambda_j >\Lambda} \lambda_j^{\rho-1} |u_j(x)|^2 \\
		&\leq \Lambda^{-\rho} h^{\rho-1}(x,x).
		\end{align*}
		Thanks to Lemma \ref{lem-Lp}, we see that $x\mapsto h^{\rho-1}(x,x) \in L^2(\R)$ for all $0\leq \rho <\frac{s-1}{2s}$, hence $B_{\Lambda,0,4} \leq C \Lambda^{-\rho}$ for some constant $C>0$. In particular, we have
		\[
		\mu_0\left(\|P_{>\Lambda} u\|_{L^4} >R\right) \leq e^{-tR^2} 2! \sum_{k\geq 0} ( Ct \Lambda^{-\rho})^k.
		\]
		Taking $t=\nu \Lambda^\rho$ with $\nu>0$ sufficiently small so that $Ct\Lambda^{-\rho} = C\nu<1$, we obtain \eqref{eq:boun-L4-Lamb}.
	\end{proof}

	When $1<s\leq 2$, the Gaussian measure $\mu_0$ lives over negative Sobolev spaces $\Hc^\theta$ with $\theta<\frac{1}{2}-\frac{1}{s}$, hence the mass is infinite $\mu_0$-almost surely. In this situation, we consider the renormalized mass as follows.
	
	\begin{lemma}[\textbf{Renormalized mass}]\label{lem-reno-mass}\mbox{}\\
		Let $1<s\leq 2$ and $V$ satisfy Assumption \ref{assu-V}. For every $\Lambda \geq \lambda_1$, we define the truncated renormalized mass
		\[
		\Mcal_{\leq \Lambda}(u) := \|P_{\leq \Lambda} u\|^2_{L^2}- \int \|P_{\leq \Lambda} u\|^2_{L^2} d\mu_0(u).
		\]
		Then $\{\Mcal_{\leq \Lambda}\}_{\Lambda \geq \lambda_1}$ converges strongly to a limit in $L^2(d\mu_0)$, i.e., 
		\[
		\Mcal(u) :=\lim_{\Lambda\rightarrow \infty} \Mcal_{\leq \Lambda}(u) \text{ in } L^2(d\mu_0).
		\]
		In particular, we have
		\[
		\int (\Mcal(u))^2 d\mu_0(u) = {\Tr}[h^{-2}]<\infty.
		\]
	\end{lemma}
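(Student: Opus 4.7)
The plan is to reduce the problem to a concrete independent-sum computation using the spectral decomposition~\eqref{eq:spectral}. Expanding $u = \sum_j \alpha_j u_j$ with $\alpha_j$ as in~\eqref{eq:alpha-j}, Parseval's identity gives
\[
\|P_{\leq \Lambda} u\|_{L^2}^2 = \sum_{\lambda_j \leq \Lambda} |\alpha_j|^2,
\]
so according to~\eqref{eq:mu-0-Lamb} (the only non-trivial content below comes from this cylindrical structure together with Lemma~\ref{lem-mu-0}), the quantities $\{|\alpha_j|^2\}_{j\geq 1}$ under $\mu_0$ are independent, with $|\alpha_j|^2$ having exponential distribution of parameter $\lambda_j$. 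Hence
\[
\int |\alpha_j|^2\, d\mu_0(u) = \lambda_j^{-1}, \qquad \int \bigl(|\alpha_j|^2 - \lambda_j^{-1}\bigr)^2\, d\mu_0(u) = \lambda_j^{-2},
\]
and the centered variables $Y_j(u) := |\alpha_j|^2 - \lambda_j^{-1}$ are pairwise orthogonal in $L^2(d\mu_0)$.

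Next I would rewrite the truncated renormalized mass as the partial sum
\[
\Mcal_{\leq \Lambda}(u) = \sum_{\lambda_j \leq \Lambda} Y_j(u),
\]
so that by orthogonality
\[
\int \bigl|\Mcal_{\leq \Lambda_2}(u) - \Mcal_{\leq \Lambda_1}(u)\bigr|^2\, d\mu_0(u) = \sum_{\Lambda_1 < \lambda_j \leq \Lambda_2} \lambda_j^{-2}
\]
for $\Lambda_1 < \Lambda_2$. The Cauchy property in $L^2(d\mu_0)$ then follows as soon as the tail of $\sum_j \lambda_j^{-2}$ vanishes, i.e. as soon as $\Tr[h^{-2}] < \infty$. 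For $s>1$ this summability is provided by Lemma~\ref{lem-trac-h-p} (applied with exponent $2$, using $2 > \frac{1}{2} + \frac{1}{s}$ for $s>1$). This defines the limit $\Mcal(u) \in L^2(d\mu_0)$.

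Finally, taking $\Lambda_1 = 0$ and $\Lambda_2 \to \infty$ in the same orthogonality identity, or equivalently passing to the limit in $\int |\Mcal_{\leq \Lambda}|^2\, d\mu_0 = \sum_{\lambda_j \leq \Lambda} \lambda_j^{-2}$, yields
\[
\int |\Mcal(u)|^2\, d\mu_0(u) = \sum_{j \geq 1} \lambda_j^{-2} = \Tr[h^{-2}],
\]
as claimed. There is no genuine obstacle here; the only point requiring care is the justification of the exact Gaussian moment formulae through the cylindrical projections $\mu_0^{\leq \Lambda}$ (to make sense of the variance computation before taking the $\Lambda \to \infty$ limit), and confirming $\Tr[h^{-2}] < \infty$ under Assumption~\ref{assu-V} for the full range $s>1$, both of which are direct.
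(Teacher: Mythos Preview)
Your proposal is correct and follows essentially the same route as the paper: both write $\Mcal_{\leq \Lambda}(u)=\sum_{\lambda_j\leq \Lambda}(|\alpha_j|^2-\lambda_j^{-1})$, use the independence/orthogonality of the centered terms under $\mu_0$ to reduce the Cauchy criterion to $\sum_{\Lambda<\lambda_j\leq \Theta}\lambda_j^{-2}\to 0$, and conclude via $\Tr[h^{-2}]<\infty$ from Lemma~\ref{lem-trac-h-p}. The only cosmetic difference is that the paper carries out the cross-term cancellation by explicit expansion, whereas you invoke orthogonality of the $Y_j$ directly.
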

	
	\begin{proof}
	This is the same argument as in~\cite[Lemma~5.2]{LewNamRou-20}. We reproduce it for the reader's convenience. For $\Lambda\geq \lambda_1$, we have
		\[
		\|P_{\leq \Lambda} u\|^2_{L^2} = \sum_{\lambda_j \leq \Lambda} |\alpha_j|^2
		\] 
		and
		\begin{align*}
		\int \|P_{\leq \Lambda} u\|^2_{L^2} d\mu_0(u) &= \sum_{\lambda_j \leq \Lambda} \int |\alpha_j|^2 d\mu_0(u) \\
		&= \sum_{\lambda_j \leq \Lambda} \left(\int_{\C} |\alpha_j|^2 \frac{\lambda_j}{\pi}e^{-\lambda_j |\alpha_j|^2} d\alpha_j \right) \left(\prod_{\lambda_k\ne \lambda_j} \int_{\C} \frac{\lambda_k}{\pi}e^{-\lambda_k |\alpha_k|^2} d\alpha_k\right) \\
		&=\sum_{\lambda_j \leq \Lambda} \frac{1}{\lambda_j}.
		\end{align*}
		Thus we get
		\begin{align} \label{eq:M-Lamb-low}
		\Mcal_{\leq \Lambda} (u) = \sum_{\lambda_j \leq \Lambda} |\alpha_j|^2-\lambda_j^{-1}.
		\end{align}
		Now for $\Theta\geq \Lambda$, we compute
		\begin{align*}
		\int |\Mcal_{\leq \Theta} (u) &- \Mcal_{\leq \Lambda}(u)|^2 d\mu_0(u) \\
		&= \int \Big(\sum_{\Lambda<\lambda_j \leq \Theta} |\alpha_j|^2-\lambda_j^{-1}\Big)^2 d\mu_0(u) \\
		&= \sum_{\Lambda<\lambda_j,\lambda_k \leq \Theta} \int \Big(|\alpha_j|^2-\lambda_j^{-1}\Big) \Big(|\alpha_k|^2-\lambda_k^{-1}\Big) d\mu_0(u) \\
		&= \sum_{\Lambda<\lambda_j,\lambda_k \leq \Theta} \int \Big(|\alpha_j|^2 |\alpha_k|^2 -\lambda_j^{-1}|\alpha_k|^2 -\lambda_k^{-1} |\alpha_j|^2 +\lambda_j^{-1}\lambda_k^{-1} \Big) d\mu_0(u) \\
		&= \sum_{\Lambda<\lambda_j,\lambda_k \leq \Theta} \int \Big(|\alpha_j|^2|\alpha_k|^2 -\lambda_j^{-1} \lambda_k^{-1} \Big)d\mu_0(u) \\
		&= \sum_{\Lambda<\lambda_j \leq \Theta} \int \Big(|\alpha_j|^4 -\lambda_j^{-2} \Big)d\mu_0(u) + \sum_{\Lambda<\lambda_j,\lambda_k \leq \Theta \atop \lambda_j \ne \lambda_k} \int \Big(|\alpha_j|^2 |\alpha_k|^2-\lambda_j^{-1} \lambda_k^{-1} \Big)d\mu_0(u) \\
		&=\sum_{\Lambda<\lambda_j \leq \Theta} \lambda_j^{-2} \rightarrow 0 \text{ as } \Lambda, \Theta \rightarrow \infty
		\end{align*}
		due to ${\Tr}[h^{-2}]=\sum_{j\geq 1} \lambda_j^{-2}<\infty$ since $2>\frac{1}{2}+\frac{1}{s}$ for all $1<s\leq 2$ (see Lemma \ref{lem-trac-h-p}). This shows that $\{\Mcal_{\leq \Lambda}(u)\}_{\Lambda\geq \lambda_1}$ is a Cauchy sequence in $L^2(d\mu_0)$. Thus there exists $\Mcal(u) \in L^2(d\mu_0)$ such that $\Mcal_{\leq \Lambda}(u) \rightarrow \Mcal(u)$ strongly in $L^2(d\mu_0)$ as $\Lambda \to \infty$. Moreover, from the above computation, we have
		\[
		\int (\Mcal(u))^2 d\mu_0(u)= \lim_{\Lambda \rightarrow \infty} \int (\Mcal_{\leq \Lambda}(u))^2 d\mu_0(u) = \lim_{\Lambda\rightarrow \infty} \sum_{\lambda_j \leq \Lambda} \lambda_j^{-2}={\Tr}[h^{-2}].
		\]
	\end{proof}
	
	We have the following observation regarding the renormalized mass.
	\begin{lemma}[\textbf{Decay estimates for the renormalized mass}] \label{lem-Lambda-sub}\mbox{}\\
		Let $1<s\leq 2$, $V$ satisfy Assumption \ref{assu-V}, and $0\leq\gamma<\frac{3s-2}{4s}$. Then there exist $C,c>0$ such that for $\theta<\frac{1}{2}-\frac{1}{s}$ and all $\Lambda, R>0$,
		\begin{align} \label{eq:boun-M-Lamb-hi}
		\mu_0(u \in \Hc^\theta : |\Mcal_{>\Lambda}(u)|>R) \leq Ce^{-c \Lambda^\gamma R},
		\end{align}
		where
		\begin{align} \label{eq:M-Lamb-hi}
		\Mcal_{>\Lambda}(u) :=  \sum_{\lambda_j>\Lambda} |\alpha_j|^2 -\lambda_j^{-1}.
		\end{align}
	\end{lemma}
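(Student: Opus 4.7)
The plan is to prove the tail bound by exponential Chebyshev, using independence of the coefficients $\alpha_j$ together with the explicit form of the moment generating function of a shifted exponential. This mirrors the strategy of Lemma~\ref{lem-L4-Lambda} above, but is slightly cleaner here because $\Mcal_{>\Lambda}$ is a centered scalar sum of independent variables rather than an $L^p$ norm.

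The first step is to compute the Laplace transform. Under $\mu_0$ the $\alpha_j$ are independent, with $\lambda_j|\alpha_j|^2$ a standard Exp$(1)$ variable, so for $|t|<\Lambda$ one has
\[
\int e^{t\Mcal_{>\Lambda}(u)}\,d\mu_0(u)
= \prod_{\lambda_j>\Lambda}\frac{e^{-t/\lambda_j}}{1-t/\lambda_j}.
\]
Using the elementary inequality $-x-\log(1-x)\leq x^2$ valid for $|x|\leq 1/2$, restricting to $|t|\leq \Lambda/2$ yields
\[
\log\int e^{t\Mcal_{>\Lambda}(u)}\,d\mu_0(u)\;\leq\; t^2\!\!\sum_{\lambda_j>\Lambda}\lambda_j^{-2}.
\]

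The second step is to control the tail sum. For $0\leq \gamma<\frac{3s-2}{4s}$ one has $2-2\gamma>\frac{1}{2}+\frac{1}{s}$, which is precisely the threshold under which $\Tr[h^{-(2-2\gamma)}]<\infty$ (Lemma~\ref{lem-trac-h-p} in the appendix). Bounding $\lambda_j^{-2}\leq \Lambda^{-2\gamma}\lambda_j^{-(2-2\gamma)}$ on the range $\lambda_j>\Lambda$ therefore gives
\[
\sum_{\lambda_j>\Lambda}\lambda_j^{-2}\;\leq\; \Lambda^{-2\gamma}\,\Tr[h^{-(2-2\gamma)}]\;\leq\; C\Lambda^{-2\gamma}.
\]

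In the final step, Markov's inequality applied with the choice $t=\pm c_0\Lambda^\gamma$ (both signs, to cover $|\Mcal_{>\Lambda}|>R$) produces
\[
\mu_0\bigl(\pm\Mcal_{>\Lambda}(u)>R\bigr)\;\leq\; \exp\bigl(-c_0\Lambda^\gamma R+Cc_0^2\bigr),
\]
and taking $c_0$ small (and absorbing the resulting constant into $C$) proves~\eqref{eq:boun-M-Lamb-hi}. The constraint $|t|\leq \Lambda/2$ needed at step one is automatic for $\Lambda$ large since $\gamma<1$; for $\Lambda$ bounded (i.e.\ $\Lambda$ near $\lambda_1$) the inequality becomes trivial after enlarging $C$. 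The only mild subtlety is the choice of the exponent in step two: it is dictated by the summability threshold, and the restriction $\gamma<\frac{3s-2}{4s}$ in the statement is exactly what this forces. No serious obstacle arises.
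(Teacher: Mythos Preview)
Your proposal is correct and follows essentially the same approach as the paper: exponential Chebyshev with the explicit Laplace transform $\prod_{\lambda_j>\Lambda}\frac{e^{-t/\lambda_j}}{1-t/\lambda_j}$, the tail-sum bound $\sum_{\lambda_j>\Lambda}\lambda_j^{-2}\leq \Lambda^{-2\gamma}\Tr[h^{-(2-2\gamma)}]$, and the choice $t\sim\Lambda^\gamma$. Your treatment of both signs at once via the single inequality $-x-\log(1-x)\leq x^2$ for $|x|\leq 1/2$ is marginally cleaner than the paper's split into two cases with separate elementary bounds on $\frac{1}{1\mp x}$, but the argument is otherwise identical; your remark about small $\Lambda$ is slightly imprecise (the inequality is not ``trivial'' there, but the constraint $|t/\lambda_j|\leq 1/2$ is nonetheless satisfied for all $\Lambda>0$ once $c_0\leq \lambda_1^{1-\gamma}/2$, since the relevant $\lambda_j$ are always $\geq\lambda_1$), though this does not affect correctness.
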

	
	\begin{proof}
		We have
		\[
		\mu_0(|\Mcal_{>\Lambda}(u)|>R) \leq \mu_0(\Mcal_{>\Lambda}(u)>R) + \mu_0(\Mcal_{>\Lambda}(u)<-R)=:(\text{I}) + (\text{II}).
		\]
		For (I), we estimate for $0<t<\frac{\Lambda}{2}$ to be chosen later,
		\begin{align*}
		\mu_0(\Mcal_{>\Lambda}(u)>R) &\leq e^{-tR} \int e^{t\Mcal_{> \Lambda}(u)} d\mu_0(u) \\
		&=e^{-tR}\int e^{t\left(\sum_{\lambda_j>\Lambda} |\alpha_j|^2-\lambda_j^{-1}\right)} d\mu_0(u) \\
		&=e^{-tR}\int \prod_{\lambda_j>\Lambda} e^{-t\lambda_j^{-1}} e^{t|\alpha_j|^2}d\mu_0(u) \\
		&=e^{-tR} \prod_{\lambda_j>\Lambda} e^{-t\lambda_j^{-1}} \left(\int_{\C} \frac{\lambda_j}{\pi} e^{-\lambda_j(1-t\lambda_j^{-1})|\alpha_j|^2}d\alpha_j\right) \left(\prod_{k\geq 1 \atop \lambda_k \ne \lambda_j} \int_{\C} \frac{\lambda_k}{\pi} e^{-\lambda_k|\alpha_k|^2}d\alpha_k\right) \\
		&=e^{-tR} \prod_{\lambda_j>\Lambda} e^{-t\lambda_j^{-1}} \frac{1}{1-t\lambda_j^{-1}}.
		\end{align*}
		Note that for $0\leq x\leq \frac{1}{2}$, we have $\frac{1}{1-x} \leq Ce^{x+x^2}$ for some constant $C>0$. Applying this to  $x=t\lambda_j^{-1} \leq t \Lambda^{-1}\leq \frac{1}{2}$, we get
		\[
		\mu_0(\Mcal_{>\Lambda}(u)>R)\leq Ce^{-tR}\prod_{\lambda_j>\Lambda} e^{t^2\lambda_j^{-2}} = Ce^{-tR} e^{t^2\sum_{\lambda_j>\Lambda} \lambda_j^{-2}}.
		\]
		For $0\leq\gamma<\frac{3s-2}{4s}$, we observe that
		\begin{align*}
		\sum_{\lambda_j>\Lambda} \lambda_j^{-2} \leq \Lambda^{-2\gamma} \sum_{\lambda_j>\Lambda} \lambda_j^{-2+2\gamma} \leq \Lambda^{-2\gamma} {\Tr}[h^{-2+2\gamma}].
		\end{align*}
		Here ${\Tr}[h^{-2+2\gamma}]<\infty$ since $2-2\gamma>\frac{1}{2}+\frac{1}{s}$ for all $1<s\leq 2$ (see Lemma \ref{lem-trac-h-p}). Thus we get
		\[
		\mu_0(\Mcal_{>\Lambda}(u)>R)\leq Ce^{-tR+t^2\Lambda^{-2\gamma}{\Tr}[h^{-2+2\gamma}]}.
		\]
		Taking $t=\nu \Lambda^\gamma$ with $\nu>0$ small, we obtain
		\[
		\mu_0(\Mcal_{>\Lambda}(u)>R)\leq Ce^{-c\Lambda^\gamma R}.
		\]
		For (II), we have for $0<t<\frac{\Lambda}{2}$, 
		\begin{align*}
		\mu_0(\Mcal_{>\Lambda}(u)<-R) \leq e^{-tR} \int e^{-t \Mcal_{>\Lambda}(u)} d\mu_0(u) = e^{-tR} \prod_{\lambda_j>\Lambda} e^{t\lambda_j^{-1}} \frac{1}{1+t\lambda_j^{-1}}.
		\end{align*}
		Note that for $0\leq x \leq \frac{1}{2}$, we have $\frac{1}{1+x} \leq C e^{-x+x^2}$ for some constant $C>0$. Estimating as in (I), we prove as well that
		\[
		\mu_0(\Mcal_{>\Lambda}(u)<-R) \leq Ce^{-c\Lambda^\gamma R}.
		\]
		Collecting both terms, we prove \eqref{eq:boun-M-Lamb-hi}.
	\end{proof}
	
	Our construction of the focusing Gibbs measure uses\footnote{This is the origin of our technical restriction $s>8/5$.} the following interpolation inequality, due to Br\'ezis and Mironescu~\cite{BreMir-19}. It generalizes well-known results~\cite{Nirenberg-59,Nirenberg-66} to fractional Sobolev spaces, which is handy in our case, for our measures can only afford at best $1/2$ derivative. 
	
	\begin{lemma}[\textbf{Fractional Gagliardo--Nirenberg inequality}]\label{lem-sobo-ineq}\mbox{}\\
		Let $1<s\leq 2$, $V$ satisfy Assumption \ref{assu-V}, $0<\beta <\frac{1}{2}$, $1\leq p \leq \infty$, and 
		$$0 < \delta=\frac{1}{2+4\beta-\frac{4}{p}} < 1. $$
		Then there exists $C>0$ such that
		\begin{align} \label{eq:sobo-ineq}
		\|u\|_{L^4} \leq C \|u\|^{1-\delta}_{L^2} \|u\|^\delta_{\Wc^{\beta,p}}.
		\end{align}
	\end{lemma}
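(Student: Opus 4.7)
The plan is to reduce this $h$-adapted fractional Gagliardo--Nirenberg inequality to the flat Euclidean version due to Brezis and Mironescu~\cite{BreMir-19}, and then use a norm comparison between the $h$-adapted Sobolev norm $\|\cdot\|_{\Wc^{\beta,p}} = \|h^{\beta/2}\cdot\|_{L^p}$ and the standard Bessel-potential norm $\|(1-\partial_x^2)^{\beta/2}\cdot\|_{L^p}$. The stated exponent $\delta = 1/(2+4\beta-4/p)$ is exactly the one dictated by the 1D scaling identity
\[
\frac{1}{4} = \frac{1-\delta}{2} + \delta\left(\frac{1}{p}-\beta\right),
\]
so invoking the fractional Gagliardo--Nirenberg inequality of \cite{BreMir-19} directly yields
\[
\|u\|_{L^4(\R)} \le C\, \|u\|_{L^2(\R)}^{1-\delta}\, \bigl\|(1-\partial_x^2)^{\beta/2} u\bigr\|_{L^p(\R)}^{\delta}
\]
under the hypotheses $0<\delta<1$, $0<\beta<\tfrac{1}{2}$ and $p$ in the corresponding admissible range (in particular $p > 4/(1+4\beta)$).

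It then remains to establish the one-sided comparison
\[
\bigl\|(1-\partial_x^2)^{\beta/2} u\bigr\|_{L^p(\R)} \le C\, \|h^{\beta/2} u\|_{L^p(\R)} = C\,\|u\|_{\Wc^{\beta,p}}.
\]
For $p=2$ this is immediate: from $V\ge 0$ and $\lambda_1>0$ one has the operator inequality $h\ge c(1-\partial_x^2)$ already noted in the proof of Lemma~\ref{lem-Lp}, and operator monotonicity of $x\mapsto x^{\beta}$ on $[0,\infty)$ (valid since $\beta < \tfrac{1}{2} \le 1$) then yields $h^{\beta} \ge c^{\beta}(1-\partial_x^2)^{\beta}$, from which the $L^2$ bound follows. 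For general $p$, factorizing $(1-\partial_x^2)^{\beta/2} u = T_\beta\,(h^{\beta/2}u)$ with the spectral multiplier $T_\beta := (1-\partial_x^2)^{\beta/2} h^{-\beta/2}$ reduces the task to the $L^p$-boundedness of $T_\beta$. For Schr\"odinger operators with smooth, polynomially growing potentials satisfying Assumption~\ref{assu-V}, this is a standard consequence of Mikhlin-type spectral multiplier theorems (equivalently, heat-kernel and resolvent estimates), and is precisely the content of the norm equivalence between $\Wc^{\beta,p}$ and the standard $W^{\beta,p}$ quoted in the Appendix.

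The main obstacle is the $L^p$ comparison step for $p\neq 2$: the $L^2$ case is a one-line functional calculus argument, whereas general $p$ genuinely requires harmonic analysis adapted to $h$ (spectral multipliers, complex interpolation on the analytic family $(1-\partial_x^2)^{z\beta/2} h^{-z\beta/2}$, or direct heat-kernel estimates). Fortunately, this technical input is entirely absorbed into the norm-equivalence lemma recalled in the Appendix, so the body of the proof of Lemma~\ref{lem-sobo-ineq} reduces to combining Brezis--Mironescu with that equivalence.
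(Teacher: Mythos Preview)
Your proposal is correct and follows essentially the same route as the paper: invoke the flat fractional Gagliardo--Nirenberg inequality of Br\'ezis--Mironescu~\cite{BreMir-19} with the exponent $\delta$ determined by the scaling identity $\tfrac{1}{4}=\tfrac{1-\delta}{2}+\tfrac{\delta}{p}-\delta\beta$, and then pass from the standard norm $\|(1-\partial_x^2)^{\beta/2}u\|_{L^p}$ to $\|u\|_{\Wc^{\beta,p}}=\|h^{\beta/2}u\|_{L^p}$ via the norm equivalence~\eqref{equi-norm} recorded in the Appendix. Your additional remarks on spectral multipliers and the $p=2$ case are accurate but not needed, since the paper simply quotes the norm equivalence as a black box.
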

	
		We have the inequality in usual Sobolev spaces $W^{\beta,p}(\R)$ (see\cite{BreMir-19})
		\[
		\|u\|_{L^4(\R)}\leq C\|u\|^{1-\delta}_{L^2(\R)} \|u\|^\delta_{W^{\beta,p}(\R)}
		\]
		provided that $1\leq p\leq \infty$ and $\delta \in (0,1)$ satisfy
		\[
		\frac{1}{4}=\frac{1-\delta}{2} + \frac{\delta}{p} -\delta \beta
		\]
		or
		\[
		\delta = \frac{1}{2+4\beta-\frac{4}{p}}.
		\]
		Hence \eqref{eq:sobo-ineq} follows from the norm equivalence \eqref{equi-norm}.
		
	We are now able to define the focusing Gibbs measure of Proposition~\ref{pro:gc}.
	
	\medskip
	
	\begin{proof}[Proof of Proposition~\ref{pro:gc}, Items (2) and (3)] ~
		
		\noindent \underline{\it Item (2).} Let $s>2$. We will prove that 
		\[
		Z=\int e^{\frac{1}{2}\|u\|^4_{L^4}} \mathds{1}_{\left\{\|u\|^2_{L^2} \leq m\right\}}d\mu_0(u)  \in (0,\infty).
		\]
		We first have
		\[
		Z \geq \int \mathds{1}_{\left\{\|u\|^2_{L^2} \leq m\right\}} d\mu_0(u) \geq \frac{1}{m} \int \|u\|^2_{L^2} d\mu_0(u) = \frac{1}{m} {\Tr}[h^{-1}]>0.
		\]
		To see that $Z <\infty$, we use the layer cake representation to write
		\[
		Z = \int_0^\infty \mu_0\left(e^{\frac{1}{2}\|u\|^4_{L^4}} >\lambda, \|u\|^2_{L^2} \leq m\right) d\lambda = \int_0^\infty \mu_0\left(\|u\|_{L^4}>(2\log \lambda)^{1/4}, \|u\|^2_{L^2}\leq m \right) d\lambda.
		\]
		Denote 
		\begin{align} \label{eq:Lamb-0}
			\Lambda_0:= \left(\log \lambda\right)^l
		\end{align}
		for some $l>0$ to be determined later. By the triangle inequality, we have
		\begin{align*}
		\mu_0\left(\|u\|_{L^4}>(2\log \lambda)^{1/4}, \|u\|^2_{L^2}\leq m\right) &\leq \mu_0\left(\|P_{\leq \Lambda_0}u\|_{L^4}>\frac{1}{2}(2\log \lambda)^{1/4}, \|u\|^2_{L^2}\leq m\right) \\
		&\quad +\mu_0\left(\|P_{>\Lambda_0}u\|_{L^4}>\frac{1}{2}(2\log \lambda)^{1/4}, \|u\|^2_{L^2}\leq m\right).
		\end{align*}
		By Lemma \ref{lem-L4-Lambda}, we have
		\begin{align*}
		\mu_0\left(\|P_{> \Lambda_0}u\|_{L^4}>\frac{1}{2}(2\log \lambda)^{1/4}, \|u\|^2_{L^2}\leq m\right) &\leq \mu_0\left(\|P_{> \Lambda_0}u\|_{L^4}>\frac{1}{2}(2\log \lambda)^{1/4}\right) \\
		&\leq Ce^{-c \Lambda_0^\rho (\log \lambda)^{1/2}} = Ce^{-c(\log \lambda)^{\rho l +\frac{1}{2}}}
		\end{align*}
		for all $0\leq \rho <\frac{s-1}{2s}$. On the other hand, by the fractional Gagliardo--Nirenberg inequality \eqref{eq:sobo-ineq}, we have
		\begin{align} \label{sobo-ineq-appl}
		\|P_{\leq \Lambda_0} u\|_{L^4} \leq C \|P_{\leq \Lambda_0} u\|_{L^2}^{1-\delta} \|P_{\leq \Lambda_0} u\|^\delta_{\mathcal{W}^{\beta,p}}
		\end{align}
		with $0<\beta <\frac{1}{2}$, $1\leq p \leq \infty$, and $\delta \in (0,1)$ satisfying
		\begin{align} \label{delta}
		\delta = \frac{1}{2-\frac{4}{p}+4\beta}.
		\end{align}
		Thus for $u$ satisfying $\|P_{\leq \Lambda_0} u\|_{L^4}>\frac{1}{2}(2\log \lambda)^{1/4}$ and $\|u\|^2_{L^2}\leq m$, we deduce from \eqref{sobo-ineq-appl} that
		\begin{align*}
			\frac{1}{2}(2\log \lambda)^{1/4} <\|P_{\leq \Lambda_0} u\|_{L^4} \leq C m^{(1-\delta)/2} \|P_{\leq \Lambda_0} u\|^\delta_{\Wc^{\beta,p}}
		\end{align*}
		or
		\[
		\|P_{\leq \Lambda_0} u\|_{\Wc^{\beta,p}} >C(m) (\log \lambda)^{\frac{1}{4\delta}}.
		\]
		Thus we get
		\begin{align*}
		\mu_0\left(\|P_{\leq \Lambda_0}u\|_{L^4}>\frac{1}{2}(2\log \lambda)^{1/4}, \|u\|^2_{L^2}\leq m\right) \leq \mu_0\left(\|P_{\leq \Lambda_0} u\|_{\Wc^{\beta,p}} >C(m) (\log \lambda)^{\frac{1}{4\delta}}\right).
		\end{align*}
		For $0<\beta<\frac{1}{2}$, we can find an even integer $p$ sufficiently large so that $p>\frac{4}{s(1-2\beta)}$. Thus, by Lemma \ref{lem-expo-p}, we have
		\[
		\mu_0\left(\|P_{\leq \Lambda_0}u\|_{L^4}>\frac{1}{2}(2\log \lambda)^{1/4}, \|u\|^2_{L^2}\leq m\right) \leq Ce^{-c(\log \lambda)^{\frac{1}{2\delta}}}.
		\]
		Using the fact that for any $L>0,\vareps >0$, there exists $C>0$ such that 
		$$e^{-c(\log \lambda)^{1+\vareps}} \leq C \lambda^{-L},$$
		the partition function $Z$ is finite if we have
		\[
		\rho l +\frac{1}{2}>1, \quad \frac{1}{2\delta}>1
		\]
		with $0\leq \rho <\frac{s-1}{2s}$ and $\delta$ as in \eqref{delta}. The above conditions are fulfilled by taking 
		$$l=\frac{s}{s-1}+\eta, \quad \rho =\frac{s-1}{2s}-\eta, \quad \beta=\frac{1}{2}-\eta, \quad p =\eta^{-1}$$
		for some suitably small number $0 < \eta \ll 1$.
		
		\medskip
		
		\noindent\underline{\it Item (3).} Let $\frac{8}{5}<s \leq 2$. We have
		\[
		Z \geq \int \mathds{1}_{\left\{|\Mcal(u)| \leq m\right\}} d\mu_0(u) \geq \frac{1}{m^2} \int (\Mcal(u))^2 d\mu_0(u) =\frac{1}{m^2} {\Tr}[h^{-2}]>0.
		\]
		It remains to show that $Z<\infty$. We have
		\begin{align*}
		Z &= \int e^{\frac{1}{2}\|u\|^4_{L^4}} \mathds{1}_{\left\{|\Mcal(u)| \leq m\right\}} d\mu_0(u) \\
		&=\int_0^\infty \mu_0 \left(e^{\frac{1}{2}\|u\|^4_{L^4}} >\lambda, |\Mcal(u)| \leq m\right) d\lambda \\
		&=\int_0^\infty \mu_0\left(\|u\|_{L^4}>\left(2 \log \lambda\right)^{1/4}, |\Mcal(u)| \leq m\right) d\lambda.
		\end{align*}
		For $\Lambda_0$ as in \eqref{eq:Lamb-0}, the triangle inequality gives
		\begin{align*}
		\mu_0\left(\|u\|_{L^4}>\left(2 \log \lambda\right)^{1/4}, |\Mcal(u)| \leq m\right) &\leq \mu_0 \left(\|P_{>\Lambda_0} u\|_{L^4} >\frac{1}{2}(2\log \lambda)^{1/4}, |\Mcal(u)| \leq m\right) \\
		&\quad + \mu_0 \left(\|P_{\leq \Lambda_0} u\|_{L^4} >\frac{1}{2}(2\log \lambda)^{1/4}, |\Mcal(u)| \leq m\right) \\
		&=: (\text{I}) + (\text{II}).
		\end{align*}
		By Lemma \ref{lem-L4-Lambda}, we have for any $0\leq \rho <\frac{s-1}{2s}$,
		\begin{align} \label{term-1}
		(\text{I}) \leq \mu_0\left(\|P_{>\Lambda_0} u\|_{L^4} >\frac{1}{2}(2\log \lambda)^{1/4}\right) \leq C e^{-c \Lambda_0^\rho (\log \lambda)^{1/2}} = Ce^{-c(\log \lambda)^{\rho l +1/2}}.
		\end{align}
		For (II), we denote
		\[
		A_{\Lambda_0, \lambda} := \left\{\|P_{\leq \Lambda_0} u\|_{L^4} >\frac{1}{2}(2\log \lambda)^{1/4}, |\Mcal(u)| \leq m \right\}.
		\]
		We estimate 
		\begin{align}
		\|P_{\leq \Lambda_0} u\|^2_{L^2} &= \sum_{\lambda_j\leq \Lambda_0} |\alpha_j|^2 \nonumber\\
		&=\sum_{\lambda_j\leq \Lambda_0} |\alpha_j|^2 -\lambda_j^{-1} + \sum_{\lambda_j\leq \Lambda_0} \lambda_j^{-1} \nonumber\\
		&=\Mcal_{\leq \Lambda_0}(u) +\sum_{\lambda_j\leq \Lambda_0} \lambda_j^{-1} \nonumber\\
		&=\Mcal(u) - \Mcal_{>\Lambda_0}(u) +\sum_{\lambda_j\leq \Lambda_0} \lambda_j^{-1}, \label{eq:low-fre-1}
		\end{align}
		where $\Mcal_{\leq \Lambda}(u)$ and $\Mcal_{>\Lambda}(u)$ are defined in \eqref{eq:M-Lamb-low} and \eqref{eq:M-Lamb-hi} respectively. Observe that 
		\begin{align} \label{eq:low-fre-2}
		\sum_{\lambda_j\leq \Lambda_0} \lambda_j^{-1} \leq \Lambda_0^\nu\sum_{\lambda_j\leq \Lambda_0} \lambda_j^{-1-\nu} = \Lambda_0^\nu {\Tr}[h^{-1-\nu}]
		\end{align}
		with ${\Tr}[h^{-1-\nu}]<\infty$ provided that $\nu>\frac{1}{s}-\frac{1}{2}$. Define the set 
		$$\Omega_{\Lambda_0,\lambda}:= \left\{ |\Mcal_{>\Lambda_0}(u)| \leq \Lambda_0^\nu\right\}.$$
		We have
		\[
		(\text{II})=\mu_0(A_{\Lambda_0,\lambda}) \leq \mu_0\left(A_{\Lambda_0,\lambda} \cap \Omega_{\Lambda_0,\lambda}^c\right) + \mu_0\left(A_{\Lambda_0,\lambda} \cap \Omega_{\Lambda_0,\lambda}\right) =:(\text{II}_1) + (\text{II}_2).
		\]
		By Lemma \ref{lem-Lambda-sub}, we have for any $0\leq \gamma <\frac{3s-2}{4s}$,
		\begin{align} \label{term-2}
		(\text{II}_1) \leq \mu_0(\Omega_{\Lambda_0,\lambda}^c) \leq C e^{-c \Lambda_0^{\gamma+\nu}} = Ce^{-c(\log \lambda)^{(\gamma+\nu)l}}.
		\end{align}
		For $u \in A_{\Lambda_0,\lambda} \cap \Omega_{\Lambda_0, \lambda}$, we deduce from \eqref{eq:low-fre-1} and \eqref{eq:low-fre-2} that
		\[
		\|P_{\leq \Lambda_0} u\|_{L^2} \leq \sqrt{m+ C\Lambda_0^\nu} \leq C \Lambda_0^{\nu/2}.
		\] 
		Moreover, using \eqref{sobo-ineq-appl}, we find that for any $u \in A_{\Lambda_0,\lambda} \cap \Omega_{\Lambda_0, \lambda}$,
		\[
		\frac{1}{2}(2\log \lambda)^{1/4} < \|P_{\leq \Lambda_0} u\|_{L^4} \leq C\|P_{\leq \Lambda_0} u\|^{1-\delta}_{L^2} \|P_{\leq \Lambda_0} u\|^{\delta}_{\Wc^{\beta,p}} \leq C \Lambda_0^{\nu(1-\delta)/2} \|P_{\leq \Lambda_0} u\|^{\delta}_{\Wc^{\beta,p}}
		\]
		with $\delta$ as in \eqref{delta}, hence
		\[
		\|P_{\leq \Lambda_0} u\|_{\Wc^{\beta,p}} > C\left(\frac{(\log \lambda)^{1/4}}{\Lambda_0^{\nu(1-\delta)/2}} \right)^{\frac{1}{\delta}} \sim (\log \lambda)^{\frac{1}{4\delta} -\frac{l\nu}{2}\left(\frac{1}{\delta}-1\right)} \sim (\log \lambda)^{\frac{1}{2}+\beta-\frac{1}{p} -\frac{l\nu}{2}\left(1+4\beta-\frac{4}{p}\right)}.
		\]
		Thus, by Lemma \ref{lem-expo-p}, we get
		\begin{align} \label{term-3}
		(\text{II}_2) &\leq \mu_0\left(\|P_{\leq \Lambda_0} u\|_{\Wc^{\beta, p}}>C(\log \lambda)^{\frac{1}{2}+\beta-\frac{1}{p} -\frac{l\nu}{2}\left(1+4\beta-\frac{4}{p}\right)}\right) \nonumber \\
		&\leq C e^{-c (\log \lambda)^{1+2\beta-\frac{2}{p} -l\nu\left(1+4\beta-\frac{4}{p}\right)}}.
		\end{align}
		Collecting \eqref{term-1}, \eqref{term-2}, and \eqref{term-3}, we obtain
		\begin{align*}
		\mu_0\left(\|u\|_{L^4}>\left(2 \log \lambda\right)^{1/4}, |\Mcal(u)| \leq m\right) &\leq Ce^{-c(\log \lambda)^{\rho l +1/2}} + Ce^{-c(\log \lambda)^{(\gamma+\nu)l}} \\
		&\quad+ C e^{-c (\log \lambda)^{1+2\beta-\frac{2}{p} -l\nu\left(1+4\beta-\frac{4}{p}\right)}}
		\end{align*}
		for any $0\leq\rho <\frac{s-1}{2s}$, $0\leq \gamma <\frac{3s-2}{4s}$, $\nu>\frac{2-s}{2s}$, $0<\beta<\frac{1}{2}$, and $p$ a large even integer satisfying $p>\frac{4}{s(1-2\beta)}$. To make $Z<\infty$, we will choose suitable values of $l, \rho, \gamma, \nu, \beta$, and $p$ so that the following conditions are satisfied:
		\[
		\rho l +1/2 >1, \quad  (\gamma+\nu)l>1, \quad 1+2\beta-\frac{2}{p} -l\nu\left(1+4\beta-\frac{4}{p}\right)>1.
		\]		
		By taking a suitably small $0< \eta \ll 1$
		$$\rho=\frac{s-1}{2s}-\eta, \quad \gamma=\frac{3s-2}{4s}-\eta, \quad \nu=\frac{2-s}{2s}+\eta,$$ 
		the first two conditions imply 
		\[
		l>\max\left\{\frac{s}{s-1}+,\frac{4s}{s+2}\right\}.
		\] 
		Taking then 
		$$\beta=\frac{1}{2}-\eta,\quad p=\eta^{-1},$$
		the last condition yields
		\[
		l<\frac{2s}{3(2-s)}.
		\]
		Since $\frac{4s}{s+2}\leq \frac{s}{s-1}<\frac{2s}{3(2-s)}$ for $\frac{8}{5}<s\leq 2$, we can choose $l$ satisfying the above conditions. This shows that for any $L>0$,
		\[
		\mu_0\left(\|u\|_{L^4}>\left(2 \log \lambda\right)^{1/4}, |\Mcal(u)| \leq m\right) \leq C_L \lambda^{-L}
		\]
		which ensures $Z<\infty$. The proof is complete.
	\end{proof}

	\begin{remark}[Higher non-linearities] \label{rem-Gibbs-general}
		The arguments presented above can be applied to construct the Gibbs measures for \eqref{eq:NLS-general}. More precisely, since the Gaussian measure $\mu_0$ is supported in $L^\kappa(\R)$ with $\max\{2,\frac{4}{s}\}<\kappa<\infty$ (see Lemma \ref{lem-expo-p} with $\beta=0$), one can easily construct the defocusing Gibbs measure 
		\[
		d\mu(u) = \frac{1}{Z} e^{-\frac{2}{\kappa}\|u\|^\kappa_{L^\kappa}} d\mu_0(u)
		\]
		for any $\max\{2,\frac{4}{s}\}<\kappa<\infty$. For the focusing Gibbs measure, when $s>2$, one can construct the measure
		\[
		d\mu(u) = \frac{1}{Z} e^{\frac{2}{\kappa}\|u\|^\kappa_{L^\kappa}} \mathds{1}_{\left\{\int_{\R}|u|^2 \leq m\right\}} d\mu_0(u)
		\]
		for any
		\begin{align}\label{foc-mea-sup}
		2<\kappa<6, \quad m>0.
		\end{align} 
		Moreover, when $s\leq 2$, one can construct the measure
		\[
		d\mu(u) = \frac{1}{Z} e^{\frac{2}{\kappa}\|u\|^\kappa_{L^\kappa}} \mathds{1}_{\left\{|\Mcal(u)| \leq m\right\}} d\mu_0(u)
		\]
		for any $\frac{14}{9}<s \leq 2$ and
		\begin{align} \label{foc-mea-sub} 
		\frac{4}{s}<\kappa<\frac{3s+2+\sqrt{9s^2+4s-28}}{2}, \quad m>0.
		\end{align}  
		To see \eqref{foc-mea-sup} and \eqref{foc-mea-sub}, we follow the same line of reasoning as in the previous construction and take into account the following estimate (which is a consequence of Lemmas \ref{lem-Lp} and \ref{lem-L4-Lambda}): 
		\begin{align} \label{est-L-kappa}
		\mu_0(u\in \Hc^\theta : \|P_{>\Lambda}u\|_{L^\kappa} >R) \leq Ce^{-c\Lambda^\rho R^2}
		\end{align}
		for $\kappa>\max\left\{2,\frac{4}{s}\right\}$ and $0\leq \rho <\frac{\kappa s-4}{2\kappa s}$. More precisely, for $s>2$, we need 
		\[
		\rho l + \frac{2}{\kappa}>1, \quad \frac{2}{\kappa\delta} >1
		\]
		with 
		\begin{align} \label{rho-delta}
		0 \leq \rho <\frac{\kappa s-4}{2\kappa s}, \quad \delta = \frac{\frac{1}{2}-\frac{1}{\kappa}}{\frac{1}{2}-\frac{1}{p}+\beta}.
		\end{align} 
		By taking $l=\frac{2s(\kappa-2)}{\kappa s-4}+$, $\rho =\frac{\kappa s-4}{2\kappa s}-$, $\beta=\frac{1}{2}-$, and $p=\infty-$, the above conditions are reduced to $\frac{4}{\kappa-2}>1$. Together with $\kappa>2$ this gives~\eqref{foc-mea-sup}. When $s\leq 2$, we need
		\[
		\rho l +\frac{2}{\kappa}>1, \quad (\gamma+\nu)l>1, \quad \frac{2}{\kappa \delta} - \nu l \left(\frac{1}{\delta}-1\right)>1
		\]
		with $\rho, \delta$ as in \eqref{rho-delta} and
		\[
		0\leq \gamma<\frac{3s-2}{4s}, \quad \nu>\frac{2-s}{2s}, \quad 0<\beta<\frac{1}{2}, \quad p>\frac{4}{s(1-2\beta)}.
		\]
		We now choose
		\[
		\rho =\frac{\kappa s-4}{2\kappa s}-, \quad \gamma=\frac{3s-2}{4s}-, \quad \nu =\frac{2-s}{2s}+, \quad \beta =\frac{1}{2}-, \quad p=\infty-
		\]
		which yields
		\[
		\max\left\{\frac{2s(\kappa -2)}{\kappa s-4}, \frac{4s}{s+2}\right\} <l <\frac{2s(6-\kappa)}{(2-s)(p+2)}.
		\]
		Such a choice is possible provided that
		\[
		\frac{2s(\kappa-2)}{\kappa s-4}<\frac{2s(6-\kappa)}{(2-s)(\kappa+2)}
		\]
		or
		\[
		s >\frac{\kappa^2-2\kappa+8}{3\kappa-2}
		\]
		which, together with $\kappa>\frac{4}{s}$ (see \eqref{est-L-kappa}) yields~\eqref{foc-mea-sub}.\hfill$\diamond$
	\end{remark}
	
\subsection{Approximating measures}
	
	We next define approximate measures for $\mu$ which are useful in proving the invariance of Gibbs measures under the flow of \eqref{eq:intro NLS}. Following \cite{BurThoTzv-10}, we introduce for $\Lambda \geq \lambda_1$,
	\begin{align} \label{eq:Q-Lamb}
	Q_\Lambda u:= \sum_{j\geq 1} \chi\left(\frac{\lambda_j}{\Lambda}\right) \alpha_j u_j = \chi(h/\Lambda) u,
	\end{align}
	where $\chi \in C^\infty_0(\R)$ satisfies $\supp(\chi)\subset [-1,1]$, $\chi \in [0,1]$, and $\chi=1$ on $[-1/2,1/2]$. It is known that 
	\begin{align} \label{eq:Q-P-Lamb}
	\quad Q_\Lambda P_{\leq \Lambda} = P_{\leq \Lambda} Q_\Lambda =Q_\Lambda
	\end{align}
	and there exists $C>0$ such that for all $\Lambda \geq \lambda_1$,
	\begin{align}\label{eq:est-Q-Lamb}
	\|Q_\Lambda\|_{L^p \to L^p} \leq C, \quad 1\leq p\leq \infty.
	\end{align}
	The latter follows from the $L^p$-boundedness of semi-classical pseudo-differential operators as we briefly recall in~Appendix~\ref{sec:app3}. 

	For the defocusing nonlinearity, we define the approximate measure as
	
	\begin{align} \label{eq:mu-Lamb}
	d\mu_\Lambda(u) := \frac{1}{Z^\Lambda} e^{-\frac{1}{2} \|Q_\Lambda u\|^4_{L^4}} d\mu_0(u) = d\mu^{\leq \Lambda}(u) \otimes d\mu_0^{>\Lambda}(u),
	\end{align}
	where
	\begin{align} \label{eq:mu-0-Lamb-hi}
	d\mu_0^{>\Lambda}(u) = \prod_{\lambda_j>\Lambda} \frac{\lambda_j}{\pi} e^{-\lambda_j|\alpha_j|^2} d\alpha_j
	\end{align}
	and
	\[
	d\mu^{\leq \Lambda}(u) = \frac{1}{Z^{\Lambda}} e^{-\frac{1}{2} \|Q_\Lambda u\|^4_{L^4}} d\mu_0^{\leq \Lambda}(u)
	\]
	with $\mu_0^{\leq \Lambda}$ as in \eqref{eq:mu-0-Lamb} and
	\[
	Z^\Lambda =\int e^{-\frac{1}{2} \|Q_\Lambda u\|^4_{L^4}} d\mu_0(u)=\int e^{-\frac{1}{2} \|Q_\Lambda u\|^4_{L^4}} d\mu_0^{\leq \Lambda}(u).
	\]
	For the focusing nonlinearity, the definitions include the natural cut-offs:
	\begin{align} \label{eq:mu-Lamb-focu-supe}
	d\mu_\Lambda(u):=d\mu^{\leq \Lambda}(u) \otimes d\mu_0^{>\Lambda}(u),
	\end{align}
	where 
	\[
	d\mu^{\leq \Lambda}(u) = \frac{1}{Z^{\Lambda}} e^{\frac{1}{2} \|Q_\Lambda u\|^4_{L^4}} \mathds{1}_{\left\{\|P_{\leq \Lambda}u\|^2_{L^2}<m\right\}} d\mu_0^{\leq \Lambda}(u)
	\]
	if $s>2$ and
	\[
	d\mu^{\leq \Lambda}(u) = \frac{1}{Z^{\Lambda}} e^{\frac{1}{2} \|Q_\Lambda u\|^4_{L^4}} \mathds{1}_{\left\{|\Mcal_{\leq \Lambda}(u)|<m\right\}} d\mu_0^{\leq \Lambda}(u)
	\]
	if $\frac{8}{5} < s \leq 2$. The partition functions $Z^\Lambda$ turn these into probability measures, as usual.
	
	\begin{lemma}[\textbf{Approximating measures}]\label{lem-mu-Lambda}\mbox{}\\
		Let $s>1$, $V$ satisfy Assumption \ref{assu-V}, $\theta <\frac{1}{2}-\frac{1}{s}$, and $\Lambda \geq \lambda_1$. Assume in addition that $s>\frac{8}{5}$ for the focusing nonlinearity. Then the measures $\mu_\Lambda$ are well-defined and absolutely continuous with respect to the Gaussian measure $\mu_0$. Moreover, $\mu_\Lambda$ converge to $\mu$ in the sense that for any measurable set $A\subset \Hc^\theta$,
		\begin{align} \label{eq:limi-mu-Lamb}
		\lim_{\Lambda \to \infty} \mu_\Lambda(A) = \mu(A).
		\end{align}
	\end{lemma}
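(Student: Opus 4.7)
My plan is to decompose the argument into three steps: establishing that each $Z^\Lambda \in (0, \infty)$, showing $\mu_0$-almost sure convergence of the unnormalized density $f_\Lambda := Z^\Lambda\, d\mu_\Lambda/d\mu_0$ toward $f := Z\, d\mu/d\mu_0$, and upgrading to $L^1(d\mu_0)$ convergence. Absolute continuity with respect to $\mu_0$ is built into the definition. In the defocusing case, finiteness of $Z^\Lambda$ is trivial since the density lies in $(0, 1]$, and positivity follows from Jensen's inequality exactly as in Proposition~\ref{pro:gc}(1). In the focusing cases I would redo the proofs of Proposition~\ref{pro:gc}(2)--(3) with $u$ replaced by $Q_\Lambda u$ throughout; this substitution works because $Q_\Lambda = Q_\Lambda P_{\leq \Lambda}$ (so $\|Q_\Lambda u\|_{L^2} \leq \|P_{\leq \Lambda}u\|_{L^2} < \sqrt m$ on the support of the cut-off), $Q_\Lambda$ commutes with every spectral projection of $h$, and the $L^p$-bound~\eqref{eq:est-Q-Lamb} is uniform in $\Lambda$.

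For the $\mu_0$-a.s.\ convergence, the cornerstone is $\|Q_\Lambda u - u\|_{L^4} \to 0$ for $\mu_0$-almost every $u$. Since $1 - Q_\Lambda = (1-\chi)(h/\Lambda)$ vanishes on modes $\lambda_j \leq \Lambda/2$, elementary spectral calculus yields $\|(1-Q_\Lambda)u\|_{\Wc^{0,p}} \leq C\Lambda^{-\beta/2}\|u\|_{\Wc^{\beta,p}}$. Picking $\beta \in (0, 1/2)$ and an even integer $p > \max\{2, 4/(s(1-2\beta))\}$ large enough that $\Wc^{\beta, p} \hookrightarrow L^4$ (via Lemma~\ref{lem-sobo-ineq} combined with the norm equivalence used in its proof), and invoking Lemma~\ref{lem-expo-p} for the a.s.\ finiteness of $\|u\|_{\Wc^{\beta,p}}$, gives the claim. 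In parallel, $\|P_{\leq \Lambda}u\|^2_{L^2} \to \|u\|_{L^2}^2$ $\mu_0$-a.s.\ for $s>2$ (a sum of nonnegative independent exponentials with finite expected sum $\Tr[h^{-1}]$), and $\Mcal_{\leq \Lambda}(u) \to \Mcal(u)$ $\mu_0$-a.s.\ for $s \leq 2$ (the partial sums form an $L^2$-bounded martingale, a.s.\ convergent by Kolmogorov's theorem). The indicators of the cut-off sets converge $\mu_0$-a.s.\ because the boundaries $\{\|u\|_{L^2}^2 = m\}$ and $\{|\Mcal(u)| = m\}$ are $\mu_0$-null, the underlying laws being absolutely continuous.

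Once $f_\Lambda \to f$ $\mu_0$-a.s., convergence in $L^1(d\mu_0)$ would immediately yield $Z^\Lambda \to Z$ and $\mu_\Lambda(A) \to \mu(A)$ for every measurable $A \subset \Hc^\theta$. In the defocusing case, $f_\Lambda \leq 1$ and bounded convergence closes the argument. In the focusing cases, I would instead invoke Vitali's theorem, which reduces matters to uniform integrability, in turn implied by a uniform bound $\sup_\Lambda \int f_\Lambda^q\, d\mu_0 < \infty$ for some $q > 1$. Establishing this is where I expect the main work to lie. My approach would be to rerun the normalization arguments of Proposition~\ref{pro:gc}(2)--(3) with $\tfrac12 \|u\|_{L^4}^4$ replaced by $\tfrac{q}{2}\|Q_\Lambda u\|_{L^4}^4$, and verify that every estimate goes through with constants independent of $\Lambda$. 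The frequency split at $\Lambda_0 = \Lambda_0(\lambda)$ from~\eqref{eq:Lamb-0} and~\eqref{eq:Lamb-0-sub} is preserved by the spectral calculus, so the low-frequency contribution $P_{\leq \Lambda_0} Q_\Lambda u$ can still be controlled by $\|P_{\leq \Lambda}u\|_{L^2} < \sqrt m$ on the cut-off set, while the high-frequency tail is handled via the commutation $P_{>\Lambda_0} Q_\Lambda = Q_\Lambda P_{>\Lambda_0}$, the uniform bound~\eqref{eq:est-Q-Lamb}, and Lemmas~\ref{lem-L4-Lambda}--\ref{lem-Lambda-sub} applied directly to $u$, producing constants independent of $\Lambda$.
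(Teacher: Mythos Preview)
Your overall architecture is sound and parallels the paper's: both establish that the unnormalized densities $G_\Lambda$ converge to $G$ in a suitable sense and then pass to the limit using a uniform-in-$\Lambda$ integrability bound (you phrase it as Vitali with an $L^q$ moment; the paper uses the equivalent device of uniform $L^2(d\mu_0)$ control plus a manual splitting on the set $\{|G_\Lambda - G| \leq \vareps\}$). Your treatment of the indicator cut-offs (via a.s.\ convergence of $\|P_{\leq \Lambda}u\|_{L^2}^2$ or $\Mcal_{\leq\Lambda}(u)$ and nullity of the boundaries) is fine and slightly more detailed than the paper's.

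The one step where your route diverges and runs into trouble is the convergence $\|Q_\Lambda u - u\|_{L^4} \to 0$. Your claimed inequality $\|(1-Q_\Lambda)u\|_{L^p} \leq C\Lambda^{-\beta/2}\|u\|_{\Wc^{\beta,p}}$ is equivalent to $\|(1-\chi(h/\Lambda))h^{-\beta/2}\|_{L^p\to L^p} \leq C\Lambda^{-\beta/2}$, i.e.\ to a uniform $L^p$ bound on $\psi(h/\Lambda)$ with $\psi(t) = (1-\chi(t))t^{-\beta/2}$. This is \emph{not} ``elementary spectral calculus'' for $p\neq 2$: $\psi$ is not compactly supported, so the pseudodifferential argument behind~\eqref{eq:est-Q-Lamb} (Appendix~\ref{sec:app3}) does not apply directly; you would need a dyadic decomposition and summation. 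Moreover, the stated embedding $\Wc^{\beta,p}\hookrightarrow L^4$ is irrelevant to the target estimate (you end up with $(1-Q_\Lambda)u$ in $L^p$, not in $\Wc^{\beta,p}$), so the logical chain is unclear as written; simply taking $p=4$ would remove that step. The paper bypasses all of this: it computes directly, via the density-matrix bound~\eqref{eq:mu-0-k-A}, that $\int\|(Q_\Lambda-1)u\|_{L^4}^4\,d\mu_0(u) \leq 32\int_{\R}\big(\sum_{\lambda_j>\Lambda/2}\lambda_j^{-1}|u_j(x)|^2\big)^2dx \to 0$, which gives convergence in measure with no $L^p$ functional calculus at all. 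Since convergence in measure (rather than a.s.) already suffices for the dominated/Vitali step, this is both simpler and enough.
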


\begin{proof}
	Thanks to \eqref{eq:Q-P-Lamb} and \eqref{eq:est-Q-Lamb}, the well-definedness of approximating measures follows exactly as the case $\Lambda=\infty$ of the full measure considered above. In addition, the normalization constants $Z^\Lambda$ are finite uniformly in $\Lambda$.
	
	We first prove~\eqref{eq:limi-mu-Lamb} in the defocusing case. Denote
	\begin{align} \label{G-Lambda-defo}
	G_\Lambda(u) := e^{-\frac{1}{2}\|Q_\Lambda u\|^4_{L^4}}, \quad G(u) := e^{-\frac{1}{2} \|u\|^4_{L^4}}
	\end{align}
	We first claim that $G_\Lambda(u) \to G(u)$ in measure with respect to $\mu_0$, i.e.,
	\begin{align} \label{limi-G-Lambda}
	\forall \vareps>0, \quad \lim_{\Lambda \to \infty} \mu_0 \left( u \in \Hc^\theta : |G_\Lambda(u)-G(u)| >\vareps \right) =0.
	\end{align}
	Since $\mu_0(\Hc^\theta)=1$, the convergence in measure is preserved under composition and multiplication by continuous functions. It suffices to show that $\|Q_\Lambda u\|_{L^4} \to \|u\|_{L^4}$ in measure with respect to $\mu_0$. By Chebyshev's inequality, namely
	\[
	\mu_0\left(|\|Q_\Lambda u\|_{L^4} - \|u\|_{L^4} |>\vareps\right) \leq \frac{1}{\vareps^4} \int |\|Q_\Lambda u\|_{L^4}-\|u\|_{L^4}|^4 d\mu_0(u),
	\]
	it suffices to show that
	\[
	\int \|Q_\Lambda u- u\|^4_{L^4} d\mu_0(u)  \to 0 \text{ as } \Lambda \to \infty.
	\]
	To see this, we denote $R_\Lambda := Q_\Lambda - \id$, hence
	\[
	R_\Lambda u = \sum_{j\geq 1} R(\lambda_j) \alpha_j u_j, \quad R(\lambda_j):= \chi\left(\frac{\lambda_j}{\Lambda}\right) -1.
	\] 
	Using \eqref{eq:mu-0-k-A}, we have\footnote{Again, regularizing the delta functions as in the proof of Lemma~\ref{lem-H-theta}.}
	\begin{align*}
	\int |R_\Lambda u(x)|^4 d\mu_0(u) &= {\Tr}\left[(\delta_x)^{\otimes 2} \int |(R_\Lambda u)^{\otimes 2}\rangle \langle (R_\Lambda u)^{\otimes 2}| d\mu_0(u) (\delta_x)^{\otimes 2}\right] \\
	&\leq 2! {\Tr}\left[(\delta_x)^{\otimes 2} (R_\Lambda h^{-1} R_\Lambda)^{\otimes 2} (\delta_x)^{\otimes 2}\right] \\
	&\leq 2! \left({\Tr}[\delta_x R_\Lambda h^{-1}R_\Lambda \delta_x]\right)^2\\
	&= 2! \left(\sum_{j\geq 1} |R(\lambda_j)|^2 \lambda_j^{-1}|u_j(x)|^2\right)^2
	\end{align*}
	By the choice of $\chi$, we have $R(\lambda_j) =0$ for $\lambda_j \leq \Lambda/2$ and $|R(\lambda_j)| \leq 2$ for all $j$, hence
	\[
	\int \|R_\Lambda u\|^4_{L^4} d\mu_0(u) \leq 32 \int \Big(\sum_{\lambda_j>\Lambda/2} \lambda_j^{-1} |u_j(x)|^2 \Big)^2 dx \to 0 \text{ as } \Lambda \to \infty.
	\]
	Thus the claim follows.
	
	Now let $A$ be a measurable set in $\Hc^\theta$. We will show that 
	\begin{align} \label{limi-Q-Lambda}
	\lim_{\Lambda \to \infty} \int \mathds{1}_A G_\Lambda(u) d\mu_0(u) = \int \mathds{1}_A G(u) d\mu_0(u)
	\end{align}
	which is \eqref{eq:limi-mu-Lamb}. Let $\vareps>0$. We introduce the set
	\[
	B_{\Lambda, \vareps}:= \left\{ u\in \Hc^\theta : |G_\Lambda(u) - G(u)| \leq \vareps\right\}.
	\]
	We have
	\[
	\Big|\int_{B_{\Lambda, \vareps}} \mathds{1}_A (G_\Lambda(u) -G(u)) d\mu_0(u) \Big| \leq \vareps.
	\]
	On the other hand, since $G_\Lambda(u), G(u) \in L^2(d\mu_0)$ uniformly in $\Lambda$, we deduce from \eqref{limi-G-Lambda} that
	\[
	\Big|\int_{B^c_{\Lambda, \vareps}} \mathds{1}_A (G_\Lambda(u)-G(u)) d\mu_0(u)\Big| \leq \|G_N(u)-G(u)\|_{L^2(d\mu_0)} \sqrt{\mu_0(B^c_{\Lambda, \vareps})} \to 0 \text{ as } \Lambda \to \infty.
	\]
	Combining the above estimates, we prove \eqref{limi-Q-Lambda}. 
	
	For the focusing case, we denote
	\begin{align} \label{G-Lambda-focu-supe}
	G_\Lambda(u) := e^{\frac{1}{2} \|Q_\Lambda u\|^4_{L^4}} \mathds{1}_{\left\{\|P_{\leq \Lambda} u\|^2_{L^2}<m\right\}}, \quad G(u):= e^{\frac{1}{2} \|u\|^4_{L^4}} \mathds{1}_{\left\{\|u\|^2_{L^2}<m\right\}}
	\end{align}
	for $s>2$ and
	\begin{align} \label{G-Lambda-focu-sub}
	G_\Lambda(u) := e^{\frac{1}{2} \|Q_\Lambda u\|^4_{L^4}} \mathds{1}_{\left\{|\Mcal_{\leq \Lambda}(u)|<m\right\}}, \quad G(u):= e^{\frac{1}{2} \|u\|^4_{L^4}} \mathds{1}_{\left\{|\Mcal(u)|<m\right\}}
	\end{align}
	for $\frac{8}{5}<s\leq 2$. Since $G_\Lambda(u), G(u) \in L^2(d\mu_0)$ uniformly in $\Lambda$, the same argument as in the defocusing case shows \eqref{eq:limi-mu-Lamb}.
\end{proof}
	
An immediate interest of the above measures is that they are easily shown to be invariant under the flow of the approximation NLS equation
\begin{align} \label{NLS-appro}
\left\{
\begin{array}{rcl}
\im  \partial_t u_\Lambda - h u_\Lambda &=& \pm Q_\Lambda(|Q_\Lambda u_\Lambda|^2 Q_\Lambda u_\Lambda), \quad (t,x) \in \R \times \R, \\
\left. u_\Lambda\right|_{t=0} &=& f.
\end{array}
\right.
\end{align}
As in e.g.,~\cite{Bourgain-96,Bourgain-97,BurThoTzv-10}, we shall use this fact extensively.

\begin{lemma}[\textbf{Approximate NLS dynamics}]\label{lem:app dyn}\mbox{}\\ 
	Let $s>1$, $V$ satisfy Assumption \ref{assu-V}, $\theta<\frac{1}{2}-\frac{1}{s}$, and $f\in \Hc^\theta$. Then for each $\Lambda \geq \lambda_1$, the solution to \eqref{NLS-appro} exists globally in time. Moreover, the measures $\mu_\Lambda$ defined in ~\eqref{eq:mu-Lamb} or~\eqref{eq:mu-Lamb-focu-supe} are invariant under the flow of~\eqref{NLS-appro}.
\end{lemma}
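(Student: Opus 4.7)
The plan is to use the spectral-localization property \eqref{eq:Q-P-Lamb} to decouple \eqref{NLS-appro} into a finite-dimensional Hamiltonian ODE on $E_{\leq \Lambda}$ plus a free linear evolution on the orthogonal complement, and to treat each piece by classical arguments.

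Writing $u_\Lambda = v + w$ with $v = P_{\leq \Lambda} u_\Lambda$ and $w = P_{>\Lambda} u_\Lambda$, and noting that $Q_\Lambda u_\Lambda = Q_\Lambda v$ because $Q_\Lambda = Q_\Lambda P_{\leq \Lambda}$ by \eqref{eq:Q-P-Lamb}, and that $\mathrm{Range}(Q_\Lambda) \subset E_{\leq \Lambda}$, the equation \eqref{NLS-appro} decouples into
\[
\im \partial_t v = hv \pm Q_\Lambda\bigl(|Q_\Lambda v|^2 Q_\Lambda v\bigr), \qquad \im \partial_t w = hw.
\]
The $w$-equation is linear with the unitary solution $w(t) = e^{-\im t h}w(0)$, globally well-posed on $P_{>\Lambda}\Hc^\theta$. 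The $v$-equation is a finite-dimensional ODE on $E_{\leq \Lambda}$ (which is isomorphic to $\C^{N_\Lambda}$ via the coordinates $(\alpha_j)_{\lambda_j\leq\Lambda}$) with a smooth (polynomial) nonlinearity. A direct pairing with $v$, using self-adjointness of $h$ and the identity $\langle v, Q_\Lambda(|Q_\Lambda v|^2 Q_\Lambda v)\rangle = \|Q_\Lambda v\|_{L^4}^4 \in \R$, yields conservation of $\|v\|_{L^2}^2$. Since all norms are equivalent on $E_{\leq \Lambda}$, this rules out finite-time blow-up and gives global existence.

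For invariance, I would observe that the $v$-equation is the Hamiltonian flow associated to
\[
H_\Lambda(v) := \langle v, hv\rangle_{L^2} \pm \tfrac{1}{2}\int_\R |Q_\Lambda v|^4,
\]
with respect to the standard symplectic structure on $E_{\leq \Lambda} \cong \C^{N_\Lambda}$. The measures defined by \eqref{eq:mu-Lamb} and \eqref{eq:mu-Lamb-focu-supe} factorize as
\[
d\mu_\Lambda(u) = d\mu^{\leq \Lambda}(v) \otimes d\mu_0^{>\Lambda}(w),
\]
and the low-frequency factor is proportional to $e^{-H_\Lambda(v)}\,dv$ (times an indicator in the focusing case), where $dv = \prod_{\lambda_j\leq\Lambda} d\alpha_j$ is Lebesgue measure in the Gaussian coordinates. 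Liouville's theorem gives invariance of $dv$ under the $v$-flow, and conservation of $H_\Lambda$ propagates this to invariance of $e^{-H_\Lambda(v)}\,dv$. In the focusing cases the mass conservation proved above, together with the fact that $\Mcal_{\leq \Lambda}(u)$ differs from $\|P_{\leq \Lambda}u\|_{L^2}^2$ only by a constant (\textit{cf.}~\eqref{eq:M-Lamb-low}), ensures the indicators $\one_{\{\|P_{\leq\Lambda}u\|_{L^2}^2 < m\}}$ and $\one_{\{|\Mcal_{\leq \Lambda}(u)|<m\}}$ are preserved by the $v$-flow, so $\mu^{\leq\Lambda}$ is invariant.

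The high-frequency factor $\mu_0^{>\Lambda}$ is a product of radial Gaussians in the coordinates $(\alpha_j)_{\lambda_j>\Lambda}$; since $e^{-\im t h}$ acts on these coordinates by multiplication by the unit modulus phases $e^{-\im t\lambda_j}$, each individual Gaussian factor $\frac{\lambda_j}{\pi}e^{-\lambda_j|\alpha_j|^2}\,d\alpha_j$ is preserved, hence so is $\mu_0^{>\Lambda}$. Combining the two invariances gives invariance of $\mu_\Lambda$ under the coupled flow. The only mildly delicate point is the clean split between a finite-dimensional Liouville computation and an infinite-dimensional linear argument; once the decoupling \eqref{eq:Q-P-Lamb} is exploited and the factorization of $\mu_\Lambda$ is set up, both steps are standard.
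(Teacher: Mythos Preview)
Your proposal is correct and follows essentially the same approach as the paper: decouple via $P_{\leq\Lambda}$ into a free linear evolution on high frequencies and a finite-dimensional Hamiltonian ODE on $E_{\leq\Lambda}$, obtain global existence from mass conservation on the finite-dimensional piece, and deduce invariance from Liouville's theorem plus conservation of $H_\Lambda$ (and mass, for the focusing cutoffs) on the low modes together with rotational invariance of the Gaussian factors on the high modes. The paper's write-up is slightly more explicit in coordinates but the argument is the same.
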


\begin{proof}
\noindent \textbf{Step 1, global existence.} We write $u_\Lambda =u^{\lo}_\Lambda + u^{\hi}_\Lambda$ with $u^{\lo}_\Lambda:= P_{\leq \Lambda} u_\Lambda$ and $u^{\hi}_\Lambda:= P_{>\Lambda} u_\Lambda$. As $P_{>\Lambda}Q_\Lambda =0$, the high frequency part satisfies
	\begin{align} \label{NLS-appro-hi}
	\left\{
	\begin{array}{rcl}
	\im  \partial_t u^{\hi}_\Lambda - h u^{\hi}_\Lambda &=& 0, \quad (t,x) \in \R \times \R, \\
	\left. u^{\hi}_\Lambda\right|_{t=0} &=& P_{>\Lambda}f.
	\end{array}
	\right.
	\end{align}
	If we write
	\[
	u^{\hi}_\Lambda(t,x) = \sum_{\lambda_j>\Lambda} \alpha^{\hi}_j(t) u_j(x), \quad P_{>\Lambda} f = \sum_{\lambda_j>\Lambda} \alpha_{j0}  u_j, \quad \alpha_{j0} = \scal{f,u_j},
	\]
	then we have
	\[
	\left\{
	\begin{array}{rcl}
	\im  \partial_t \alpha^{\hi}_j  -\lambda_j \alpha^{\hi}_j &=& 0,  \\
	\left. \alpha^{\hi}_j\right|_{t=0} &=& \alpha_{j0},
	\end{array}
	\right.
	\]
	hence $\alpha^{\hi}_j(t) = e^{-\im t\lambda_j} \alpha_{j0}$ or
	\[
	u^{\hi}_\Lambda(t,x) = \sum_{\lambda_j>\Lambda} e^{-\im t\lambda_j}\alpha_{j0} u_j(x).
	\]
	In particular, the high frequency part exists globally in time.
	
	On the other hand, by applying $P_{\leq \Lambda}$ to both sides of \eqref{NLS-appro} and using \eqref{eq:Q-P-Lamb}, we get
	\begin{align} \label{NLS-appro-lo}
	\left\{
	\begin{array}{rcl}
	\im  \partial_t u^{\lo}_\Lambda - h u^{\lo}_\Lambda &=&  \pm Q_\Lambda\left(|Q_\Lambda u^{\lo}_\Lambda|^2 Q_\Lambda u^{\lo}_\Lambda\right), \quad (t,x) \in \R \times \R, \\
	\left. u^{\lo}_\Lambda\right|_{t=0} &=& P_{\leq\Lambda}f.
	\end{array}
	\right.
	\end{align}
	If we write
	\[
	u^{\lo}_\Lambda(t,x) = \sum_{\lambda_j \leq \Lambda} \alpha^{\lo}_j(t) u_j(x), \quad \alpha^{\lo}_j = a^{\lo}_j + \im b^{\lo}_j,
	\]
	then~\eqref{NLS-appro-lo} is a Hamiltonian ODE of the form
	\[
	\partial_t a^{\lo}_j = \frac{\partial H}{\partial b_j^{\lo}}, \quad \partial_t b^{\lo}_j = - \frac{\partial H}{\partial a^{\lo}_j}, \quad \lambda_j \leq \Lambda,
	\]
	where
	\begin{align*}
	H(u^{\lo}_\Lambda)
	&=\|h^{1/2} u^{\lo}_\Lambda\|^2_{L^2} \pm \frac{1}{2} \|Q_\Lambda u^{\lo}_\Lambda\|^4_{L^4} \\
	&= \sum_{\lambda_j\leq \Lambda} \lambda_j |\alpha^{\lo}_j|^2 \pm \frac{1}{2} \Big\|Q_\Lambda \Big(\sum_{\lambda_j\leq \Lambda} \alpha^{\lo}_j u_j \Big)\Big\|^4_{L^4}=:H(a^{\lo}_j, b^{\lo}_j)
	\end{align*}
	is conserved under the flow of \eqref{NLS-appro-lo}. By the Cauchy-Lipschitz theorem, there exists a local solution to \eqref{NLS-appro-lo}. In addition, thanks to the conservation of mass
	\[
	M(u^{\lo}_\Lambda) = \|u^{\lo}_\Lambda\|^2_{L^2} = \sum_{\lambda_j \leq \Lambda} |\alpha^{\lo}_j|^2,
	\]
	local solutions can be extended globally in time. Thus $u^{\lo}_\Lambda$ exists globally in time. 
	
	\medskip 
	
	\noindent\textbf{Step 2, measure invariance.} We show that $\mu_0^{>\Lambda}$ and $\mu^{\leq \Lambda}$ are invariant under the flow of \eqref{NLS-appro-hi} and \eqref{NLS-appro-lo}. To see this, we denote by $\Phi^{\hi}_\Lambda(t)$ and $\Phi^{\lo}_\Lambda(t)$ the solution maps of \eqref{NLS-appro-hi} and \eqref{NLS-appro-lo} separately.
	
	Let $A$ be a measurable set in $\Hc^\theta$ with $\theta <\frac{1}{2}-\frac{1}{s}$. We have
	\begin{align*}
	\mu_0^{>\Lambda}(\Phi^{\hi}_\Lambda(t)(A)) &= \int_A d\mu_0^{>\Lambda}(u^{\hi}_\Lambda(t)) \\
	&= \int_A \prod_{\lambda_j>\Lambda} \frac{\lambda_j}{\pi} e^{-\lambda_j|\alpha^{\hi}_\Lambda(t)|^2} d\alpha^{\hi}_j(t) \\
	&= \int_A \prod_{\lambda_j>\Lambda} \frac{\lambda_j}{\pi} e^{-\lambda_j|e^{-i\lambda_j} \alpha_{j0}|^2} d\left(e^{-\im t\lambda_j}\alpha_{j0}\right) \\
	&=\int_A \prod_{\lambda_j>\Lambda} \frac{\lambda_j}{\pi} e^{-\lambda_j|\alpha_{j0}|^2} d\alpha_{j0} \\
	&=\int_A d\mu_0^{>\Lambda}(u_0) = \mu_0^{>\Lambda}(A),
	\end{align*}
	where the fourth line follows from the invariance of the Lebesgue measure under rotations. This shows that $\mu_0^{>\Lambda}$ is invariant under the flow of \eqref{NLS-appro-hi}.
	
	Since $\mu^{\leq \Lambda}$ is finite dimensional, the invariance of $\mu^{\leq \Lambda}$ under the flow of \eqref{NLS-appro-lo} follows directly from the conservation of the Hamiltonian $H(u^{\lo}_\Lambda)$ and the Liouville theorem for $du^{\lo}_\Lambda = \prod_{\lambda_j\leq \Lambda} da^{\lo}_j db^{\lo}_j$. We also use  conservation of mass for the focusing measures.
\end{proof}

\section{Cauchy problem and invariant measure, super-harmonic case}
\label{sec:invari super}
\setcounter{equation}{0}

We start our analysis of the Cauchy problem with the case $s>2$, where the Gibbs measure is supported on Sobolev spaces with positive indices, i.e., in $\Hc^\theta$ with $0<\theta<\frac{1}{2}-\frac{1}{s}$. Thus there is a chance to expect a deterministic local well-posedness with initial data lying in the support of the Gibbs measure. This is indeed what we provide in Section~\ref{sec:LWPsuper}. In Section~\ref{sec:GloSuper}, we globalize the flow by using the invariance of the approximate measures introduced above.

\subsection{Deterministic local well-posedness}\label{sec:LWPsuper}

When $s>2$, we may use tools from~\cite{YajZha-01,YajZha-04,ZhaYajLiu-06} to obtain the following deterministic local well-posedness result. 

\begin{proposition}[\textbf{Deterministic local well-posedness, $s>2$}]\label{prop-dete-lwp}\mbox{}\\
Let $s>2$, $V$ satisfy Assumption \ref{assu-V}, and $\theta$ satisfy 
\begin{align} \label{cond-theta}
\frac{1}{2}\left(\frac{1}{2}-\frac{1}{s}\right)<\theta<\frac{1}{2}-\frac{1}{s}.
\end{align}
Let $(p,q)$ be a Strichartz-admissible pair as in Appendix~\ref{sec:app2} and $\gamma>0$ satisfying
\begin{align} \label{pq-dete-lwp}
p>4, \quad \frac{1}{2}-\frac{2}{p}<\gamma<\theta-\frac{2}{p}\left(\frac{1}{2}-\frac{1}{s}\right).
\end{align}
Then for any $f\in \Hc^\theta$, there exist $\delta>0$ and a unique solution to \eqref{eq:intro NLS} with initial datum $\left.u\right|_{t=0}=f$ satisfying
\[
u \in C([-\delta,\delta], \Hc^\theta) \cap L^p([-\delta,\delta], \Wc^{\gamma, q}).
\]
In particular, if $\|f\|_{\Hc^\theta} \leq K$, then 
\begin{align} \label{est-solu}
\|u(t)\|_{\Hc^\theta} \leq 2CK
\end{align}
for all $|t|\leq \delta\sim K^{-\varrho}$ with $\varrho>0$ and some universal constant $C>0$. In addition, if $u_1(t)$ and $u_2(t)$ are respectively solutions to \eqref{eq:intro NLS} with initial data $\left.u_1\right|_{t=0}=f_1, \left.u_2\right|_{t=0}= f_2 \in \Hc^\theta$ that satisfy $\|f_1\|_{\Hc^\theta}, \|f_2\|_{\Hc^\theta} \leq K$, then 
\begin{align} \label{cont-prop}
\|u_1(t)-u_2(t)\|_{\Hc^\theta} \leq 2C \|f_1-f_2\|_{\Hc^\theta}
\end{align}
for all $|t|\leq \delta\sim K^{-\varrho}$. 
\end{proposition}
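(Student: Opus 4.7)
The plan is to run a contraction argument for the Duhamel map
\[
\Phi(u)(t) := e^{-\im t h} f \mp \im \int_0^t e^{-\im(t-\tau) h} \bigl(|u|^2 u\bigr)(\tau)\,d\tau
\]
on the closed ball
\[
X_\delta := \Bigl\{ u \in C([-\delta,\delta]; \Hc^\theta) \cap L^p([-\delta,\delta]; \Wc^{\gamma,q}) : \|u\|_{X_\delta} \leq 2CK \Bigr\},
\]
with norm $\|u\|_{X_\delta} := \|u\|_{L^\infty_\delta \Hc^\theta} + \|u\|_{L^p_\delta \Wc^{\gamma,q}}$ (abbreviating $L^r_\delta := L^r([-\delta,\delta])$). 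Banach's fixed point theorem, applied once $\Phi$ is shown to map $X_\delta$ into itself and contract, will produce the desired unique solution.

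The linear half of the work relies on the homogeneous and inhomogeneous Strichartz inequalities for $e^{-\im t h}$ from Appendix~\ref{sec:app2}, established by Yajima--Zhang~\cite{YajZha-01,YajZha-04,ZhaYajLiu-06}. For $s>2$ these come with a \emph{loss of derivatives} of magnitude $\frac{2}{p}\bigl(\frac{1}{2}-\frac{1}{s}\bigr)$, which is precisely what is subtracted from $\theta$ in the upper bound on $\gamma$ in \eqref{pq-dete-lwp}: that bound is exactly the condition ensuring $\|e^{-\im t h} f\|_{L^p_\delta \Wc^{\gamma,q}} \lesssim \|f\|_{\Hc^\theta}$, together with the trivial $\Hc^\theta$-isometry giving the $L^\infty_\delta \Hc^\theta$ control. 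The lower bound $\gamma > \frac{1}{2}-\frac{2}{p} = \frac{1}{q}$ (recalling the admissible-pair identity in dimension one) instead guarantees the Sobolev embedding $\Wc^{\gamma,q} \hookrightarrow L^\infty$, the main device through which the Strichartz norm will control the cubic nonlinearity pointwise in $x$.

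Combining $\Wc^{\gamma,q} \hookrightarrow L^\infty$ with a fractional Leibniz rule in $\Hc^\theta$ (valid for $0\leq \theta < 1/2$ by the norm equivalence \eqref{equi-norm} which reduces matters to a standard product rule in $H^\theta(\R)$) and H\"older in time yields
\[
\||u|^2 u\|_{L^1_\delta \Hc^\theta} \lesssim \delta^{1-\frac{2}{p}} \|u\|_{L^\infty_\delta \Hc^\theta} \|u\|_{L^p_\delta \Wc^{\gamma,q}}^2,
\]
and a dual-Strichartz analogue for the $L^p_\delta \Wc^{\gamma,q}$ piece, again carrying a factor $\delta^{1-2/p}$. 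Feeding these into the linear estimates produces $\|\Phi(u)\|_{X_\delta} \leq CK + C \delta^{1-\frac{2}{p}} \|u\|_{X_\delta}^3$ and a matching Lipschitz estimate for $\Phi(u)-\Phi(v)$; choosing $\delta \sim K^{-\varrho}$ with $\varrho$ proportional to $(1-\frac{2}{p})^{-1}$ makes $\Phi$ a strict contraction on $X_\delta$. The size bound \eqref{est-solu} is then immediate from the definition of $X_\delta$, and the stability bound \eqref{cont-prop} follows by applying the contraction estimate to the difference $u_1 - u_2$ with data $f_1 - f_2$.

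The main obstacle I expect is the bookkeeping of exponents rather than any deep new estimate. The conditions \eqref{cond-theta}--\eqref{pq-dete-lwp} are sharp: the lower bound $\theta > \frac{1}{2}\bigl(\frac{1}{2}-\frac{1}{s}\bigr)$ combined with $p>4$ is exactly what makes the interval for $\gamma$ in \eqref{pq-dete-lwp} nonempty, while simultaneously guaranteeing positivity of the time power $1-\frac{2}{p}$, applicability of the Sobolev embedding, and compatibility with the Strichartz loss. Once these four constraints are accommodated by a single choice of $(p,q,\gamma)$, the contraction itself is routine; the real technical content lies in verifying that the Yajima--Zhang framework permits such a choice for every $\theta$ in the range \eqref{cond-theta}, and that the fractional Leibniz rule propagates properly from $H^\theta(\R)$ to $\Hc^\theta$ via the norm equivalence.
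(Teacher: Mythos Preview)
Your proposal is correct and follows essentially the same route as the paper: a contraction argument on $C([-\delta,\delta];\Hc^\theta)\cap L^p([-\delta,\delta];\Wc^{\gamma,q})$ using the Yajima--Zhang Strichartz estimate with loss, the Sobolev embedding $\Wc^{\gamma,q}\hookrightarrow L^\infty$, the fractional product rule in $\Hc^\theta$, and H\"older in time to extract the factor $\delta^{1-2/p}$. One minor caveat: what you call a ``dual-Strichartz analogue'' for the $L^p_\delta\Wc^{\gamma,q}$ piece is, in the paper, implemented by Minkowski followed by the homogeneous Strichartz estimate (since for $s>2$ the inhomogeneous estimate \eqref{inho-str-est-supe} only accepts $L^1_t\Hc^\sigma$ on the right), but this leads to the same bound and the same $\varrho=\tfrac{2}{1-2/p}$.
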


	Let us comment briefly on the conditions \eqref{cond-theta} and \eqref{pq-dete-lwp}. The second inequality in \eqref{cond-theta} ensures that the support of the Gibbs measure is contained in $\Hc^\theta$. The first inequality in \eqref{cond-theta} guarantees the existence of a Strichartz-admissible pair $(p,q)$ given in \eqref{pq-dete-lwp}. The first condition in~\eqref{pq-dete-lwp} allows us to use H\"older's inequality in time. The first inequality in the second condition in \eqref{pq-dete-lwp} coupled with the admissibility of $(p,q)$ yields
	\[
	\frac{1}{q}=\frac{1}{2}-\frac{2}{p}<\gamma
	\]
	so that we have the Sobolev embedding $\Wc^{\gamma,q} \subset L^\infty(\R)$. Finally, the second inequality in the second condition in \eqref{pq-dete-lwp} implies $\theta-\gamma>\frac{2}{p}\left(\frac{1}{2}-\frac{1}{s}\right)$ which is needed to use Strichartz estimates with a loss of derivatives 
	\begin{align} \label{str-est-supe}
	\|e^{-\im th}f\|_{L^p([-\delta,\delta], \Wc^{\gamma,q})} \lesssim \|f\|_{\Hc^\theta}.
	\end{align}
	See Appendix~\ref{sec:app2} for more details.

\begin{proof}[Proof of Proposition \ref{prop-dete-lwp}]
	The proof is essentially given in \cite{YajZha-04}. For the reader's convenience, we recall some details. Let $\delta>0$ be a small parameter to be chosen later. We denote
	\[
	X^{\theta,\gamma}_\delta:= C([-\delta,\delta], \Hc^\theta) \cap L^p([-\delta,\delta], \Wc^{\gamma, q})
	\]
	with the norm
	\[
	\|u\|_{X^{\theta,\gamma}_\delta}=\|u\|_{L^\infty([-\delta,\delta],\Hc^\theta)} + \|u\|_{L^p([-\delta,\delta], \Wc^{\gamma,q})}.
	\]
	It suffices to show that the Duhamel functional
	\[
	\Phi_{f}(u(t)) = e^{-\im th} f \mp \im  \int_0^t e^{-\im (t-\tau)h} |u(\tau)|^2 u(\tau) d\tau
	\]
	is a contraction on $(B_{X^{\theta,\gamma}_\delta}(L),d)$, where
	\[
	B_{X^{\theta,\gamma}_\delta}(L):=\left\{ u \in X^{\theta,\gamma}_\delta : \|u\|_{X^{\theta,\gamma}_\delta} \leq L\right\}
	\]
	is the ball in $X^{\theta,\gamma}_\delta$ centered at zero and of radius $L$ and
	\[
	d(u_1,u_2) := \|u_1-u_2\|_{X^{\theta,\gamma}_\delta}.
	\]
	By the unitary of $e^{-\im th}$ on $\Hc^\theta$ and the fractional product rule (see Lemma~\ref{lem-prod-rule}), we have
	\begin{align*}
	\sup_{t\in [-\delta,\delta]}\|\Phi_f(u)(t)\|_{\Hc^\theta} &\leq \|f\|_{\Hc^\theta} + \int_0^\delta \||u(\tau)|^2 u(\tau)\|_{\Hc^\theta} d\tau \\
	&\leq \|f\|_{\Hc^\theta} + C\int_0^\delta \|u(\tau)\|^2_{L^\infty} \|u(\tau)\|_{\Hc^\theta} d\tau \\
	&\leq \|f\|_{\Hc^\theta} + C \|u\|^2_{L^2([-\delta,\delta], L^\infty)} \|u\|_{L^\infty([-\delta,\delta], \Hc^\theta)} \\
	&\leq \|f\|_{\Hc^\theta} + C \delta^{1-\frac{2}{p}} \|u\|^2_{L^p([-\delta,\delta], \Wc^{\gamma,q})} \|u\|_{L^\infty([-\delta,\delta], \Hc^\theta)},
	\end{align*}
	where we have used the H\"older inequality in time and the Sobolev embedding $\Wc^{\gamma,q} \subset L^\infty(\R)$ to get the last inequality. On the other hand, using \eqref{str-est-supe}, we have
	\begin{align*}
	\|\Phi_{f}(u)\|_{L^p([-\delta,\delta], \Wc^{\gamma,q})} &\leq \|e^{-\im th} f\|_{L^p([-\delta,\delta], \Wc^{\gamma,q})} + \Big\|\int_0^t e^{-\im (t-\tau)h} |u(\tau)|^2 u(\tau) d\tau \Big\|_{L^p([-\delta,\delta], \Wc^{\gamma,q})} \\
	&\leq C\|f\|_{\Hc^\theta} + \int_0^\delta \|\mathds{1}_{\{\tau<t\}} e^{-\im (t-\tau)h}|u(\tau)|^2 u(\tau)\|_{L^p([-\delta,\delta], \Wc^{\gamma,q})} d\tau \\
	&\leq C\|f\|_{\Hc^\theta} + \int_0^\delta \|e^{-\im th} e^{i\tau h}|u(\tau)|^2 u(\tau)\|_{L^p([-\delta,\delta], \Wc^{\gamma,q})} d\tau \\
	&\leq C\|f\|_{\Hc^\theta} +  C\int_0^\delta \||u(\tau)|^2 u(\tau)\|_{\Hc^\theta} d\tau \\
	&\leq C\|f\|_{\Hc^\theta} + C \delta^{1-\frac{2}{p}} \|u\|^2_{L^p([-\delta,\delta], \Wc^{\gamma,q})} \|u\|_{L^\infty([-\delta,\delta], \Hc^\theta)}.
	\end{align*}
	In particular, we have
	\[
	\|\Phi_{f}(u)\|_{X^{\theta,\gamma}_\delta} \leq C\|f\|_{\Hc^\theta} + C \delta^{1-\frac{2}{p}} \|u\|^3_{X^{\theta,\gamma}_\delta}.
	\]
	By writing
	\[
	|u_1|^2u_1-|u_2|^2u_2 = (u_1-u_2) (|u_1|^2+|u_2|^2) + (\overline{u}_1-\overline{u}_2) u_1u_2,
	\]
	the same argument gives
	\begin{align*}
	\|\Phi_{f}(u_1) - \Phi_{f}(u_2)\|_{X^{\theta,\gamma}_\delta} &\leq \Big\|\int_0^t e^{-\im (t-\tau)h} (|u_1(\tau)|^2u_1(\tau)-|u_2(\tau)|^2u_2(\tau)) d\tau \Big\|_{X^{\theta,\gamma}_\delta} \\
	&\leq C \delta^{1-\frac{2}{p}} \left(\|u_1\|^2_{X^{\theta,\gamma}_\delta} + \|u_2\|^2_{X^{\theta,\gamma}_\delta} \right) \|u_1-u_2\|_{X^{\theta,\gamma}_\delta}.
	\end{align*}
	Hence there exists $C>0$ such that for each $u_1,u_2\in B_{X^{\theta,\gamma}_\delta}(L)$,
	\begin{align*}
	\|\Phi_{f}(u)\|_{X^{\theta,\gamma}_\delta} &\leq C\|f\|_{\Hc^\theta} + C\delta^{1-\frac{2}{p}} L^3, \\
	d(\Phi_{f}(u_1), \Phi_{f}(u_2)) &\leq C \delta^{1-\frac{2}{p}} L^2 d(u_1,u_2).
	\end{align*}
	Taking $L=2C\|f\|_{\Hc^\theta}$ and choosing $\delta>0$ such that $C \delta^{1-\frac{2}{p}} L^2 \leq \frac{1}{2}$, we see that $\Phi_{f}$ is a contraction mapping on $(B_{X^{\theta,\gamma}_\delta}(L),d)$. This shows the existence of a solution satisfying \eqref{est-solu}. Moreover, if $\|f\|_{\Hc^\theta} \leq K$, then we can take $L=2CK$ and $\delta=\nu K^{-\varrho}$ with $\varrho=\frac{2}{1-\frac{2}{p}}$ and $\nu>0$ sufficiently small so that 
	\[
	C\delta^{1-\frac{2}{p}} L^2 = 4C^3 \nu^{1-\frac{2}{p}} \leq \frac{1}{2}.
	\]
	Thus the solution satisfies $\|u(t)\|_{\Hc^\theta} \leq 2C K$ for all $|t|\leq \delta \sim K^{-\varrho}$.
	
	To see \eqref{cont-prop}, we estimate as before and get
	\begin{align*}
	\|u_1-u_2\|_{X^{\theta,\gamma}_\delta} &\leq C\|f_1-f_2\|_{\Hc^\theta} + C\delta^{1-\frac{2}{p}} \left(\|u_1\|^2_{X^{\theta,\gamma}_\delta} + \|u_2\|^2_{X^{\theta,\gamma}_\delta}\right) \|u_1-u_2\|_{X^{\theta,\gamma}_\delta}.
	\end{align*}
	As $\|f_1\|_{\Hc^\theta}, \|f_2\|_{\Hc^\theta} \leq K$, we have
	\[
	\|u_1\|_{X^{\theta,\gamma}_\delta}, ~ \|u_2\|_{X^{\theta,\gamma}_\delta} \leq 2CK,
	\]
	hence
	\[
	\|u_1-u_2\|_{X^{\theta,\gamma}_\delta} \leq C\|f_1-f_2\|_{\Hc^\theta} + C\delta^{1-\frac{2}{p}} K^2 \|u_1-u_2\|_{X^{\theta,\gamma}_\delta}.
	\]
	We choose $\delta>0$ small enough to have $C\delta^{1-\frac{2}{p}} K^2 \leq \frac{1}{2}$. Then we can absorb the second term in the right hand side in the left hand side to obtain 
	\[
	\|u_1-u_2\|_{X^{\theta,\gamma}_\delta} \leq 2C\|f_1-f_2\|_{\Hc^\theta}
	\]
	which yields \eqref{cont-prop}. The proof is complete.
\end{proof}

\subsection{Measure invariance and global well-posedness}\label{sec:GloSuper}
	
We next show that the flow can be globalized for initial data in a set of full Gibbs measure.
\begin{theorem}[\textbf{Almost sure global well-posedness, $s>2$}] \label{theo:almo-gwp-supe}\mbox{}\\
	Let $s>2$, $V$ satisfy Assumption \ref{assu-V}, and $\theta$ satisfy 
	$$ \frac{1}{2}\left(\frac{1}{2}-\frac{1}{s}\right) < \theta<\frac{1}{2}-\frac{1}{s}.$$
	Then there exists a set $\Sigma \subset \Hc^\theta$ satisfying $\mu(\Sigma)=1$ such that for any $f\in \Sigma$, the corresponding solution to \eqref{eq:intro NLS} with initial datum $\left.u\right|_{t=0}=f$ exists globally in time and satisfies
	\begin{align} \label{grow-norm}
	\|u(t)\|_{\Hc^\theta} \leq C \left(\omega(f)+\log^{\frac{1}{2}}(1+|t|)\right), \quad \forall t\in \R
	\end{align}
	for some constant $\omega(f)>0$ depending on $f$ and some universal constant $C>0$. Moreover, the Gibbs measure $\mu$ is invariant under the flow of~\eqref{eq:intro NLS}.
\end{theorem}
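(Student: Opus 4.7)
The plan is Bourgain's globalization scheme, in the form used in \cite{BurThoTzv-10}: build a set of initial data on which the approximate flow $\Phi_\Lambda$ is controlled for all times, with quantitative bounds uniform in $\Lambda$, then pass to the limit $\Lambda \to \infty$ using Lemma \ref{lem-mu-Lambda}. Invariance of $\mu_\Lambda$ under $\Phi_\Lambda$ (Lemma \ref{lem:app dyn}) plays the role of a conservation law, substituting for the fact that the Hamiltonian $H$ is not defined on the support of $\mu$. The deterministic Cauchy theory of Proposition \ref{prop-dete-lwp} makes this direct for $s > 2$, without any probabilistic Strichartz input.

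First I would fix $K, T \geq 1$, set $\delta = c K^{-\varrho}$ as in Proposition \ref{prop-dete-lwp} (its proof applies verbatim to the approximate equation \eqref{NLS-appro}, since $Q_\Lambda$ is uniformly bounded on $L^p$ by \eqref{eq:est-Q-Lamb}), and define
$$ \Sigma_\Lambda^K(T) := \bigcap_{|i| \leq T/\delta} \Phi_\Lambda(-i\delta) \bigl\{ g \in \Hc^\theta : \|g\|_{\Hc^\theta} \leq K \bigr\}. $$
Iterating the local theory, $f \in \Sigma_\Lambda^K(T)$ implies $\|\Phi_\Lambda(t) f\|_{\Hc^\theta} \leq 2CK$ for all $|t| \leq T$. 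Invariance of $\mu_\Lambda$ under $\Phi_\Lambda$, the fact that $\mu_\Lambda$ has $\mu_0$-density bounded uniformly in $\Lambda$, and the Gaussian tail \eqref{eq:boun-H-theta} give, via a union bound over the roughly $TK^\varrho$ discrete times,
$$ \mu_\Lambda\bigl( \Sigma_\Lambda^K(T)^c \bigr) \leq C T K^\varrho e^{-cK^2}. $$
With $T_j = 2^j$ and $K_j = A(\log T_j)^{1/2}$ for $A$ large, the right-hand side is summable in $j$ uniformly in $\Lambda$, so Borel--Cantelli produces a set $\Sigma_\Lambda \subset \Hc^\theta$ with $\mu_\Lambda(\Sigma_\Lambda) = 1$ on which $\|\Phi_\Lambda(t) f\|_{\Hc^\theta} \lesssim \omega(f) + \log^{1/2}(1+|t|)$ for all $t$, with a bound independent of $\Lambda$.

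To transfer this to the true equation, I would compare the Duhamel formulas of $u$ and $u_\Lambda$ on intervals of length $\delta$, decomposing $|u|^2 u - Q_\Lambda(|Q_\Lambda u_\Lambda|^2 Q_\Lambda u_\Lambda)$ as a commutator term $(\id - Q_\Lambda)(\cdots)$ plus a difference term that is Lipschitz in $u - u_\Lambda$. The fractional product rule, Strichartz with loss \eqref{str-est-supe}, a bootstrap ensuring $\|u(t)\|_{\Hc^\theta} \leq 3CK$, and Gronwall yield $\|u(t) - u_\Lambda(t)\|_{\Hc^\theta} \leq C(K, T) \Lambda^{-\epsilon}$ for $|t| \leq T$, with $C(K,T)$ at most polynomial in $K$ and $T$; the gain $\Lambda^{-\epsilon}$ comes from the commutator via interpolation between $\Hc^\theta$ and $\Hc^{\theta - \epsilon}$. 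Combining this with Lemma \ref{lem-mu-Lambda}, which is strong enough to upgrade uniform-in-$\Lambda$ $\mu_\Lambda$-measure bounds to $\mu$-measure bounds on the $\Lambda$-independent sets obtained by taking suprema over $\Lambda$ of the approximate-flow norms, I obtain $\Sigma \subset \Hc^\theta$ with $\mu(\Sigma) = 1$ on which $\Phi_\Lambda(t) f \to u(t)$ in $\Hc^\theta$, uniformly on compact time intervals; this limit solves \eqref{eq:intro NLS} globally and inherits the bound \eqref{grow-norm}.

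Invariance of $\mu$ is then obtained by passing to the limit $\Lambda \to \infty$ in $\mu_\Lambda(\Phi_\Lambda(t)^{-1} A) = \mu_\Lambda(A)$, using Lemma \ref{lem-mu-Lambda} on both sides together with the uniform convergence $\Phi_\Lambda(t) \to \Phi(t)$ on $\Sigma$. The main obstacle is the approximation estimate in the previous paragraph: one must close a bootstrap over $T/\delta \sim TK^\varrho$ iterations while keeping the accumulated error much smaller than $1$, and the choice $K \sim (\log T)^{1/2}$ makes this delicate but manageable provided one tracks carefully the dependence of constants on $K$. Everything else reduces to measure-theoretic bookkeeping on top of the results of Section \ref{sec:gc measures}.
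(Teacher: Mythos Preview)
Your overall strategy matches the paper's exactly: Bourgain globalization for $\Phi_\Lambda$ via invariance of $\mu_\Lambda$, then a comparison argument to transfer bounds to $\Phi$, then a weak-limit argument for invariance of $\mu$. However, one step in your comparison argument is not set up correctly, and the paper's proof shows how to fix it.

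You run the whole argument at the single regularity level $\theta$ and claim the gain $\Lambda^{-\epsilon}$ on the commutator term $(\id - Q_\Lambda)(|Q_\Lambda u_\Lambda|^2 Q_\Lambda u_\Lambda)$ ``via interpolation between $\Hc^\theta$ and $\Hc^{\theta-\epsilon}$.'' That interpolation goes the wrong way: it would only give a small $\Hc^{\theta-\epsilon}$ norm, not a small $\Hc^\theta$ norm. To get $\|(\id - Q_\Lambda) F\|_{\Hc^\theta} \lesssim \Lambda^{-\epsilon/2} \|F\|_{\Hc^{\theta+\epsilon}}$ you need control of $F$ at a \emph{higher} regularity than $\theta$. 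The paper resolves this by introducing a second exponent $\theta_1$ with $\theta < \theta_1 < \tfrac12 - \tfrac1s$: the good set $\Sigma_{\Lambda,T,\vareps}$ is built from the ball in $\Hc^{\theta_1}$, so that the approximate solution satisfies $\|u_\Lambda(t)\|_{\Hc^{\theta_1}} \leq CK$ on $[-T,T]$, and the commutator then gains $\Lambda^{(\theta-\theta_1)/2}$ when measured in $\Hc^\theta$. This two-level trick is what makes the bootstrap close; without it your difference estimate does not produce any decay in $\Lambda$ at the target regularity.

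Two smaller points. First, $C(K,T)$ is not polynomial: iterating over $T/\delta \sim T K^\varrho$ subintervals gives a factor $e^{cTK^\varrho}$ (see the paper's Lemma~\ref{lem:appflow1}). This is harmless because one simply chooses $\Lambda$ large depending on $T,\vareps$, but your sketch suggests an optimism that is not warranted. Second, in the focusing case the $\mu_0$-density of $\mu_\Lambda$ is \emph{not} uniformly bounded (it contains $e^{+\frac12\|Q_\Lambda u\|_{L^4}^4}$); the paper instead uses that $G_\Lambda \in L^2(d\mu_0)$ uniformly and applies Cauchy--Schwarz to get $\mu_\Lambda(B_K^c) \leq C (\mu_0(B_K^c))^{1/2} \leq C e^{-cK^2}$.
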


The proof is built on two main ingredients:
\medskip

\noindent $\bullet$ The approximate NLS flow~\eqref{NLS-appro} is globalized with explicit bounds on a set of almost full measure.

\medskip
\noindent $\bullet$ The local solution of the full flow~\eqref{eq:intro NLS} is shown to stay close to the approximate solution. 

\medskip

We start with the former ingredient, where we use crucially the invariance of $\mu_\Lambda$ under the approximate flow. 

\begin{lemma}[\textbf{Uniform estimate for the approximate flow, $s>2$}]\label{lem:globflow1}\mbox{}\\
Let $s, \theta$ be as in the previous statement and take $\theta_1$ satisfying 
\[
\theta<\theta_1<\frac{1}{2}-\frac{1}{s}.
\]
Then for all $\Lambda \geq \lambda_1$ and $T,\vareps>0$. There exist $\Sigma_{\Lambda, T,\vareps} \subset \Hc^{\theta_1}$ and $C>0$ independent of $\Lambda, T,\vareps$ such that: 

\noindent (1) $\mu_\Lambda(\Sigma^c_{\Lambda, T,\vareps}) \leq C\vareps$.

\noindent (2) For $f\in \Sigma_{\Lambda, T,\vareps}$, there exists a unique solution to \eqref{NLS-appro} on $[-T,T]$ satisfying
\begin{align} \label{u-Lambda-T}
\|u_\Lambda (t)\|_{\Hc^{\theta_1}} \leq C \left(\log\frac{T}{\vareps}\right)^{1/2}, \quad \forall |t|\leq T.
\end{align}
\end{lemma}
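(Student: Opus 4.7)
The plan is to combine the deterministic local well-posedness of Proposition~\ref{prop-dete-lwp} (adapted to the approximate equation) with the invariance of $\mu_\Lambda$ under $\Phi_\Lambda(t)$ (Lemma~\ref{lem:app dyn}), via the classical Bourgain-style argument: control the $\Hc^{\theta_1}$-norm of the flow at a discrete set of times using invariance, then extend between these times deterministically.

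First, I would check that Proposition~\ref{prop-dete-lwp} applies to the approximate equation~\eqref{NLS-appro} with constants \emph{independent of $\Lambda$}. Indeed, the cutoff $Q_\Lambda=\chi(h/\Lambda)$ is uniformly bounded on $\Hc^{\theta_1}$ and on $\Wc^{\gamma,q}$ (as per~\eqref{eq:est-Q-Lamb} and the Appendix~\ref{sec:app3} arguments), so the same fixed-point contraction yields: if $\|f\|_{\Hc^{\theta_1}}\leq K$, there is a unique solution on $[-\delta,\delta]$ with $\delta\sim K^{-\varrho}$ and
\[
\sup_{|t|\leq \delta}\|u_\Lambda(t)\|_{\Hc^{\theta_1}}\leq 2CK,
\]
with $C,\varrho>0$ independent of $\Lambda$. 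Here I use $\theta_1<\frac{1}{2}-\frac{1}{s}$ so that an admissible pair and exponent $\gamma$ satisfying~\eqref{pq-dete-lwp} exist (with $\theta$ replaced by $\theta_1$).

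Next, introduce the bad set $B_K:=\{f\in\Hc^{\theta_1}:\|f\|_{\Hc^{\theta_1}}>K\}$. By the Fernique-type estimate~\eqref{eq:boun-H-theta} of Lemma~\ref{lem-H-theta} one has $\mu_0(B_K)\leq Ce^{-cK^2}$. The measures $\mu_\Lambda$ are absolutely continuous with respect to $\mu_0$ with densities uniformly bounded (in the defocusing case, by $1/Z^\Lambda$, which is bounded above uniformly by Lemma~\ref{lem-mu-Lambda}; in the focusing case, one uses H\"older together with a uniform $L^q(d\mu_0)$ bound on the density $e^{\frac12\|Q_\Lambda u\|_{L^4}^4}\mathds{1}_{\{\|P_{\leq\Lambda}u\|^2_{L^2}<m\}}$, proved exactly as in Proposition~\ref{pro:gc} with $Q_\Lambda$ in place of the identity, uniformly in $\Lambda$). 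This gives
\[
\mu_\Lambda(B_K)\leq C e^{-cK^2}
\]
with constants independent of $\Lambda$. Now discretize $[-T,T]$ as $t_i=i\delta$ for $|i|\leq N$ with $N\delta\sim T$, and set
\[
\Sigma_{\Lambda,T,\vareps}:=\bigcap_{|i|\leq N}\Phi_\Lambda(t_i)^{-1}(B_K^c).
\]
By $\mu_\Lambda$-invariance of $\Phi_\Lambda(t)$ (Lemma~\ref{lem:app dyn}) and a union bound,
\[
\mu_\Lambda(\Sigma_{\Lambda,T,\vareps}^c)\leq (2N+1)\mu_\Lambda(B_K)\leq C\,\frac{T}{\delta}\,e^{-cK^2}\leq C\,T K^{\varrho}e^{-cK^2}.
\]
For $f\in\Sigma_{\Lambda,T,\vareps}$, $\|u_\Lambda(t_i)\|_{\Hc^{\theta_1}}\leq K$ at every sampling time, so by the deterministic estimate on each sub-interval $[t_i,t_{i+1}]$, $\|u_\Lambda(t)\|_{\Hc^{\theta_1}}\leq 2CK$ for all $|t|\leq T$.

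Finally, tune $K$. Choosing $K=C_0\bigl(\log(T/\vareps)\bigr)^{1/2}$ with $C_0$ large enough gives $TK^{\varrho}e^{-cK^2}\leq \vareps$ (since the polynomial factor $K^\varrho$ is absorbed into the Gaussian factor up to a harmless constant), which simultaneously yields $\mu_\Lambda(\Sigma_{\Lambda,T,\vareps}^c)\leq C\vareps$ and the bound~\eqref{u-Lambda-T}. The main technical nuisance—and only mild obstacle—is verifying the uniform-in-$\Lambda$ boundedness of the Radon--Nikodym derivative $d\mu_\Lambda/d\mu_0$ in the focusing case; everything else is a direct application of the Bourgain globalization scheme, leveraging that our LWP and Fernique estimates are already uniform in $\Lambda$.
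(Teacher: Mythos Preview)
Your proposal is correct and follows essentially the same Bourgain globalization argument as the paper: apply the deterministic LWP (uniformly in $\Lambda$) to get a local existence time $\delta\sim K^{-\varrho}$, define $\Sigma_{\Lambda,T,\vareps}$ as the intersection of preimages of the good ball under $\Phi_\Lambda(j\delta)$, use invariance of $\mu_\Lambda$ plus a union bound, and tune $K\sim(\log(T/\vareps))^{1/2}$. The paper handles the focusing density bound exactly as you suggest, via Cauchy--Schwarz and the uniform $L^2(d\mu_0)$ bound on $G_\Lambda$ together with $Z^\Lambda\geq C>0$.
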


\begin{proof}
Let $K>0$ and denote
\begin{equation}\label{eq:BK}
B_K:= \{u\in \Hc^{\theta_1} : \|u\|_{\Hc^{\theta_1}} \leq K\}. 
\end{equation}
Thanks to \eqref{eq:est-Q-Lamb}, the deterministic local well-posedness given in Proposition~\ref{prop-dete-lwp} applies mutatis mutandis to the approximate flow and implies that for $f\in B_K$, there exists a unique solution to \eqref{NLS-appro} satisfying 
\begin{align} \label{solu-Lambda}
\|u_\Lambda(t)\|_{\Hc^{\theta_1}} \leq  2 C K, \quad \forall |t|\leq \delta
\end{align}
with 
\begin{align}\label{defi-delta}
\delta =\nu (K+1)^{-\varrho},
\end{align} 
where $\varrho >0$ is as in Proposition~\ref{prop-dete-lwp}, $\nu>0$ small, and $C>0$ is independent of $\Lambda, K$. Here we choose $\delta$ slightly smaller than in the proof of Proposition~\ref{prop-dete-lwp}, which will be convenient later.

Denote $J=\left[\frac{T}{\delta}\right]$ the integer part of $\frac{T}{\delta}$ and set 
	\begin{equation}\label{eq:def Sigma 1}
	\Sigma_{\Lambda,T,K} := \bigcap_{j=-J}^J \Phi_\Lambda\left(-j\delta\right)(B_K),
	\end{equation}
	where $\Phi_\Lambda$ is the solution map for~\eqref{NLS-appro}. Since $\mu_\Lambda$ is invariant under the flow of~\eqref{NLS-appro}, we have
\begin{align*}
	\mu_\Lambda(\Sigma_{\Lambda,T,K}^c) &= \mu_\Lambda \Big( \Big( \bigcap_{j=-J}^J \Phi_\Lambda(-j\delta)(B_K)\Big)^c \Big) \\
	&= \mu_\Lambda \Big(\bigcup_{j=-J}^J (\Phi_\Lambda(-j\delta) (B_K) )^c\Big) \\
	&= \mu_\Lambda \Big( \bigcup_{j=-J}^J \Phi_\Lambda(-j\delta)(B^c_K)\Big) \\
	&\leq \sum_{j=-J}^J \mu_\Lambda(\Phi_\Lambda(-j\delta)(B^c_K)) \\
	&\leq 2 \frac{T}{\delta} \mu_\Lambda (B^c_K).
\end{align*}	
Thanks to \eqref{eq:boun-H-theta}, we have
\begin{align*}
\mu_\Lambda(B^c_K) &=\int \mathds{1}_{B^c_K} d\mu_\Lambda(u) \\
&= \int \mathds{1}_{B^c_K} \frac{1}{Z^{\Lambda}} G_\Lambda(u) d\mu_0(u) \\
&\leq \frac{1}{Z^{\Lambda}} \|G_\Lambda(u)\|_{L^2(d\mu_0)} \left(\mu_0(B^c_K)\right)^{1/2} \\
&\leq C e^{-cK^2},
\end{align*}
where $G_\Lambda(u)$ is as in \eqref{G-Lambda-defo} for the defocusing case and in \eqref{G-Lambda-focu-supe} for the focusing one. Here we have used the fact that $G_\Lambda(u) \in L^2(d\mu_0)$ and $Z^{\Lambda} \geq C>0$ uniformly in $\Lambda$. In particular, we obtain
\[
\mu_\Lambda(\Sigma^c_{\Lambda,T,K}) \leq \frac{2C}{\nu} T (K+1)^\varrho e^{-cK^2} \leq C Te^{-cK^2}
\]
for some constants $C,c>0$. Note that the constants $C,c$ may change from line to line but are independent of $\Lambda, T, K$. By choosing 
\begin{align} \label{choi-K}
K = C \left(\log \frac{T}{\vareps}\right)^{1/2}
\end{align}
for a suitable constant $C>0$, we obtain 
\[
\mu_\Lambda(\Sigma_{\Lambda,T,K}^c) \leq C \vareps.
\]
Setting $\Sigma_{\Lambda,T,\vareps} = \Sigma_{\Lambda,T,K}$ with $K$ as in \eqref{choi-K}, we have the first item. To see the second item, we observe that for $|t|\leq T$, there exist an integer $j$ and $\delta_1 \in[-\delta, \delta]$ such that $t=j\delta + \delta_1$, hence
\[
u_\Lambda(t) = \Phi_\Lambda(\delta_1) \Phi_\Lambda(j \delta) f.
\]
Since $f \in \Phi_{\Lambda}(-j\delta)(B_K)$, we have $\Phi_\Lambda(j\delta) f \in B_K$ which, by \eqref{solu-Lambda}, yields
\begin{align} \label{solu-Lambda-T}
\|u_\Lambda(t)\|_{\Hc^{\theta_1}} \leq  2 C K, \quad \forall |t|\leq T.
\end{align}
Taking into account \eqref{choi-K}, we have the desired estimate.
\end{proof}

We turn to the second main ingredient.

\begin{lemma}[\textbf{Comparison between approximate and exact flows, $s> 2$}]\label{lem:appflow1}\mbox{}\\
Let $\Sigma_{\Lambda, T,\vareps}$ be as in the previous lemma. Then for any $f\in \Sigma_{\Lambda,T,\vareps}$, there exists a unique solution to~\eqref{eq:intro NLS} with initial data $\left.u\right|_{t=0}=f$ satisfying
	\begin{align} \label{est-diff}
	\|u(t)-u_\Lambda(t)\|_{\Hc^\theta} \leq C(T,\vareps)\Lambda^{(\theta-\theta_1)/2}, \quad \forall |t|\leq T
	\end{align}
	for all $\Lambda$ sufficiently large and some constant $C(T,\vareps)>0$ independent of $\Lambda$. In particular, there exist $\Sigma_{T,\vareps} \subset \Hc^\theta$ and $C>0$ independent of $T, \vareps$ such that:
	
	\noindent (1) $\mu(\Sigma^c_{T,\vareps}) \leq C\vareps$.
	
	\noindent (2) For $f\in \Sigma_{T,\vareps}$, there exists a unique solution to \eqref{eq:intro NLS} with initial data $\left.u\right|_{t=0}=f$ on $[-T,T]$ satisfying
	\begin{align}\label{est-u-T}
	\|u(t)\|_{\Hc^\theta} \leq C \left(\log \frac{T}{\vareps}\right)^{1/2}, \quad \forall |t|\leq T.
	\end{align}
\end{lemma}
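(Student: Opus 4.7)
The plan is to run a Bourgain-style iteration that compares the exact and approximate flows over short time slices whose length is dictated by the deterministic local theory of Proposition~\ref{prop-dete-lwp}. Since $f\in \Sigma_{\Lambda,T,\vareps}\subset B_K$ with $K\sim (\log(T/\vareps))^{1/2}$, Proposition~\ref{prop-dete-lwp} applied simultaneously to~\eqref{eq:intro NLS} and~\eqref{NLS-appro} (the latter works identically thanks to $\|Q_\Lambda\|_{L^p\to L^p}\lesssim 1$) produces, on a common interval $[0,\delta]$ with $\delta\sim K^{-\varrho}$, unique solutions $u,u_\Lambda\in X^{\theta_1,\gamma}_\delta$ with norms bounded by $2CK$. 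Because $f\in \Sigma_{\Lambda,T,\vareps}$, Lemma~\ref{lem:globflow1} guarantees $\|u_\Lambda(j\delta)\|_{\Hc^{\theta_1}}\leq 2CK$ at every step $|j|\leq J:=[T/\delta]$, so the iteration on the approximate side is unconditional; the exact side will be globalized step by step through the comparison below.

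On a single interval, I would set $w=u-u_\Lambda$, which solves
\[
\im\partial_t w-hw=\pm\bigl(|u|^2u-|u_\Lambda|^2 u_\Lambda\bigr)\pm\bigl(|u_\Lambda|^2 u_\Lambda-Q_\Lambda(|Q_\Lambda u_\Lambda|^2 Q_\Lambda u_\Lambda)\bigr)=:N_1+N_2.
\]
The term $N_1$ is handled exactly as the difference estimate in the proof of Proposition~\ref{prop-dete-lwp}, yielding $\|N_1\|_{L^1_{[0,\delta]}\Hc^\theta}\leq C\delta^{1-2/p}K^2\|w\|_{X^{\theta,\gamma}_\delta}$. For $N_2$, the uniform bound $\|Q_\Lambda\|\lesssim 1$ on $\Hc^\theta$ and $L^p$, together with the spectral-localization estimate $\|(\id-Q_\Lambda)v\|_{\Hc^\theta}\leq \Lambda^{(\theta-\theta_1)/2}\|v\|_{\Hc^{\theta_1}}$ (valid since $\chi=1$ on $[-1/2,1/2]$ and $\theta<\theta_1$), combined with the cubic product rule and the uniform $X^{\theta_1,\gamma}_\delta$-bound $K$ on $u_\Lambda$, give $\|N_2\|_{L^1_{[0,\delta]}\Hc^\theta}\leq C\delta^{1-2/p}K^3\Lambda^{(\theta-\theta_1)/2}$. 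Choosing $\delta$ slightly smaller so that $C\delta^{1-2/p}K^2\leq 1/2$, I can absorb the $N_1$ contribution and obtain the one-step bound
\[
\|w\|_{X^{\theta,\gamma}_\delta}\leq 2C\|w(0)\|_{\Hc^\theta}+C'\Lambda^{(\theta-\theta_1)/2},
\]
with $C'$ depending only on $K$.

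Iterating this across $J\sim T/\delta\sim TK^\varrho$ intervals yields $\|u(j\delta)-u_\Lambda(j\delta)\|_{\Hc^\theta}\leq (2C)^J C'\Lambda^{(\theta-\theta_1)/2}\leq C(T,\vareps)\Lambda^{(\theta-\theta_1)/2}$ with $C(T,\vareps)=\exp(O(T(\log(T/\vareps))^{\varrho/2}))$. Choosing $\Lambda$ large enough that $C(T,\vareps)\Lambda^{(\theta-\theta_1)/2}\leq 1$, the triangle inequality $\|u(j\delta)\|_{\Hc^\theta}\leq \|u_\Lambda(j\delta)\|_{\Hc^{\theta_1}}+1\leq 2CK+1$ keeps the exact solution in a fixed ball of $\Hc^\theta$ at every restart, so the deterministic local theory can be reapplied to extend $u$ through each interval, establishing~\eqref{est-diff}.

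For the second part, I would construct $\Sigma_{T,\vareps}$ by a Borel--Cantelli-type argument along a sequence $\Lambda_n\to\infty$: set $\Sigma_{T,\vareps}:=\bigcap_{n\geq n_0}\Sigma_{\Lambda_n,T,\vareps 2^{-n}}$. Using Lemma~\ref{lem-mu-Lambda} to pass from $\mu_{\Lambda_n}$ to $\mu$ on each fixed (finite-dimensional, hence continuity-set) $\Sigma_{\Lambda_n,T,\vareps 2^{-n}}^c$ and summing the geometric series delivers $\mu(\Sigma_{T,\vareps}^c)\leq C\vareps$. On this set, for any $f$ fixed, we can pick $n$ so that $\Lambda_n$ is large enough for the comparison~\eqref{est-diff} to apply, and the triangle inequality with~\eqref{u-Lambda-T} yields~\eqref{est-u-T}. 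The main obstacle is the control of the exponentially accumulated comparison error in Step~3: the fact that $C(T,\vareps)$ grows superpolynomially in $T$ forces $\Lambda$ to be taken doubly-exponentially large, but this is admissible because all constants are allowed to depend on $T,\vareps$ while $\Lambda^{(\theta-\theta_1)/2}$ decays with any prescribed rate.
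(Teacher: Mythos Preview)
Your comparison argument for~\eqref{est-diff} is essentially the paper's: the only organizational difference is that you work directly with $w=u-u_\Lambda$, whereas the paper inserts the intermediate $v_\Lambda:=Q_\Lambda u_\Lambda$ and estimates $u-v_\Lambda$ and $v_\Lambda-u_\Lambda$ separately. Both decompositions lead to the same one-step bound and the same geometric iteration, so this part is fine.

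The construction of $\Sigma_{T,\vareps}$, however, has a genuine gap. Your Borel--Cantelli scheme requires a bound of the form $\mu(\Sigma_{\Lambda_n,T,\vareps 2^{-n}}^c)\lesssim \vareps 2^{-n}$, but Lemma~\ref{lem:globflow1} only controls $\mu_{\Lambda_n}(\Sigma_{\Lambda_n,T,\vareps 2^{-n}}^c)$. Your proposed bridge, ``Lemma~\ref{lem-mu-Lambda} on each fixed (finite-dimensional, hence continuity-set) $\Sigma_{\Lambda_n,\cdot}^c$'', does not work: first, $\Sigma_{\Lambda,T,K}=\bigcap_j \Phi_\Lambda(-j\delta)(B_K)$ with $B_K$ a ball in the infinite-dimensional space $\Hc^{\theta_1}$, so these are \emph{not} cylinder sets in $E_{\leq\Lambda}$; second, Lemma~\ref{lem-mu-Lambda} gives $\mu_\Lambda(A)\to\mu(A)$ for a \emph{fixed} $A$ as $\Lambda\to\infty$, but here both the measure and the set depend on $n$, so no limit is being taken.

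The paper avoids this entirely: it picks a \emph{single} $\Lambda_1$ large enough for~\eqref{est-diff} to give $\|u-u_{\Lambda_1}\|_{\Hc^\theta}\ll 1$ on $[-T,T]$, sets $\Sigma_{T,\vareps}:=\Sigma_{\Lambda_1,T,\vareps}$, and bounds $\mu(\Sigma_{\Lambda_1,T,\vareps}^c)$ directly in terms of $\mu_{\Lambda_1}(\Sigma_{\Lambda_1,T,\vareps}^c)$ by comparing the densities $G$ and $G_{\Lambda_1}$ with respect to $\mu_0$ (H\"older's inequality in the defocusing case, the inclusion $\{\|u\|_{L^2}^2<m\}\subset\{\|P_{\leq\Lambda_1}u\|_{L^2}^2<m\}$ in the focusing case). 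This yields $\mu(\Sigma_{T,\vareps}^c)\leq C\vareps^\alpha$ for some fixed $\alpha>0$, which after relabeling $\vareps$ gives Item~(1). If you want to salvage your Borel--Cantelli route, you would need to invoke this same density comparison at each $\Lambda_n$; but then the intersection over $n$ is unnecessary, since one $\Lambda_1$ already suffices.
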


\begin{proof}
	{\bf Estimating the difference.} Denote 
	$$v_\Lambda:= Q_\Lambda u_\Lambda$$
	with $Q_\Lambda$ as in~\eqref{eq:Q-Lamb}. We first study the difference $u-v_\Lambda$ on 
	\begin{equation}\label{eq:resol space 1}
	X^{\theta,\gamma}([-\delta,\delta]) = L^\infty([-\delta,\delta], \Hc^\theta) \cap L^p([-\delta,\delta], \Wc^{\gamma,q}),
	\end{equation}
	where $\delta$ is as in \eqref{defi-delta}, $(p,q)$ is a Strichartz-admissible pair (see Appendix~\ref{sec:app2}), and $\gamma$ is such that
	\begin{align} \label{pgam}
	p>4, \quad \frac{1}{2}-\frac{2}{p}<\gamma<\theta-\frac{2}{p}\left(\frac{1}{2}-\frac{2}{p}\right).
	\end{align}
	From \eqref{NLS-appro}, we have the following Duhamel formula
	\[
	u(t)-v_\Lambda(t) = e^{-\im th}(f - Q_\Lambda f) \mp \im \int_0^t e^{-\im (t-\tau) h} \left( |u(\tau)|^2 u(\tau) - Q^2_\Lambda (|v_\Lambda(\tau)|^2 v_\Lambda(\tau))\right) d\tau
	\]
	which, by Strichartz estimates (see Appendix~\ref{sec:app2}), yields
	\[
	\|u-v_\Lambda\|_{X^{\theta,\gamma}([-\delta,\delta])} \leq C \|f-Q_\Lambda f\|_{\Hc^\theta} + C\||u|^2u-Q^2_\Lambda(|v_\Lambda|^2 v_\Lambda)\|_{L^1([-\delta,\delta], \Hc^\theta)}.
	\]
	For the linear term, we estimate (recall~\eqref{eq:Q-Lamb})
	\[
	\|f-Q_\Lambda f\|_{\Hc^\theta} \leq C \Lambda^{(\theta-\theta_1)/2} \|f\|_{\Hc^{\theta_1}} \leq C K \Lambda^{(\theta-\theta_1)/2}.
	\]
	For the nonlinear term, we write
	\[
	|u|^2u-Q_\Lambda^2(|v_\Lambda|^2 v_\Lambda) = |u|^2 u - |v_\Lambda|^2 v_\Lambda + (\id-Q_\Lambda^2)(|v_\Lambda|^2 v_\Lambda).
	\]
	Since $f \in \Sigma_{\Lambda, T,\vareps}$ (see \eqref{eq:def Sigma 1}), we have $\|f\|_{\Hc^{\theta_1}}=\|u_\Lambda(0)\|_{\Hc^{\theta_1}}\leq K$ and the local theory for \eqref{NLS-appro} ensures the existence of a unique solution satisfying $\|u_\Lambda\|_{X^{\theta_1,\gamma}([-\delta,\delta])} \leq 2CK$ which, by \eqref{eq:est-Q-Lamb}, yields 
	\[
	\|v_\Lambda\|_{X^{\theta_1,\gamma}([-\delta,\delta])} \leq  2CK,
	\] 
	where $X^{\theta_1,\gamma}([-\delta,\delta])$ is as in~\eqref{eq:resol space 1} with $p,q,\gamma$ as in \eqref{pgam}. Estimating as in the proof of Proposition \ref{prop-dete-lwp}, we have
	\begin{align*}
	\|(\id-Q^2_\Lambda)(|v_\Lambda|^2 v_\Lambda)\|_{L^1([-\delta,\delta], \Hc^{\theta})} &\leq C\Lambda^{(\theta-\theta_1)/2} \||v_\Lambda|^2 v_\Lambda\|_{L^1([-\delta,\delta], \Hc^{\theta_1})} \\
	&\leq C\delta^{1-\frac{2}{p}}\Lambda^{(\theta-\theta_1)/2} \|v_\Lambda\|^3_{X^{\theta_1,\gamma}([-\delta,\delta])} \\
	&\leq C \delta^{1-\frac{2}{p}} K^3 \Lambda^{(\theta-\theta_1)/2}.
	\end{align*}
	Since $\|f\|_{\Hc^\theta} \leq \|f\|_{\Hc^{\theta_1}}\leq K$, the local theory and \eqref{eq:est-Q-Lamb} give
	\[
	\|u\|_{X^{\theta,\gamma}([-\delta,\delta])}, ~ \|v_\Lambda\|_{X^{\theta,\gamma}([-\delta,\delta])} \leq 2CK,
	\]
	which implies
	\begin{align*}
	\||u|^2u&-|v_\Lambda|^2v_\Lambda\|_{L^1([-\delta,\delta], \Hc^\theta)} \\
	&\leq C \delta^{1-\frac{2}{p}} \left( \|u\|^2_{X^{\theta,\gamma}([-\delta,\delta])} + \|v_\Lambda\|^2_{X^{\theta,\gamma}([-\delta,\delta])}\right)\|u-v_\Lambda\|_{X^{\theta,\gamma}([-\delta,\delta])} \\
	&\leq C\delta^{1-\frac{2}{p}} K^2 \|u-v_\Lambda\|_{X^{\theta,\gamma}([-\delta,\delta])}.
	\end{align*}
	By the choice of $\delta$ with some $\nu>0$ small, we deduce
	\[
	\|u-v_\Lambda\|_{X^{\theta,\gamma}([-\delta,\delta])} \leq C K \Lambda^{(\theta-\theta_1)/2} + \frac{1}{2} \|u-v_\Lambda\|_{X^{\theta,\gamma}([-\delta,\delta])}
	\]
	hence
	\[
	\|u-v_\Lambda\|_{X^{\theta,\gamma}([-\delta,\delta])} \leq 2C K \Lambda^{(\theta-\theta_1)/2}.
	\]
	On the other hand, we infer from \eqref{solu-Lambda-T} that
	\[
	\|u_\Lambda(t)-v_\Lambda(t)\|_{\Hc^\theta} \leq C\Lambda^{(\theta-\theta_1)/2} \|u_\Lambda(t)\|_{\Hc^{\theta_1}} \leq C K \Lambda^{(\theta-\theta_1)/2}
	\]
	which gives
	\begin{align}\label{eq:utov}
	\|u(t)-u_\Lambda(t)\|_{\Hc^\theta} &\leq \|u(t)-v_\Lambda(t)\|_{\Hc^\theta} + \|u_\Lambda(t)-v_\Lambda(t)\|_{\Hc^\theta} \nonumber \\
	&\leq 3C K \Lambda^{(\theta-\theta_1)/2}, \quad \forall |t|\leq \delta.
	\end{align}
	We can iterate the above argument $\left[\frac{T}{\delta}\right]$ many times. For instance, at the second iteration, we have
	\[
	\|u-v_\Lambda\|_{X^{\theta,\gamma}([0,2\delta])} \leq C\|u(\delta)-v_\Lambda(\delta)\|_{\Hc^\theta} + \text{ nonlinear term}.
	\]
	Note that 
	\[
	\|u(\delta)-v_\Lambda(\delta)\|_{\Hc^\theta} \leq \|u-v_\Lambda\|_{L^\infty([-\delta,\delta], \Hc^\theta)} \leq \|u-v_\Lambda\|_{X^{\theta,\gamma}([-\delta,\delta])} \leq 2CK \Lambda^{(\theta-\theta_1)/2}.
	\]
	The nonlinear term can be handled as before by noting that $\|u_\Lambda(\delta)\|_{\Hc^{\theta_1}} = \|\Phi_\Lambda(\delta)\|_{\Hc^{\theta_1}} \leq K$ (see \eqref{eq:def Sigma 1}) and
	\[
	\|u(\delta)\|_{\Hc^{\theta}} \leq \|u_\Lambda(\delta)\|_{\Hc^{\theta_1}} + \|u(\delta)-u_\Lambda(\delta)\|_{\Hc^\theta} \leq K + 1
	\]
	provided that $\Lambda$ is taken sufficiently large. The above estimate is the reason why we take $\delta$ as in \eqref{defi-delta}. Thus we get
	\[
	\|u-v_\Lambda\|_{X^{\theta,\gamma}([0,2\delta])} \leq (2C)^2 K \Lambda^{(\theta-\theta_1)/2}.
	\]
	Arguing as in \eqref{eq:utov}, we obtain
	\[
	\|u(t)-u_\Lambda(t)\|_{\Hc^\theta} \leq (3C)^2 K \Lambda^{(\theta-\theta_1)/2}, \quad \forall t\in [0,2\delta].
	\]
	After $\left[\frac{T}{\delta}\right]$ iterations, we can sum over all sub-intervals to get
	\[
	\|u-v_\Lambda\|_{X^{\theta,\gamma}([-T,T])} \leq C e^{c\frac{T}{\delta}} K \Lambda^{(\theta-\theta_1)/2} \leq C e^{cT(K+1)^\varrho} K \Lambda^{(\theta-\theta_1)/2}.
	\]
	By the same reasoning as in \eqref{eq:utov} and invoking \eqref{choi-K}, we prove \eqref{est-diff}.
	
	\noindent {\bf Globalizing the flow.} By \eqref{est-diff}, there exists $\Lambda_1$ sufficiently large such that for $f\in \Sigma_{\Lambda_1, T,\vareps}$, the corresponding solution to \eqref{eq:intro NLS} with initial data $\left.u\right|_{t=0}$ satisfies
	\[
	\|u(t)-u_\Lambda(t)\|_{\Hc^\theta} \ll 1, \quad \forall |t|\leq T, \quad \forall \Lambda \geq \Lambda_1.
	\]
	From this and \eqref{u-Lambda-T}, we have
	\[
	\|u(t)\|_{\Hc^\theta} \leq C \left(\log\frac{T}{\vareps}\right)^{1/2}, \quad \forall |t|\leq T.
	\]
	This proves \eqref{est-u-T} by setting $\Sigma_{T,\vareps} :=\Sigma_{\Lambda_1, T,\vareps}$. It remains to prove the first item. We estimate
	\begin{align*}
	\mu(\Sigma^c_{\Lambda_1, T,\vareps}) = \int \mathds{1}_{\Sigma^c_{\Lambda_1,T,\vareps}} d\mu(u) = \int_{\Sigma^c_{\Lambda_1,T,\vareps}} \frac{1}{Z} G(u) d\mu_0(u) \leq C \int_{\Sigma^c_{\Lambda_1,T,\vareps}}  G(u) d\mu_0(u),
	\end{align*}
	where $G(u)$ is as in \eqref{G-Lambda-defo} for the defocusing nonlinearity and in \eqref{G-Lambda-focu-supe} for the focusing one. Here we have used the fact that $Z\geq C>0$. 
	
	To estimate the last integral in terms of $\mu_{\Lambda_1}$, we consider two cases. For the defocusing nonlinearity, we use the fact that $\|Q_\Lambda u\|^4_{L^4}\leq C_1\|u\|^4_{L^4}$ for some constant $C_1>0$. Without loss of generality, we may assume that $C_1\geq 1$. By H\"older's inequality, we estimate
	\begin{align*}
	\int_{\Sigma^c_{\Lambda_1,T,\vareps}}  e^{-\frac{1}{2}\|u\|^4_{L^4}} d\mu_0(u) &\leq\int_{\Sigma^c_{\Lambda_1,T,\vareps}} e^{-\frac{1}{2C_1}\|Q_{\Lambda_1}u\|^4_{L^4}} d\mu_0(u) \\
	&\leq \Big(\int_{\Sigma^c_{\Lambda_1,T,\vareps}} e^{-\frac{1}{2}\|Q_{\Lambda_1} u\|^4_{L^4}} d\mu_0(u)\Big)^{1/C_1} \Big(\int_{\Sigma^c_{\Lambda_1,T,\vareps}} d\mu_0(u)\Big)^{1/C_1'} \\
	&=\Big(Z^{\Lambda_1}\int_{\Sigma^c_{\Lambda_1,T,\vareps}} d\mu_{\Lambda_1}(u)\Big)^{1/C_1} \Big(\int_{\Sigma^c_{\Lambda_1,T,\vareps}} d\mu_0(u)\Big)^{1/C_1'} \\
	&\leq C\left(\mu_{\Lambda_1}(\Sigma^c_{\Lambda_1,T,\vareps})\right)^{1/C_1} \\
	&< C\vareps^{1/C_1},
	\end{align*}
	where we used that $(C_1,C_1')$ is a H\"older-conjugate pair, $\mu_0$ is a probability measure, and $Z^{\Lambda_1} \leq 1$. Here the last inequality follows from Lemma \ref{lem:globflow1}. In the focusing case, we have
	\begin{align*}
	\int_{\Sigma^c_{\Lambda_1,T,\vareps}}  G(u) d\mu_0(u) &=\int_{\Sigma^c_{\Lambda_1,T,\vareps}}  G(u) \mathds{1}_{\left\{\|u\|^2_{L^2}<m\right\}} d\mu_0(u) \\
	&\leq \|G(u)\|_{L^2(d\mu_0)} \left(\int_{\Sigma^c_{\Lambda_1,T,\vareps}} \mathds{1}_{\left\{\|u\|^2_{L^2}<m\right\}} d\mu_0(u)\right)^{1/2} \\
	&\leq C \left(\int_{\Sigma^c_{\Lambda_1,T,\vareps}} \mathds{1}_{\left\{\|P_{\leq \Lambda_1}u\|^2_{L^2}<m\right\}} d\mu_0(u)\right)^{1/2} \\
	&\leq C \left(\int_{\Sigma^c_{\Lambda_1,T,\vareps}} e^{\frac{1}{2}\|Q_{\Lambda_1} u\|^4_{L^4}}\mathds{1}_{\left\{\|P_{\leq \Lambda_1}u\|^2_{L^2}<m\right\}} d\mu_0(u)\right)^{1/2} \\
	&= C \left(\mu_{\Lambda_1}(\Sigma^c_{\Lambda_1,T,\vareps})\right)^{1/2} \\
	&< C \vareps^{1/2}.
	\end{align*}
	In both cases, we adjust $\vareps$ slightly to get the desired result. The proof is complete.
\end{proof}

Now we can conclude the proof of Theorem \ref{theo:almo-gwp-supe}.

\begin{proof}[Proof of Theorem~\ref{theo:almo-gwp-supe}]
\noindent\textbf{Almost-sure flow, global in time}. Fix $\vareps>0$ and set $T_n = 2^n$, $\vareps_n = \vareps 2^{-n}$. Let $\Sigma_n=\Sigma_{T_n, \vareps_n}$ be as in Lemma \ref{lem:appflow1} and set
\[
\Sigma_\vareps=\bigcap_{n=1}^\infty \Sigma_n.
\]
For $f\in \Sigma_\vareps$, we have $f\in \Sigma_n$ for all $n\geq 1$. By Lemma \ref{lem:appflow1}, the corresponding solution to \eqref{eq:intro NLS} with initial data $\left. u\right|_{t=0}=f$ exists globally in time since it exists on $[2^{-n},2^n]$ for all $n\geq 1$. In addition, we have
\[
\mu(\Sigma^c_\vareps) =\mu\Big(\bigcup_{n=1}^\infty \Sigma^c_n\Big) \leq \sum_{n=1}^\infty \mu(\Sigma_n^c) \leq C \sum_{n=1}^\infty \vareps_n=C\vareps.
\] 
Define now
\begin{align} \label{Sigma}
\Sigma = \bigcup_{\vareps>0} \Sigma_\vareps
\end{align}
so that
\[
\mu(\Sigma^c) = \mu \Big(\bigcap_{\vareps>0} \Sigma^c_\vareps\Big) \leq \inf_{\vareps>0} \mu(\Sigma^c_\vareps)  =0.
\]
Hence we have indeed found the claimed set of full $\mu$ measure on which the flow is globally defined. 

\medskip

\noindent\textbf{Growth in time.} Pick $f\in \Sigma$ and $t\in \R$. It must be that $f\in \Sigma_\vareps$ for some $\vareps >0$ and that 
\[
2^{n-1}\leq 1+|t| \leq 2^n
\]
for some integer $n\geq 1$. In particular, we have $n \leq 1+\log_2(1+|t|) \leq 1+2\log(1+|t|)$. For such $n$, we apply Lemma \ref{lem:appflow1} with $f\in \Sigma_n$ to get
\[
\|u(t)\|_{\Hc^\theta} \leq C \left(\log \frac{T_n}{\vareps_n}\right)^{1/2} = C \left(\log \frac{1}{\vareps} + n \right)^{1/2} \leq C \left(\log^{\frac{1}{2}} \frac{1}{\vareps} + \log^{\frac{1}{2}} (1+|t|)\right)
\]
for some constant $C>0$. Since $\vareps$ depends on $f$, we prove \eqref{grow-norm} with $\omega(f) = \log^{\frac{1}{2}} \frac{1}{\vareps}$. 

\medskip

\noindent\textbf{Measure invariance.} Let $\theta$ satisfy $\frac{1}{2}\left(\frac{1}{2}-\frac{1}{s}\right)<\theta<\frac{1}{2}-\frac{1}{s}$. By the time reversibility of the solution map, it suffices to show
\begin{align} \label{inva-prof}
\mu(A) \leq \mu(\Phi(t)(A))
\end{align}
for all measurable sets $A \subset \Sigma$ and all $t\in \R$. By the inner regularity, there exists a sequence $\{F_n\}_n$ of closed set in $\Hc^\theta$ such that $F_n \subset A$ and $\mu(A) = \lim_{n\rightarrow \infty} \mu(F_n)$.  We claim that it suffices to prove \eqref{inva-prof} for closed sets. Note that since $F_n \subset A$, the uniqueness of solutions implies that $\mu(\Phi(t) (F_n)) \leq \mu(\Phi(t)(A))$. If \eqref{inva-prof} is true for closed sets, then we have
\[
\mu(A) = \lim_{n\rightarrow \infty} \mu(F_n) \leq \limsup_{n\rightarrow \infty} \mu(\Phi(t) (F_n)) \leq \mu(\Phi(t)(A)),
\]
hence \eqref{inva-prof} is true for all measurable sets. Now given a closed set $F \subset \Hc^\theta$. Take $\theta<\theta_1 <\frac{1}{2}-\frac{1}{s}$ and set
\[
U_n := \left\{u \in F : \|u\|_{\Hc^{\theta_1}}\leq n\right\}.
\]
From \eqref{eq:boun-beta-p}, we infer that
\begin{align*}
\mu(F\setminus U_n) &= \frac{1}{Z} \int_{F\setminus U_n} G(u)d\mu_0(u) \\
&\leq C \|G(u)\|_{L^2(d\mu_0)} \left(\mu_0(F\setminus U_n)\right)^{1/2} \\
&\leq C \left(\mu_0(\|u\|_{\Hc^{\theta_1}} >n)\right)^{1/2} \\
&\leq Ce^{-cn^2} \to 0 \text{ as } n \to \infty.
\end{align*}
Thus we have
\[
\mu(F) =\lim_{n\rightarrow \infty} \mu(U_n).
\]
The same argument as above yields that it suffices to prove \eqref{inva-prof} for closed sets of $\Hc^\theta$ which are bounded in $\Hc^{\theta_1}$. 

Let $U$ be such a set and fix $t >0$ (the case $t<0$ is similar). Since $U$ is bounded in $\Hc^{\theta_1}$, the local theory ensures the existence of $K>0$ such that
\[
\{\Phi(\tau)(U) : 0\leq \tau \leq t\} \subset \{u\in \Hc^{\theta_1} : \|u\|_{\Hc^{\theta_1}} \leq K\}=:B_{\theta_1,K}.
\]  
Set 
\[
\delta=\nu K^{-\varrho},
\]
where $\varrho$ is as in Proposition \ref{prop-dete-lwp} and $\nu>0$ is a small constant independent of $K$. It suffices to prove
\begin{align} \label{inva-prof-redu}
\mu(U) \leq \mu(\Phi(t)(U)), \quad \forall t\in [0,\delta].
\end{align}
Indeed, we split $[0,t]$ into intervals of size $\delta$ and apply \eqref{inva-prof-redu} on these intervals. Such an iteration is possible since by the continuity of the solution map, the image of each interval remains closed in $\Hc^\theta$ and included in $B_{\theta_1,K}$.

To prove \eqref{inva-prof-redu}, we take $\vareps>0$ and denote by $B_{\theta, \vareps}$ the open ball in $\Hc^\theta$ centered at the origin and of radius $\vareps$. There exist $0<c \ll 1$ and $\Lambda\geq \lambda_1$ sufficiently large such that
\begin{align*}
\Phi_\Lambda(t) (U+ B_{\theta,c\vareps}) &\subset \Phi_\Lambda(t) (U) + B_{\theta,\vareps/2} \quad \tag{Lipschitz continuity \eqref{cont-prop}} \\
&\subset \Phi(t) (U) + B_{\theta,\vareps}.  \tag{Lemma \ref{lem:appflow1}}
\end{align*} 
Thus we have the chain of inequalities
\begin{align*}
\mu(U) \leq \mu(U+B_{\theta, c\vareps}) &\leq \liminf_{\Lambda\rightarrow\infty} \mu_\Lambda(U+B_{\theta, c\vareps})  \tag{$\mu_\Lambda \rightharpoonup \mu$ weakly}\\
& = \liminf_{\Lambda\rightarrow \infty} \mu_\Lambda(\Phi_\Lambda(t)(U+B_{\theta, c\vareps})) \tag{invariance of $\mu_\Lambda$}\\
&\leq \liminf_{\Lambda\rightarrow \infty} \mu_\Lambda(\Phi(t) (U)+B_{\theta,\vareps})  \\
&\leq \limsup_{\Lambda\rightarrow \infty} \mu_\Lambda(\Phi(t) (U) + \overline{B}_{\theta,\vareps}) \\
&\leq \mu(\Phi(t)(U) + \overline{B}_{\theta,\vareps}). \tag{$\mu_\Lambda \rightharpoonup \mu$ weakly} 
\end{align*}
Letting $\vareps \rightarrow 0$, we obtain $\mu(U) \leq \mu(\Phi(t) (U))$ for all $t\in [0,\delta]$. The proof is complete.
\end{proof}

\begin{remark}[Higher non-linearities] \label{rem-meas-inva-sup}
	The argument presented in this section can be applied to prove the invariance of Gibbs measures associated to \eqref{eq:NLS-general} for any $s>2$ and $2<\kappa<4+s$ as long as the Gibbs measure is well-defined. In fact, the local well-posedness holds in 
	\[
	C([-\delta,\delta], \Hc^\theta) \cap L^p([-\delta,\delta], \Wc^{\gamma,q}),
	\]
	where the exponents are chosen as follows:
	\[
	\frac{1}{2}-\frac{2}{\max\{\kappa-2,4\}}\left(\frac{1}{2}+\frac{1}{s}\right) <\theta<\frac{1}{2}-\frac{1}{s},
	\]
	the pair $(p,q)$ is Strichartz-admissible and we assume
	\[
	p>\max\{\kappa-2,4\}, \quad \frac{1}{2}-\frac{2}{p}<\gamma<\theta-\frac{2}{p}\left(\frac{1}{2}-\frac{1}{s}\right).
	\]
	The condition $\kappa <4+s$ guarantees the existence of such exponents, and then the local well-posedness follows from the following Strichartz estimate
	\[
	\|e^{-\im t h} f\|_{L^p_\delta \Wc^{\gamma,q}} \lesssim \|f\|_{\Hc^\theta}.
	\]
	and nonlinear estimates
	\begin{align*}
	\||u|^{\kappa-2} u\|_{L^1_\delta \Hc^\theta} &\leq \|u\|^{\kappa-2}_{L^{\kappa-2}_\delta L^\infty} \|u\|_{L^\infty_\delta \Hc^\theta} \lesssim \delta^{1-\frac{\kappa-2}{p}}\|u\|^{\kappa-2}_{L^p_\delta \Wc^{\gamma,q}} \|u\|_{L^\infty_\delta \Hc^\theta} \\
	\||u_1|^{\kappa-2} u_1-|u_2|^{\kappa-2}u_2\|_{L^1_\delta \Hc^\theta} &\lesssim \delta^{1-\frac{\kappa-2}{p}} \left(\|u_1\|^{\kappa-2}_{L^p_\delta\Wc^{\gamma,q}} + \|u_2\|^{\kappa-2}_{L^p_\delta\Wc^{\gamma,q}} \right)\|u_1-u_2\|_{L^\infty_\delta \Hc^\theta},
	\end{align*}
	where we have used that $\Wc^{\gamma,q} \subset L^\infty(\R)$ by Sobolev embedding, since $\frac{1}{q}=\frac{1}{2}-\frac{2}{p}<\gamma$. Once the local well-posedness is proved, the invariance of Gibbs measures follows by the same argument as above.\hfill$\diamond$
\end{remark}

\section{Cauchy problem and invariant measure, (sub)-harmonic case}
\label{sec:invari sub}
\setcounter{equation}{0}

We now consider the case $1<s \leq 2$ for which the Gibbs measure is supported on $L^2$-based Sobolev spaces of negative indices. It seems difficult to obtain deterministic local well-posedness for \eqref{eq:intro NLS} in this case. In \cite{BurThoTzv-10}, a deterministic LWP was proved for $s=2$ using some intricate multilinear estimates. The proof of these estimates relies heavily on a detailed understanding of spectral properties of the harmonic oscillator. We do not expect the proof of such multilinear estimates to carry through to the case $1<s<2$. In Section \ref{sec:LWPsub}, we aim to prove an almost sure local well-posedness for \eqref{eq:intro NLS}. Section \ref{sec:GloSub} is devoted to an almost sure global well-posedness and the measure invariance. 

\subsection{Probabilistic local well-posedness}
\label{sec:LWPsub}

Our proof of almost sure local well-posedness relies on the following probabilistic Strichartz estimates. 

\begin{lemma}[\bf Probabilistic Strichartz estimates] \label{lem-est-expo} \mbox{}\\
	Let $1<s\leq 2$, $V$ satisfy Assumption \ref{assu-V}, $0\leq \beta <\frac{1}{2}$, and $p>\max\left\{2,\frac{4}{s(1-2\beta)}\right\}$ be an even integer. Then there exist $C,c>0$ such that
	\begin{align} \label{fern-ineq}
	\int e^{c\|e^{-\im th}f\|^2_{\Wc^{\beta,p}}} d\mu_0(f) \leq C, \quad \forall t\in \R.
	\end{align}
	Furthermore, for $\theta<1/2-1/s$ and all $\lambda>0$,
	\begin{align} \label{prob-str-est-1}
	\mu_0\left(f\in \Hc^\theta : \|e^{-\im th}f\|_{L^\infty(\R,\Wc^{\beta,p})}>\lambda\right) \leq C e^{-c\lambda^2}
	\end{align}
	and for all $T>0$, all $q\geq 1$, and all $\lambda>0$,
	\begin{align} \label{prob-str-est-2}
	\mu_0\left(f\in \Hc^\theta : \|e^{-\im th}f\|_{L^q([-T,T], \Wc^{\beta,p})}>\lambda\right) \leq C e^{-c\frac{\lambda^2}{T^{2/q}}}.
	\end{align}
\end{lemma}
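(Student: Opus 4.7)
The plan rests on the key observation that the Gaussian measure $\mu_0$ is invariant under the linear propagator $e^{-\im t h}$. Indeed, on the spectral coefficients $\alpha_j = \langle u_j,f\rangle$ this propagator acts as $\alpha_j \mapsto e^{-\im t\lambda_j}\alpha_j$, and each one-dimensional factor in~\eqref{eq:mu-0-Lamb} is a rotation-invariant complex Gaussian on $\C$. Hence $(e^{-\im t h})_\star\mu_0 = \mu_0$, and for every nonnegative measurable functional $F$ on $\Hc^\theta$,
\[
\int F\bigl(e^{-\im t h} f\bigr)\,d\mu_0(f) = \int F(f)\,d\mu_0(f), \qquad \forall\, t\in\R.
\]
Taking $F(u) = e^{c\|u\|^2_{\Wc^{\beta,p}}}$ and invoking Lemma~\ref{lem-expo-p} immediately yields~\eqref{fern-ineq} with constants independent of $t$; Chebyshev's inequality then recovers the pointwise-in-$t$ form of~\eqref{prob-str-est-1}.

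To prove~\eqref{prob-str-est-2}, I expand $e^{c\|e^{-\im t h}f\|^2_{L^q([-T,T],\Wc^{\beta,p})}/T^{2/q}}$ as a power series, apply Jensen's inequality in $\tau\in[-T,T]$ to pull the $(2k/q)$-th power inside the time integral (picking up a factor $(2T)^{2k/q-1}$), exchange the order of integration via Fubini, and use the invariance at each $\tau$ to reduce to the Gaussian moments $\int \|f\|^{2k}_{\Wc^{\beta,p}}\,d\mu_0(f)$ already bounded in the proof of Lemma~\ref{lem-expo-p}. The net effect is to replace the Fernique constant $c$ by $c/T^{2/q}$, after which Chebyshev delivers the claimed tail~\eqref{prob-str-est-2}.

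The main obstacle is upgrading the pointwise-in-$t$ statement to the uniform bound~\eqref{prob-str-est-1}, since $\mu_0$-invariance alone gives only fixed-$t$ control while a naive union bound diverges over $\R$. The plan is a frequency truncation $f = P_{\leq\Lambda}f + P_{>\Lambda}f$ tuned to $\lambda$. The low-frequency part is a trigonometric polynomial in $t$, $e^{-\im t h}P_{\leq\Lambda}f = \sum_{\lambda_j\leq\Lambda} e^{-\im t\lambda_j}\alpha_j u_j$, so expanding the even power $p$ in the definition of $\Wc^{\beta,p}$ and taking the supremum over $t$ replaces the oscillating phases by their absolute values, yielding a deterministic upper bound on $\sup_t\|e^{-\im t h}P_{\leq\Lambda}f\|_{\Wc^{\beta,p}}$ in terms of a linear functional of the $|\alpha_j|$'s with $\lambda_j\leq\Lambda$, which concentrates at most polynomially in $\Lambda$ under $\mu_0$. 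The high-frequency remainder $e^{-\im t h}P_{>\Lambda}f$ has, for each $t$, the same law as $P_{>\Lambda}f$ by the invariance, and an adaptation of Lemma~\ref{lem-expo-p} to this projection (in the spirit of Lemma~\ref{lem-L4-Lambda}) yields Gaussian tails with a constant improving as $\Lambda\to\infty$. Optimizing $\Lambda=\Lambda(\lambda)$ to balance both contributions should yield the advertised bound $Ce^{-c\lambda^2}$.
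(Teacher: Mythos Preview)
Your derivation of \eqref{fern-ineq} via the rotation-invariance of $\mu_0$ under $e^{-\im t h}$ combined with Lemma~\ref{lem-expo-p} is exactly the paper's proof. For \eqref{prob-str-est-2} the paper takes a shortcut --- H\"older in time to bound $L^q([-T,T],\Wc^{\beta,p})$ by $T^{1/q}$ times the $L^\infty([-T,T],\Wc^{\beta,p})$ norm, then invoke \eqref{prob-str-est-1} --- whereas your direct moment argument (expand, Jensen in $\tau$, Fubini, invariance at each $\tau$, reduce to the moments from the proof of Lemma~\ref{lem-expo-p}) is correct and self-contained, with the advantage of not relying on the $L^\infty_t$ bound at all.

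The gap is in your plan for \eqref{prob-str-est-1}. The paper simply writes that \eqref{fern-ineq} ``immediately yields'' \eqref{prob-str-est-1} and says nothing more; you are right to flag the passage from uniform-in-$t$ tails to a $\sup_{t\in\R}$ tail as nontrivial. But your frequency-truncation scheme does not resolve it: your high-frequency analysis (same law as $P_{>\Lambda}f$ by invariance, then Lemma~\ref{lem-L4-Lambda}-type Gaussian tails) is again only pointwise in $t$, so the uncontrolled supremum over $t\in\R$ is inherited unchanged by that piece, and no choice of $\Lambda=\Lambda(\lambda)$ can balance away what is still an unbounded-in-$t$ event. The low-frequency estimate is also looser than advertised, since the triangle-inequality bound $\sup_t\|e^{-\im t h}P_{\leq\Lambda}f\|_{\Wc^{\beta,p}} \leq \bigl\|\sum_{\lambda_j\leq\Lambda}\lambda_j^{\beta/2}|\alpha_j|\,|u_j|\bigr\|_{L^p}$ discards the cancellations that make Lemma~\ref{lem-expo-p} work. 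Since only \eqref{prob-str-est-2} with finite $q$ and $T$ is used downstream (Proposition~\ref{prop-almo-sure-lwp}, Lemma~\ref{lem:globflow2}), your moment argument for \eqref{prob-str-est-2} already suffices for every application in the paper.
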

\begin{proof}
	Denote $g_f(t):=e^{-\im th}f$. We write 
	\[
	f=\sum_{j\geq 1} \alpha_j u_j, \quad \alpha_j = \langle f,u_j\rangle,
	\]
	hence
	\[
	g_f(t)= \sum_{j\geq 1} \beta_j(t) u_j, \quad \beta_j(t) = \alpha_j e^{-\im t\lambda_j}.
	\]
	Observe that
	\begin{align*}
	d\mu_0(g_f(t)) &= \prod_{j\geq 1} \frac{\lambda_j}{\pi} e^{-\lambda_j|\beta_j(t)|^2}d\beta_j(t) \\
	&=\prod_{j\geq 1} \frac{\lambda_j}{\pi} e^{-\lambda_j|\alpha_j|^2}d\alpha_j = d\mu_0(f),
	\end{align*}
	where we have used the invariance of the Lebesgue measure under rotations. Thus we get
	\[
	\int e^{c\|e^{-\im th}f\|^2_{W^{\beta,p}}}d\mu_0(f) = \int e^{c\|g_f(t)\|^2_{W^{\beta,p}}}d\mu_0(g_f(t)), \quad \forall t\in \R. 
	\]
	By Lemma \ref{lem-expo-p}, we prove \eqref{fern-ineq}. This immediately yields \eqref{prob-str-est-1}. To see \eqref{prob-str-est-2}, we use H\"older's inequality in time to get
	\[
	\|e^{-\im th}f\|_{L^q([-T,T],\Wc^{\beta,p})} \leq CT^{1/q}\|e^{-\im th}f\|_{L^\infty([-T,T], \Wc^{\beta,p})}.
	\]
	Thus
	\[
	\mu_0\left(f\in \Hc^\theta: \|e^{-\im th}f\|_{L^q([-T,T],\Wc^{\beta,p})} >\lambda\right) \leq \mu_0\left(f\in \Hc^\theta : \|e^{-\im th}f\|_{L^\infty([-T,T],\Wc^{\beta,p})}> C\frac{\lambda}{T^{1/q}}\right)
	\]
	and \eqref{prob-str-est-2} follows from \eqref{prob-str-est-1}.
\end{proof}

We are now able to state the almost sure LWP for \eqref{eq:intro NLS}.

\begin{proposition} [\bf Probabilistic local well-posedness, $1<s\leq 2$] \label{prop-almo-sure-lwp} \mbox{}\\
	Let $1<s\leq 2$, $V$ satisfy Assumption \ref{assu-V}, $0\leq \beta <\frac{s-1}{2s}$, and $\theta <\frac{1}{2}-\frac{1}{s}$. For any $K>0$, there exist $\Sigma(K) \subset \Hc^\theta$ satisfying $\mu_0(\Sigma^c(K)) \leq Ce^{-cK^2}$ for some constants $C,c>0$ and $\delta \sim K^{-4}>0$ such that for all $f \in \Sigma(K)$, there exists a unique solution to \eqref{eq:intro NLS} with initial data $\left.u\right|_{t=0}=f$ satisfying
	\[
	u(t)-e^{-\im th} f \in C([-\delta,\delta],\Hc^\beta) \cap L^8([-\delta,\delta], \Wc^{\beta,4}).
	\]
	In addition, for all $f\in \Sigma(K)$, we have $\|f\|_{\Hc^\theta}\leq K$ and the corresponding solution to \eqref{eq:intro NLS} with initial data $\left.u\right|_{t=0}=f$ satisfies 
	\begin{align} \label{est-solu-sub}
	\|u(t)\|_{\Hc^\theta} \leq 2K, \quad \forall |t|\leq \delta.
	\end{align}
	Furthermore, if $u_1(t)$ and $u_2(t)$ are respectively solutions to \eqref{eq:intro NLS} with initial data $\left.u_1\right|_{t=0}=f_1, \left.u_2\right|_{t=0}=f_2$ with $f_1,f_2 \in \Sigma(K)$, then
	\begin{align} \label{cont-prop-sub}
	\|u_1(t)-u_2(t)\|_{\Hc^\theta} \leq \|f_1-f_2\|_{\Hc^\theta} + \|e^{-\im th}f_1-e^{-\im th}f_2\|_{L^8([-\delta,\delta], \Wc^{\beta,4})}
	\end{align}
	for all $|t|\leq \delta$. 
\end{proposition}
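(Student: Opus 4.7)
The approach is the Bourgain--McKean nonlinear decomposition: write $u = F + v$ where $F(t) := e^{-\im th} f$ is the rough linear evolution (living only in $\Hc^\theta$) and $v$ is the nonlinear correction, which we will build in the better space $\Hc^\beta$ with $\beta > \theta$. The set $\Sigma(K)$ will consist of those $f$ whose linear evolution $F$ is well-behaved on $[-\delta,\delta]$ in the Strichartz norm $L^8_t \Wc^{\beta,4}_x$.

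The plan is as follows. First I would fix $\delta = c_0 K^{-4}$ for a small absolute constant $c_0$ and set
\[
\Sigma(K) := \left\{ f \in \Hc^\theta \,:\, \|f\|_{\Hc^\theta} \leq K \text{ and } \|e^{-\im th} f\|_{L^8([-\delta,\delta],\Wc^{\beta,4})} \leq K \right\}.
\]
By Lemma~\ref{lem-H-theta}, the first condition fails with probability at most $Ce^{-cK^2}$. By the probabilistic Strichartz estimate~\eqref{prob-str-est-2} with $(q,p)=(8,4)$ (note that $\beta < \frac{s-1}{2s}$ forces $p=4$ to satisfy $p > \frac{4}{s(1-2\beta)}$), the second condition fails with probability at most $C\exp(-cK^2/\delta^{1/4}) = C\exp(-cK^3)$. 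Hence $\mu_0(\Sigma^c(K)) \leq Ce^{-cK^2}$.

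Next I would solve the equation for $v$, namely
\[
v(t) = \mp \im \int_0^t e^{-\im(t-\tau)h}\, \bigl[ |F+v|^2(F+v) \bigr](\tau) \,d\tau,
\]
by contraction in the resolution space $X_\delta := L^\infty([-\delta,\delta],\Hc^\beta) \cap L^8([-\delta,\delta],\Wc^{\beta,4})$, which pairs the energy norm with the admissible Strichartz norm $(8,4)$ (admissible in 1D since $\frac{2}{8}+\frac{1}{4}=\frac{1}{2}$). Using the dual Strichartz estimate recalled in Appendix~\ref{sec:app2} (together with the norm equivalence between $\Wc^{\beta,p}$ and the usual fractional Sobolev space $W^{\beta,p}$), the Duhamel map obeys
\[
\|\text{Duhamel}(G)\|_{X_\delta} \lesssim \|G\|_{L^{8/7}([-\delta,\delta],\Wc^{\beta,4/3})}.
\]
Applying a fractional Leibniz rule in $W^{\beta,4/3}$ with three-factor H\"older in the combination $\tfrac{1}{4}+\tfrac{1}{4}+\tfrac{1}{4}=\tfrac{3}{4}$, and then H\"older in time with $\tfrac{7}{8}-\tfrac{3}{8}=\tfrac{1}{2}$, one obtains
\[
\bigl\| |F+v|^2(F+v) \bigr\|_{L^{8/7}_t \Wc^{\beta,4/3}_x} \lesssim \delta^{1/2} \Bigl( \|F\|_{L^8_t \Wc^{\beta,4}_x}^3 + \|v\|_{L^8_t \Wc^{\beta,4}_x}^3 \Bigr).
\]
On the ball $\{v\in X_\delta : \|v\|_{X_\delta} \leq K\}$, this is bounded by $C\delta^{1/2} K^3 \leq K$ as soon as $\delta \leq c_0 K^{-4}$ for $c_0$ small; the analogous difference estimate $\|v_1 - v_2\|_{X_\delta} \leq C\delta^{1/2} K^2 \|v_1 - v_2\|_{X_\delta} + \|F_1 - F_2\|_{L^8_t\Wc^{\beta,4}_x}$ gives contractivity (and, after absorbing, the Lipschitz bound \eqref{cont-prop-sub}, together with the unitarity of $e^{-\im th}$ on $\Hc^\theta$).

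Once $v$ is produced, the bound \eqref{est-solu-sub} follows from $\|u(t)\|_{\Hc^\theta} \leq \|F(t)\|_{\Hc^\theta} + \|v(t)\|_{\Hc^\beta} \leq K + K = 2K$, using that $\beta > 0 > \theta$ and $F$ preserves the $\Hc^\theta$ norm. The main technical obstacle is the fractional Leibniz / H\"older bookkeeping in the spectral Sobolev spaces $\Wc^{\beta,p}$: since these are defined via the operator $h$ rather than $(1-\partial_x^2)$, the standard product rule is not directly available and one must transfer it through the norm equivalence of Appendix~\ref{sec:app}. A secondary subtlety is matching the probabilistic Strichartz lifetime $\delta \sim K^{-4}$ with both the deterministic contraction estimate and the measure bound $\mu_0(\Sigma^c(K)) \lesssim e^{-cK^2}$, so that $\Sigma(K)$ exhausts $\Hc^\theta$ as $K \to \infty$.
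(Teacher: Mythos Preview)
Your approach is essentially identical to the paper's: the same decomposition $u = e^{-\im th}f + v$, the same resolution space $X_\delta = L^\infty_t\Hc^\beta \cap L^8_t\Wc^{\beta,4}$, the same nonlinear estimate via dual Strichartz plus fractional Leibniz (Lemma~\ref{lem-prod-rule}) producing the $\delta^{1/2}$ gain, and the same choice $\delta \sim K^{-4}$. The only cosmetic difference is that the paper defines $\Sigma(K)$ using the Strichartz norm on the fixed interval $[-1,1]$ rather than on $[-\delta,\delta]$; this changes nothing in the local result (your exponent $e^{-cK^3}$ is even better than needed) but the $K$-independent window is slightly more convenient when the set is reused in the time-iteration of Section~\ref{sec:GloSub}.
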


Note that since the pair $(8,4)$ is Strichartz-admissible and $\beta > \theta$,~\eqref{cont-prop-sub} implies an estimate with a loss of derivatives
\begin{align} \label{cont-prop-sub-beta} 
	\|u_1(t)-u_2(t)\|_{\Hc^\theta} \leq C \|f_1-f_2\|_{\Hc^\beta}.
\end{align}

As preparation for the proof of Proposition~\ref{prop-almo-sure-lwp}, denote
	\[
	g_f(t):= e^{-\im th}f, \quad v(t):= u(t)-g_f(t).
	\]
	We seek for a solution $v$ to the equation 
	\[
	\im  \partial_t v - hv = \pm |g_f+v|^2 (g_f+v), \quad \left.v\right|_{t=0}=0.
	\]
	To this end, we define for $0<\delta \leq 1$, 
	\[
	X^{\beta}_\delta:=C([-\delta,\delta], \Hc^\beta) \cap L^8([-\delta,\delta],\Wc^{\beta,4}).
	\]
	Our purpose is to prove that the functional
	\[
	\Phi_f(v(t)):= \mp \im \int_0^t e^{-\im (t-\tau)h} (|g_f+v|^2(g_f+v))(\tau) d\tau
	\]
	is a contraction mapping on the ball
	\[
	B_{X^\beta_\delta}(L):= \left\{ v \in X^\beta_\delta : \|v\|_{X^\beta_\delta} \leq L\right\}
	\]
	equipped with the distance
	\[
	d(v_1,v_2) := \|v_1-v_2\|_{X^\beta_\delta},
	\]
	with $L>0$ to be determined later.
	
	Let us start with the following nonlinear estimates.
	\begin{lemma}[\bf Nonlinear estimates] \label{lem-non-est} \mbox{} \\
		There exists $C>0$ such that for any $0<\delta\leq 1$, any $f\in \Hc^\theta$ satisfying 
		\[
		\|g_f\|_{L^8([-1,1],\Wc^{\beta,4})}\leq K
		\] 
		with some $K>0$, and any $v, v_1,v_2 \in X^\beta_\delta$,
		\begin{align*}
		\|\Phi_f(v)\|_{X^\beta_\delta} &\leq C\delta^{1/2} \left( K^3+\|v\|^3_{X^\beta_\delta}\right), \\
		\|\Phi_f(v_1)-\Phi_f(v_2)\|_{X^\beta_\delta} &\leq C\delta^{1/2} \left(K^2+ \|v_1\|^2_{X^\beta_\delta} +\|v\|^2_{X^\beta_\delta}\right) \|v_1-v_2\|_{X^\beta_\delta}.
		\end{align*}
	\end{lemma}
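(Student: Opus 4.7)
The natural approach is to bound $\Phi_f(v)$ by dual Strichartz, control the cubic nonlinearity pointwise in time by a fractional Leibniz rule, and extract the factor $\delta^{1/2}$ by Hölder in time. Since the pair $(8,4)$ is Strichartz-admissible in dimension one, applying the inhomogeneous Strichartz estimate at the dual pair $(8/7,4/3)$ and using that $h^{\beta/2}$ commutes with $e^{-\im t h}$ gives
\[
\|\Phi_f(v)\|_{X^\beta_\delta} \lesssim \bigl\| \, |u|^2 u \, \bigr\|_{L^{8/7}([-\delta,\delta],\, \Wc^{\beta,4/3})}, \qquad u := g_f + v.
\]

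Next, at each fixed time I would apply a fractional product rule (transferred from standard $W^{\beta,p}(\R)$ to the $h$-adapted space $\Wc^{\beta,p}$ via the norm equivalence \eqref{equi-norm}, where Kato--Ponce is classical) together with the embedding $\Wc^{\beta,4}(\R) \hookrightarrow L^4(\R)$ for $\beta \geq 0$ to obtain
\[
\bigl\| \, |u|^2 u \, \bigr\|_{\Wc^{\beta,4/3}} \lesssim \|u\|_{L^4}^2 \, \|u\|_{\Wc^{\beta,4}} \lesssim \|u\|_{\Wc^{\beta,4}}^3.
\]
Hölder in time on $[-\delta,\delta]$ with the splitting $\frac{7}{8} = \frac{3}{8} + \frac{1}{2}$ then yields
\[
\bigl\| \, |u|^2 u \, \bigr\|_{L^{8/7}_t \Wc^{\beta,4/3}} \lesssim \delta^{1/2} \, \|u\|_{L^8_t \Wc^{\beta,4}}^3 \lesssim \delta^{1/2}\bigl(K + \|v\|_{X^\beta_\delta}\bigr)^3,
\]
where the last bound uses $u = g_f + v$, the hypothesis $\|g_f\|_{L^8([-1,1], \Wc^{\beta,4})} \leq K$, and $\delta \leq 1$. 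Expanding the cube and regrouping the mixed terms produces the stated bound $C \delta^{1/2}(K^3 + \|v\|_{X^\beta_\delta}^3)$.

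For the Lipschitz bound, I would use the algebraic identity
\[
|u_1|^2 u_1 - |u_2|^2 u_2 = (u_1 - u_2)|u_1|^2 + u_2\, \overline{(u_1 - u_2)}\, u_1 + |u_2|^2(u_1 - u_2),
\]
observe that $u_1 - u_2 = v_1 - v_2$, and run the same trilinear scheme placing $v_1 - v_2$ as one factor in each summand; the two remaining factors are each controlled by $K + \|v_i\|_{X^\beta_\delta}$, producing the bilinear factor $\lesssim K^2 + \|v_1\|_{X^\beta_\delta}^2 + \|v_2\|_{X^\beta_\delta}^2$. The main subtlety I anticipate is justifying the fractional Leibniz rule directly in $\Wc^{\beta,4/3}$ (rather than in the standard Sobolev space $W^{\beta,4/3}$), but this should follow cleanly from the norm equivalence recalled in the appendix; modulo that transfer, the argument is a routine trilinear Strichartz estimate.
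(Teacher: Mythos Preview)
Your proposal is correct and follows essentially the same approach as the paper: dual Strichartz at $(8/7,4/3)$, the fractional product rule in the $h$-adapted spaces (which the paper states as Lemma~\ref{lem-prod-rule}, derived exactly as you anticipate from Kato--Ponce plus the norm equivalence~\eqref{equi-norm}), and H\"older in time to extract $\delta^{1/2}$. The only cosmetic difference is that the paper applies H\"older in time after the product rule via an intermediate $L^{24/7}_t$ norm rather than first bounding pointwise in time and then using your splitting $\tfrac{7}{8}=\tfrac{3}{8}+\tfrac{1}{2}$, but the two orderings are equivalent.
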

	\begin{proof}
		By Strichartz estimates (see Appendix~\ref{sec:app2}), we have
		\begin{align*}
		\|\Phi_f(v)\|_{X^\beta_\delta} &\leq C \left\|h^{\beta/2}\left(|g_f+v|^2 (g_f+v) \right)\right\|_{L^{8/7}([-\delta,\delta], L^{4/3})}
		\end{align*}
		By the product rule (see Lemma \ref{lem-prod-rule}), we have, using H\"older in time,
		\begin{align*}
		\|\Phi_f(v)\|_{X^\beta_\delta} &\leq C\|g_f+v\|^2_{L^{24/7}([-\delta,\delta], L^4)} \|h^{\beta/2}(g_f+v)\|_{L^{24/7}([-\delta,\delta],L^4)} \\
		&\leq C\delta^{1/2} \|g_f+v\|^2_{L^8([-\delta,\delta],L^4)} \|h^{\beta/2}(g_f+v)\|_{L^8([-\delta,\delta],L^4)}.
		\end{align*}
		By the norm equivalence \eqref{equi-norm}, we infer that
		\begin{align*}
		\|\Phi_f(v)\|_{X^\beta_\delta} &\leq C\delta^{1/2} \|h^{\beta/2}(g_f+v)\|^3_{L^8([-\delta,\delta],L^4)} \\
		&\leq C \delta^{1/2} \left(\|g_f\|^3_{L^8([-\delta,\delta],\Wc^{\beta,4})} + \|v\|^3_{L^8([-\delta,\delta],\Wc^{\beta,4})}\right) \\
		&\leq C \delta^{1/2} \left(\|g_f\|^3_{L^8([-\delta,\delta],\Wc^{\beta,4})} + \|v\|^3_{X^\beta_\delta}\right).
		\end{align*}
		On the other hand, we have
		\[
		\|\Phi_f(v_1)-\Phi_f(v_2)\|_{X^\beta_\delta} \leq C \left\| h^{\beta/2}\left(|g_f+v_1|^2(g_f+v_1)-|g_f+v_2|^2(g_f+v_2)\right)\right\|_{L^{8/7}([-\delta,\delta],L^{4/3})}.
		\]
		By writing
		\begin{align*}
		|g_f+v_1|^2(g_f+v_1)&-|g_f+v_2|^2(g_f+v_2) \\
		&= |g_f+v_1|^2(v_1-v_2) + |g_f+v_2|^2(v_1-v_2) + (g_f+v_1)(g_f+v_2)(\overline{v}_1-\overline{v}_2),
		\end{align*}
		and estimating as above, we have
		\begin{align*}
		\|\Phi_f(v_1)-\Phi_f(v_2)\|_{X^\beta_\delta} &\leq C \|h^{\beta/2}(v_1-v_2)\|_{L^{24/7}([-\delta,\delta], L^4)} \\
		&\quad \times \left(\|h^{\beta/2}(g_f+v_1)\|^2_{L^{24/7}([-\delta,\delta],L^4)} +\|h^{\beta/2}(g_f+v_2)\|^2_{L^{24/7}([-\delta,\delta],L^4)} \right) \\
		&\leq C \delta^{1/2} \|h^{\beta/2}(v_1-v_2)\|_{L^8([-\delta,\delta],L^4)} \\
		&\quad \times \left( \|h^{\beta/2}(g_f+v_1)\|^2_{L^8([-\delta,\delta],L^4)} +\|h^{\beta/2}(g_f+v_2)\|^2_{L^8([-\delta,\delta],L^4)}  \right) \\
		&\leq C\delta^{1/2} \|v_1-v_2\|_{X^\beta_\delta} \left( \|g_f\|^2_{L^8([-\delta,\delta],\Wc^{\beta,4})} + \|v_1\|^2_{X^\beta_\delta} + \|v_2\|^2_{X^\beta_\delta}\right).
		\end{align*}
		Thus we get the desired estimates.
	\end{proof}
	
	Now we may complete the 
	\begin{proof}[Proof of Proposition~\ref{prop-almo-sure-lwp}]
Denote
	\[
	\Sigma(K):= \left\{f \in \Hc^\theta: \|f\|_{\Hc^\theta} \leq K, \|g_f\|_{L^8([-1,1], \Wc^{\beta, 4})} \leq K\right\},
	\]
	where $g_f(t)=e^{-\im th}f$. By Lemma \ref{lem-H-theta} and Lemma \ref{lem-est-expo}, we have
	\begin{align}
	\mu_0(\Sigma^c(K)) &\leq \mu_0(f \in \Hc^\theta : \|f\|_{\Hc^\theta} >K) + \mu_0(f\in \Hc^\theta : \|g_f\|_{L^8([-1,1],\Wc^{\beta,4})} >K) \nonumber \\
	&\leq Ce^{-cK^2}. \label{mu-0-Sigma-K}
	\end{align}
	By Lemma \ref{lem-non-est}, there exists $C>0$ such that for all $0<\delta\leq 1$, all $f\in \Sigma(K)$, and all $v,v_1,v_2 \in B_{X^\beta_\delta}(L)$,
	\begin{align*}
	\|\Phi_f(v)\|_{X^\beta_\delta} &\leq C \delta^{1/2} (K^3+L^3), \\
	d(\Phi_f(v_1), \Phi_f(v_2)) &\leq C \delta^{1/2} (K^2+L^2) d(v_1,v_2).
	\end{align*}
	Pick $L=K$ and $\delta=\nu K^{-4}$ with $\nu>0$ sufficiently small so that $0<\delta \leq 1$ and 
	\[
	2C\delta^{1/2} K^2 = 2C \sqrt{\nu} \leq \frac{1}{2}.
	\]
	We get
	\begin{align*}
	\|\Phi_f(v)\|_{X^\beta_\delta} &\leq K, \\
	d(\Phi_f(v_1),\Phi_f(v_2)) &\leq \frac{1}{2} d(v_1,v_2),
	\end{align*}
	hence $\Phi_f$ is a contraction $\left(B_{X^\beta_\delta}(K),d\right)$. In particular, for each $f\in \Sigma(K)$, there exists a unique solution to \eqref{eq:intro NLS} with initial datum $\left.u\right|_{t=0}=f$ satisfying
	\[
	u(t)-e^{-\im th}f \in C([-\delta,\delta], \Hc^\beta) \cap L^8([-\delta,\delta], \Wc^{\beta,4}).
	\]
	Moreover, for all $f\in \Sigma(K)$, we have $\|f\|_{\Hc^\theta} \leq K$ and the corresponding solution to \eqref{eq:intro NLS} satisfies
	\begin{align*}
	\|u(t)\|_{\Hc^\theta} &\leq \|e^{-\im th} f\|_{\Hc^\theta} +\|u(t)-e^{-\im th} f\|_{\Hc^\theta} \\
	&\leq \|f\|_{\Hc^\theta}+\|u(t)-e^{-\im th}f\|_{\Hc^\beta} \\
	&\leq 2K, \quad \forall |t| \leq \delta
	\end{align*}
	which is \eqref{est-solu-sub}. 
	
	Let us prove \eqref{cont-prop-sub}. We have
	\begin{align}
	\|u_1(t)-u_2(t)\|_{\Hc^\theta} &\leq \|e^{-\im th}f_1-e^{-\im th}f_2\|_{\Hc^\theta} + \|v_1(t)-v_2(t)\|_{\Hc^\theta} \nonumber\\
	&\leq \|f_1-f_2\|_{\Hc^\theta} +\|v_1(t)-v_2(t)\|_{\Hc^\beta}, \label{cont-prop-sub-prof}
	\end{align}
	where
	\[
	v_1(t):= u_1(t)-e^{-\im th}f_1, \quad v_2(t):= u_2(t)-e^{-\im th}f_2.
	\]
	Using Duhamel's formula, we estimate as above to get
	\begin{align*}
	\|v_1-v_2\|_{X^\beta_\delta} &\leq \||g_{f_1}+v_1|^2 (g_{f_1}+v_1)-|g_{f_2}+v_2|^2(g_{f_2}+v_2)\|_{L^{8/7}([-\delta,\delta], \Wc^{\beta,4/3})} \\
	&\leq C\delta^{1/2} \Big(\|g_{f_1}\|^2_{L^8([-\delta,\delta], \Wc^{\beta,4})}+ \|g_{f_2}\|^2_{L^8([-\delta,\delta], \Wc^{\beta,4})} + \|v_1\|^2_{X^\beta_\delta} + \|v_2\|^2_{X^{\beta}_\delta}\Big) \\
	&\quad \times \Big(\|g_{f_1}-g_{f_2}\|_{L^8([-\delta,\delta],\Wc^{\beta,4})} + \|v_1-v_2\|_{X^\beta_\delta}\Big).
	\end{align*}
	Since $f_1, f_2 \in \Sigma(K)$, we have 
	\[
	\|g_{f_1}\|_{L^8([-1,1],\Wc^{\beta,4})}, ~\|g_{f_2}\|_{L^8([-1,1], \Wc^{\beta,4})},~\|v_1\|_{X^\beta_\delta}, ~ \|v_2\|_{X^\beta_\delta} \leq K,
	\]
	hence
	\begin{align*}
	\|v_1-v_2\|_{X^\beta_\delta} &\leq \||g_{f_1}+v_1|^2 (g_{f_1}+v_1)-|g_{f_2}+v_2|^2(g_{f_2}+v_2)\|_{L^{8/7}([-\delta,\delta], \Wc^{\beta,4/3})} \\
	&\leq C\delta^{1/2} K^2 \Big(\|g_{f_1}-g_{f_2}\|_{L^8([-\delta,\delta],\Wc^{\beta,4})} + \|v_1-v_2\|_{X^\beta_\delta}\Big).
	\end{align*}
	By choosing $\delta>0$ sufficiently small so that $C\delta^{1/2} K^2\leq \frac{1}{2}$, we get
	\[
	\|v_1-v_2\|_{X^\beta_\delta} \leq \|g_{f_1}-g_{f_2}\|_{L^8([-\delta,\delta], \Wc^{\beta,4})}.
	\]
	This together with \eqref{cont-prop-sub-prof} imply \eqref{cont-prop-sub}. The proof is complete.
\end{proof}

\subsection{Measure invariance and global well-posedness}	
\label{sec:GloSub}

\begin{theorem}[\bf Almost sure global well-posedness, $1<s\leq 2$] \label{theo:almo-gwp-sub} \mbox{} \\
	Let $1<s\leq2$, $V$ satisfy Assumption \ref{assu-V}, and $\theta<\frac{1}{2}-\frac{1}{s}$. Assume in addition that $s>\frac{8}{5}$ for the focusing nonlinearity. Then there exists a set $\Sigma \subset \Hc^\theta$ satisfying $\mu(\Sigma)=1$ such that for any $f\in \Sigma$, the corresponding solution to \eqref{eq:intro NLS} with initial data $\left.u\right|_{t=0}=f$ exists globally in time and satisfies
	\[
	\|u(t)\|_{\Hc^\theta} \leq C \left(\omega(f)+\log^{\frac{1}{2}}(1+|t|)\right), \quad \forall t\in \R
	\]
	for some constant $\omega(f)>0$ depending on $f$ and some universal constant $C>0$. Moreover, the Gibbs measure $\mu$ is invariant under the flow of \eqref{eq:intro NLS}.
\end{theorem}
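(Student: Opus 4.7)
The proof follows the architecture of Theorem \ref{theo:almo-gwp-supe}, with the deterministic LWP of Proposition \ref{prop-dete-lwp} replaced everywhere by the probabilistic LWP of Proposition \ref{prop-almo-sure-lwp}. Fix $\theta<\frac{1}{2}-\frac{1}{s}$ and auxiliary parameters $0<\beta<\frac{s-1}{2s}$. The first observation is that Proposition \ref{prop-almo-sure-lwp} carries over \emph{mutatis mutandis} to the approximating equation \eqref{NLS-appro}, with constants uniform in $\Lambda\ge\lambda_1$: its proof rests only on the probabilistic Strichartz estimates of Lemma \ref{lem-est-expo} (which are $\Lambda$-independent, since $\mu_0$ is invariant under $e^{-\im th}$ and $Q_\Lambda$ is bounded on $L^p$ by \eqref{eq:est-Q-Lamb}) and on the trilinear bounds of Lemma \ref{lem-non-est}, both of which survive the insertion of the projector $Q_\Lambda$. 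This yields, for each $K>0$ and $\Lambda\ge\lambda_1$, a set $\Sigma_\Lambda(K)\subset\Hc^\theta$ with $\mu_0(\Sigma^c_\Lambda(K))\le Ce^{-cK^2}$ (uniformly in $\Lambda$), on which the approximate flow $\Phi_\Lambda$ exists on $[-\delta,\delta]$ with $\delta=\nu K^{-4}$ and $\|u_\Lambda(t)\|_{\Hc^\theta}\le 2K$. Since $d\mu_\Lambda/d\mu_0$ is uniformly bounded in $L^2(d\mu_0)$, Cauchy--Schwarz yields $\mu_\Lambda(\Sigma^c_\Lambda(K))\le Ce^{-cK^2}$ as well.

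Next I mimic Lemma \ref{lem:globflow1}: set $K=C(\log(T/\vareps))^{1/2}$, $J=\lfloor T/\delta\rfloor$, and
\[
\Sigma_{\Lambda,T,\vareps}:=\bigcap_{j=-J}^J \Phi_\Lambda(-j\delta)(\Sigma_\Lambda(K)).
\]
The invariance of $\mu_\Lambda$ under $\Phi_\Lambda$ (Lemma \ref{lem:app dyn}) and a union bound give $\mu_\Lambda(\Sigma^c_{\Lambda,T,\vareps})\le 2(T/\delta)\mu_\Lambda(\Sigma^c_\Lambda(K))\le CT(K+1)^4 e^{-cK^2}\le C\vareps$, while $J$ successive applications of the approximate probabilistic LWP on intervals of length $\delta$ produce $\|u_\Lambda(t)\|_{\Hc^\theta}\le 2K\le C(\log(T/\vareps))^{1/2}$ for all $|t|\le T$. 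The analog of Lemma \ref{lem:appflow1} then follows: for $f\in\Sigma_{\Lambda,T,\vareps}$, the Duhamel comparison of $u$ with $v_\Lambda:=Q_\Lambda u_\Lambda$ in $X^\beta_\delta$ splits into a linear term of size $\Lambda^{-\kappa}\|f\|_{\Hc^\theta}$ and a nonlinear term controlled by Lemma \ref{lem-non-est} together with the uniform probabilistic Strichartz bound on $v_\Lambda$ inherited from $\Sigma_\Lambda(K)$; iteration over the $J$ subintervals gives $\|u(t)-u_\Lambda(t)\|_{\Hc^\theta}\le C(T,\vareps)\Lambda^{-\kappa}$, so that for $\Lambda_1$ large the set $\Sigma_{T,\vareps}:=\Sigma_{\Lambda_1,T,\vareps}$ carries a flow for \eqref{eq:intro NLS} with the stated logarithmic bound. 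The $\mu$-measure estimate $\mu(\Sigma^c_{T,\vareps})\le C\vareps^\alpha$ is obtained as at the end of the proof of Lemma \ref{lem:appflow1}, by the H\"older argument using the uniform $L^2(d\mu_0)$-bound on the density in the defocusing case, and a Cauchy--Schwarz against the $L^2$-cutoff in the focusing case.

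Globalization proceeds verbatim as in Theorem \ref{theo:almo-gwp-supe}: setting $T_n=2^n$, $\vareps_n=\vareps 2^{-n}$, $\Sigma_\vareps=\bigcap_n\Sigma_{T_n,\vareps_n}$ and $\Sigma=\bigcup_{\vareps>0}\Sigma_\vareps$ yields the desired full-measure set with the $\log^{1/2}$ growth. For measure invariance one repeats the portmanteau chain at the end of Theorem \ref{theo:almo-gwp-supe}, but since the Lipschitz continuity \eqref{cont-prop-sub-beta} only controls $\Hc^\theta$-distances by $\Hc^\beta$-distances (on the good set $\Sigma_\Lambda(K)$), the continuity step must be performed with $\Hc^\beta$-balls rather than $\Hc^\theta$-balls. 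The preliminary reductions are accordingly adapted to approximate a general closed $U\subset\Hc^\theta$ by closed sets additionally bounded in $\Wc^{\beta,p}$ for $p$ large, whose union has full $\mu$-measure by the Fernique bound of Lemma \ref{lem-expo-p}; on such sets, $\Hc^\beta$-tubes $U+B_{\Hc^\beta,c\vareps}$ sit inside the open $\Hc^\theta$-tube $U+B_{\Hc^\theta,Cc\vareps}$ via the continuous embedding $\Hc^\beta\hookrightarrow\Hc^\theta$, which is precisely what is needed to close the portmanteau inequalities.

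The main obstacle is the measure-invariance step. In the super-harmonic regime, deterministic LWP gives Lipschitz continuity of the flows $\Phi_\Lambda$ directly in $\Hc^\theta$ — the very topology in which the approximating measures converge weakly — so $\Hc^\theta$-balls simultaneously play the role of a continuity neighborhood and of an open set for portmanteau. Here this double role cannot be played by a single norm: continuity without loss holds only in $\Hc^\beta$ (with $\beta>\theta$), and $\Hc^\beta$-balls are not open in $\Hc^\theta$. The resolution is to restrict the approximation to sets with additional $L^p$-regularity, exploiting that the Gaussian measure concentrates on such sets by Lemma \ref{lem-expo-p}, and to compare $\Hc^\beta$-tubes with open $\Hc^\theta$-tubes using the embedding $\Hc^\beta\hookrightarrow\Hc^\theta$. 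The bookkeeping is more delicate than in the super-harmonic case but structurally analogous.
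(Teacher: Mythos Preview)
Your global-existence outline is essentially the paper's, but one key mechanism is missing. To extract the $\Lambda^{-\kappa}$ decay in the comparison step you must run the probabilistic LWP at \emph{auxiliary higher exponents} $\theta_1>\theta$, $\beta_1>\beta$ with $\eta:=\theta_1-\theta=\beta_1-\beta>0$: the good set is $\Sigma_{\theta_1,\beta_1}(K)=\{\|f\|_{\Hc^{\theta_1}}\le K,\ \|e^{-\im th}f\|_{L^8_t\Wc^{\beta_1,4}}\le K\}$, and it is the resulting $\Hc^{\theta_1}$ and $\Wc^{\beta_1,4}$ bounds on $u_\Lambda$ that produce $\|(\id-Q_\Lambda)f\|_{\Hc^\theta}\lesssim\Lambda^{-\eta/2}\|f\|_{\Hc^{\theta_1}}$ and the analogous nonlinear gains. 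With only $\theta,\beta$ as you wrote, your ``linear term of size $\Lambda^{-\kappa}\|f\|_{\Hc^\theta}$'' has no source of decay.

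For measure invariance the paper does \emph{not} attempt to salvage the portmanteau chain. It switches to the equivalent test-function characterization $\int F(\Phi(t)u)\,d\mu=\int F(u)\,d\mu$ for bounded continuous $F$, and writes the difference as $(\mathrm I)+(\mathrm{II})+(\mathrm{III})+(\mathrm{IV})$ by inserting $\mu_\Lambda$ and $\Phi_\Lambda$. Terms $(\mathrm I),(\mathrm{IV})$ vanish by $\mu_\Lambda\rightharpoonup\mu$, $(\mathrm{III})=0$ by invariance of $\mu_\Lambda$, and $(\mathrm{II})=\int|F(\Phi(t)u)-F(\Phi_\Lambda(t)u)|\,d\mu_\Lambda$ is split over $\Sigma_{\theta_1,\beta_1}(K)$: on the good set the comparison estimate gives $\|\Phi(t)u-\Phi_\Lambda(t)u\|_{\Hc^\theta}\le CK\Lambda^{-\eta/2}$ so continuity of $F$ concludes; on the complement one uses $\|F\|_{L^\infty}$ times $\mu_\Lambda(\Sigma_{\theta_1,\beta_1}^c(K))\le Ce^{-cK^2}$. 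This sidesteps the ball mismatch entirely.

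Your proposed portmanteau fix does not close. The embedding $\Hc^\beta\hookrightarrow\Hc^\theta$ gives $B_{\Hc^\beta,r}\subset B_{\Hc^\theta,Cr}$, but the chain needs the \emph{opposite} inclusion: after enlarging $U$ to an $\Hc^\theta$-open set (required for $\liminf\mu_\Lambda\ge\mu$), you must apply the Lipschitz bound to every point of that enlargement, which would require $B_{\Hc^\theta,r}\subset B_{\Hc^\beta,Cr}$ --- false since $\beta>\theta$. Restricting $U$ to be bounded in $\Wc^{\beta,p}$ does not reverse the inclusion of balls around a point. You have correctly diagnosed the obstruction, but the resolution you describe is in the wrong direction; the paper's test-function route is what actually works here.
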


The proof of this theorem follows by the same line of reasoning as in Theorem \ref{theo:almo-gwp-supe}. However, due to the lack of deterministic local well-posedness, we need to proceed differently.

\begin{lemma}[\bf Uniform estimate for the approximate flow, $1<s\leq 2$]  \label{lem:globflow2} \mbox{}\\
	Let $s, \theta$ be as in the previous statement and take $\theta_1$ satisfying
	\[
	\theta<\theta_1<\frac{1}{2}-\frac{1}{s}.
	\] 
	Then for all $\Lambda \geq \lambda_1$ and $T,\vareps>0$. There exist $\Sigma_{\Lambda, T,\vareps} \subset \Hc^{\theta_1}$ and $C>0$ independent of $\Lambda, T,\vareps$ such that:
	
	\noindent (1) $\mu_\Lambda(\Sigma^c_{\Lambda,T,\vareps})\leq C\vareps$.
	
	\noindent (2) For $f\in \Sigma_{\Lambda, T,\vareps}$, there exists a unique solution to \eqref{NLS-appro} on $[-T,T]$ satisfying 
	\begin{align} \label{u-Lambda-T-sub}
	\|u_\Lambda(t)\|_{\Hc^{\theta_1}} \leq C \left(\log\frac{T}{\vareps}\right)^{1/2}, \quad \forall |t|\leq T.
	\end{align}
\end{lemma}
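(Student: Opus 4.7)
The plan is to mimic the proof of Lemma~\ref{lem:globflow1} from the super-harmonic case, replacing its deterministic local well-posedness input by the probabilistic one of Proposition~\ref{prop-almo-sure-lwp}, and using invariance of $\mu_\Lambda$ under the approximate flow to globalize via a union bound on short intervals.

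First I would observe that the contraction-mapping argument of Proposition~\ref{prop-almo-sure-lwp} adapts verbatim to the approximate equation~\eqref{NLS-appro}: the nonlinear estimates of Lemma~\ref{lem-non-est} rely only on the product rule and Strichartz estimates for $e^{-\im t h}$, while the frequency cut-offs $Q_\Lambda$ in the nonlinearity are harmless thanks to their $\Lambda$-uniform boundedness~\eqref{eq:est-Q-Lamb} on $L^p$ (and hence on $\Wc^{\beta,p}$, since $Q_\Lambda$ commutes with $h^{\beta/2}$). Running that argument with $\theta$ replaced by $\theta_1 < \frac{1}{2}-\frac{1}{s}$ yields, for every $K>0$, a random data set $\Sigma_\Lambda(K) \subset \Hc^{\theta_1}$ with $\mu_0(\Sigma_\Lambda(K)^c) \le C e^{-cK^2}$ (via Lemma~\ref{lem-H-theta} and Lemma~\ref{lem-est-expo}) and a local time $\delta = \nu K^{-4}$ such that for every $f \in \Sigma_\Lambda(K)$ the approximate flow $\Phi_\Lambda(\cdot) f$ exists on $[-\delta,\delta]$ with $\|\Phi_\Lambda(t)f\|_{\Hc^{\theta_1}} \le 2K$, all constants being independent of $\Lambda$.

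Setting $J := \left[T/\delta\right]$ and
\[
\Sigma_{\Lambda,T,K} := \bigcap_{j=-J}^{J} \Phi_\Lambda(-j\delta)(\Sigma_\Lambda(K)),
\]
I would then combine invariance of $\mu_\Lambda$ under the flow of~\eqref{NLS-appro} (Lemma~\ref{lem:app dyn}) with Cauchy--Schwarz applied to the density $d\mu_\Lambda/d\mu_0 = G_\Lambda/Z^\Lambda$ (uniformly bounded in $L^2(d\mu_0)$, with $Z^\Lambda \ge c > 0$, exactly as in the proof of Lemma~\ref{lem:globflow1}) to get
\[
\mu_\Lambda(\Sigma_{\Lambda,T,K}^c) \le (2J+1)\, \mu_\Lambda(\Sigma_\Lambda(K)^c) \le C\frac{T}{\delta} \bigl(\mu_0(\Sigma_\Lambda(K)^c)\bigr)^{1/2} \le C T K^4 e^{-cK^2}.
\]
Choosing $K = C\left(\log(T/\vareps)\right)^{1/2}$ for $C$ sufficiently large then makes the right-hand side $\le C\vareps$, which proves Item~(1) upon relabeling $\Sigma_{\Lambda,T,\vareps} := \Sigma_{\Lambda,T,K}$. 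Item~(2) follows by direct iteration: any $|t| \le T$ decomposes as $t = j\delta + \delta_1$ with $|\delta_1| \le \delta$, and by construction $\Phi_\Lambda(j\delta) f \in \Sigma_\Lambda(K)$, so the local theory applied with initial datum $\Phi_\Lambda(j\delta) f$ yields $\|\Phi_\Lambda(t) f\|_{\Hc^{\theta_1}} \le 2K \lesssim \left(\log(T/\vareps)\right)^{1/2}$.

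The main conceptual subtlety, and the place where the argument genuinely differs from the super-harmonic case, lies in handling the random Strichartz condition $\|e^{-\im t h} f\|_{L^8([-1,1], \Wc^{\beta,4})} \le K$ that is built into $\Sigma_\Lambda(K)$. At each iteration step we restart the local theory from $\Phi_\Lambda(j\delta) f$ rather than from a fresh Gaussian sample, so a naive estimate would lose control of this random quantity after a single iteration. The key point, which dictates the intersection structure in the definition of $\Sigma_{\Lambda,T,K}$, is that invariance of $\mu_\Lambda$ under $\Phi_\Lambda(j\delta)$ makes the event $\{\Phi_\Lambda(j\delta) f \in \Sigma_\Lambda(K)\}$ have the same $\mu_\Lambda$-probability as $\{f \in \Sigma_\Lambda(K)\}$, so the union bound over $|j| \le J$ costs only a polynomial factor $2J+1 \lesssim TK^4$, which is easily absorbed by the Gaussian tail $e^{-cK^2}$.
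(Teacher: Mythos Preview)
Your proposal is correct and follows essentially the same approach as the paper: adapt Proposition~\ref{prop-almo-sure-lwp} to the approximate flow via~\eqref{eq:est-Q-Lamb}, define the good set as an intersection of preimages under $\Phi_\Lambda(-j\delta)$, use invariance of $\mu_\Lambda$ together with Cauchy--Schwarz on the density $G_\Lambda/Z^\Lambda$ to bound $\mu_\Lambda(\Sigma^c)$, and finally choose $K \sim (\log(T/\vareps))^{1/2}$. The only cosmetic differences are that the paper takes $\delta = \nu(K+1)^{-4}$ and also fixes a second exponent $\beta_1$ with $\beta_1-\beta = \theta_1-\theta$; both choices are irrelevant for the present lemma and are made only in anticipation of the comparison argument in Lemma~\ref{lem:appflow2}.
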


\begin{proof}
	Pick $0\leq \beta <\beta_1<\frac{s-1}{2s}$ so that
	\begin{align} \label{eta}
	\theta_1-\theta = \beta_1-\beta =\eta>0.
	\end{align}
	For $K>0$, we denote
	\[
	\Sigma_{\theta_1, \beta_1} (K):=\left\{ f\in \Hc^{\theta_1} : \|f\|_{\Hc^{\theta_1}} \leq K, \|e^{-\im th} f\|_{L^8([-1,1],\Wc^{\beta_1,4})} \leq K\right\}.
	\]
	Using \eqref{eq:est-Q-Lamb}, the probabilistic local well-posedness given in Proposition \ref{prop-almo-sure-lwp} applies to \eqref{NLS-appro} and we have that for $f\in \Sigma_{\theta_1,\beta_1}(K)$, there exists a unique solution to \eqref{NLS-appro} satisfying
	\begin{align} \label{solu-Lambda-sub}
	\|u_\Lambda(t)\|_{\Hc^{\theta_1}} \leq 2K, \quad \forall |t|\leq \delta
	\end{align}
	with
	\begin{align} \label{defi-delta-sub}
	\delta = \nu (K+1)^{-4},
	\end{align}
	where $\nu>0$ is a small constant independent of $\Lambda, K$. As for the super-harmonic case, here we choose $\delta$ a bit smaller than $\nu K^{-4}$ as in Proposition \ref{prop-almo-sure-lwp} which is useful for the iteration process (see Lemma \ref{lem:appflow2}). 
	
	Denote $J= \left[\frac{T}{\delta}\right]$ and set 
	\begin{align} \label{eq:def Sigma 2} 
	\Sigma_{\Lambda, K, \theta_1, \beta_1} := \bigcap_{j=-J}^J \Phi_\Lambda(-j\delta) (\Sigma_{\theta_1,\beta_1}(K)),
	\end{align}
	where $\Phi_\Lambda$ is the solution map of \eqref{NLS-appro}. Note that the set $\Sigma_{\Lambda, K, \theta_1,\beta_1}$ contains all initial data $f$ such that the corresponding solutions $u_\Lambda(t)$ of \eqref{NLS-appro} satisfy 
	\begin{align} \label{est-u-Lambda-sub}
	\|u_\Lambda(t)\|_{\Hc^{\theta_1}}\leq 2K, \quad \forall |t|\leq T.
	\end{align} 
	In fact, for $|t|\leq T$, there exist an integer $j$ and $\delta_1 \in (-\delta, \delta)$ such that $t=j\delta + \delta_1$. Thus
	\[
	u_\Lambda(t) = \Phi_\Lambda(\delta_1) \Phi_\Lambda(j \delta) f.
	\]
	Since $f \in \Phi_{\Lambda}(-j\delta)(\Sigma_{\theta_1,\beta_1}(K))$, we have $\Phi_\Lambda(j\delta) f \in \Sigma_{\theta_1,\beta_1}(K)$ and the observation follows from \eqref{solu-Lambda-sub}. 
	
	Since $\mu_\Lambda$ is invariant under the flow of \eqref{NLS-appro}, we have
	\begin{align*}
	\mu_\Lambda(\Sigma_{\Lambda, K,\theta_1,\beta_1}^c) &= \mu_\Lambda \Big( \Big( \bigcap_{j=-J}^J \Phi_\Lambda(-j\delta)(\Sigma_{\theta_1,\beta_1}(K))\Big)^c \Big) \\
	&\leq \sum_{j=-J}^J \mu_\Lambda(\Phi_\Lambda(-j\delta)(\Sigma^c_{\theta_1,\beta_1}(K))) \\
	&=\sum_{j=-J}^J \mu_\Lambda(\Sigma^c_{\theta_1, \beta_1}(K)) \\
	&\leq 2\frac{T}{\delta} \mu_\Lambda(\Sigma^c_{\theta_1, \beta_1}(K)).
	\end{align*}
	Thanks to Proposition \ref{prop-almo-sure-lwp} we have
	\begin{align*}
	\mu_\Lambda(\Sigma^c_{\theta_1,\beta_1}(K)) &=\int \mathds{1}_{\Sigma^c_{\theta_1,\beta_1}(K)} d\mu_\Lambda(u) \nonumber\\
	&= \int \mathds{1}_{\Sigma^c_{\theta_1,\beta_1}(K)} \frac{1}{Z^{\Lambda}} G_\Lambda(u) d\mu_0(u) \nonumber\\
	&\leq \frac{1}{Z^{\Lambda}} \|G_\Lambda(u)\|_{L^2(d\mu_0)} \left(\mu_0(\Sigma^c_{\theta_1,\beta_1}(K))\right)^{1/2} \nonumber\\
	&\leq Ce^{-cK^2}, 
	\end{align*}
	where we have used $G_\Lambda(u) \in L^2(d\mu_0)$ and $Z^\Lambda \geq C>0$ uniformly in $\Lambda$ and \eqref{mu-0-Sigma-K}. In particular, we obtain
	\[
	\mu_\Lambda(\Sigma^c_{\Lambda, K,\theta_1,\beta_1}) \leq \frac{2C}{\nu}T(K+1)^4 e^{-cK^2}  \leq CTe^{-cK^2}
	\]
	for some constants $C,c>0$. By choosing $K$ as in \eqref{choi-K} and setting $\Sigma_{\Lambda,T,\vareps} = \Sigma_{\Lambda, K, \theta_1, \beta_1}$, we get the desired estimates.
\end{proof}

\begin{lemma}[\textbf{Comparison between approximate and exact flows, $1<s\leq2$}]\label{lem:appflow2}\mbox{}\\
	Let $\Sigma_{\Lambda, T,\vareps}$ be as in the previous lemma. Then for any $f\in \Sigma_{\Lambda,T,\vareps}$, there exists a unique solution to~\eqref{eq:intro NLS} with initial data $\left.u\right|_{t=0}=f$ satisfying
	\begin{align} \label{est-diff-sub}
	\|u(t)-u_\Lambda(t)\|_{\Hc^\theta} \leq C(T,\vareps)\Lambda^{-\eta/2}, \quad \forall |t|\leq T
	\end{align}
	for all $\Lambda$ sufficiently large and some constant $C(T,\vareps)>0$ independent of $\Lambda$, where $\eta$ is as in \eqref{eta}. In particular, there exist $\Sigma_{T,\vareps} \subset \Hc^\theta$ and $C>0$ independent of $T, \vareps$ such that:
	
	\noindent (1) $\mu(\Sigma^c_{T,\vareps}) \leq C\vareps$.
	
	\noindent (2) For $f\in \Sigma_{T,\vareps}$, there exists a unique solution to \eqref{eq:intro NLS} with initial data $\left.u\right|_{t=0}=f$ on $[-T,T]$ satisfying
	\begin{align}\label{est-u-T-sub}
	\|u(t)\|_{\Hc^\theta} \leq C \left(\log \frac{T}{\vareps}\right)^{1/2}, \quad \forall |t|\leq T.
	\end{align}
\end{lemma}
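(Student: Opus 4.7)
The plan is to adapt the proof of Lemma~\ref{lem:appflow1} to the probabilistic setting, using the local theory of Proposition~\ref{prop-almo-sure-lwp} in the resolution space $X^\beta_\delta := C([-\delta, \delta], \Hc^\beta) \cap L^8([-\delta, \delta], \Wc^{\beta, 4})$, with $\beta, \beta_1$ chosen so that $\beta_1 - \beta = \theta_1 - \theta = \eta > 0$ as in~\eqref{eta}. Since $\Sigma_{\Lambda, T, \vareps} \subset \Sigma_{\theta_1, \beta_1}(K) \subset \Sigma(K)$ for $K \sim (\log(T/\vareps))^{1/2}$, Proposition~\ref{prop-almo-sure-lwp} yields a unique solution $u$ of \eqref{eq:intro NLS} on $[-\delta, \delta]$ with $\delta = \nu K^{-4}$ and $\|u - e^{-\im t h} f\|_{X^\beta_\delta} \leq K$.

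Setting $v_\Lambda = Q_\Lambda u_\Lambda$, which satisfies $\im \partial_t v_\Lambda - h v_\Lambda = \pm Q_\Lambda^2(|v_\Lambda|^2 v_\Lambda)$, I would write the Duhamel identity
\[
u - v_\Lambda = e^{-\im t h}(f - Q_\Lambda f) \mp \im \int_0^t e^{-\im(t-\tau)h}\bigl[|u|^2 u - Q_\Lambda^2(|v_\Lambda|^2 v_\Lambda)\bigr](\tau)\, d\tau
\]
and estimate the right-hand side in $X^\beta_\delta$ via Strichartz inequalities with loss of derivatives (Appendix~\ref{sec:app2}), the fractional product rule (Lemma~\ref{lem-prod-rule}), and the telescoping $|u|^2 u - Q_\Lambda^2(|v_\Lambda|^2 v_\Lambda) = (|u|^2 u - |v_\Lambda|^2 v_\Lambda) + (I - Q_\Lambda^2)(|v_\Lambda|^2 v_\Lambda)$. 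The difference-of-cubes gets absorbed on the left by smallness of $\delta \sim K^{-4}$. The term $(I - Q_\Lambda^2)(|v_\Lambda|^2 v_\Lambda)$ and the linear piece $e^{-\im th}(f - Q_\Lambda f)$ are each frequency-localized above $\Lambda/2$, and comparing them with the $\Wc^{\beta_1, 4}$-type norms controlled on $\Sigma_{\theta_1, \beta_1}(K)$ (in particular $\|e^{-\im th}f\|_{L^8([-1,1], \Wc^{\beta_1, 4})} \leq K$) gains the factor $\Lambda^{-\eta/2}$. Combining with $\|v_\Lambda - u_\Lambda\|_{\Hc^\theta} = \|(I - Q_\Lambda)u_\Lambda\|_{\Hc^\theta} \leq C K \Lambda^{-\eta/2}$ yields $\|u(t) - u_\Lambda(t)\|_{\Hc^\theta} \leq C K \Lambda^{-\eta/2}$ on $[-\delta, \delta]$.

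Iterating over the $\lfloor T/\delta\rfloor$ subintervals is the delicate step: by~\eqref{eq:def Sigma 2}, $\Phi_\Lambda(j\delta)f \in \Sigma_{\theta_1, \beta_1}(K)$ at every iteration time, which supplies exactly the $\Hc^{\theta_1}$- and free-flow-Strichartz bounds needed to reapply Proposition~\ref{prop-almo-sure-lwp} at $u(j\delta)$, provided we can transfer them across the error $u(j\delta) - u_\Lambda(j\delta) = [u(j\delta) - v_\Lambda(j\delta)] - (I - Q_\Lambda)u_\Lambda(j\delta)$. The first summand is controlled in $\Hc^\beta$ (via $X^\beta_\delta$) from the previous step; the second is high-frequency so its free evolution gains a factor $\Lambda^{-\eta/2}$; and deterministic Strichartz with loss of derivatives then converts both into a small contribution to the $L^8\Wc^{\beta, 4}$-norm on $[j\delta - 1, j\delta + 1]$, letting us restart LWP with $K$ replaced by $K + o(1)$. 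Summing the geometrically growing errors gives $\|u(t) - u_\Lambda(t)\|_{\Hc^\theta} \leq C e^{cT(K+1)^4} K \Lambda^{-\eta/2} =: C(T, \vareps)\Lambda^{-\eta/2}$ for $|t| \leq T$, which is~\eqref{est-diff-sub}. The construction of $\Sigma_{T, \vareps}$ and the bounds \eqref{est-u-T-sub}, $\mu(\Sigma_{T, \vareps}^c) \leq C\vareps$, then follow verbatim from Lemma~\ref{lem:appflow1}: choose $\Lambda_1$ so large that $C(T, \vareps)\Lambda_1^{-\eta/2} \ll 1$, set $\Sigma_{T, \vareps} = \Sigma_{\Lambda_1, T, \vareps}$, derive \eqref{est-u-T-sub} from \eqref{u-Lambda-T-sub} by triangle inequality, and bound $\mu(\Sigma_{\Lambda_1, T, \vareps}^c)$ by a H\"older comparison between the interacting density $G$ and the truncated density $G_{\Lambda_1}$ (handling defocusing via $\|Q_{\Lambda_1}u\|_{L^4}^4 \leq C\|u\|_{L^4}^4$ and focusing via $\|G\|_{L^2(d\mu_0)} < \infty$ together with the mass-cutoff comparison), combined with Lemma~\ref{lem:globflow2}. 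The main obstacle is precisely this iteration step: unlike the deterministic LWP used in Lemma~\ref{lem:appflow1}, the probabilistic LWP demands a random-data Strichartz hypothesis on the initial datum at each restart, and propagating it through the splitting is only possible thanks to the built-in slack $\eta > 0$ between the regularity on which the Gibbs measure is supported and that at which we run the fixed-point argument.
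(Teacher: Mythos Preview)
Your overall strategy matches the paper's, but there is a technical gap in the way you set up the Duhamel estimate. You propose to estimate $u - v_\Lambda$ directly in $X^\beta_\delta = C([-\delta,\delta],\Hc^\beta) \cap L^8([-\delta,\delta],\Wc^{\beta,4})$, claiming that the linear piece $e^{-\im th}(f - Q_\Lambda f)$ gains $\Lambda^{-\eta/2}$ from its high-frequency localization. This works for the $L^8\Wc^{\beta,4}$ component (compare to $\|e^{-\im th}f\|_{L^8\Wc^{\beta_1,4}} \leq K$), but \emph{fails} for the $C(\Hc^\beta)$ component: since $f$ lies only in $\Hc^{\theta_1}$ with $\theta_1 < 0 < \beta$, the quantity $\|(I-Q_\Lambda)f\|_{\Hc^\beta}$ is not small---indeed it typically diverges like $\Lambda^{(\beta-\theta_1)/2}$. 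So $u - v_\Lambda$ simply does not belong to $X^\beta_\delta$, and the fixed-point bootstrap you describe cannot close in that space.

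The paper sidesteps this by first peeling off the linear evolutions: writing $u = g_f + v$ and $v_\Lambda = Q_\Lambda g_f + w_\Lambda$, where $v$ and $w_\Lambda$ are pure Duhamel integrals and hence genuinely lie in $X^\beta$. One then estimates $\|v - w_\Lambda\|_{X^\beta(I_0)} \leq CK\Lambda^{-\eta/2}$ by the same telescoping you describe, and recovers the $\Hc^\theta$ bound on $u - u_\Lambda$ by adding back $\|(I-Q_\Lambda)g_f\|_{\Hc^\theta}$ and $\|(I-Q_\Lambda)u_\Lambda\|_{\Hc^\theta}$, both of which gain $\Lambda^{-\eta/2}$ from the available $\Hc^{\theta_1}$ control. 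The iteration likewise restarts the linear/nonlinear splitting at each $j\delta$, so that only Duhamel pieces are ever compared in $X^\beta$. A minor side remark: for $1<s\leq 2$ the Strichartz estimates of Appendix~\ref{sec:app2} are loss-free, not ``with loss of derivatives'' as you write; the lossy version is the $s>2$ case.
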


\begin{proof}
	{\bf Estimating the difference.} We first study the difference between $u$ and $u_\Lambda$ on $I_0=[-\delta,\delta]$, where $\delta$ is as in \eqref{defi-delta-sub}. To do this, we denote $g_f(t)=e^{-\im th}f$ and $v_\Lambda := Q_\Lambda u_\Lambda$. By Duhamel's formula, we write for $t\in I_0$,
	\[
	u(t) = g_f(t) + v(t), \quad v(t):= \mp \im \int_0^t e^{-\im (t-\tau)h} (|g_f+v|^2(g_f+v))(\tau)d\tau
	\]
	and
	\[
	v_\Lambda(t) =Q_\Lambda g_f(t) +w_\Lambda(t), \quad w_\Lambda(t) := \mp \im \int_0^t e^{-\im (t-\tau)h} (Q^2_\Lambda(|Q_\Lambda g_f +w_\Lambda|^2(Q_\Lambda g_f+w_\Lambda)))(\tau)d\tau.
	\]
	Denote
	\[
	X^\beta(I_0):= C(I_0, \Hc^{\beta}) \cap L^8(I_0,\Wc^{\beta,4}).
	\]
	By Strichartz estimates (cf Appendix~\ref{sec:app2}), we have
	\begin{align*}
	\|v-w_\Lambda\|_{X^\beta(I_0)} \lesssim \||g_f+v|^2(g_f+v) - Q^2_\Lambda (|Q_\Lambda g_f+ w_\Lambda|^2 (Q_\Lambda g_f+w_\Lambda))\|_{L^{8/7}(I_0, \Wc^{\beta,4/3})}.
	\end{align*}
	We write
	\begin{align*}
	|g_f+v|^2(g_f+v) &- Q^2_\Lambda(|Q_\Lambda g_f+w_\Lambda|^2(Q_\Lambda g_f+w_\Lambda))\\
	&=|g_f+v|^2(g_f+v)-|Q_\Lambda g_f + w_\Lambda|^2(Q_\Lambda g_f+ w_\Lambda) \\
	&\quad + (\id-Q^2_\Lambda)(|Q_\Lambda g_f + w_\Lambda|^2(Q_\Lambda g_f+ w_\Lambda)). 
	\end{align*}
	Since $f\in \Sigma_{\theta_1, \beta_1}(K)$, by Proposition \ref{prop-almo-sure-lwp} and \eqref{eq:est-Q-Lamb}, we have
	\begin{align} \label{est-v-w-I0}
	\|w_\Lambda\|_{X^{\beta_1}(I_0)} \leq CK.
	\end{align}
	Estimating as in the proof of Lemma \ref{lem-non-est} yields
	\begin{align*}
	\|(\id -Q^2_\Lambda)(|Q_\Lambda g_f+w_\Lambda|^2&(Q_\Lambda g_f+w_\Lambda))\|_{L^{8/7}(I_0, \Wc^{\beta,4/3})} \\
	&\leq C\Lambda^{(\beta-\beta_1)/2} \||Q_\Lambda g_f+w_\Lambda|^2(Q_\Lambda g_f+w_\Lambda)\|_{L^{8/7}(I_0, \Wc^{\beta_1,4/3})}\\
	&\leq C\delta^{1/2} \Lambda^{-\eta/2} \Big(\|Q_\Lambda g_f\|^3_{L^8(I_0,\Wc^{\beta_1,4})} + \|w_\Lambda\|^3_{L^8(I_0,\Wc^{\beta_1,4})} \Big) \\
	&\leq C\delta^{1/2} \Lambda^{-\eta/2} \Big(\|g_f\|^3_{L^8(I_0,\Wc^{\beta_1,4})} + \|w_\Lambda\|^3_{X^{\beta_1}(I_0)} \Big) \\
	&\leq C\delta^{1/2} K^3 \Lambda^{-\eta/2}.
	\end{align*}
	Since $\|f\|_{\Hc^\theta} \leq \|f\|_{\Hc^{\theta_1}} \leq K$ and $\|g_f\|_{L^8([-1,1], \Wc^{\beta,4})} \leq \|g_f\|_{L^8([-1,1],\Wc^{\beta_1,4})} \leq K$ due to $f \in \Sigma_{\theta_1,\beta_1}(K)$, Proposition \ref{prop-almo-sure-lwp} and \eqref{eq:est-Q-Lamb} give
	\[
	\|v\|_{X^\beta(I_0)}, ~ \|w_\Lambda\|_{X^\beta(I_0)} \leq K.
	\]
	By writing
	\begin{align*}
	|g_f+v|^2(g_f+v) &-|Q_\Lambda g_f+w_\Lambda|^2(Q_\Lambda g_f+w_\Lambda) \\
	&= (g_f-Q_\Lambda g_f + v-w_\Lambda) R_2(g_f, Q_\Lambda g_f, v, w_\Lambda), 
	\end{align*}
	where $R_2$ is a homogeneous polynomial of degree $2$ (omitting the complex conjugates for simplicity) and estimating as in Lemma \ref{lem-non-est}, we obtain
	\begin{align*}
	\|Q^2_\Lambda(|g_f+v|^2&(g_f+v) -|Q_\Lambda g_f+w_\Lambda|^2(Q_\Lambda g_f+w_\Lambda))\|_{L^{8/7}(I_0, \Wc^{\beta,4/3})} \\
	&\leq C\delta^{1/2} \left(\|g_f-Q_\Lambda g_f\|_{L^8(I_0, \Wc^{\beta,4})} + \|v-w_\Lambda\|_{L^8(I_0,\Wc^{\beta,4})} \right) \\
	&\quad \times \left(\|g_f\|^2_{L^8(I_0, \Wc^{\beta,4})} + \|Q_\Lambda g_f\|^2_{L^8(I_0, \Wc^{\beta,4})} + \|v\|^2_{L^8(I_0,\Wc^{\beta,4})} +\|w_\Lambda\|^2_{L^8(I_0,\Wc^{\beta,4})}\right) \\
	&\leq C\delta^{1/2} \left(\Lambda^{(\beta-\beta_1)/2} \|g_f\|_{L^8(I_0,\Wc^{\beta_1,4})} + \|v-w_\Lambda\|_{X^\beta(I_0)}\right) \\
	&\quad \times \left(\|g_f\|^2_{L^8(I_0, \Wc^{\beta,4})} +\|v\|^2_{X^{\beta}(I_0)} +\|w_\Lambda\|^2_{X^{\beta}(I_0)}\right) \\
	&\leq C\delta^{1/2} K^2 \left( K \Lambda^{-\eta/2} + \|v-w_\Lambda\|_{X^{\beta}(I_0)}\right).
	\end{align*}
	Thus we obtain
	\[
	\|v-w_\Lambda\|_{X^{\beta}(I_0)} \leq C \delta^{1/2} K^3 \Lambda^{-\eta/2} + C\delta^{1/2} K^2 \|v-w_\Lambda\|_{X^{\beta}(I_0)}.
	\]
	By the choice of $\delta$ (see \eqref{choi-K}), we get
	\[
	\|v-w_\Lambda\|_{X^{\beta}(I_0)} \leq K \Lambda^{-\eta/2}.
	\]
	From this and \eqref{est-u-Lambda-sub}, we deduce
	\begin{align}
	\|u-u_\Lambda\|_{L^\infty(I_0,\Hc^\theta)} &\leq \|u-v_\Lambda\|_{L^\infty(I_0,\Hc^\theta)} + \|v_\Lambda - u_\Lambda\|_{L^\infty(I_0,\Hc^\theta)} \nonumber\\
	&\leq\|g_f-Q_\Lambda g_f\|_{L^\infty(I_0,\Hc^\theta)} + \|v-w_\Lambda\|_{L^\infty(I_0, \Hc^\theta)} + \|Q_\Lambda u_\Lambda - u_\Lambda\|_{L^\infty(I_0,\Hc^\theta)} \nonumber\\
	&\leq \|f-Q_\Lambda f\|_{\Hc^\theta} + \|v-w_\Lambda\|_{L^\infty(I_0,\Hc^\beta)} + \|Q_\Lambda u_\Lambda - u_\Lambda\|_{L^\infty(I_0,\Hc^\theta)} \nonumber\\
	&\leq C \Lambda^{(\theta-\theta_1)/2}\|f\|_{\Hc^{\theta_1}} + \|v-w_\Lambda\|_{X^{\beta}(I_0)} + C\Lambda^{(\theta-\theta_1)/2}\|u_\Lambda\|_{L^\infty(I_0, \Hc^{\theta_1})} \nonumber\\
	&\leq C K \Lambda^{-\eta/2}.\label{est-diff-I0}
	\end{align}
	
	\medskip
	
	 \noindent\textbf{Iterating in time.} We now iterate this argument to other sub-intervals of $[-T,T]$. The next iteration is on $I_1:=[0,2\delta]$. To this end, we claim that, taking $\Lambda$ sufficiently large,
	\begin{align} \label{ite-I1}
	\|u(\delta)\|_{\Hc^{\theta}} \leq K+1, \quad \|e^{-\im th} u(\delta)\|_{L^8([-1,1],\Wc^{\beta,4})} \leq K+1.
	\end{align}
	In fact, the first observation follows directly from \eqref{est-diff-I0} and 
	\[
	\|u_\Lambda(\delta)\|_{\Hc^\theta} = \|\Phi_\Lambda(\delta)f\|_{\Hc^\theta}\leq \|\Phi_\Lambda(\delta)f\|_{\Hc^{\theta_1}}\leq K
	\]
	since $u_\Lambda(\delta) = \Phi_\Lambda(\delta) f \in \Sigma_{\theta_1,\beta_1}(K)$ (see \eqref{eq:def Sigma 2}). We turn to the second inequlity in~\eqref{ite-I1}, using Strichartz estimates and~\eqref{eta}
	\begin{align}\label{eq:ite-I1i}
	\|e^{-\im th} u(\delta)\|_{L^8([-1,1], \Wc^{\beta,4})} & \leq \|e^{-\im th} u_\Lambda(\delta)\|_{L^8([-1,1], \Wc^{\beta,4})} + \|e^{-\im th}(u(\delta)-u_\Lambda(\delta))\|_{L^8([-1,1],\Wc^{\beta,4})}\nonumber \\
	&\leq \|e^{-\im th} u_\Lambda(\delta)\|_{L^8([-1,1], \Wc^{\beta_1,4})} + C\|u(\delta)-u_\Lambda(\delta)\|_{\Hc^{\beta}}\nonumber \\
	&\leq K + C \Lambda^{-\eta/2}\|u-u_\Lambda\|_{L^\infty(I_0,\Hc^{\beta_1})}.
	\end{align}
	Next, using Duhamel's formulas
	\[	
	u(t) = e^{-\im th}f \mp \im \int_0^t e^{-\im (t-\tau)h} (|u|^2u)(\tau)d\tau
	\]
	and
	\[
	u_\Lambda(t) = e^{-\im th}f \mp \im \int_0^t e^{-\im (t-\tau)h}(Q_\Lambda(|Q_\Lambda u_\Lambda|^2 Q_\Lambda u_\Lambda))(\tau)d\tau,
	\]
	we have
	\begin{align}\label{eq:ite-I1ii}
	\|u-u_\Lambda\|_{L^\infty(I_0,\Hc^{\beta_1})} &\leq C\||u|^2 u- Q_\Lambda(|Q_\Lambda u_\Lambda|^2 Q_\Lambda u_\Lambda)\|_{L^{8/7}(I_0, \Wc^{\beta_1,4/3})} \nonumber \\
	&\leq C\delta^{1/2} \left(\|u\|^3_{L^8(I_0,\Wc^{\beta_1,4})} + \|u_\Lambda\|^3_{L^8(I_0,\Wc^{\beta_1,4})}\right) \\
	&\leq C\delta^{1/2} K^3 \nonumber\\
	&\leq K
	\end{align}
	by the choice of $\delta$. Here we have used the fact that
	\begin{align*}
	\|u\|_{L^8(I_0, \Wc^{\beta_1,4})} &\leq \|g_f\|_{L^8(I_0, \Wc^{\beta_1,4})} + \|u-g_f\|_{L^8(I_0,\Wc^{\beta_1,4})} \leq 2K, \\
	\|u_\Lambda\|_{L^8(I_0, \Wc^{\beta_1, 4})} &\leq \|g_f\|_{L^8(I_0, \Wc^{\beta_1,4})} + \|u_\Lambda-g_f\|_{L^8(I_0,\Wc^{\beta_1,4})} \leq 2K,
	\end{align*}
	where the second line follows from the probabilistic local well-posedness. Combining~\eqref{eq:ite-I1i} with~\eqref{eq:ite-I1ii} yields the second inequality in~\eqref{ite-I1}. Note that \eqref{ite-I1} is the reason why we choose $\delta$ as in \eqref{defi-delta-sub}.
	
	Thanks to \eqref{ite-I1} and $u_\Lambda(\delta)=\Phi_\Lambda(\delta) f \in \Sigma_{\theta_1, \beta_1}(K)$, we can repeat the above argument using the following Duhamel formulas for $t\in [0,2\delta]$,
	\begin{align*}
	u(t) &= g_\delta(t) + v(t), \quad v(t) := \mp \im \int_\delta^t e^{-\im (t-\tau) h} (|g_\delta + v|^2(g_\delta +v))(\tau) d\tau \\
	v_\Lambda(t) &= Q_\Lambda g_{\Lambda,\delta}(t) + w_\Lambda(t), \\ 
	w_\Lambda(t):&= \mp \im \int_\delta^t e^{-\im (t-\tau) h} (Q_\Lambda^2(|Q_\Lambda g_{\Lambda,\delta} + w_\Lambda|^2(Q_\Lambda g_{\Lambda,\delta} + w_\Lambda)))(\tau) d\tau
	\end{align*}
	with $g_\delta(t):= e^{-\im (t-\delta)h} u(\delta)$ and $g_{\Lambda,\delta}(t):=e^{-\im (t-\delta)h} u_\Lambda(\delta)$ and get
	\[
	\|v-w_\Lambda\|_{X^\beta(I_1)} \leq K \Lambda^{-\eta/2}.
	\]
	The same reasoning as in \eqref{est-diff-I0} yields
	\[
	\|u-u_\Lambda\|_{L^\infty(I_1,\Hc^{\theta})} \lesssim CK \Lambda^{-\eta/2}.
	\]
	Iterating this bound $\left[\frac{T}{\delta}\right]$ many times and taking into account the choice of $K$ (see \eqref{choi-K}), we prove \eqref{est-diff-sub} .
	
	\noindent {\bf Globalizing the flow.} Finally,~\eqref{est-u-T-sub} follows from~\eqref{est-u-Lambda-sub} and~\eqref{est-diff-sub} exactly as in the proof of Lemma \ref{lem:globflow1}. We omit the details. 
\end{proof}

We are now able to prove Theorem \ref{theo:almo-gwp-sub}.

\begin{proof}[Proof of Theorem \ref{theo:almo-gwp-sub}]
	The almost sure global well-posedness and the growth in time are proved by the same argument as in the proof of Theorem \ref{theo:almo-gwp-supe}. Since the continuity of the solution flow does not depend solely on $\Hc^\theta$ norm of initial data (see \eqref{cont-prop-sub} and \eqref{cont-prop-sub-beta}), the argument given in the super-harmonic case should be modified by choosing a suitable ball in higher Sobolev spaces. We present here another approach using an equivalent characterization of measure invariance (see e.g., \cite[Theorem 6.5]{NOBS}), i.e., for all $F\in L^1(\Hc^\theta, d\mu)$,
	\begin{align} \label{inva-prof-equi}
	\int F(\Phi(t) u) d\mu(u) = \int F(u) d\mu(u), \quad \forall t\in \R.
	\end{align}
	By iteration, it suffices to show \eqref{inva-prof-equi} for $t>0$ small. In addition, by a density argument, the problem is reduced to show \eqref{inva-prof-equi} for $F$ bounded and continuous. 
	
	Now fix $t>0$ small and let $\vareps>0$. We write for $\Lambda \geq \lambda_1$,
	\begin{align*}
	\left|\int F(\Phi(t)u)d\mu(u) - \int F(u) d\mu(u) \right| &\leq \left| \int F(\Phi(t)u) d\mu(u) - \int F(\Phi(t) u) d\mu_\Lambda(u)\right| \\
	&\quad + \left| \int F(\Phi(t)u) d\mu_\Lambda(u) - \int F(\Phi_\Lambda(t) u) d\mu_\Lambda(u)\right| \\
	&\quad + \left| \int F(\Phi_\Lambda(t) u) d\mu_\Lambda(u) - \int F(u) d\mu_\Lambda(u)\right| \\
	&\quad + \left| \int F(u) d\mu_\Lambda(u) - \int F(u) d\mu(u)\right| \\
	&=: (\text{I}) + (\text{II}) + (\text{III}) + (\text{IV}).
	\end{align*} 
	Since $\mu_\Lambda \rightharpoonup \mu$ weakly as $\Lambda \to \infty$, the boundedness of $F$ implies
	\[
	(\text{I}) +(\text{IV}) \to 0 \text{ as } \Lambda \to \infty.
	\]
	Since $\mu_\Lambda$ is invariant under the flow map $\Phi_\Lambda(t)$, an equivalent characterization of invariance as in \eqref{inva-prof-equi} yields $(\text{III})=0$. It remains to estimate $(\text{II})$. Pick $\theta<\theta_1<\frac{1}{2}-\frac{1}{s}$ and $0\leq \beta <\beta_1<\frac{s-1}{2s}$ such that $\theta_1-\theta=\beta_1-\beta=\eta>0$. Denote
	\[
	\Sigma_{\theta_1, \beta_1}(K):= \left\{f \in \Hc^{\theta_1} : \|f\|_{\Hc^{\theta_1}} \leq K,  \|e^{-\im th}f\|_{L^8([-1,1],\Wc^{\beta_1,4})} \leq K\right\}.
	\] 
	We estimate
	\begin{align*}
	(\text{II}) &\leq \left| \int_{\Sigma_{\theta_1, \beta_1}(K)} F(\Phi(t)u) - F(\Phi_\Lambda(t) u) d\mu_\Lambda(u)\right| + \left| \int_{\Sigma^c_{\theta_1, \beta_1}(K)} F(\Phi(t)u) - F(\Phi_\Lambda(t) u) d\mu_\Lambda(u)\right| \\
	&=: (\text{II}_1) + (\text{II}_2).
	\end{align*}
	Since $F$ is bounded, we have 
	\begin{align*}
	(\text{II}_2) \leq 2\|F\|_{L^\infty} \mu_\Lambda(\Sigma^c_{\theta_1,\beta_1}(K)) \leq C\|F\|_{L^\infty} \left(\mu_0(\Sigma^c_{\theta_1,\beta_1}(K))\right)^{1/2} \leq C e^{-cK^2} <\frac{\vareps}{2}
	\end{align*}
	provided that $K$ is taken sufficiently large. For such a $K$, the same argument as in the proof of Lemma \ref{lem:globflow2} yields 
	\[
	\|\Phi(t) u - \Phi_\Lambda(t) u\|_{\Hc^\theta} \leq CK \Lambda^{-\eta/2}
	\]
	for all $u \in \Sigma_{\theta_1, \beta_1}(K)$. Note that $t>0$ is taken small here. Since $F$ is continuous, we have
	\[
	\|F(\Phi(t) u) - F(\Phi_\Lambda(t) u)\|_{\Hc^\theta} <\vareps/2
	\]
	for all $u \in \Sigma_{\theta_1, \beta_1}(K)$ provided that $\Lambda$ is chosen large enough. Since $\mu_\Lambda$ is a probability measure, we have $(\text{II}_1) \leq \vareps/2$, hence 
	\begin{align*}
	(\text{II}) \to 0 \text{ as } \Lambda \to \infty.
	\end{align*}
	Collecting the above estimates, we prove the invariance.
\end{proof}

\begin{proof}[Proof of Theorem \ref{theo-Gibbs-meas}]
	It follows from Theorems \ref{theo:almo-gwp-supe} and \ref{theo:almo-gwp-sub}.
\end{proof}

\begin{remark}[Higher non-linearities] \label{rem-meas-inva-sub}
	The method used in this section can be modified to prove the invariance of Gibbs measures associated to \eqref{eq:NLS-general} for any $1<s\leq 2$ and $\frac{4}{s}<\kappa<6$ provided that the Gibbs measure is well-defined. Indeed, we take $0\leq \beta<\frac{\kappa s -4}{2\kappa s}$ and $\theta <\frac{1}{2}-\frac{1}{s}$. For $K>0$, we define
	\[
	\Sigma(K):= \left\{f \in \Hc^\theta: \|f\|_{\Hc^\theta} \leq K, \quad \|g_f\|_{L^{\frac{4\kappa}{\kappa-2}}([-1,1], \Wc^{\beta,\kappa})} \leq K \right\},
	\] 
	where $g_f(t)=e^{-\im th} f$ and $\left(\frac{4\kappa}{\kappa-2}, \kappa\right)$ is a Strichartz-admissible pair. By the choice of $\beta$, we have (see Lemma \ref{lem-est-expo})
	\[
	\mu_0(\Sigma^c(K)) \leq Ce^{-cK^2}.
	\]
	Thanks to the following nonlinear estimates
	\begin{align*}
		\||g_f+v|^{\kappa-2}(g_f+v)\|_{L^{\frac{4\kappa}{3\kappa+2}}_\delta \Wc^{\beta, \frac{\kappa}{\kappa-1}}} &\leq \|g_f+v\|^{\kappa-2}_{L^{\frac{2\kappa(\kappa-2)}{\kappa+2}}_\delta L^\kappa} \|g_f+v\|_{L^{\frac{4\kappa}{\kappa-2}}_\delta \Wc^{\beta,\kappa}} \\
		&\lesssim \delta^{\frac{6-\kappa}{4}} \|g_f+v\|^{\kappa-2}_{L^{\frac{4\kappa}{\kappa-2}}_\delta L^\kappa} \|g_f+v\|_{L^{\frac{4\kappa}{\kappa-2}}_\delta \Wc^{\beta,\kappa}} \\
		&\lesssim \delta^{\frac{6-\kappa}{4}} \left(\|g_f\|^{\kappa-1}_{L^{\frac{4\kappa}{\kappa-2}}_\delta \Wc^{\beta,\kappa}} + \|v\|^{\kappa-1}_{L^{\frac{4\kappa}{\kappa-2}}_\delta \Wc^{\beta,\kappa}} \right)
	\end{align*}
	and
	\begin{align*}
		&\||g_f+v_1|^{\kappa-2}(g_f+v_1) - |g_f+v_2|^{\kappa-2}(g_f+v_2)\|_{L^{\frac{4\kappa}{3\kappa+2}}_\delta \Wc^{\beta, \frac{\kappa}{\kappa-1}}} \\
		&\lesssim \delta^{\frac{6-\kappa}{4}} \left(\|g_f\|^{\kappa-2}_{L^{\frac{4\kappa}{\kappa-2}}_\delta \Wc^{\beta,\kappa}} + \|v_1\|^{\kappa-2}_{L^{\frac{4\kappa}{\kappa-2}}_\delta \Wc^{\beta,\kappa}} + \|v_1\|^{\kappa-2}_{L^{\frac{4\kappa}{\kappa-2}}_\delta \Wc^{\beta,\kappa}} \right) \|v_1-v_2\|_{L^{\frac{4\kappa}{\kappa-2}}_\delta \Wc^{\beta,\kappa}},
	\end{align*}
	we can repeat the same argument as in the proof of Proposition \ref{prop-almo-sure-lwp} to show the almost sure local well-posedness for \eqref{eq:NLS-general}, where $\left(\frac{4\kappa}{3\kappa+2}, \frac{\kappa}{\kappa-1}\right)$ is the dual pair of $\left(\frac{4\kappa}{\kappa-2}, \kappa\right)$. Note that the condition $\kappa<6$ is needed to close the contraction mapping argument. In particular, for all $f\in \Sigma(K)$, there exist $\delta \sim K^{-\frac{4(\kappa-2)}{6-\kappa}}$ and a unique solution to \eqref{eq:NLS-general} with initial data $\left. u\right|_{t=0}=f$ satisfying
	\[
	u(t)-e^{-\im t h} f \in C([-\delta,\delta], \Hc^\beta) \cap L^{\frac{4\kappa}{\kappa-2}}([-\delta, \delta], \Wc^{\beta,\kappa}).
	\] 
	Once local solutions exist, a straightforward modification of the above argument yields the invariance of Gibbs measures, based on the fact that $\left(\frac{4\kappa}{\kappa-2}, \kappa\right)$ is a Strichartz-admissible pair. The latter constraint is in fact the main restriction that sets the affordable non-linearities. \hfill $\diamond$ 
\end{remark}

\section{Canonical measures}
\label{sec:cano measures}
\setcounter{equation}{0}
We aim at constructing, and proving the invariance of, Gibbs measures conditioned on mass. Towards this purpose, we first define the Gaussian measure with a fixed renormalized mass in Section \ref{sec:Gaus fix renor}. We then define the Gibbs measures conditioned on mass in Section \ref{sec:Gibbs cond mass}. Finally, in Section \ref{sec:invari Gibbs fix mass}, we prove that these measures are invariant under the dynamics of \eqref{eq:intro NLS}. 

\subsection{Gaussian measure with a fixed renormalized mass}
\label{sec:Gaus fix renor}
Our purpose in this section is to give a rigorous definition of Gaussian measure conditioned on mass. There is a difference between the cases $s>2$ and $1<s\leq 2$ in that the latter the mass is infinite almost surely. We however treat the two cases on the same footing by conditioning on the renormalized mass, although this is slightly redundant for $s>2$ (where the mass is finite almost surely). To do this, we shall define the Gaussian measure with a fixed renormalized mass which is formally given by
\begin{align}\label{eq:Gaus-fix-mass}
d\mu_{0}^m(u) ~``="~ \frac{\mathds{1}_{\{\Mcal(u)=m\}}}{\mu_0(\Mcal(u)=m)} d\mu_0(u),
\end{align}
where $m\in \R$, $\Mcal(u)=\|u\|^2_{L^2}-\scal{\|u\|^2_{L^2}}_{\mu_0}$ is the renormalized mass (see Lemma \ref{lem-reno-mass}), and $\mu_0$ is the Gaussian measure. When $s>2$, since the expectation of the mass with respect to the Gaussian measure is finite, 
$$\scal{\|u\|^2_{L^2}}_{\mu_0} = {\Tr}[h^{-1}]<\infty,$$
we obtain a Gaussian measure with a fixed mass $m+{\Tr}[h^{-1}]>0$. 

The formulation \eqref{eq:Gaus-fix-mass} is purely formal because we essentially have that 
$$\mu_0(\Mcal(u)=m)=0.$$ 
Inspired by an idea of Oh and Quastel \cite{OhQua-13}, we will define the above measure as a limit, as $\vareps \rightarrow 0^+$, of the constrained measure
\[
d\mu_{0}^{m,\vareps}(u) = \frac{1}{Z_{0}^{m,\vareps}} \mathds{1}_{\{m-\vareps<\Mcal(u)<m+\vareps\}} d\mu_0(u),
\]
where
\[
Z_{0}^{m,\vareps} = \mu_0 (m-\vareps<\Mcal(u)<m+\vareps).
\]
Note that for each $\vareps>0$, the measure $\mu_0^{m,\vareps}$ is well-defined a priori, however, since the normalized constant $Z_0^{m,\vareps}$ converges to zero as $\vareps \to 0^+$, some care is needed to justify the meaning of this limit. 

The construction of the canonical Gaussian measure consists of two steps:
\medskip

\noindent $\bullet$ We first find a sequence of measures on the finite dimensional spaces 
	$$E_{\leq \Lambda}= \one_{h\leq \Lambda} L^2 (\R)$$
	that will define the cylindrical projections of the target measure.
\medskip

\noindent $\bullet$ We then show that the above sequence of measures is tight on a suitable Hilbert space, so that the target measure can be defined as its limit.

\medskip

This is summarized in the following statement, whose proof occupies this whole subsection.

\begin{proposition}[\bf Gaussian measure with a fixed renormalized mass] \label{prop-gaus-meas-mass} \mbox{} \\
	Let $s>1$, $V$ satisfy Assumption \ref{assu-V}, $\theta <\frac{1}{2}-\frac{1}{s}$, $m\in \R$, and assume $m>-{\Tr}[h^{-1}]$ if $s>2$. 
	
	\medskip
	
	\noindent (1) For any borelian set $A$ of $E_{\leq \Lambda}$, the limit
	\begin{align} \label{limi-Lambda}
		\lim_{\vareps \rightarrow 0^+} \mu_0^{m,\vareps}(A) = \lim_{\vareps \rightarrow 0^+} \frac{\mu_0(A\cap \{m-\vareps<\Mcal(u)<m+\vareps\})}{\mu_0(m-\vareps<\Mcal(u)<m+\vareps)}
	\end{align}
	exists, and thus me may define a probability measure on $E_{\leq \Lambda}$ by setting
	\[
	\mu^{m,\leq \Lambda}_0(A) := \lim_{\vareps \rightarrow 0^+} \mu_0^{m,\vareps} (A).
	\]
	
	\medskip
	
	\noindent (2) There exists a unique measure $\mu_0^m$ supported in $\Hc^\theta \cap \{\Mcal(u)=m\}$ such that for all $\Lambda\geq \lambda_1$, $\mu^{m,\leq \Lambda}_{0}$ is the cylindrical projection of $\mu_0^m$ on $E_{\leq \Lambda}$. Moreover, we have for any measurable set $A \subset \Hc^\theta$
	\[
	\mu_0^m(A) = \lim_{\vareps\rightarrow 0^+} \mu_0^{m,\vareps}(A).
	\]
\end{proposition}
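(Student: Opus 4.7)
The strategy rests on the independence decomposition $\mu_0 = \mu_0^{\leq \Lambda} \otimes \mu_0^{>\Lambda}$, the splitting $\Mcal = \Mcal_{\leq \Lambda} + \Mcal_{>\Lambda}$, and an explicit computation of the density of $\Mcal_{>\Lambda}$ under $\mu_0^{>\Lambda}$. Since the $\lambda_j|\alpha_j|^2$ for $\lambda_j>\Lambda$ are i.i.d.\ Exp$(1)$ random variables, $\Mcal_{>\Lambda}$ has characteristic function
\[
\varphi_\Lambda(t)=\prod_{\lambda_j>\Lambda}\frac{e^{-it/\lambda_j}}{1-it/\lambda_j},\qquad |\varphi_\Lambda(t)|=\prod_{\lambda_j>\Lambda}\left(1+t^2/\lambda_j^2\right)^{-1/2}.
\]
Since $\lambda_j\to\infty$, this product decays faster than any polynomial in $|t|$, and Fourier inversion produces a smooth bounded density $p_\Lambda\in C^\infty(\R)\cap L^\infty(\R)$, positive everywhere because $\Mcal_{>\Lambda}$ is an infinite convolution of continuous laws with full support.

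For (1), fix a Borel set $A\subset E_{\leq\Lambda}$ identified with the cylinder $\{P_{\leq\Lambda}u\in A\}\subset\Hc^\theta$. Writing $u=v+w$ with $v=P_{\leq\Lambda}u$ and $w=P_{>\Lambda}u$ and applying Fubini,
\[
\frac{1}{2\vareps}\,\mu_0\bigl(A\cap\{|\Mcal-m|<\vareps\}\bigr)=\int_A\frac{F_\Lambda(m+\vareps-\Mcal_{\leq\Lambda}(v))-F_\Lambda(m-\vareps-\Mcal_{\leq\Lambda}(v))}{2\vareps}\,d\mu_0^{\leq\Lambda}(v),
\]
where $F_\Lambda$ is the CDF of $\Mcal_{>\Lambda}$. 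Dominated convergence (using $\|p_\Lambda\|_\infty<\infty$) gives the explicit formula
\begin{equation}\label{eq:plan-density}
\mu_0^{m,\leq\Lambda}(A)=\frac{1}{c_\Lambda^m}\int_A p_\Lambda\bigl(m-\Mcal_{\leq\Lambda}(v)\bigr)\,d\mu_0^{\leq\Lambda}(v),\qquad c_\Lambda^m:=\int p_\Lambda\bigl(m-\Mcal_{\leq\Lambda}\bigr)\,d\mu_0^{\leq\Lambda}\in(0,\|p_\Lambda\|_\infty].
\end{equation}
This is a probability measure on $E_{\leq\Lambda}$.

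For (2), the consistency of $\{\mu_0^{m,\leq\Lambda}\}$ under cylindrical projection $E_{\leq\Lambda'}\to E_{\leq\Lambda}$ follows from~\eqref{eq:plan-density} combined with the convolution identity $p_\Lambda=p_{\Lambda'}*q_{\Lambda,\Lambda'}$, where $q_{\Lambda,\Lambda'}$ is the smooth density of $\sum_{\Lambda<\lambda_j\leq\Lambda'}(|\alpha_j|^2-\lambda_j^{-1})$. Tightness in $\Hc^\theta$ follows by inserting~\eqref{eq:plan-density} together with the uniform bounds $\|p_\Lambda\|_\infty\leq C$ and $c_\Lambda^m\geq c>0$ (both independent of $\Lambda$, obtained from Fourier-analytic estimates on $\varphi_\Lambda$) into the moment computation: one bounds $\int\|u\|_{\Hc^\theta}^2\,d\mu_0^{m,\leq\Lambda}(u)$ by a constant times $\int\|u\|_{\Hc^\theta}^2\,d\mu_0^{\leq\Lambda}(u)$, and Lemma~\ref{lem-H-theta} provides uniform control. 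Skorokhod's theorem then produces the infinite-dimensional $\mu_0^m$, supported in $\{\Mcal=m\}$ by construction.

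The final identity $\mu_0^m(A)=\lim_{\vareps\to 0^+}\mu_0^{m,\vareps}(A)$ for measurable $A\subset\Hc^\theta$ follows from the disintegration $d\mu_0(u)=d\mu_0^s(u)\,p(s)\,ds$ of $\mu_0$ along the map $\Mcal$: the same Fourier argument gives a smooth positive density $p$ for $\Mcal$, hence
\[
\mu_0^{m,\vareps}(A)=\frac{1}{\int_{m-\vareps}^{m+\vareps}p(s)\,ds}\int_{m-\vareps}^{m+\vareps}\mu_0^s(A)\,p(s)\,ds,
\]
and one then invokes continuity of $s\mapsto\mu_0^s(A)\,p(s)$ at $s=m$, inherited from the continuous convolutional structure of the family $\{\mu_0^s\}$, to pass to the limit. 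The main obstacle of this program is precisely the uniform-in-$\Lambda$ control $\|p_\Lambda\|_\infty\leq C$ and $c_\Lambda^m\geq c>0$, together with the regularity of the disintegration $\{\mu_0^s\}$ at $s=m$: these drive both the tightness required for the Skorokhod extension and the final pointwise identification of the limit.
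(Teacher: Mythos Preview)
Your overall architecture matches the paper's closely: the product decomposition $\mu_0=\mu_0^{\leq\Lambda}\otimes\mu_0^{>\Lambda}$, the density $p_\Lambda$ of $\Mcal_{>\Lambda}$ via Fourier inversion of its characteristic function, the explicit formula~\eqref{eq:plan-density}, and Skorokhod's criterion for the infinite-dimensional extension are exactly what the paper does (the paper calls the density $f_\Lambda$ and writes the formula as $d\mu_0^{m,\leq\Lambda}=\frac{f_\Lambda(m-\vartheta_\Lambda)}{f_0(m)}\,d\mu_0^{\leq\Lambda}$, so your $c_\Lambda^m$ is the $\Lambda$-independent constant $f_0(m)$).

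However, your tightness argument contains a genuine error: the claimed uniform bound $\|p_\Lambda\|_\infty\leq C$ is \emph{false}. As $\Lambda\to\infty$ the variance $\sigma_\Lambda^2=\sum_{\lambda_j>\Lambda}\lambda_j^{-2}\to 0$, so $\Mcal_{>\Lambda}$ concentrates at $0$ and its density blows up; concretely, Chebyshev gives $\int_{|x|\leq 2\sigma_\Lambda}p_\Lambda(x)\,dx\geq 3/4$, hence $\|p_\Lambda\|_\infty\geq 3/(16\sigma_\Lambda)\to\infty$. The paper in fact only proves $\|f_\Lambda\|_\infty\leq C\Lambda$ (Lemma~\ref{lem-f-Lamb}), which is useless for your moment bound since $\Lambda\cdot\Tr[h^{\theta-1}]$ diverges. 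The paper's fix for tightness (Lemma~\ref{lem-Cm}) is to bound each moment $\int\lambda_j|\alpha_j|^2\,d\mu_0^{m,\leq\Lambda}$ separately, using instead the \emph{leave-one-out} density $F_j$ of $\Mcal_{\ne j}=\sum_{k\ne j}(|\alpha_k|^2-\lambda_k^{-1})$: since this sum always retains at least two of the low modes $\lambda_1,\lambda_2,\lambda_3$, one gets $\|F_j\|_\infty\leq C\lambda_3$ uniformly in $j$ and $\Lambda$, which yields $\int\lambda_j|\alpha_j|^2\,d\mu_0^{m,\leq\Lambda}\leq C(m)$ and then $\int\|u\|^2_{\Hc^\theta}\,d\mu_0^{m,\leq\Lambda}\leq C(m)\Tr[h^{\theta-1}]$.

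Two smaller points: your positivity claim for $p_\Lambda$ (``infinite convolution of continuous laws with full support'') is too quick when $s>2$, since each summand is supported on $[-\lambda_j^{-1},\infty)$ and the full sum on $(-\Tr[h^{-1}],\infty)$; the paper devotes an entire lemma (Lemma~\ref{lem-f0-m}) to establishing $f_0(m)>0$ by a careful induction peeling off one eigenmode at a time. And your final passage from cylinder sets to all measurable $A$ via continuity of $s\mapsto\mu_0^s(A)$ is asserted without proof; this kind of regularity of the disintegration is not automatic.
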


We start by defining the finite dimensional measures as follows.

\begin{lemma}[\bf Cylindrical projections I] \mbox{}\\
	The projection of $\mu_{0}^{m,\vareps}$ on $E_{\leq \Lambda}$ is given by
	\begin{align}
		\left.d\mu_{0}^{m,\vareps}\right|_{E_{\leq \Lambda}} &= \frac{1}{Z_{0}^{m,\vareps}} \mu_0^{>\Lambda}\left( m-\vareps-\vartheta_\Lambda<\Mcal_{>\Lambda}(u)<m+\vareps-\vartheta_\Lambda\right) d\mu_0^{\leq \Lambda}(u)\label{cylin-proj-gaus-meas} \\
		&=: d\mu_{0}^{m,\vareps, \leq \Lambda}(u), \nonumber
	\end{align}
	where 
	$$\vartheta_\Lambda:=\Mcal_{\leq \Lambda}(u)= \sum_{\lambda_j \leq \Lambda} |\alpha_j|^2-\lambda_j^{-1}$$
	and $\Mcal_{>\Lambda}(u)$ is as in \eqref{eq:M-Lamb-hi}. The measure $\mu_{0}^{m,\vareps, \leq \Lambda}$ is the cylindrical projection of $\mu_{0}^{m,\vareps}$ on $E_{\leq \Lambda}$ in the sense that for $\Theta \geq \Lambda$,
	\begin{align} \label{cylin-proj-prop}
		\left.\mu_{0}^{m,\vareps, \leq \Theta} \right|_{E_{\leq \Lambda}} = \mu_{0}^{m,\vareps, \leq \Lambda}.
	\end{align}
\end{lemma}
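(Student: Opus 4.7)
The plan is to derive both \eqref{cylin-proj-gaus-meas} and \eqref{cylin-proj-prop} by exploiting two simple structural facts: (a) the Gaussian measure $\mu_0$ factorizes as a product $d\mu_0(u) = d\mu_0^{\leq \Lambda}(u) \otimes d\mu_0^{>\Lambda}(u)$ across the orthogonal decomposition $L^2(\R) = E_{\leq \Lambda} \oplus E_{>\Lambda}$, because the modes $\alpha_j$ are independent under $\mu_0$; (b) the renormalized mass splits additively as $\Mcal(u) = \Mcal_{\leq \Lambda}(u) + \Mcal_{>\Lambda}(u) = \vartheta_\Lambda + \Mcal_{>\Lambda}(u)$, where $\vartheta_\Lambda$ depends only on the low-frequency part of $u$ and $\Mcal_{>\Lambda}$ only on the high-frequency part. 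Both (a) and (b) are immediate from the definitions of $\mu_0^{\leq \Lambda}$, $\mu_0^{>\Lambda}$ and of the renormalized mass from Lemma~\ref{lem-reno-mass}.

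Granted (a) and (b), the formula \eqref{cylin-proj-gaus-meas} is a plain application of Fubini's theorem: for any borelian $A \subset E_{\leq \Lambda}$, interpreting $A$ as a cylindrical set in the full space,
\[
\mu_0^{m,\vareps}(A) = \frac{1}{Z_0^{m,\vareps}} \int \one_A(u^{\leq \Lambda}) \one_{\{m-\vareps < \vartheta_\Lambda + \Mcal_{>\Lambda}(u) < m+\vareps\}} \, d\mu_0^{\leq \Lambda}(u) \, d\mu_0^{>\Lambda}(u),
\]
and after freezing $u^{\leq \Lambda}$ and integrating out the high-frequency variable, the inner integral becomes $\mu_0^{>\Lambda}(m-\vareps-\vartheta_\Lambda < \Mcal_{>\Lambda}(u) < m+\vareps-\vartheta_\Lambda)$.

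For the consistency relation \eqref{cylin-proj-prop}, I would again use the product structure, now applied to the three-way splitting $E_{\leq \Theta} = E_{\leq \Lambda} \oplus E_{\Lambda < \cdot \leq \Theta}$ with the corresponding factorization $d\mu_0^{\leq \Theta} = d\mu_0^{\leq \Lambda} \otimes d\mu_0^{\Lambda < \cdot \leq \Theta}$, and the additive decomposition $\Mcal_{>\Lambda}(u) = \Mcal_{\Lambda < \cdot \leq \Theta}(u) + \Mcal_{>\Theta}(u)$. Writing the density of $\mu_0^{m,\vareps,\leq \Theta}$ as in \eqref{cylin-proj-gaus-meas} (with $\Theta$ in place of $\Lambda$) and integrating out the $\Lambda < \lambda_j \leq \Theta$ modes against $d\mu_0^{\Lambda < \cdot \leq \Theta}$, the conditional probability on $\Mcal_{>\Theta}$ is first integrated against the middle slab and then reassembled into a conditional probability on $\Mcal_{>\Lambda}$, producing exactly the density \eqref{cylin-proj-gaus-meas} with cutoff $\Lambda$.

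There is no real obstacle here: the content is measure-theoretic bookkeeping, and one only needs to keep track that the constants $Z_0^{m,\vareps}$ used for $\mu_0^{m,\vareps,\leq \Theta}$ and $\mu_0^{m,\vareps,\leq \Lambda}$ coincide—which they do by definition since both are normalizations of the same global measure $\mu_0^{m,\vareps}$ restricted via Fubini. The delicate parts (justifying the limit $\vareps \to 0^+$ and the tightness needed for Item (2) of Proposition~\ref{prop-gaus-meas-mass}) belong to the subsequent lemmas and do not enter here.
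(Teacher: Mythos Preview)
Your proposal is correct and follows essentially the same approach as the paper: both arguments rest on the product factorization $d\mu_0 = d\mu_0^{\leq \Lambda} \otimes d\mu_0^{>\Lambda}$ and the additive splitting $\Mcal(u) = \vartheta_\Lambda + \Mcal_{>\Lambda}(u)$, then apply Fubini to integrate out the high-frequency (respectively, intermediate) modes. The paper merely writes out the computation for \eqref{cylin-proj-prop} explicitly by identifying a borelian $A \subset E_{\leq \Lambda}$ with $A \times \C^N \subset E_{\leq \Theta}$ and unwinding the definitions line by line, which is exactly your ``three-way splitting'' made concrete.
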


\begin{proof}
	For any borelian set $A$ of $E_{\leq \Lambda}$, we have
	\begin{align*}
		\mu_0^{m,\vareps}(A) &= \frac{1}{Z_0^{m,\vareps}} \int \mathds{1}_{A \cap \{ m-\vareps <\Mcal(u)<m+\vareps\}} d\mu_0(u) \\
		&= \frac{1}{Z_0^{m,\vareps}} \int_{A} \left(\int \mathds{1}_{\left\{m-\vareps-\vartheta_\Lambda <\Mcal_{>\Lambda}(u) <m+\vareps-\vartheta_\Lambda \right\} } d\mu_{0}^{>\Lambda}(u) \right) d\mu_0^{\leq \Lambda}(u) \\
		&= \frac{1}{Z_0^{m,\vareps}} \int_{A} \mu_{0}^{>\Lambda}\left( m-\vareps-\vartheta_\Lambda <\Mcal_{>\Lambda}(u)<m+\vareps-\vartheta_\Lambda\right) d\mu_0^{\leq \Lambda}(u),
	\end{align*}	
	where $d\mu^{\leq\Lambda}_0(u)$ and $d\mu_0^{>\Lambda}(u)$ are defined as in \eqref{eq:mu-0-Lamb} and \eqref{eq:mu-0-Lamb-hi} respectively. This shows \eqref{cylin-proj-gaus-meas}. To see \eqref{cylin-proj-prop}, we observe that $A \times \C^N$ is a borelian set of $E_{\leq \Theta}$ with $N = \#\{\lambda_j : \Lambda <\lambda_j \leq \Theta\}$, hence
	\begin{align}
		&\left.\mu_{0}^{m,\vareps, \leq \Theta}\right|_{E_{\leq \Lambda}}(A)=\int_{A \times \C^N} d\mu_{0}^{m,\vareps, \leq \Theta}(u)\nonumber\\
		&=\frac{1}{Z_0^{m,\vareps}}\int_{A\times \C^N} \mu_{0}^{>\Theta}(m-\vareps-\vartheta_\Theta<\Mcal_{>\Theta}(u)<m+\vareps-\vartheta_\Theta) d\mu_{0}^{\leq \Theta}(u) \nonumber\\
		&=\frac{1}{Z_0^{m,\vareps}} \int_{A\times \C^N} \left(\int \mathds{1}_{\{m-\vareps-\vartheta_\Theta< \Mcal_{>\Theta}(u)<m+\vareps-\vartheta_\Theta\}} d\mu_{0}^{>\Theta}(u) \right) d\mu_{0}^{\leq \Theta}(u) \nonumber\\
		&=\frac{1}{Z_0^{m,\vareps}}\int_{A} \left(\int_{\C^N} \left(\int \mathds{1}_{\{m-\vareps-\vartheta_\Theta< \Mcal_{>\Theta}(u)<m+\vareps-\vartheta_\Theta\}} d\mu_{0}^{>\Theta}(u) \right) \prod_{\Lambda<\lambda_j\leq \Theta} \frac{\lambda_j}{\pi}e^{-\lambda_j|\alpha_j|^2} d\alpha_j\right) d\mu_{0}^{\leq \Lambda}(u) \nonumber\\
		&=\frac{1}{Z_0^{m,\vareps}} \int_{A} \left(\int \mathds{1}_{\{m-\vareps-\vartheta_\Lambda<\Mcal_{>\Lambda}(u)<m+\vareps-\vartheta_\Lambda\}} d\mu_{0}^{>\Lambda}(u) \right) d\mu_{0}^{\leq \Lambda}(u) \nonumber\\
		&=\frac{1}{Z_0^{m,\vareps}} \int_{A} \mu_{0}^{>\Lambda}(m-\vareps-\vartheta_\Lambda<\Mcal_{>\Lambda}(u)<m+\vareps-\vartheta_\Lambda) d\mu_{0}^{\leq \Lambda}(u) \nonumber\\
		&=\int_{A} d\mu_{0}^{m,\vareps, \leq \Lambda}(u) = \mu_{0}^{m,\vareps, \leq \Lambda}(A) \label{limi-eps-A}
	\end{align}
	which proves \eqref{cylin-proj-prop}.
\end{proof}

To prove that the limit in \eqref{limi-Lambda} exists, we denote $f_\Lambda$ the density function of $\Mcal_{> \Lambda}(u)$ with respect to $\mu_0^{>\Lambda}$. In particular, we have
\[
\mu_{0}^{> \Lambda}\left( m-\vareps-\vartheta_\Lambda <\Mcal_{>\Lambda}(u) <m+\vareps-\vartheta_\Lambda\right) = \int_{m-\vareps-\vartheta_\Lambda}^{m+\vareps -\vartheta_\Lambda} f_{\Lambda}(x) dx
\]
and
\[
Z_0^{m,\vareps}=\mu_0(m-\vareps <\Mcal(u)<m+\vareps) = \int_{m-\vareps}^{m+\vareps} f_0(x) dx,
\]
hence
\begin{align}\label{eq:mu-0-m-A-eps}
\mu_0^{m,\vareps}(A) = \int_{A} \left( \int_{m-\vareps-\vartheta_\Lambda}^{m+\vareps -\vartheta_\Lambda} f_{\Lambda}(x) dx\right)  \left(\int_{m-\vareps}^{m+\vareps} f_0(x) dx \right)^{-1} d\mu_{0}^{\leq \Lambda}(u).
\end{align}

\begin{lemma}[\bf Uniform continuity of the density function] \label{lem-f-Lamb} \mbox{} \\
	For any $\Lambda\geq 0$, $f_\Lambda$ is bounded and uniformly continuous on $(m_0,+\infty)$, where $m_0=-{\Tr}[h^{-1}]$ if $s>2$ and $m_0=-\infty$ if $1<s\leq 2$. In addition, for $\Lambda>0$ sufficiently large, there exists $C>0$ such that $\|f_\Lambda\|_{L^\infty((m_0,+\infty))} \leq C \Lambda$. 
\end{lemma}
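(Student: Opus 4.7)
My plan is to identify $f_\Lambda$ with the inverse Fourier transform of the characteristic function
\[
\varphi_\Lambda(t) := \int e^{it\Mcal_{>\Lambda}(u)}\, d\mu_0^{>\Lambda}(u),
\]
and to reduce both conclusions of the lemma to the single quantitative bound $\|\varphi_\Lambda\|_{L^1(\R)} \leq C\Lambda$ for $\Lambda$ large.

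Under $\mu_0^{>\Lambda}$ the coefficients $(\alpha_j)_{\lambda_j>\Lambda}$ are independent with $|\alpha_j|^2 \sim \mathrm{Exp}(\lambda_j)$, so each $X_j := |\alpha_j|^2 - \lambda_j^{-1}$ has characteristic function $e^{-it/\lambda_j}(1 - it/\lambda_j)^{-1}$. Independence then gives
\[
\varphi_\Lambda(t) = \prod_{\lambda_j > \Lambda} \frac{e^{-it/\lambda_j}}{1 - it/\lambda_j}, \qquad |\varphi_\Lambda(t)|^2 = \prod_{\lambda_j > \Lambda} \frac{1}{1 + t^2/\lambda_j^2},
\]
the infinite product being absolutely convergent because $\sum \lambda_j^{-2} = \Tr[h^{-2}] < \infty$ under Assumption~\ref{assu-V}.

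To bound $\|\varphi_\Lambda\|_{L^1}$ I split $\R$ into $|t| \leq C_0 \Lambda$, where $|\varphi_\Lambda| \leq 1$ and the contribution is at most $2 C_0 \Lambda$, and $|t| > C_0 \Lambda$ for a large fixed $C_0$. On the latter region, every $\lambda_j \in (\Lambda, |t|]$ contributes a factor $(1 + t^2/\lambda_j^2)^{-1} \leq 1/2$, so
\[
|\varphi_\Lambda(t)| \leq 2^{-(N(|t|) - N(\Lambda))/2},\qquad N(E) := \#\{j : \lambda_j \leq E\}.
\]
The Weyl-type asymptotics for $h = -\partial_x^2 + V$ under Assumption~\ref{assu-V} give $N(E) \simeq c\, E^{1/2 + 1/s}$ (consistent with $\Tr[h^{-p}]<\infty$ iff $p>1/2+1/s$); choosing $C_0$ large enough, depending only on $s$, forces $N(|t|) \geq 2 N(\Lambda)$ for $|t| > C_0 \Lambda$, and hence $N(|t|) - N(\Lambda) \geq c |t|^{1/2 + 1/s}$. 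Since $1/2 + 1/s > 0$, the tail integral $\int_{|t| > C_0 \Lambda} 2^{-c|t|^{1/2+1/s}/2}\, dt$ is bounded by an absolute constant, and $\|\varphi_\Lambda\|_{L^1} \leq C\Lambda$ follows for $\Lambda$ large.

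With this in hand, Fourier inversion supplies the continuous representative $f_\Lambda(x) = \frac{1}{2\pi} \int_\R e^{-itx} \varphi_\Lambda(t)\, dt$, which immediately gives $\|f_\Lambda\|_{L^\infty(\R)} \leq \|\varphi_\Lambda\|_{L^1}/(2\pi) \leq C\Lambda$. Uniform continuity on $\R$, a fortiori on $(m_0, +\infty)$, follows by writing
\[
f_\Lambda(x + h) - f_\Lambda(x) = \frac{1}{2\pi} \int_\R (e^{-ith} - 1)\, e^{-itx} \varphi_\Lambda(t)\, dt
\]
and applying dominated convergence as $h \to 0$, producing a modulus of continuity independent of $x$. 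The main obstacle is thus the quantitative eigenvalue-counting estimate $N(|t|) - N(\Lambda) \gtrsim |t|^{1/2 + 1/s}$ uniformly for $|t| \gtrsim \Lambda$; all remaining steps are routine Fourier analysis and Gaussian computations.
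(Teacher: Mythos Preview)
Your argument is correct and reaches the same conclusion, but by a different route than the paper. The paper bounds $|\varphi_\Lambda(t)|$ much more crudely: it keeps only the first \emph{two} factors $\lambda_{j_1},\lambda_{j_2}>\Lambda$ in the product to obtain the Lorentzian bound $|\varphi_\Lambda(t)|\le (1+t^2\lambda_{j_2}^{-2})^{-1}$, which integrates directly to $C\lambda_{j_2}$; then the same Weyl two-sided count $N(E)\simeq E^{1/2+1/s}$ you invoke is used only to place $\lambda_{j_2}\le C\Lambda$. Your version instead keeps all factors with $\lambda_j\le |t|$, producing the far stronger tail decay $2^{-c|t|^{1/2+1/s}}$ at the cost of a slightly more involved splitting. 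Either way, the essential input is the eigenvalue-counting estimate (Lemma~\ref{lem-numb-eigen} in the paper); the paper's version is shorter because a $1/(1+t^2)$ bound is already integrable, so no tail analysis is needed. One small point: your reduction is phrased for ``$\Lambda$ large,'' but the first conclusion (boundedness and uniform continuity) is stated for all $\Lambda\ge 0$; the same argument handles small $\Lambda$ by splitting at a fixed $T_0$ rather than at $C_0\Lambda$, since $|\varphi_\Lambda|\le 1$ always and the tail estimate only requires $N(|t|)-N(\Lambda)\to\infty$.
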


\begin{proof}
	Denote $\phi_\Lambda$ the characteristic function of $\Mcal_{>\Lambda}(u)$ with respect to $\mu_0^{>\Lambda}$. We have
	\begin{align*}
		\phi_\Lambda(s) &= \E_{\mu_0^{>\Lambda}}[e^{is \Mcal_{>\Lambda}(u)}] \\
		&= \int e^{is \sum_{\lambda_j>\Lambda} |\alpha_j|^2-\lambda_j^{-1}} d\mu_0^{>\Lambda}(u) \\
		&= \int \prod_{\lambda_j>\Lambda} e^{is (|\alpha_j|^2-\lambda_j^{-1})} \prod_{\lambda_k>\Lambda} \frac{\lambda_k}{\pi} e^{-\lambda_k|\alpha_k|^2} d\alpha_k \\
		&= \prod_{\lambda_j>\Lambda} \left(\int_{\C} e^{-is\lambda_j^{-1}} \frac{\lambda_j}{\pi}e^{-(1-is\lambda_j^{-1})\lambda_j |\alpha_j|^2} d\alpha_j\right) \left(\prod_{\lambda_k\ne \lambda_j\atop \lambda_k> \Lambda} \int_{\C} \frac{\lambda_k}{\pi} e^{-\lambda_k|\alpha_k|^2} d\alpha_k \right) \\
		&=\prod_{\lambda_j>\Lambda} \frac{e^{-is \lambda_j^{-1}}}{1-is \lambda_j^{-1}}. 
	\end{align*}
	Each factor of this product has complex norm smaller than or equal to 1, thus the norm of this product is bounded by the norm of a product of any two terms. In particular, we have
	\[
	|\phi_\Lambda(s)| \leq \left| \frac{e^{-is\lambda_{j_1}^{-1}}}{1-is\lambda_{j_1}^{-1}}\frac{e^{-is\lambda_{j_2}^{-1}}}{1-is\lambda_{j_2}^{-1}}\right| = \frac{1}{\sqrt{1+s^2\lambda_{j_1}^{-2}}} \frac{1}{\sqrt{1+s^2\lambda_{j_2}^{-2}}} \leq \frac{1}{1+s^2\lambda_{j_2}^{-2}},
	\]
	where $j_1$ and $j_2$ are the first two indices such that $\lambda_{j_2}\geq \lambda_{j_1}>\Lambda$. We deduce that $\phi_\Lambda \in L^1(\R)$ with 
	$$\|\phi_\Lambda\|_{L^1(\R)} \leq C\lambda_{j_2}.$$
	We have (see Lemma \ref{lem-numb-eigen}) that the number $N(\Lambda)$ of eigenvalues of $h$ below $\Lambda$ satisfies 
	$$ c \Lambda^{\frac{1}{2}+\frac{1}{s}}\leq N(\Lambda) \leq C \Lambda^{\frac{1}{2}+\frac{1}{s}}$$
	for positive constants $c,C >0$. Hence, we may pick some $k>0$ large enough but independent of $\Lambda$  to ensure that  
	$$ N(k \Lambda) - N(\Lambda) \geq \left( c k ^{1/2+1/s} - C\right)\Lambda^{\frac{1}{2}+\frac{1}{s}} > 1 $$
	for $\Lambda$ sufficiently large. Hence $\lambda_{j_2}$ can be chosen of order  $\Lambda$ in the above argument and we deduce that
	$$\|\phi_\Lambda\|_{L^1(\R)}\leq C\Lambda$$
	for $\Lambda$ sufficiently large. 
	
	Using the following relation between density and characteristic functions
	\[
	f_\Lambda(x) = \frac{1}{2\pi} \int_{\R} e^{-isx} \phi_\Lambda(s) ds,
	\]
	we infer that $f_\Lambda$ is bounded and uniformly continuous on $(m_0,+\infty)$. Note that when $s>2$, we have $\Mcal_{>\Lambda}(u) = \|P_{>\Lambda}u\|^2_{L^2}-\scal{\|P_{>\Lambda}u\|^2_{L^2}}_{\mu_0} >-{\Tr}[h^{-1}]$, hence the density function $f_\Lambda$ is defined on $(-{\Tr}[h^{-1}],+\infty)$. 
\end{proof}

\begin{lemma}[\bf Positivity of the density function] \label{lem-f0-m} \mbox{} \\
	Let $s>1$, $m\in \R$, and assume $m>-{\Tr}[h^{-1}]$ if $s>2$. We have
	\[
	\lim_{\vareps \rightarrow 0^+} \frac{1}{2\vareps} \int_{m-\vareps}^{m+\vareps} f_0(x) dx = f_0(m)>0.
	\]
\end{lemma}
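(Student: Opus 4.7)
The plan is to separate the statement into two parts. The equality of the limit with $f_0(m)$ is immediate from Lemma~\ref{lem-f-Lamb}: since $f_0$ is uniformly continuous on $(m_0, +\infty)$ and $m$ lies in this open interval, the elementary fact $\lim_{\vareps \to 0^+}\frac{1}{2\vareps}\int_{m-\vareps}^{m+\vareps}g(x)dx = g(m)$ for continuous $g$ settles it. The substantive content is therefore the strict positivity $f_0(m) > 0$, which I would establish by a convolution/conditioning argument.

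The strategy exploits the natural decomposition $\Mcal(u) = \Mcal_{\leq \Lambda}(u) + \Mcal_{>\Lambda}(u)$ given in~\eqref{eq:M-Lamb-low}--\eqref{eq:M-Lamb-hi}. Under $\mu_0$, the two summands are independent since they depend on disjoint families of Gaussian coordinates $(\alpha_j)$. For $\Lambda \geq \lambda_1$ large enough, both possess continuous bounded densities (denote them $f_{\leq \Lambda}$ and $f_\Lambda$): $f_\Lambda$ by Lemma~\ref{lem-f-Lamb}, and $f_{\leq \Lambda}$ by explicit Fourier inversion on the finite-dimensional space $E_{\leq \Lambda}$. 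Independence gives $f_0 = f_{\leq \Lambda} * f_\Lambda$.

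Two ingredients then combine. First, $\Mcal_{\leq \Lambda}(u) + \sum_{\lambda_j \leq \Lambda}\lambda_j^{-1} = \sum_{\lambda_j \leq \Lambda}|\alpha_j|^2$ is a finite sum of independent non-negative exponential random variables, and an induction on the number of summands shows its density is continuous and strictly positive on $(0, +\infty)$ (the convolution of two continuous functions strictly positive on $(0, +\infty)$ is again strictly positive there, since the integrand $f_1(y)f_2(x-y)$ is positive for $y \in (0,x)$). Hence $f_{\leq \Lambda}$ is strictly positive on the open interval $(-\sum_{\lambda_j \leq \Lambda}\lambda_j^{-1}, +\infty)$. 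Second, $\Mcal_{>\Lambda}(u)$ has mean zero and variance $\Tr_{>\Lambda}[h^{-2}]$, which tends to $0$ as $\Lambda \to \infty$ by Lemma~\ref{lem-trac-h-p}; hence Chebyshev's inequality gives $\mu_0(|\Mcal_{>\Lambda}(u)| < \delta) \to 1$ for every fixed $\delta > 0$.

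Finally, I choose $\Lambda$ large enough that $\sum_{\lambda_j \leq \Lambda}\lambda_j^{-1} > |m|+1$: this is possible for $1 < s \leq 2$ since the series diverges, and for $s > 2$ thanks to the standing hypothesis $m > -\Tr[h^{-1}]$. Then $m$ lies in the open interval on which $f_{\leq \Lambda}$ is continuous and strictly positive, so there exist $c > 0$ and $\delta > 0$ with $f_{\leq \Lambda} \geq c$ on $[m-\delta, m+\delta]$. Enlarging $\Lambda$ further so that $\mu_0(|\Mcal_{>\Lambda}| < \delta) \geq 1/2$, the convolution formula gives
\[
f_0(m) = \int_\R f_{\leq \Lambda}(m-y) f_\Lambda(y) dy \geq c \int_{-\delta}^\delta f_\Lambda(y) dy = c\,\mu_0(|\Mcal_{>\Lambda}(u)| < \delta) \geq c/2 > 0.
\]
The main technical point is the positivity of $f_{\leq \Lambda}$ on the interior of its support; this is routine via the convolution induction, and is unaffected by any possibly coincident eigenvalues (though in one dimension the eigenvalues of $h$ are in fact simple).
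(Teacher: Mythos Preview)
Your approach is correct in substance and takes a genuinely different route from the paper. Both proofs rest on the independence splitting $\Mcal = \Mcal_{\leq\Lambda} + \Mcal_{>\Lambda}$ and the concentration of $\Mcal_{>\Lambda}$ as $\Lambda\to\infty$, but you exploit the convolution identity $f_0 = f_{\leq\Lambda}*f_\Lambda$ to produce a direct lower bound, whereas the paper argues by contradiction: it peels off a single eigenvalue, computes explicitly the conditional density of $|\alpha_1|^2-\lambda_1^{-1}$, deduces that $f_0(m)=0$ would force $f_{\lambda_1}\equiv 0$ on $(m_0,\lambda_1^{-1}+m)$, then iterates to obtain $f_\Lambda\equiv 0$ on $(m_0,\Tr[(P_{\leq\Lambda}h)^{-1}]+m)$ for every $\Lambda$, and finally contradicts $\int f_\Lambda=1$ via Chebyshev. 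Your argument is shorter and more conceptual; the paper's is more explicit and avoids introducing the auxiliary density $f_{\leq\Lambda}$.

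Two small points to tighten. First, the requirement $\sum_{\lambda_j\leq\Lambda}\lambda_j^{-1}>|m|+1$ can fail for $s>2$ when $m>0$ is large; all you actually need is $m>-\sum_{\lambda_j\leq\Lambda}\lambda_j^{-1}$, which is automatic for $m\geq 0$ and, for $m<0$, follows from the hypothesis $m>-\Tr[h^{-1}]$ once $\Lambda$ is large. Second, ``enlarging $\Lambda$ further'' after extracting $(c,\delta)$ changes $f_{\leq\Lambda}$, so the bound $f_{\leq\Lambda}\geq c$ need not persist. The clean fix is to reverse the order of choices: first fix $\delta>0$ with $m-\delta>-\Tr[h^{-1}]$ (when $s>2$; any $\delta>0$ otherwise), then choose a single $\Lambda$ so large that simultaneously $\sum_{\lambda_j\leq\Lambda}\lambda_j^{-1}>-m+\delta$ and $\mu_0(|\Mcal_{>\Lambda}|<\delta)\geq\tfrac12$; for this $\Lambda$ the interval $[m-\delta,m+\delta]$ lies in the positivity region of $f_{\leq\Lambda}$, and continuity on that compact interval provides the constant $c$.
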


\begin{proof}
	Since $f_0$ is uniformly continuous on $\R$, we have
	\[
	\frac{1}{2\vareps} \int_{m-\vareps}^{m+\vareps} f_0(x)dx \xrightarrow[\vareps \rightarrow 0^+]{} f_0(m).
	\]
	It remains to prove that $f_0(m)>0$. Since the first eigenvalue of $h$ is simple (see e.g., \cite[Theorem 11.8]{LieLos-01}), we have
	\begin{align*}
		&\frac{1}{2\vareps} \int_{m-\vareps}^{m+\vareps} f_0(x) dx \\
		&=\frac{1}{2\vareps} \mu_0(m-\vareps<\Mcal(u)<m+\vareps) \\
		&= \frac{1}{2\vareps}\mu_0(\Mcal_{>\lambda_1}(u) \in (m_0,+\infty), m-\vareps-\Mcal_{>\lambda_1}(u) <|\alpha_1|^2-\lambda_1^{-1}<m+\vareps-\Mcal_{>\lambda_1}(u)) \\
		&= \frac{1}{2\vareps}\mu_0(\Mcal_{>\lambda_1}(u) \in (m_0,+\infty),  \lambda_1^{-1}+m-\vareps-\Mcal_{>\lambda_1}(u) <|\alpha_1|^2<\lambda_1^{-1}+m+\vareps-\Mcal_{>\lambda_1}(u)) \\
		&= \frac{1}{2\vareps} \int_{m_0}^{+\infty} f_{\lambda_1}(x) \left(\int_{\max\{0,\lambda_1^{-1} + m-\vareps-x\}<|\alpha_1|^2<\lambda_1^{-1} + m+\vareps-x} \frac{\lambda_1}{\pi} e^{-\lambda_1|\alpha_1|^2} d\alpha_1 \right)  dx.
	\end{align*}
	We estimate it further as
	\begin{align*}
		\frac{1}{2\vareps} &\int_{m-\vareps}^{m+\vareps} f_0(x) dx\\
		&= \frac{1}{2\vareps}\int_{m_0}^{\lambda_1^{-1}+m-\vareps} f_{\lambda_1}(x) \left(\int_{\lambda_1^{-1} + m-\vareps-x<|\alpha_1|^2<\lambda_1^{-1} + m+\vareps-x} \frac{\lambda_1}{\pi} e^{-\lambda_1|\alpha_1|^2} d\alpha_1 \right)  dx \\
		& \quad +\frac{1}{2\vareps} \int_{\lambda_1^{-1}+m-\vareps}^{\lambda_1^{-1}+m+\vareps} f_{\lambda_1}(x) \left(\int_{0<|\alpha_1|^2<\lambda_1^{-1} + m+\vareps-x} \frac{\lambda_1}{\pi} e^{-\lambda_1|\alpha_1|^2} d\alpha_1 \right) dx \\
		&=\frac{1}{2\vareps} \int_{m_0}^{\lambda_1^{-1}+m-\vareps} f_{\lambda_1}(x) \left(\int_{1+\lambda_1(m-\vareps-x)}^{1+\lambda_1(m+\vareps-x)} e^{-\lambda} d\lambda\right) dx \\
		&\quad +\frac{1}{2\vareps}\int_{\lambda_1^{-1}+m-\vareps}^{\lambda_1^{-1}+m+\vareps} f_{\lambda_1}(x) \left(\int_0^{1+\lambda_1(m+\vareps-x)} e^{-\lambda} d\lambda\right) dx \\
		&\geq \frac{1}{2\vareps} \int_{m_0}^{\lambda_1^{-1}+m-\vareps} f_{\lambda_1}(x) e^{-1-\lambda_1 m +\lambda_1 x} (e^{\lambda_1\vareps} -e^{-\lambda_1 \vareps})dx \\
		&= \frac{1}{2\vareps} \int_{m_0}^{\lambda_1^{-1}+m} f_{\lambda_1}(x) e^{-1-\lambda_1 m +\lambda_1 x} (e^{\lambda_1 \vareps} - e^{-\lambda_1 \vareps})dx \\
		&\quad -\frac{1}{2\vareps}\int_{\lambda_1^{-1}+m-\vareps}^{\lambda_1^{-1} +m} f_{\lambda_1}(x) e^{-1-\lambda_1 m+\lambda_1 x} (e^{\lambda_1 \vareps}-e^{-\lambda_1\vareps})dx.
	\end{align*}
	The last term converges to zero as $\vareps \rightarrow 0^+$ due to the dominated convergence theorem. Thus letting $\vareps \rightarrow 0^+$, we get
	\begin{align*}
		f_0(m) = \lim_{\vareps \to 0^+} \frac{1}{2\vareps}\int_{m-\vareps}^{m+\vareps} f_0(x) dx &\geq \lim_{\vareps \to 0^+} \int_{m_0}^{\lambda_1^{-1}+m} f_{\lambda_1}(x) e^{-1-\lambda_1 m+\lambda_1 x} \frac{e^{\lambda_1 \vareps}-e^{-\lambda_1\vareps}}{2\vareps} dx \\
		&= \int_{m_0}^{\lambda_1^{-1}+m} f_{\lambda_1}(x) e^{-1-\lambda_1 m+\lambda_1 x} \lambda_1 dx.
	\end{align*}
	Assume for contradiction that $f_0(m)=0$. Since $f_{\lambda_1}(x)\geq 0$ for all $x\in (m_0,+\infty)$, we infer that $f_{\lambda_1}(x)=0$ for all $x\in (m_0,\lambda_1^{-1}+m)$. In particular, we have
	\begin{align*}
		&0= \int_{m_0}^{\lambda_1^{-1}+m} f_{\lambda_1}(x)dx \\
		&= \mu_{0}^{>\lambda_1} (m_0<\Mcal_{>\lambda_1}(u)<\lambda_1^{-1}+m) \\
		&= \mu_{0}^{>\lambda_1} \Big(\Mcal_{>\lambda_2}(u)\in (m_0,+\infty), m_0 -\Mcal_{>\lambda_2}(u)< \sum_{\lambda_j=\lambda_2}|\alpha_j|^2-\lambda_j^{-1}<\lambda_1^{-1}+m-\Mcal_{>\lambda_2}(u)\Big).
	\end{align*}
	To proceed further, we denote  
	$$\alpha=(\alpha_j)_{\lambda_j=\lambda_2} \in \C^{N},$$
	with $N$ the multiplicity of $\lambda_2$, hence $d\alpha=\prod_{\lambda_j=\lambda_2} d\alpha_j$ and $|\alpha|^2 = \sum_{\lambda_j=\lambda_2} |\alpha_j|^2$. The above measure becomes
	\begin{align*}
	&\int_{m_0}^{N\lambda_2^{-1}+\lambda_1^{-1}+m} f_{\lambda_2}(x) \Big( \int_{\max\left\{0,m_0+ N\lambda_2^{-1}-x\right\}<|\alpha|^2<N\lambda_2^{-1}+\lambda_1^{-1}+m-x} \left(\frac{\lambda_2}{\pi}\right)^N e^{-\lambda_2|\alpha|^2}d\alpha\Big) dx\\
	&= \int_{m_0}^{N\lambda_2^{-1}+m_0} f_{\lambda_2}(x) \Big(\int_{m_0+N\lambda_2^{-1}-x<|\alpha|^2<N\lambda_2^{-1}+\lambda_1^{-1}+m-x} \left(\frac{\lambda_2}{\pi}\right)^N e^{-\lambda_2|\alpha|^2}d\alpha \Big) dx \\
	&\quad + \int_{N\lambda_2^{-1}+m_0}^{N\lambda_2^{-1}+\lambda_1^{-1}+m} f_{\lambda_2}(x) \Big(\int_{0<|\alpha|^2<N\lambda_2^{-1}+\lambda_1^{-1}+m-x} \left(\frac{\lambda_2}{\pi}\right)^N e^{-\lambda_2|\alpha|^2}d\alpha \Big) dx
	\end{align*}
	Using polar coordinates, this becomes (up to a factor $\sigma(\Sb^{2N-1})$)
	\begin{align*}
		&\int_{m_0}^{N\lambda_2^{-1}+m_0} f_{\lambda_2}(x) \Big(\int_{\sqrt{m_0+N\lambda_2^{-1}-x}}^{\sqrt{N\lambda_2^{-1}+\lambda_1^{-1}+m-x}} \left(\frac{\lambda_2}{\pi}\right)^N e^{-\lambda_2 r^2} r^{2N-1} dr \Big) dx \\
		&+ \int_{N\lambda_2^{-1}+m_0}^{N\lambda_2^{-1}+\lambda_1^{-1}+m-x} f_{\lambda_2}(x) \Big(\int_0^{\sqrt{N\lambda_2^{-1}+\lambda_1^{-1}+m}} \left(\frac{\lambda_2}{\pi}\right)^N e^{-\lambda_2 r^2} r^{2N-1} dr \Big) dx.
	\end{align*}
	By a change of variable $\lambda =\lambda_2 r^2$, the above quantity is (up to a factor $\frac{1}{2\pi^N}$)
	\begin{align*}
		&\int_{m_0}^{N\lambda_2^{-1}+m_0} f_{\lambda_2}(x) \Big(\int_{\lambda_2(m_0+N\lambda_2^{-1}-x)}^{\lambda_2(N\lambda_2^{-1}+\lambda_1^{-1}+m-x)} e^{-\lambda} \lambda^{N-1}\lambda \Big) dx \\
		&+ \int_{N\lambda_2^{-1}+m_0}^{N\lambda_2^{-1}+\lambda_1^{-1}+m} f_{\lambda_2}(x) \Big(\int_0^{\lambda_2(N\lambda_2^{-1}+\lambda_1^{-1}+m-x)} e^{-\lambda} \lambda^{N-1}\lambda \Big) dx.
	\end{align*}
	Since this sum is equal to zero and both terms are non-negative, we infer that $f_{\lambda_2}(x)=0$ for all $x\in (m_0, N\lambda_2^{-1}+\lambda_1^{-1}+m)$. Arguing in a similar manner, we can prove that $f_\Lambda(x)=0$ for all $x\in (m_0,{\Tr}[(P_{\leq \Lambda}h)^{-1}]+m)$ and all $\Lambda\geq 0$. Thus
	\[
	1=\int_{m_0}^{+\infty} f_\Lambda(x) dx =\int_{{\Tr}[(P_{\leq \Lambda} h)^{-1}]+m}^{+\infty} f_\Lambda(x) dx, \quad \forall \Lambda\geq 0.
	\]
	This contradicts the fact that
	\begin{align*}
		\int_{{\Tr}[(P_{\leq \Lambda}h)^{-1}]+m}^{+\infty} f_\Lambda(x) dx &= \mu_0^{>\Lambda}\left(\Mcal_{>\Lambda}(u)>{\Tr}[(P_{\leq \Lambda} h)^{-1}]+m\right) \\
		&\leq \mu_0^{>\Lambda}\left((\Mcal_{>\Lambda}(u))^2 >({\Tr}[(P_{\leq \Lambda} h)^{-1}]+m)^2\right) \\
		&\leq \frac{1}{({\Tr}[(P_{\leq \Lambda} h)^{-1}]+m)^2} \int (\Mcal_{>\Lambda}(u))^2 d\mu_0^{>\Lambda}(u) \\
		&=\frac{1}{({\Tr}[(P_{\leq \Lambda} h)^{-1}]+m)^2}\sum_{\lambda_j>\Lambda} \lambda_j^{-2} \rightarrow 0 \text{ as } \Lambda\rightarrow \infty
	\end{align*}
	due to ${\Tr}[h^{-2}]=\sum \lambda_j^{-2} <\infty$. Note that for $m$ fixed, we have
	\[
	{\Tr}[(P_{\leq \Lambda}h)^{-1}]+m \xrightarrow[\Lambda \rightarrow \infty]{} \left\{
	\begin{array}{cl}
		m+{\Tr}[h^{-1}]>0 &\text{if } s>2, \\
		+\infty &\text{if }1<s\leq 2.
	\end{array}
	\right.
	\]
	The proof is complete.
\end{proof}

\begin{lemma}[\bf Cylindrical projections II] \mbox{} \\
	Let $s>1$, $m\in \R$, and assume $m>-{\Tr[h^{-1}]}$ if $s>2$. Then for any $\Lambda\geq \lambda_1$ and any borelian set $A$ of $E_{\leq \Lambda}$, the limit $\lim_{\vareps \rightarrow 0^+} \mu_{0}^{m,\vareps}(A)$ exists and
	\begin{align} \label{eq:limi-A-eps}
	\lim_{\vareps \rightarrow 0^+} \mu_{0}^{m,\vareps} (A) =\int_{A} \frac{f_\Lambda\left(m-\vartheta_\Lambda\right)}{f_0(m)} d\mu_0^{\leq\Lambda}(u) =:\mu_{0}^{m,\leq \Lambda}(A).
	\end{align}
	In particular,
	\[
	d\mu_{0}^{m,\leq \Lambda}(u)= \frac{f_\Lambda(m-\vartheta_\Lambda)}{f_0(m)} d\mu_0^{\leq \Lambda}(u)
	\]
	satisfies for any $\Theta\geq \Lambda$,
	\begin{align} \label{eq:cyli-proj}
	\left.\mu_{0}^{m,\leq \Theta}\right|_{E_{\leq \Lambda}}=\mu_{0}^{m,\leq \Lambda}.
	\end{align}
\end{lemma}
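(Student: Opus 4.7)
My starting point is the explicit formula \eqref{eq:mu-0-m-A-eps}, which I rewrite by inserting the factor $\frac{1}{2\vareps}$ in both numerator and denominator as
\[
\mu_0^{m,\vareps}(A) = \int_{A} \frac{\frac{1}{2\vareps}\int_{m-\vareps-\vartheta_\Lambda}^{m+\vareps-\vartheta_\Lambda} f_{\Lambda}(x)\, dx}{\frac{1}{2\vareps}\int_{m-\vareps}^{m+\vareps} f_0(x)\, dx}\, d\mu_{0}^{\leq \Lambda}(u).
\]
I then intend to pass $\vareps \to 0^+$ inside the $\mu_0^{\leq\Lambda}$-integral by dominated convergence.

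For the pointwise limit, Lemma \ref{lem-f0-m} gives that the denominator tends to $f_0(m) > 0$, while the uniform continuity of $f_\Lambda$ on $(m_0,+\infty)$ from Lemma \ref{lem-f-Lamb} gives that the numerator tends to $f_\Lambda(m - \vartheta_\Lambda)$. Here I extend $f_\Lambda$ by zero outside $(m_0,+\infty)$: this is coherent because in the super-harmonic case $\Mcal_{>\Lambda}$ takes values only in $(-{\Tr}[(P_{>\Lambda}h)^{-1}],+\infty) \supset (m_0,+\infty)$, so the density simply vanishes elsewhere, and the integral in the numerator is automatically zero as soon as $\vareps$ is small enough that $m + \vareps - \vartheta_\Lambda \leq m_0$. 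To justify passage to the limit under the $u$-integral I use the quantitative parts of the two preparatory lemmas: since $f_0$ is continuous with $f_0(m) > 0$, for some $\vareps_0 > 0$ the denominator is bounded below by $f_0(m)/2$ uniformly for $0 < \vareps < \vareps_0$; and since $f_\Lambda$ is bounded on its whole domain by Lemma \ref{lem-f-Lamb}, the integrand is uniformly dominated by the constant $2\|f_\Lambda\|_{L^\infty((m_0,+\infty))}/f_0(m)$, which is integrable against the probability measure $\mu_0^{\leq\Lambda}$. Dominated convergence then yields \eqref{eq:limi-A-eps}.

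The cylindrical projection identity \eqref{eq:cyli-proj} is obtained by passing $\vareps \to 0^+$ in the already-established identity \eqref{cylin-proj-prop}: for any borelian $A \subset E_{\leq\Lambda}$ and $\Theta \geq \Lambda$, I apply the limit just proved to the cylindrical set $A \times \C^N \subset E_{\leq\Theta}$ (with $N = \#\{j : \Lambda < \lambda_j \leq \Theta\}$) on the left and to $A \subset E_{\leq\Lambda}$ on the right of \eqref{cylin-proj-prop}, yielding $\mu_0^{m,\leq\Theta}(A \times \C^N) = \mu_0^{m,\leq\Lambda}(A)$, which is the asserted compatibility. The hard work has really been pushed upstream into Lemmas \ref{lem-f-Lamb} and \ref{lem-f0-m}, namely the uniform $L^\infty$ bound on $f_\Lambda$ (obtained via characteristic functions together with the Weyl-type counting estimate) and the positivity $f_0(m) > 0$ (obtained by a contradiction argument exploiting simplicity of the ground state). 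Given those inputs the present statement is a direct consequence of dominated convergence.
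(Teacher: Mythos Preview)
Your proof is correct and follows essentially the same approach as the paper: rewrite \eqref{eq:mu-0-m-A-eps} with the $\tfrac{1}{2\vareps}$ normalizations, use uniform continuity of $f_\Lambda$ and Lemma~\ref{lem-f0-m} for the pointwise limit, bound the integrand uniformly by $2\|f_\Lambda\|_{L^\infty}/f_0(m)$ to apply dominated convergence, and then obtain \eqref{eq:cyli-proj} by passing to the limit in \eqref{cylin-proj-prop}. Your extra remark about extending $f_\Lambda$ by zero outside $(m_0,+\infty)$ is a nice point of care that the paper leaves implicit.
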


\begin{proof}
	On the one hand, by the uniform continuity of $f_\Lambda$, we have
	\[
	\left(\frac{1}{2\vareps}\int_{m-\vareps-\vartheta_\Lambda}^{m+\vareps-\vartheta_\Lambda} f_\Lambda(x) dx\right)  \left(\frac{1}{2\vareps}\int_{m-\vareps}^{m+\vareps} f_0(x) dx \right)^{-1} \xrightarrow[\vareps \rightarrow 0^+]{} \frac{f_\Lambda(m-\vartheta_\Lambda)}{f_0(m)}.
	\]
	On the other hand, the boundedness of $f_\Lambda$ gives
	\[
	\frac{1}{2\vareps} \int_{m-\vareps-\vartheta_\Lambda}^{m+\vareps-\vartheta_\Lambda} f_\Lambda(x)dx\leq \|f_\Lambda\|_{L^\infty}.
	\]
	Since $f_0(m)>0$, we have for $\vareps>0$ sufficiently small,
	\[
	\frac{1}{2\vareps} \int_{m-\vareps}^{m+\vareps} f_0(x)dx \geq \frac{f_0(m)}{2}.
	\]
	In particular, we get for $\vareps>0$ sufficiently small,
	\[
	\left(\int_{m-\vareps-\vartheta_\Lambda}^{m+\vareps-\vartheta_\Lambda} f_\Lambda(x) dx\right)  \left(\int_{m-\vareps}^{m+\vareps} f_0(x) dx \right)^{-1} d\mu_0^{\leq \Lambda}(u) \leq \frac{2\|f_\Lambda\|_{L^\infty}}{f_0(m)} d\mu_0^{\leq \Lambda}(u)
	\]
	which is integrable on $A$. From \eqref{eq:mu-0-m-A-eps}, the dominated convergence theorem yields \eqref{eq:limi-A-eps}.
	
	To see \eqref{eq:cyli-proj}, we use \eqref{limi-eps-A} to have for any borelian set $A$ of $E_{\leq \Lambda}$,
	\begin{align*}
		\left.\mu^{m,\leq \Theta}_{0}\right|_{E_{\leq \Lambda}} (A) &= \mu_{0}^{m,\leq \Theta}(A\times \C^{N}) \\
		&= \lim_{\vareps \rightarrow 0^+} \mu_{0}^{m,\vareps,\leq \Theta} (A\times \C^{N}) \\
		&= \lim_{\vareps \rightarrow 0^+} \mu_{0}^{m,\vareps,\leq \Lambda}(A) \\
		&= \mu_{0}^{m,\leq \Lambda}(A),
	\end{align*}
	where $N=\#\{\lambda_j : \Lambda<\lambda_j\leq \Theta\}$.
\end{proof}

We have constructed a sequence of measures $\{\mu_{0}^{m,\leq \Lambda}\}_{\Lambda\geq \lambda_1}$ on the finite dimensional spaces $\{E_{\leq \Lambda}\}_{\Lambda\geq \lambda_1}$ that satisfies the cylindrical property~\eqref{eq:cyli-proj}. Thus we have completed the proof of Item (1) of Proposition~\ref{prop-gaus-meas-mass}.

To prove Item (2), we will show that there exists a unique measure $\mu_0^m$ on an infinite dimensional Hilbert space satisfying
\[
\left.\mu_0^m\right|_{E_{\leq \Lambda}} = \mu_{0}^{m,\leq \Lambda}.
\]
By Skorokhod's criterion (see \cite[Lemma 1]{Skorokhod-74}), it suffices to check that the sequence $\{\mu_{0}^{m,\leq \Lambda}\}_{\Lambda\geq \lambda_1}$ is tight in the sense that
\[
\lim_{R\rightarrow \infty} \sup_{\Lambda\geq \lambda_1} \mu_{0}^{m,\leq \Lambda}(\{ u \in E_{\leq \Lambda} : \|u\|_{\Hc^\theta} \geq R\}) =0
\]
for $\theta <\frac{1}{2}-\frac{1}{s}$. This clearly follows from the following lemma, whose proof will thus complete that of Proposition~\ref{prop-gaus-meas-mass}.

\begin{lemma}[\bf Tightness of the canonical Gaussian measure] \label{lem-Cm} \mbox{} \\
	Let $s>1$, $m\in \R$, and assume $m>-{\Tr}[h^{-1}]$ if $s>2$. Then for any $\theta <\frac{1}{2}-\frac{1}{s}$, there exists $C(m,\theta)>0$ such that
	\begin{align*}
		\int_{E_{\leq \Lambda}} \|u\|^2_{\Hc^\theta} d\mu_0^{m,\leq \Lambda}(u) \leq C(m,\theta), \quad \forall \Lambda \geq \lambda_1.
	\end{align*} 
\end{lemma}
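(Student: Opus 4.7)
The strategy is to show that $\int |\alpha_j|^2\,d\mu_0^{m,\leq\Lambda}\leq \frac{C(m)}{\lambda_j}$, uniformly in $j$ and $\Lambda\geq\lambda_1$. Summing then gives
\[
\int \|u\|^2_{\Hc^\theta}\,d\mu_0^{m,\leq\Lambda} \;=\; \sum_{\lambda_j\leq \Lambda} \lambda_j^\theta \int|\alpha_j|^2\,d\mu_0^{m,\leq\Lambda} \;\leq\; C(m)\sum_{\lambda_j\leq\Lambda} \lambda_j^{\theta-1}\;\leq\; C(m)\,\Tr[h^{\theta-1}],
\]
which is finite for $\theta<\tfrac12-\tfrac1s$ by Lemma \ref{lem-trac-h-p}, with $C(m)$ finite thanks to $f_0(m)>0$ (Lemma \ref{lem-f0-m}).

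To bound each individual moment I would start from the explicit density $d\mu_0^{m,\leq\Lambda}=\frac{f_\Lambda(m-\vartheta_\Lambda)}{f_0(m)}\,d\mu_0^{\leq\Lambda}$ of Proposition \ref{prop-gaus-meas-mass}, decompose $\vartheta_\Lambda = X_j + \vartheta_\Lambda^j$ with $X_j=|\alpha_j|^2-\lambda_j^{-1}$ and $\vartheta_\Lambda^j = \sum_{\lambda_k\leq\Lambda,\,k\neq j} X_k$, and use independence under $\mu_0^{\leq\Lambda}$ to integrate successively over the $\alpha_k$ with $k\neq j$ and then over $\alpha_j$ in polar coordinates. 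Writing $h_j$ for the density of $\Mcal - X_j = \sum_{k\neq j} X_k$ (blending the bulk indices $\lambda_k\leq\Lambda, k\neq j$ with the high-frequency part encoded in $f_\Lambda$), this should produce
\[
\int|\alpha_j|^2\,d\mu_0^{m,\leq\Lambda} \;=\; \frac{\lambda_j}{f_0(m)}\int_0^\infty t\,e^{-\lambda_j t}\,h_j\!\left(m+\lambda_j^{-1}-t\right)dt.
\]
The same computation without the factor $t$ yields $(g_j\ast h_j)(m)=f_0(m)$ (where $g_j$ is the law of $X_j$), which confirms the normalization and isolates a single factor $t$ to be absorbed.

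The key step is then the sup-norm bound $\|h_j\|_{L^\infty}\leq C$, \emph{uniform in both $j$ and $\Lambda$}. This follows by a Fourier argument parallel to Lemma \ref{lem-f-Lamb}: the characteristic function $\hat h_j(s) = \prod_{k\neq j} \frac{e^{-is\lambda_k^{-1}}}{1-is\lambda_k^{-1}}$ admits, for any two indices $k_1,k_2\neq j$ with $\lambda_{k_1}\leq\lambda_{k_2}$, the bound $|\hat h_j(s)|\leq (1+s^2\lambda_{k_2}^{-2})^{-1}$, hence $\|\hat h_j\|_{L^1}\leq \pi\lambda_{k_2}$ and $\|h_j\|_{L^\infty}\leq \lambda_{k_2}/2$. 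Choosing $k_1,k_2$ as the two smallest indices different from $j$ guarantees $\lambda_{k_2}\leq\lambda_3$, regardless of $j$ or $\Lambda$. Combined with $\lambda_j\int_0^\infty t\,e^{-\lambda_j t}\,dt=\lambda_j^{-1}$, this yields the desired $\int|\alpha_j|^2\,d\mu_0^{m,\leq\Lambda}\leq C/(f_0(m)\lambda_j)$.

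The main obstacle sidestepped by this presentation is that the crude bound $\|f_\Lambda\|_{L^\infty}\leq C\Lambda$ from Lemma \ref{lem-f-Lamb} grows with $\Lambda$ and thus cannot be used directly. The trick is to peel off the coordinate $\alpha_j$ \emph{before} passing to the Fourier side: the exponential kernel in $\alpha_j$ then supplies the factor $\lambda_j^{-1}$ needed to make the series over $j$ converge, while the residual density $h_j$ retains enough factors in its characteristic function for a $\Lambda$-independent $L^\infty$-bound.
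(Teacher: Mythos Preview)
Your proof is correct and follows essentially the same approach as the paper: both isolate the coordinate $\alpha_j$, introduce the density $h_j$ (the paper's $F_j$) of $\Mcal_{\neq j}=\sum_{k\neq j}(|\alpha_k|^2-\lambda_k^{-1})$, and bound $\|h_j\|_{L^\infty}\leq C\lambda_3$ uniformly in $j$ via the characteristic function, which yields $\int\lambda_j|\alpha_j|^2\,d\mu_0^{m,\leq\Lambda}\leq C(m)$. The one difference is that the paper establishes the bound first for $\mu_0^{m,\vareps,\leq\Lambda}$ and then passes to the limit $\vareps\to0^+$ by a layer-cake/dominated-convergence argument, whereas you work directly with the explicit density $\frac{f_\Lambda(m-\vartheta_\Lambda)}{f_0(m)}$ from the cylindrical-projection lemma and use the convolution identity $\int f_\Lambda(y-z)\,d\nu_{\vartheta_\Lambda^j}(z)=h_j(y)$; this is a modest but genuine simplification.
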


\begin{proof}
	It suffices to prove that 
	\begin{equation}\label{eq:Cm}
		\int \lambda_j |\alpha_j|^2 d\mu_{0}^{m,\leq \Lambda}(u) \leq C(m), \quad \forall \Lambda \geq \lambda_1, \quad \forall \lambda_1\leq \lambda_ j\leq \Lambda
	\end{equation}
	for some constant $C(m)$ depending only on $m$. Indeed, we have
	\begin{align*}
		\int_{E_{\leq \Lambda}} \|u\|^2_{\Hc^\theta} d\mu_{0}^{m,\leq \Lambda}(u) &= \int\sum_{\lambda_j\leq \Lambda} \lambda_j^\theta|\alpha_j|^2 d\mu_{0}^{m,\leq \Lambda}(u) \\
		&=\sum_{\lambda_j\leq \Lambda} \lambda_j^{\theta-1}\int \lambda_j |\alpha_j|^2 d\mu_{0}^{m,\leq \Lambda}(u) \\
		&\leq C(m)\sum_{\lambda_j\leq \Lambda} \lambda_j^{-1+\theta} \\
		&\leq C(m) {\Tr}[h^{-1+\theta}] <\infty,
	\end{align*}
	where we have used the fact that $1-\theta>\frac{1}{2}+\frac{1}{s}$ to get the finiteness of ${\Tr}[h^{-1+\theta}]$ (see Lemma \ref{lem-trac-h-p}).
	
	We are thus reduced to proving~\eqref{eq:Cm}, which we shall deduce from the fact that for all $\Lambda\geq \lambda_1$, all $\lambda_1\leq \lambda_ j\leq \Lambda$, and all $\vareps>0$ sufficiently small,
	\begin{align} \label{eq:Cm-eps}
		\int \lambda_j |\alpha_j|^2 d\mu_{0}^{m,\vareps, \leq \Lambda}(u) \leq C(m).
	\end{align}
	We have
	\begin{align*}
		&\int \lambda_j|\alpha_j|^2  d\mu_{0}^{m,\vareps,\leq \Lambda}(u) \\
		&\quad = \frac{1}{Z_{0}^{m,\vareps}}\int  \lambda_j|\alpha_j|^2 \mu_0^{>\Lambda}(m-\vareps-\vartheta_\Lambda<\Mcal_{>\Lambda}(u)<m+\vareps-\vartheta_\Lambda) d\mu_0^{\leq \Lambda}(u)\\
		&\quad =\frac{1}{Z_{0}^{m,\vareps}} \int \lambda_j |\alpha_j|^2 \left(\int \mathds{1}_{\{m-\vareps-\vartheta_\Lambda<\Mcal_{>\Lambda}(u)<m+\vareps-\vartheta_\Lambda\}} d\mu_0^{>\Lambda}(u)\right) d\mu_0^{\leq \Lambda}(u) \\
		&\quad =\frac{1}{Z_{0}^{m,\vareps}} \int \lambda_j|\alpha_j|^2  \left(\int \mathds{1}_{\{m-\vareps-|\alpha_j|^2 +\lambda_j^{-1}<\Mcal_{\ne j}(u)<m+\vareps-|\alpha_j|^2+\lambda_j^{-1}\}} d\mu_0^{\ne j}(u)\right) \frac{\lambda_j}{\pi}e^{-\lambda_j|\alpha_j|^2} d\alpha_j \\
		&\quad = \frac{1}{Z_{0}^{m,\vareps}} \int \lambda_j|\alpha_j|^2 \mu_0^{\ne j}(m-\vareps-|\alpha_j|^2+\lambda_j^{-1}<\Mcal_{\ne j}(u)<m+\vareps-|\alpha_j|^2+\lambda_j^{-1}) \frac{\lambda_j}{\pi} e^{-\lambda_j|\alpha_j|^2} d\alpha_j,
	\end{align*}
	where 
	\[
	\Mcal_{\ne j} (u) = \sum_{k\ne j} |\alpha_k|^2-\lambda_k^{-1}, \quad d\mu_0^{\ne j}(u) = \prod_{k \ne j} \frac{\lambda_k}{\pi} e^{-\lambda_k|\alpha_k|^2} d\alpha_k.
	\]
	Denote $F_j$ the density function of $\Mcal_{\ne j}(u)$ with respect to $\mu_0^{\ne j}$. We rewrite
	\[
	\int (\lambda_j|\alpha_j|^2)^q  d\mu_{0}^{m,\vareps,\leq \Lambda}(u) = \frac{1}{Z_{0}^{m,\vareps}} \int \lambda_j|\alpha_j|^2  \left(\int_{m-\vareps-|\alpha_j|^2+\lambda_j^{-1}}^{m+\vareps-|\alpha_j|^2+\lambda_j^{-1}} F_j(x) dx\right) \frac{\lambda_j}{\pi} e^{-\lambda_j|\alpha_j|^2} d\alpha_j. 
	\]
	To proceed further, we denote by $\Phi_j$ the characteristic function of $\Mcal_{\ne j}(u)$ with respect to $\mu_0^{\ne j}$. As in the proof of Lemma \ref{lem-f-Lamb}, we have
	\[
	\Phi_j(s) = \E_{\mu_0^{\ne j}}[e^{is \Mcal_{\ne j}(u)}] = \prod_{k\ne j} \frac{e^{-is\lambda_k^{-1}}}{1-is\lambda_k^{-1}}.
	\]
	Since each factor of this product has complex norm smaller than or equal to 1, we bound the norm of this product by the norm of a product of any two terms. Taking the first two terms, we obtain that for all $\lambda_j\geq \lambda_1$,
	\[
	|\Phi_j(s)| \leq \frac{1}{1+s^2 \lambda_3^{-2}}.
	\]
	In particular, $\|\Phi_j\|_{L^1(\R)} \leq C\lambda_3$ for all $\lambda_j\geq \lambda_1$. Thus $F_j$ is bounded (uniformly in $j$) and uniformly continuous for all $\lambda_j\geq \lambda_1$. 
	
	By Lemma \ref{lem-f0-m}, we have for $\vareps>0$ sufficiently small,
	\[
	\frac{1}{2\vareps} Z_{0}^{m,\vareps} \geq \frac{f_0(m)}{2}.
	\]
	We also have
	\[
	\frac{1}{2\vareps} \int_{m-\vareps-|\alpha_j|^2+\lambda_j^{-1}}^{m+\vareps-|\alpha_j|^2+\lambda_j^{-1}} F_j(x) dx \leq  \|F_j\|_{L^\infty} \leq C\|\Phi_j\|_{L^1(\R)} \leq C \lambda_3, \quad \forall \lambda_j\geq \lambda_1.
	\]
	It follows that 
	\begin{align*}
		\int \lambda_j|\alpha_j|^2  d\mu_{0}^{m,\vareps,\leq \Lambda}(u) &\leq \frac{2C\lambda_3}{f_0(m)} \int \lambda_j|\alpha_j|^2 \frac{\lambda_j}{\pi}e^{-\lambda_j|\alpha_j|^2} d\alpha_j \\
		&=\frac{2C\lambda_3}{f_0(m)} \int_0^\infty \lambda e^{-\lambda} d\lambda \\
		&= C(m)
	\end{align*}
	for all $\lambda_j\geq \lambda_1$ and all $\vareps>0$ sufficiently small. This proves \eqref{eq:Cm-eps}.
	
	We now prove \eqref{eq:Cm}. Using the layer cake representation, the problem is reduced to showing that 
	\[
	\int_0^\infty \mu_{0}^{m,\leq \Lambda}( \lambda_j|\alpha_j|^2 >\lambda) d\lambda = \lim_{\vareps \rightarrow 0^+} \int_0^\infty \mu_{0}^{m,\vareps,\leq \Lambda}(\lambda_j|\alpha_j|^2 >\lambda) d\lambda.
	\]
	Thanks to the weak convergence $\mu_{0}^{m,\vareps,\leq \Lambda} \rightarrow \mu_{0}^{m,\leq \Lambda}$, \eqref{eq:Cm} follows from the dominated convergence theorem and the fact that
	\begin{align*}
		&\mu_{0}^{m,\vareps,\leq \Lambda}(\lambda_j|\alpha_j|^2>\lambda) \\
		&\quad = \int \mathds{1}_{\{\lambda_j|\alpha_j|^2>\lambda\}} d\mu_{0}^{m,\vareps,\leq \Lambda}(u) \\
		&\quad = \frac{1}{Z_{0}^{m,\vareps}}\int \mathds{1}_{\{\lambda_j|\alpha_j|^2>\lambda\}} \mu_0^{>\Lambda}(m-\vareps-\vartheta_\Lambda<\Mcal_{>\Lambda}(u)<m+\vareps-\vartheta_\Lambda) d\mu_0^{\leq \Lambda}(u) \\
		&\quad =\frac{1}{Z_{0}^{m,\vareps}}\int \mathds{1}_{\{\lambda_j|\alpha_j|^2>\lambda\}} \left(\int \mathds{1}_{\{m-\vareps-\vartheta_\Lambda<\Mcal_{>\Lambda}(u)<m+\vareps-\vartheta_\Lambda\}} d\mu_0^{>\Lambda}(u)\right) d\mu_0^{\leq \Lambda}(u) \\
		&\quad = \frac{1}{Z_{0}^{m,\vareps}}\int \mathds{1}_{\{\lambda_j|\alpha_j|^2>\lambda\}\cap \{|\alpha_j|^2-\lambda_j^{-1}\in \R\}} \\
		&\quad \quad \times \left( \int \mathds{1}_{\{m-\vareps-|\alpha_j|^2+\lambda_j^{-1}<\Mcal_{\ne j}(u)<m+\vareps-|\alpha_j|^2+\lambda_j^{-1}\}} d\mu_0^{\ne j}(u)\right) \frac{\lambda_j}{\pi} e^{-\lambda_j|\alpha_j|^2} d\alpha_j \\
		&\quad = \frac{1}{Z_{0}^{m,\vareps}} \int \mathds{1}_{\{\lambda_j|\alpha_j|^2>\lambda\}\cap \{|\alpha_j|^2-\lambda_j^{-1}\in \R\}} \left(\int_{m-\vareps-|\alpha_j|^2+\lambda_j^{-1}}^{m+\vareps-|\alpha_j|^2+\lambda_j^{-1}} F_j(x) dx\right) \frac{\lambda_j}{\pi} e^{-\lambda_j|\alpha_j|^2}d\alpha_j \\
		&\quad \leq \frac{2C\lambda_3}{f_0(m)}  \int \mathds{1}_{\{\lambda_j|\alpha_j|^2>\lambda, |\alpha_j|^2-\lambda_j^{-1}\in \R\}} \frac{\lambda_j}{\pi}e^{-\lambda_j|\alpha_j|^2}d\alpha_j \\
		&\quad \leq \frac{2C\lambda_3}{f_0(m)} \int^\infty_{\lambda} e^{-\tau}d\tau \\
		&\quad = \frac{2C\lambda_3}{f_0(m)} e^{-\lambda}
	\end{align*}
	which is integrable on $(0,\infty)$.
\end{proof}

\subsection{Gibbs measures conditioned on the mass}
\label{sec:Gibbs cond mass}
Once the Gaussian measure conditioned on mass is constructed, our next task is to define the Gibbs measure with a fixed renormalized mass as follows:
\[
d\mu^m(u) = \frac{1}{Z^m} e^{\mp\frac{1}{2}\|u\|^4_{L^4}} d\mu_0^m(u),
\]
where
\[
Z^m := \int e^{\mp\frac{1}{2}\|u\|^4_{L^4}} d\mu_0^m(u)
\]
is the normalization constant. Here the minus sign stands for the defocusing nonlinearity, and the plus sign is for the focusing one.

\begin{proposition} [\bf Gibbs measures conditioned on mass] \label{prop-mu-fix-renor} \mbox{}\\
	Let $s>1$, $V$ satisfy Assumption \ref{assu-V}, $m\in \R$, and assume $m>-{\Tr}[h^{-1}]$ if $s>2$. Assume in addition that $s>\frac{8}{5}$ for the focusing nonlinearity. Then $\mu^m$ makes sense as a probability measure.
\end{proposition}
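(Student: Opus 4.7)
The plan is to combine the construction of the grand-canonical measure in Proposition~\ref{pro:gc} with the approximations $\mu_0^{m,\varepsilon}$ of the conditional Gaussian from Proposition~\ref{prop-gaus-meas-mass}. The defocusing case is essentially free: the upper bound $Z^m\leq 1$ is immediate since $e^{-\frac{1}{2}\|u\|_{L^4}^4}\leq 1$, and for the lower bound Jensen's inequality reduces the problem to a moment bound $\int \|u\|_{L^4}^4 \, d\mu_0^m(u) < \infty$. I would prove this uniformly at the level of the cylindrical measures $\mu_0^{m,\varepsilon,\leq\Lambda}$ by repeating the argument of Lemma~\ref{lem-Cm}, applied to pairs $(\alpha_j,\alpha_k)$ rather than single coefficients: isolating those two modes in $\Mcal$ and using the uniform $L^\infty$ bound on the characteristic function of $\Mcal_{\neq j,k}$, one sees that their joint marginal under $\mu_0^{m,\varepsilon,\leq\Lambda}$ is dominated by a constant times the $\mu_0$-marginal, which by a Wick-type computation yields $\int\|u\|_{L^4}^4 \, d\mu_0^m \leq C(m)\,\Tr[h^{-1}\otimes h^{-1}]<\infty$ via Lemma~\ref{lem-Lp}.

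In the focusing case, $Z^m \geq 1$ is trivial. For the upper bound, I would truncate by $F_N(u):=\min(e^{\frac{1}{2}\|u\|_{L^4}^4},N)$ and use the weak convergence $\mu_0^{m,\varepsilon}\rightharpoonup \mu_0^m$ followed by monotone convergence in $N$ to reduce the question to the uniform estimate
\[
\int e^{\frac{1}{2}\|u\|_{L^4}^4} \, d\mu_0^{m,\varepsilon}(u) \leq C, \qquad \varepsilon \in (0,\varepsilon_0].
\]
Combined with the lower bound $Z_0^{m,\varepsilon}\geq c\varepsilon$ from Lemma~\ref{lem-f0-m} and a layer cake decomposition, this boils down to establishing, for some $L>1$, the conditional tail estimate
\[
\tfrac{1}{\varepsilon}\mu_0\bigl(\|u\|_{L^4}^4 > 2\log\lambda,\ |\Mcal(u)-m|<\varepsilon\bigr) \leq C_L\, \lambda^{-L}.
\]
I would then imitate the proofs of Items (2) and (3) of Proposition~\ref{pro:gc}: when $s>2$, the mass constraint yields $\|u\|_{L^2}^2 \leq m+\Tr[h^{-1}]+\varepsilon$ uniformly, which by Sobolev embedding controls $\|P_{\leq\Lambda_0}u\|_{L^4}$; when $1<s\leq 2$, on the good set $\{|\Mcal_{>\Lambda_0}(u)|\leq \Lambda_0^\nu\}$ (large probability by Lemma~\ref{lem-Lambda-sub}) one has $\|P_{\leq \Lambda_0} u\|_{L^2}^2 \leq m+\varepsilon+\Lambda_0^\nu + \Tr[(P_{\leq\Lambda_0}h)^{-1}]$, to which the fractional Gagliardo--Nirenberg inequality \eqref{eq:sobo-ineq} applies; the high-frequency tail is supplied by Lemma~\ref{lem-L4-Lambda}.

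The main obstacle — and the only genuinely new ingredient — is the extraction of a sharp factor of $\varepsilon$ from the narrow constraint $|\Mcal(u)-m|<\varepsilon$: the bare layer cake arguments of Section~\ref{sec:gc measures} give a bound without any $\varepsilon$-gain and are therefore insufficient. The key device is to use the independence of $u^{\leq}=P_{\leq\Lambda_0}u$ and $u^{>}=P_{>\Lambda_0}u$ under $\mu_0$: conditioning on $u^{>}$, the event $|\Mcal(u)-m|<\varepsilon$ becomes $|\vartheta_{\Lambda_0}(u^{\leq})-(m-\Mcal_{>\Lambda_0}(u^{>}))|<\varepsilon$, a window of length $2\varepsilon$ for the finite-dimensional random variable $\vartheta_{\Lambda_0}$, whose density (a finite convolution of shifted exponentials) is bounded by some $C_{\Lambda_0}<\infty$; the corresponding $\mu_0^{\leq\Lambda_0}$-mass is therefore at most $2C_{\Lambda_0}\varepsilon$. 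The dual trick, conditioning on $u^{\leq}$ and using that the density $f_{\Lambda_0}$ of $\Mcal_{>\Lambda_0}$ is bounded by a $\Lambda_0$-dependent constant (Lemma~\ref{lem-f-Lamb}), handles the high-frequency tail factor. Since we will choose $\Lambda_0=\Lambda_0(\lambda)$ as a logarithmic power of $\lambda$ (as in the proof of Proposition~\ref{pro:gc}(3)), the polynomial growth in $\Lambda_0$ of these density bounds is comfortably absorbed by the super-polynomial $\lambda^{-L}$ decay furnished by the Gaussian tails.
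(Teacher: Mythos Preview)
Your proposal is correct and follows essentially the same route as the paper. The ``key device'' you identify --- extracting the $\varepsilon$ factor from the mass constraint by conditioning on one frequency block and using boundedness of the density of the renormalized mass of the other block --- is exactly what the paper does: it packages this into three conditional tail estimates (Lemmas~\ref{lem-mu-fix-renor}, \ref{lem-L4-fix-renor}, \ref{lem-beta-p-fix-renor}) under $\mu_0^{m,\varepsilon}$, passes to $\mu_0^m$, and then runs the grand-canonical layer-cake argument of Proposition~\ref{pro:gc} directly under $\mu_0^m$; your organization (keep everything at the $\varepsilon$-level, truncate, and pass to the limit at the end) is equivalent. One small slip: the ``dual trick'' using $f_{\Lambda_0}$ (condition on $u^{\leq}$) is what handles the \emph{low}-frequency term $\|P_{\leq\Lambda_0}u\|_{\Wc^{\beta,p}}$, while the first trick using the density $g_{\Lambda_0}$ of $\vartheta_{\Lambda_0}$ (condition on $u^{>}$) --- which is in fact bounded uniformly by $C\lambda_2$, not just by a $\Lambda_0$-dependent constant --- handles the high-frequency terms $\|P_{>\Lambda_0}u\|_{L^4}$ and $|\Mcal_{>\Lambda_0}|$.
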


We first need the following observation.

\begin{lemma} [\bf Decay of the renormalized mass] \label{lem-mu-fix-renor} \mbox{} \\
	Let $s>1$, $V$ satisfy Assumption \ref{assu-V}, $0\leq \gamma<\frac{3s-2}{4s}$, $m\in \R$, and assume $m>-{\Tr}[h^{-1}]$ if $s>2$. Then there exist $C,c>0$ such that for $\theta<\frac{1}{2}-\frac{1}{s}$, all $\Lambda \geq \lambda_2$, all $R>0$, and all $\vareps>0$ sufficiently small,
	\begin{align} \label{eq:boun-M-mu-renor-eps}
		\mu_{0}^{m,\vareps}\left(u \in \Hc^\theta : |\Mcal_{>\Lambda}(u)|>R\right) \leq Ce^{-c\Lambda^\gamma R}.
	\end{align}
	In particular, we have
	\begin{align} \label{eq:boun-M-mu-renor}
		\mu_{0}^m \left(u \in \Hc^\theta : |\Mcal_{>\Lambda}(u)|>R\right) \leq Ce^{-c\Lambda^\gamma R}.
	\end{align}
\end{lemma}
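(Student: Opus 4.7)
The strategy is to leverage the independence of the low- and high-frequency parts of the Gaussian measure $\mu_0$ to reduce the estimate for $\mu_0^{m,\vareps}$ to the unconditioned bound \eqref{eq:boun-M-Lamb-hi} already established in Lemma \ref{lem-Lambda-sub}. The only new input is the uniform boundedness in $\Lambda$ of the density of the truncated renormalized mass $\vartheta_\Lambda := \Mcal_{\leq \Lambda}(u)$.

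First, I would write, for $R>0$ and by the independence under $\mu_0^{\leq \Lambda}\otimes \mu_0^{>\Lambda}$ of $\vartheta_\Lambda$ and $\Mcal_{>\Lambda}(u)$,
\[
Z_0^{m,\vareps}\, \mu_0^{m,\vareps}(\Mcal_{>\Lambda}(u)>R) = \int_R^{+\infty} f_\Lambda(y)\, \mu_0^{\leq \Lambda}\!\left( |m-y-\vartheta_\Lambda|<\vareps\right) dy,
\]
where $f_\Lambda$ is the density of $\Mcal_{>\Lambda}(u)$ under $\mu_0^{>\Lambda}$. Let $g_\Lambda$ denote the density of $\vartheta_\Lambda$ under $\mu_0^{\leq \Lambda}$. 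Computing the characteristic function of $\vartheta_\Lambda$ as in Lemma \ref{lem-f-Lamb} yields
\[
\Phi_\Lambda^{\leq}(s) = \prod_{\lambda_j\leq \Lambda} \frac{e^{-is\lambda_j^{-1}}}{1-is\lambda_j^{-1}}, \qquad |\Phi_\Lambda^{\leq}(s)|\leq \prod_{\lambda_j\leq \Lambda} \frac{1}{\sqrt{1+s^2\lambda_j^{-2}}}.
\]
Since each factor has modulus at most $1$, the product is dominated by the product of the factors corresponding to $\lambda_1$ and $\lambda_2$, which gives $\|\Phi_\Lambda^\leq\|_{L^1(\R)}\leq C$ uniformly in $\Lambda\geq \lambda_2$, hence $\|g_\Lambda\|_{L^\infty(\R)}\leq C$ uniformly in $\Lambda\geq \lambda_2$ by Fourier inversion. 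Consequently $\mu_0^{\leq \Lambda}(|m-y-\vartheta_\Lambda|<\vareps)\leq 2C\vareps$ uniformly in $y$ and $\Lambda$.

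Inserting this into the display above, using $\int_R^{+\infty} f_\Lambda(y)\,dy = \mu_0^{>\Lambda}(\Mcal_{>\Lambda}(u)>R)$, and invoking the lower bound $Z_0^{m,\vareps}\geq \vareps f_0(m)/2$ from Lemma \ref{lem-f0-m} (valid for $\vareps>0$ small, thanks to the assumption $m>-\Tr[h^{-1}]$ when $s>2$), I obtain
\[
\mu_0^{m,\vareps}(\Mcal_{>\Lambda}(u)>R) \leq \frac{4C}{f_0(m)}\,\mu_0\!\left(\Mcal_{>\Lambda}(u)>R\right) \leq C' e^{-c\Lambda^\gamma R},
\]
where the last inequality is the upper-tail content of Lemma \ref{lem-Lambda-sub}. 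The lower tail $\{\Mcal_{>\Lambda}(u)<-R\}$ is handled identically, yielding \eqref{eq:boun-M-mu-renor-eps}.

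For \eqref{eq:boun-M-mu-renor}, I would pass to the limit $\vareps\to 0^+$: by Proposition \ref{prop-gaus-meas-mass}, $\mu_0^{m,\vareps}$ converges to $\mu_0^m$ on measurable sets of $\Hc^\theta$, and since the bound $Ce^{-c\Lambda^\gamma R}$ is independent of $\vareps$, the same estimate holds for $\mu_0^m$. The main potential obstacle is the uniform-in-$\Lambda$ bound on $\|g_\Lambda\|_{L^\infty}$, which is precisely why the hypothesis $\Lambda\geq \lambda_2$ is needed (so that at least two factors enter the characteristic function and render $\Phi_\Lambda^\leq$ integrable); everything else reduces to invoking earlier results.
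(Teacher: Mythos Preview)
Your argument is correct and rests on the same key ingredient as the paper's: the uniform-in-$\Lambda$ bound $\|g_\Lambda\|_{L^\infty}\leq C$ obtained from the characteristic function $\Phi_\Lambda^{\leq}$ and the two lowest eigenvalues (this is exactly why $\Lambda\geq\lambda_2$ is assumed). The only structural difference is the order of operations. The paper first applies the Chernoff bound $\mu_0^{m,\vareps}(\Mcal_{>\Lambda}>R)\leq e^{-tR}\int e^{t\Mcal_{>\Lambda}}\,d\mu_0^{m,\vareps}$, then uses the $g_\Lambda$ bound to reduce the exponential moment to $\int e^{t\Mcal_{>\Lambda}}\,d\mu_0^{>\Lambda}$, and finally re-derives the computation of Lemma~\ref{lem-Lambda-sub} inline. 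You instead apply the $g_\Lambda$ bound first, reducing the tail probability itself to $\mu_0(\Mcal_{>\Lambda}>R)$, and then quote Lemma~\ref{lem-Lambda-sub}. Your route is a bit more modular and avoids repeating the exponential-moment calculation; the paper's has the minor advantage of being self-contained for all $s>1$, whereas Lemma~\ref{lem-Lambda-sub} as stated covers only $1<s\leq 2$ (its proof, however, goes through verbatim for $s>2$ since only $\Tr[h^{-2+2\gamma}]<\infty$ is used).
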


\begin{proof}
	Since $\mu_{0}^{m,\vareps}\to \mu_{0}^m$ as $\vareps \to 0^+$ (see Proposition \ref{prop-gaus-meas-mass}), it is enough to prove \eqref{eq:boun-M-mu-renor-eps}. We estimate
	\[
	\mu_{0}^{m,\vareps}\left(|\Mcal_{>\Lambda}(u)|>R\right) \leq \mu_{0}^{m,\vareps}\left(\Mcal_{>\Lambda}(u)>R\right) + \mu_{0}^{m,\vareps}\left(\Mcal_{>\Lambda}(u)<-R\right) = (\text{I}) + (\text{II}).
	\]
	For $(\text{I})$, we have for $0<t<\frac{\Lambda}{2}$ to be chosen shortly,
	\begin{align*}
		\mu_{0}^{m,\vareps}\left(\Mcal_{>\Lambda}(u)>R\right) \leq e^{-tR} \int e^{t\Mcal_{>\Lambda}(u)} d\mu_{0}^{m,\vareps}(u).
	\end{align*}
	By the definition of $\mu_0^{m,\vareps}$,
	\begin{align*}
		\int e^{t\Mcal_{>\Lambda}(u)} d\mu_{0}^{m,\vareps}(u) &= \frac{1}{Z_{0}^{m,\vareps}} \int e^{t \Mcal_{>\Lambda}(u)} \mathds{1}_{\left\{m-\vareps<\Mcal(u)<m+\vareps\right\}} d\mu_0(u) \\
		&=\frac{1}{Z_{0}^{m,\vareps}}\int e^{t\Mcal_{>\Lambda}(u)} \left( \int \mathds{1}_{\left\{m-\vareps-\varrho_\Lambda<\Mcal_{\leq \Lambda}(u)<m+\vareps-\varrho_\Lambda\right\}} d\mu_0^{\leq \Lambda}(u)\right) d\mu_0^{>\Lambda}(u),
	\end{align*}
	where $$\varrho_\Lambda=\Mcal_{>\Lambda}(u).$$ 
	Denote $g_\Lambda$ the density function of $\Mcal_{\leq \Lambda}(u)$ with respect to $\mu_0^{\leq \Lambda}$. We rewrite the above quantity as
	\[
	\frac{1}{Z_{0}^{m,\vareps}} \int e^{t\Mcal_{>\Lambda}(u)} \left(\int_{m-\vareps-\varrho_\Lambda}^{m+\vareps-\varrho_\Lambda} g_\Lambda(x)dx\right)d\mu_0^{>\Lambda}(u).
	\]
	Let $\psi_\Lambda$ be the characteristic function of $\Mcal_{\leq \Lambda}(u)$ with respect to $\mu_0^{\leq \Lambda}$. We compute
	\begin{align*}
		\psi_\Lambda(s) &= \E_{\mu_0^{\leq \Lambda}}[e^{is\Mcal_{\leq \Lambda}(u)}] \\
		&=\int e^{is\Mcal_{\leq \Lambda}(u)} d\mu_0^{\leq \Lambda}(u) \\
		&=\int e^{is \sum_{\lambda_j\leq \Lambda} |\alpha_j|^2-\lambda_j^{-1}} \prod_{\lambda_k \leq \Lambda} \frac{\lambda_k}{\pi}e^{-\lambda_k |\alpha_k|^2} d\alpha_k \\
		&= \prod_{\lambda_j\leq \Lambda} \frac{e^{-is\lambda_j^{-1}}}{1-is\lambda_j^{-1}}.
	\end{align*}
	Since $\Lambda\geq \lambda_2$, we see that
	\[
	|\psi_\Lambda(s)| \leq \frac{1}{1+s^2\lambda_2^{-2}},
	\]
	hence $\|\psi_\Lambda\|_{L^1(\R)} \leq C\lambda_2$. Thus we deduce that $g_\Lambda$ is uniformly bounded (in $\Lambda$) and uniformly continuous. In particular, we have
	\[
	\frac{1}{2\vareps}\int_{m-\vareps-\varrho_\Lambda}^{m+\vareps-\varrho_\Lambda} g_\Lambda(x) dx \leq \|g_\Lambda\|_{L^\infty} \leq C\|\psi_\Lambda\|_{L^1(\R)}\leq C\lambda_2.
	\]
	On the other hand, Lemma \ref{lem-f0-m} implies for $\vareps>0$ sufficiently small,
	\[
	\frac{1}{2\vareps} Z_{0}^{m,\vareps} \geq \frac{f_0(m)}{2}.
	\]
	Thus for $\vareps>0$ small enough, we get
	\[
	\frac{1}{Z_{0}^{m,\vareps}} \int_{m-\vareps-\varrho_\Lambda}^{m+\vareps-\varrho_\Lambda} g_\Lambda(x) dx \leq \frac{2C\lambda_2}{f_0(m)},
	\]
	hence
	\[
	\int e^{t\Mcal_{>\Lambda}(u)} d\mu_{0}^{m,\vareps}(u) \leq \frac{2C\lambda_2}{f_0(m)} \int e^{t\Mcal_{>\Lambda}(u)} d\mu_0^{>\Lambda}(u).
	\]
	By the same argument as in the proof of Lemma \ref{lem-Lambda-sub}, we have
	\begin{align*}
		\int e^{t\Mcal_{>\Lambda}(u)} d\mu_0^{>\Lambda}(u) = \prod_{\lambda_j>\Lambda} e^{-t\lambda_j^{-1}} \frac{1}{1-t\lambda_j^{-1}}\leq C\prod_{\lambda_j>\Lambda} e^{t^2\lambda_j^{-2}} =C e^{t^2\sum_{\lambda_j>\Lambda} \lambda_j^{-2}}.
	\end{align*}
	For $0\leq \gamma<\frac{3s-2}{4s}$, we have
	\[
	\sum_{\lambda_j>\Lambda} \lambda_j^{-2} \leq \Lambda^{-2\gamma} {\Tr}[h^{-2+2\gamma}].
	\]
	In particular, we obtain
	\[
	\mu_0^{m,\vareps}(\Mcal_{>\Lambda}(u)>R) \leq Ce^{-tR+t^2 \Lambda^{-2}{\Tr}[h^{-2+2\gamma}]}.
	\]
	Taking $t=\nu \Lambda^\gamma$ with $\nu>0$ small yields
	\[
	(\text{I}) \leq Ce^{-c\Lambda^\gamma R}
	\]
	for any $0\leq \gamma <\frac{3s-2}{4s}$. The term $(\text{II})$ is treated in a similar manner and we prove \eqref{eq:boun-M-mu-renor-eps}. 
\end{proof}

We also have the following decay estimates.

\begin{lemma}[\bf Decay of the $L^4$-norm ] \label{lem-L4-fix-renor} \mbox{} \\
	Let $s>1$, $V$ satisfy Assumption \ref{assu-V}, $0\leq \gamma<\frac{3s-2}{4s}$, $m\in \R$, and assume $m>-{\Tr}[h^{-1}]$ if $s>2$. Then there exist $C,c>0$ such that for $\theta<\frac{1}{2}-\frac{1}{s}$, all $\Lambda \geq \lambda_2$, all $R>0$ and all $\vareps>0$ sufficiently small,
	\begin{align} \label{eq:boun-L4-mu-renor-eps}
	\mu_0^{m,\vareps} \left(u \in \Hc^\theta : \|P_{>\Lambda} u\|_{L^4}>R\right) \leq C e^{-c\Lambda^\rho R^2}.
	\end{align}
	In particular, we have
	\begin{align} \label{eq:boun-L4-mu-renor}
	\mu_0^m \left(u \in \Hc^\theta : \|P_{>\Lambda} u\|_{L^4}>R\right) \leq C e^{-c\Lambda^\rho R^2}.
	\end{align}
\end{lemma}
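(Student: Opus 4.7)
The plan is to reduce the statement for the conditioned Gaussian $\mu_0^{m,\vareps}$ to the already-proved exponential moment estimate under the free Gaussian (Lemma~\ref{lem-L4-Lambda}), by borrowing the density-function trick employed in the proof of Lemma~\ref{lem-mu-fix-renor}. I note in passing that the exponent $\gamma$ appearing in the assumption on the range $0\le\gamma<\tfrac{3s-2}{4s}$ seems to be a typo for $\rho$ in the range $0\le\rho<\tfrac{s-1}{2s}$ of Lemma~\ref{lem-L4-Lambda} (the exponent that actually appears in the conclusion), and I will proceed under that reading. Since $\mu_0^{m,\vareps}\rightharpoonup \mu_0^m$ as $\vareps\to 0^+$ by Proposition~\ref{prop-gaus-meas-mass}, it suffices to establish~\eqref{eq:boun-L4-mu-renor-eps}.

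First I would Chebyshev: for $t>0$ to be chosen,
\[
\mu_0^{m,\vareps}\bigl(\|P_{>\Lambda}u\|_{L^4}>R\bigr)\le e^{-tR^2}\int e^{t\|P_{>\Lambda}u\|_{L^4}^2}\,d\mu_0^{m,\vareps}(u).
\]
Next, I would split the reference measure as $d\mu_0 = d\mu_0^{\le\Lambda}\otimes d\mu_0^{>\Lambda}$, noting that $\|P_{>\Lambda}u\|_{L^4}$ depends only on the high-frequency variables while $\Mcal(u)=\Mcal_{\le\Lambda}(u)+\Mcal_{>\Lambda}(u)$. Setting $\varrho_\Lambda:=\Mcal_{>\Lambda}(u)$ and denoting by $g_\Lambda$ the density of $\Mcal_{\le\Lambda}(u)$ under $\mu_0^{\le\Lambda}$, I would rewrite
\[
\int e^{t\|P_{>\Lambda}u\|_{L^4}^2}\,d\mu_0^{m,\vareps}(u)=\frac{1}{Z_0^{m,\vareps}}\int e^{t\|P_{>\Lambda}u\|_{L^4}^2}\biggl(\int_{m-\vareps-\varrho_\Lambda}^{m+\vareps-\varrho_\Lambda} g_\Lambda(x)\,dx\biggr)d\mu_0^{>\Lambda}(u).
\]

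The inner integral is bounded by $2\vareps\|g_\Lambda\|_{L^\infty}$. Exactly as in the proof of Lemma~\ref{lem-mu-fix-renor}, the characteristic function $\psi_\Lambda$ of $\Mcal_{\le\Lambda}(u)$ factors as $\prod_{\lambda_j\le\Lambda}e^{-is\lambda_j^{-1}}/(1-is\lambda_j^{-1})$, and selecting the two slowest-decaying factors (which is possible since $\Lambda\ge\lambda_2$) gives $|\psi_\Lambda(s)|\le (1+s^2\lambda_2^{-2})^{-1}$, hence $\|g_\Lambda\|_{L^\infty}\lesssim \|\psi_\Lambda\|_{L^1}\le C\lambda_2$ uniformly in $\Lambda\ge\lambda_2$. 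Combining this with the lower bound $Z_0^{m,\vareps}\ge \vareps f_0(m)$ for small $\vareps$ provided by Lemma~\ref{lem-f0-m}, I get
\[
\int e^{t\|P_{>\Lambda}u\|_{L^4}^2}\,d\mu_0^{m,\vareps}(u)\le \frac{2C\lambda_2}{f_0(m)}\int e^{t\|P_{>\Lambda}u\|_{L^4}^2}\,d\mu_0^{>\Lambda}(u),
\]
so the problem reduces to controlling an exponential moment of $\|P_{>\Lambda}u\|_{L^4}^2$ under the unconstrained Gaussian $\mu_0^{>\Lambda}$.

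Finally, I would invoke the computation already carried out in Lemma~\ref{lem-L4-Lambda}: expanding the exponential and using the $k$-particle density matrix bound~\eqref{eq:mu-0-k} together with $(P_{>\Lambda}h)^{-1}(x,x)\le\Lambda^{-\rho}h^{\rho-1}(x,x)$ gives, for any $0\le\rho<\tfrac{s-1}{2s}$,
\[
\int e^{t\|P_{>\Lambda}u\|_{L^4}^2}\,d\mu_0^{>\Lambda}(u)\le 2\sum_{k\ge 0}(CtB_{\Lambda,0,4})^k,\qquad B_{\Lambda,0,4}\le C\Lambda^{-\rho}.
\]
Choosing $t=\nu\Lambda^\rho$ with $\nu$ small enough that $C\nu<1$ makes the series convergent and uniformly bounded in $\Lambda$, which after combining with the Chebyshev factor yields the claimed bound $C e^{-c\Lambda^\rho R^2}$. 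The only mildly delicate point is the uniform control of $\|g_\Lambda\|_{L^\infty}$ independently of $\Lambda$, which is the reason for the assumption $\Lambda\ge\lambda_2$; everything else is a mechanical recombination of Lemmas~\ref{lem-L4-Lambda}, \ref{lem-f0-m}, and the argument of Lemma~\ref{lem-mu-fix-renor}.
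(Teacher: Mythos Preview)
Your proposal is correct and follows essentially the same route as the paper: Chebyshev, then disintegrate $\mu_0^{m,\vareps}$ along the low/high frequency split, bound the low-frequency density $g_\Lambda$ uniformly via its characteristic function (using $\Lambda\ge\lambda_2$) together with $Z_0^{m,\vareps}\ge \vareps f_0(m)$, and reduce to the free-Gaussian moment bound of Lemma~\ref{lem-L4-Lambda}. Your observation that the hypothesis should read $0\le\rho<\tfrac{s-1}{2s}$ rather than $0\le\gamma<\tfrac{3s-2}{4s}$ is also well taken; the paper's own proof indeed works with $\rho$ in that range.
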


\begin{proof}
	Since $\mu_0^{m,\vareps} \to \mu_0^m$ as $\vareps \to 0^+$, it suffices to prove \eqref{eq:boun-L4-mu-renor-eps}. Let $t>0$ be a positive constant to be determined later. We estimate
	\begin{align*}
	\mu_0^{m,\vareps} \left(\|P_{>\Lambda} u\|_{L^4}>R\right) &\leq e^{-tR^2} \int e^{t\|P_{>\Lambda} u\|^2_{L^4}} d\mu_0^{m,\vareps}(u) \\
	&=e^{-tR^2}\sum_{k\geq 0} \frac{t^k}{k!} \int \|P_{>\Lambda}u\|^{2k}_{L^4}d\mu_0^{m,\vareps}(u).
	\end{align*}
	We have
	\begin{align*}
	\int &\|P_{>\Lambda}u\|^{2k}_{L^4}d\mu_0^{m,\vareps}(u) \\
	&=\frac{1}{Z_0^{m,\vareps}} \int \|P_{>\Lambda}u\|^{2k}_{L^4} \mathds{1}_{\left\{m-\vareps<\Mcal(u)<m+\vareps\right\}} d\mu_0(u) \\
	&=\frac{1}{Z_0^{m,\vareps}} \int \|P_{>\Lambda}u\|^{2k}_{L^4} \left( \int \mathds{1}_{\left\{m-\vareps-\varrho_\Lambda<\Mcal_{\leq \Lambda} (u)<m+\vareps-\varrho_\Lambda\right\}} d\mu_0^{\leq \Lambda}(u)\right) d\mu_0^{>\Lambda}(u) \\
	&=\frac{1}{Z_0^{m,\vareps}} \int \|P_{>\Lambda}u\|^{2k}_{L^4} \left(\int_{m-\vareps-\varrho_\Lambda}^{m+\vareps-\varrho_\Lambda} g_\Lambda(x)dx\right) d\mu_0^{>\Lambda}(u),
	\end{align*}
	where $g_{\Lambda}$ is the density function of $\Mcal_{\leq \Lambda} (u)$ with respect to $\mu_0^{\leq \Lambda}$ and $\varrho_\Lambda=\Mcal_{>\Lambda}(u)$. As in the proof of Lemma \ref{lem-mu-fix-renor}, we have for $\vareps>0$ sufficiently small,
	\[
	\frac{1}{Z_0^{m,\vareps}} \int_{m-\vareps-\varrho_\Lambda}^{m+\vareps-\varrho_\Lambda} g_{\Lambda}(x)dx\leq \frac{2C\lambda_2}{f_0(m)}
	\]
	hence
	\[
	\int \|P_{>\Lambda} u\|^{2k}_{L^4} d\mu_0^{m,\vareps}(u) \leq \frac{2C\lambda_2}{f_0(m)} \int \|P_{>\Lambda} u\|^{2k}_{L^4} d\mu_0^{> \Lambda}(u).
	\]
	By the same argument as in the proof of Lemma \ref{lem-expo-p}, we have
	\[
	\int \|P_{>\Lambda} u\|^{2k}_{L^4} d\mu_0^{>\Lambda}(u) \leq 2! k! B_{\Lambda,0,4}^k, \quad \forall k\geq 0,
	\]
	where $B_{\Lambda,0,4}$ is as in \eqref{eq:B-Lamb}. In particular, we have for $\vareps>0$ sufficiently small,
	\[
	\int \|P_{>\Lambda}u\|^{2k}_{L^4}d\mu_0^{m,\vareps}(u)\leq C k! B_{\Lambda,0,4}^k, \quad \forall k\geq 0,
	\]
	hence
	\[
	\mu_0^{m,\vareps}\left(\|P_{>\Lambda}u\|_{L^4}>R\right) \leq Ce^{-tR^2}\sum_{k\geq 0} (tB_{\Lambda,0,4})^k.
	\]
	Arguing exactly as in the proof of Lemma \ref{lem-L4-Lambda}, we obtain \eqref{eq:boun-L4-mu-renor-eps}. 
\end{proof}

\begin{lemma}[\bf Decay of $\mathcal{W}^{\beta,p}$-norm] \label{lem-beta-p-fix-renor} \mbox{}\\
	Let $s>1$, $V$ satisfy Assumption \ref{assu-V}, $0\leq \beta<\frac{1}{2}$, $p>\max\left\{2,\frac{4}{s(1-2\beta)}\right\}$ be an even integer, $m\in \R$, and assume $m>-{\Tr}[h^{-1}]$ if $s>2$. Then there exists $c>0$ such that for $\theta<\frac{1}{2}-\frac{1}{s}$, all $\Lambda\geq \lambda_1$ sufficiently large, all $R>0$, and all $\vareps>0$ sufficiently small,
	\begin{align} \label{eq:boun-beta-p-mu-renor-eps}
		\mu_0^{m,\vareps}\left( u\in \Hc^\theta : \|P_{\leq \Lambda} u\|_{\Wc^{\beta,p}}>R\right) \leq C\Lambda e^{-cR^2}.
	\end{align}
	In particular, we have
	\begin{align} \label{eq:boun-beta-p-mu-renor}
		\mu_0^{m}\left( u\in \Hc^\theta : \|P_{\leq \Lambda} u\|_{\Wc^{\beta,p}}>R\right) \leq C\Lambda e^{-cR^2}.
	\end{align}
\end{lemma}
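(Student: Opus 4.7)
The plan mirrors the strategy of Lemmas~\ref{lem-mu-fix-renor} and~\ref{lem-L4-fix-renor}, but since $\|P_{\leq \Lambda} u\|_{\Wc^{\beta,p}}$ depends only on the low-frequency modes rather than the high ones, the roles of $\leq \Lambda$ and $> \Lambda$ are interchanged. First I would use the factorization $\mu_0 = \mu_0^{\leq \Lambda}\otimes \mu_0^{>\Lambda}$ and condition on the high-frequency component to write
\begin{align*}
\mu_0^{m,\vareps}\bigl(\|P_{\leq \Lambda} u\|_{\Wc^{\beta,p}}>R\bigr) = \frac{1}{Z_0^{m,\vareps}} \int \mathds{1}_{\{\|P_{\leq \Lambda} u\|_{\Wc^{\beta,p}}>R\}} \left(\int_{m-\vareps-\vartheta_\Lambda}^{m+\vareps-\vartheta_\Lambda} f_\Lambda(x)\,dx\right) d\mu_0^{\leq \Lambda}(u),
\end{align*}
where $\vartheta_\Lambda = \Mcal_{\leq \Lambda}(u)$ and $f_\Lambda$ is the density of $\Mcal_{>\Lambda}$ with respect to $\mu_0^{>\Lambda}$.

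Next, I would invoke Lemma~\ref{lem-f-Lamb}, which gives $\|f_\Lambda\|_{L^\infty} \leq C\Lambda$ for $\Lambda$ sufficiently large, together with Lemma~\ref{lem-f0-m}, which yields $Z_0^{m,\vareps} \geq c\vareps$ for $\vareps$ small enough. The inner parenthesis is then bounded by $2C\Lambda \vareps$, producing the key pointwise domination
\[
\mu_0^{m,\vareps}\bigl(\|P_{\leq \Lambda} u\|_{\Wc^{\beta,p}}>R\bigr) \leq C\Lambda \cdot \mu_0\bigl(\|P_{\leq \Lambda} u\|_{\Wc^{\beta,p}}>R\bigr),
\]
reducing the problem to a Fernique-type estimate under the Gaussian measure $\mu_0$ itself.

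For this remaining tail bound, I would adapt the proof of Lemma~\ref{lem-expo-p} with $u$ replaced by $P_{\leq \Lambda} u$. The density matrix estimate~\eqref{eq:mu-0-k-A} applied with the self-adjoint operator $A=h^{\beta/2} P_{\leq \Lambda}$ gives
\[
\int |(h^{\beta/2} P_{\leq \Lambda} u)^{\otimes k}\rangle\langle(h^{\beta/2} P_{\leq \Lambda} u)^{\otimes k}|\,d\mu_0(u) \leq k!\,\bigl(P_{\leq \Lambda} h^{\beta-1} P_{\leq \Lambda}\bigr)^{\otimes k},
\]
and since the pointwise bound $(P_{\leq \Lambda} h^{\beta-1} P_{\leq \Lambda})(x,x) \leq h^{\beta-1}(x,x)$ holds on the diagonal, the moment estimates in Lemma~\ref{lem-expo-p} transfer without change. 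This yields $\int e^{c\|P_{\leq \Lambda} u\|^2_{\Wc^{\beta,p}}}\,d\mu_0(u) \leq C$ uniformly in $\Lambda$, and Chebyshev's inequality then provides the desired sub-Gaussian tail $\mu_0(\|P_{\leq \Lambda} u\|_{\Wc^{\beta,p}}>R) \leq Ce^{-cR^2}$.

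Combining these ingredients proves~\eqref{eq:boun-beta-p-mu-renor-eps}, and the corresponding bound~\eqref{eq:boun-beta-p-mu-renor} for $\mu_0^m$ follows by letting $\vareps \to 0^+$ and invoking the convergence $\mu_0^{m,\vareps}(A) \to \mu_0^m(A)$ from Proposition~\ref{prop-gaus-meas-mass}. The only genuinely delicate ingredient is the linear bound $\|f_\Lambda\|_{L^\infty} \leq C\Lambda$, which hinges on having two eigenvalues above $\Lambda$ of order $\Lambda$ (via Weyl-type counting, cf.\ the proof of Lemma~\ref{lem-f-Lamb}); this is precisely why the hypothesis ``$\Lambda$ sufficiently large'' enters and why the factor $\Lambda$ appears in the bound.
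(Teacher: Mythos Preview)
Your proposal is correct and follows essentially the same strategy as the paper: both arguments factor $\mu_0 = \mu_0^{\leq \Lambda}\otimes \mu_0^{>\Lambda}$, integrate out the high-frequency part via the density $f_\Lambda$, use $\|f_\Lambda\|_{L^\infty}\leq C\Lambda$ together with $\tfrac{1}{2\vareps}Z_0^{m,\vareps}\geq \tfrac{f_0(m)}{2}$ to produce the factor $C\Lambda$, and then invoke the Fernique-type moment bounds of Lemma~\ref{lem-expo-p} (adapted to $P_{\leq \Lambda}u$). The only cosmetic difference is ordering: the paper applies Chebyshev first and bounds each moment $\int\|P_{\leq\Lambda}u\|_{\Wc^{\beta,p}}^{2k}d\mu_0^{m,\vareps}$ by $C\Lambda\int\|P_{\leq\Lambda}u\|_{\Wc^{\beta,p}}^{2k}d\mu_0^{\leq\Lambda}$, whereas you first derive the cleaner measure domination $\mu_0^{m,\vareps}(\cdot)\leq C\Lambda\,\mu_0(\cdot)$ on low-frequency events and then apply the Gaussian tail bound; the two routes are equivalent.
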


\begin{proof}
	We only need to prove \eqref{eq:boun-beta-p-mu-renor-eps}. We estimate
	\begin{align*}
		\mu_0^{m,\vareps}(\|P_{\leq \Lambda} u\|_{\Wc^{\beta,p}}>R) &\leq e^{-tR^2} \int e^{t\|P_{\leq \Lambda} u\|^2_{\Wc^{\beta,p}}} d\mu_0^{m,\vareps}(u) \\
		&= e^{-tR^2} \sum_{k\geq 0} \frac{t^k}{k!}\int \|P_{\leq \Lambda}u\|^{2k}_{\Wc^{\beta,p}} d\mu_0^{m,\vareps}(u).
		\end{align*}
		We have
		\begin{align*}
			\int &\|P_{\leq\Lambda}u\|^{2k}_{\Wc^{\beta,p}}d\mu_0^{m,\vareps}(u) \\
			&=\frac{1}{Z_0^{m,\vareps}} \int \|P_{\leq\Lambda}u\|^{2k}_{\Wc^{\beta,p}} \mathds{1}_{\left\{m-\vareps<\Mcal(u)<m+\vareps\right\}} d\mu_0(u) \\
			&=\frac{1}{Z_0^{m,\vareps}} \int \|P_{\leq\Lambda}u\|^{2k}_{\Wc^{\beta,p}} \left( \int \mathds{1}_{\left\{m-\vareps-\vartheta_\Lambda<\Mcal_{> \Lambda} (u)<m+\vareps-\vartheta_\Lambda\right\}} d\mu_0^{> \Lambda}(u)\right) d\mu_0^{\leq \Lambda}(u) \\
			&=\frac{1}{Z_0^{m,\vareps}} \int \|P_{\leq\Lambda}u\|^{2k}_{\Wc^{\beta,p}} \left(\int_{m-\vareps-\vartheta_\Lambda}^{m+\vareps-\vartheta_\Lambda} f_\Lambda(x)dx\right) d\mu_0^{\leq\Lambda}(u),
		\end{align*}
		with $f_{\Lambda}$ the density function of $\Mcal_{> \Lambda} (u)$ with respect to $\mu_0^{> \Lambda}$ and $\vartheta_\Lambda=\Mcal_{\leq\Lambda}(u)$. From Lemmas \ref{lem-f-Lamb} and \ref{lem-f0-m}, we infer that for $\vareps>0$ sufficiently small,
		\[
		\frac{1}{Z_0^{m,\vareps}} \int_{m-\vareps-\vartheta_\Lambda}^{m+\vareps-\vartheta_\Lambda} f_\Lambda(x)dx \leq \frac{2\|f_\Lambda\|_{L^\infty}}{f_0(m)} \leq C\Lambda.
		\]
		We deduce that for $\vareps>0$ small,
		\[
		\int \|P_{\leq\Lambda}u\|^{2k}_{\Wc^{\beta,p}}d\mu_0^{m,\vareps}(u) \leq C\Lambda \int \|P_{\leq \Lambda}u\|^{2k}_{\Wc^{\beta,q}} d\mu_0^{\leq \Lambda}(u).
		\]
		The integral in the right hand side is estimated exactly as in Lemma \ref{lem-expo-p} to get
		\[
		\int \|P_{\leq \Lambda}u\|^{2k}_{\Wc^{\beta,q}} d\mu_0^{\leq \Lambda}(u)\leq \left(\frac{p}{2}\right)! k! B^k_{\beta,p}, \quad \forall k\geq 0.
		\]
		In particular, we obtain
		\[
		\mu_0^{m,\vareps}(\|P_{\leq \Lambda} u\|_{\Wc^{\beta,p}}>R) \leq C\Lambda e^{-tR^2} \sum_{k\geq 0} (t B_{\beta,p})^k \leq C\Lambda e^{-cR^2}
		\]
		provided that $t>0$ is taken small enough.
	\end{proof}

\begin{remark}
	From the argument presented in the proofs of Lemmas \ref{lem-L4-fix-renor} and \ref{lem-beta-p-fix-renor} (see also the proof of Lemma \ref{lem-H-theta}), we can show that: for $s>1$, $\theta<\frac{1}{2}-\frac{1}{s}$, and $m>0$,
	\begin{align} \label{expo-deca-fix-mass}
		\mu_0^m(u \in \Hc^\theta : \|u\|_{\Hc^\theta}>\lambda) \leq Ce^{-c\lambda^2}
	\end{align}
	for some constants $C,c>0$. 
\end{remark}

\begin{proof}[Proof of Proposition \ref{prop-mu-fix-renor}]
	We only consider the harder case of focusing nonlinearity since the defocusing one is easier. Since $\mu_0^m$ is a probability measure, it is clear that $Z^m\geq 1$. It remains to prove 
	\begin{align}\label{Zm-focu}
		Z^m=\int e^{\frac{1}{2}\|u\|^4_{L^4}} d\mu_0^m(u)<\infty.
	\end{align}
	We have
	\[
	Z^m = \int_0^\infty \mu_{0}^m \left(e^{\frac{1}{2}\|u\|^4_{L^4}}>\lambda\right) d\lambda = C(\lambda_0) + \int_{\lambda_0}^\infty \mu_{0}^m \left(e^{\frac{1}{2}\|u\|^4_{L^4}}>\lambda\right) d\lambda
	\]
	for some $\lambda_0>0$ to be fixed later. To show the finiteness of 
	\[
	\int_{\lambda_0}^\infty \mu_{0}^m \left(e^{\frac{1}{2}\|u\|^4_{L^4}}>\lambda\right) d\lambda,
	\]
	we repeat the same line of reasoning as in the proof of Proposition \ref{pro:gc} using Lemmas \ref{lem-mu-fix-renor}, \ref{lem-L4-fix-renor}, and \ref{lem-beta-p-fix-renor}. Note that $\Mcal(u)=m$ on the support of $\mu_0^m$, hence $\|u\|^2_{L^2} = m+{\Tr}[h^{-1}] \in (0,\infty)$ when $s>2$. The only different point is the following estimate (see Lemma \ref{lem-beta-p-fix-renor}):
	\[
	\mu_0^m(\|P_{\leq \Lambda_0} u\|_{\Wc^{\beta,p}}> C(\log \lambda)^\sigma) \leq C\Lambda_0e^{-c(\log \lambda)^{2\sigma}},
	\]
	where the additional term $\Lambda_0$ is just $(\log \lambda)^l$ (see \eqref{eq:Lamb-0}) and can be absorbed by the exponential decay $e^{-c(\log \lambda)^{2\sigma}}$. The constant $\lambda_0$ is chosen so that $\Lambda_0$ is sufficiently large as needed to apply Lemmas \ref{lem-mu-fix-renor}, \ref{lem-L4-fix-renor}, and \ref{lem-beta-p-fix-renor}.
\end{proof}
	
\subsection{Invariance of Gibbs measures conditioned on mass}
\label{sec:invari Gibbs fix mass}
Thanks to the invariance of the standard Gibbs measure $\mu$ (see Theorem \ref{theo-Gibbs-meas}), we can deduce the invariance of the Gibbs measures conditioned on mass.

\begin{proposition}[\bf Invariance of canonical Gibbs measures] \label{prop-inva-fix-renor} \mbox{} \\
	Let $s>1$, $V$ satisfy Assumption \ref{assu-V}, $m\in \R$, and assume $m>-{\Tr}[h^{-1}]$ if $s>2$. Assume in addition that $s>\frac{8}{5}$ for the focusing nonlinearity. Then $\mu^m$ is invariant under the flow of \eqref{eq:intro NLS}.
\end{proposition}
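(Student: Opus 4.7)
\noindent The plan is to realize $\mu^m$ as a conditioning of the grand-canonical Gibbs measure $\mu$ on $\{\Mcal(u)=m\}$, and to deduce its invariance from the invariance of $\mu$ (Theorem~\ref{theo-Gibbs-meas}) combined with the conservation of $\Mcal$ along the flow $\Phi(t)$. Concretely, for $\varepsilon>0$ small, I would introduce
\[
d\mu^{m,\varepsilon}(u) := \frac{1}{\mu(m-\varepsilon<\Mcal(u)<m+\varepsilon)}\, \one_{\{m-\varepsilon<\Mcal(u)<m+\varepsilon\}}\, d\mu(u),
\]
which makes sense because the density of $\Mcal$ with respect to $\mu$ is bounded and continuous at $m$, with positive value at $m$ (by the arguments of Lemmas~\ref{lem-f-Lamb}--\ref{lem-f0-m} adapted to the Radon--Nikodym derivative defining $\mu$, using the integrability estimates of Lemmas~\ref{lem-mu-fix-renor}--\ref{lem-beta-p-fix-renor}). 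The construction of $\mu_0^m$ in Proposition~\ref{prop-gaus-meas-mass}, combined with the integrability of the $L^4$-exponential proved in Proposition~\ref{prop-mu-fix-renor}, then yields the weak convergence $\mu^{m,\varepsilon}\rightharpoonup \mu^m$ on $\Hc^\theta$ as $\varepsilon\to 0^+$.

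\noindent The decisive step is the conservation of $\Mcal$ under the flow $\Phi(t)$, namely $\Mcal(\Phi(t)f)=\Mcal(f)$ for $\mu$-almost every $f\in\Sigma$. At the level of the approximate flow~\eqref{NLS-appro} this is immediate: the high-frequency dynamics \eqref{NLS-appro-hi} yields $|\alpha_j(t)|=|\alpha_{j0}|$ for $\lambda_j>\Lambda$, so $\Mcal_{>\Lambda}$ is preserved, while the low-frequency Hamiltonian ODE \eqref{NLS-appro-lo} conserves $\|P_{\leq \Lambda}u_\Lambda\|_{L^2}^2$ and hence $\Mcal_{\leq \Lambda}$; summing gives $\Mcal(\Phi_\Lambda(t)f)=\Mcal(f)$ for every $f$. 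To pass to the limit flow, one combines the $\Hc^\theta$-approximation $\|\Phi(t)f-\Phi_\Lambda(t)f\|_{\Hc^\theta}\to 0$ from Lemmas~\ref{lem:appflow1}--\ref{lem:appflow2} with the $L^2(d\mu_0)$ convergence $\Mcal_{\leq \Lambda}\to \Mcal$ of Lemma~\ref{lem-reno-mass} and the high-frequency tail bound of Lemma~\ref{lem-Lambda-sub}, along a sequence $\Lambda_k\to\infty$, and concludes conservation of $\Mcal$ along $\Phi(t)$ on a set of full $\mu$-measure.

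\noindent Granted conservation of $\Mcal$, the identity
\[
\Phi(t)(A)\cap\{m-\varepsilon<\Mcal<m+\varepsilon\}=\Phi(t)\bigl(A\cap\{m-\varepsilon<\Mcal<m+\varepsilon\}\bigr)
\]
for measurable $A\subset\Sigma$, combined with $\mu(\Phi(t)A)=\mu(A)$, yields $\mu^{m,\varepsilon}(\Phi(t)A)=\mu^{m,\varepsilon}(A)$; equivalently, for every bounded continuous $F$ on $\Hc^\theta$,
\[
\int F(\Phi(t)u)\, d\mu^{m,\varepsilon}(u)=\int F(u)\, d\mu^{m,\varepsilon}(u).
\]
Letting $\varepsilon\to 0^+$ on both sides and using $\mu^{m,\varepsilon}\rightharpoonup \mu^m$ together with the continuity of $\Phi(t)$ (in the sub-harmonic case, one localizes to balls of bounded $\Hc^{\theta_1}$-norm as in the proof of Theorem~\ref{theo:almo-gwp-sub} to control the continuity loss in the estimate~\eqref{cont-prop-sub}), one arrives at the equivalent characterization of invariance~\eqref{inva-prof-equi} for $\mu^m$.

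\noindent I expect the main difficulty to lie in making the $\mu$-a.s.\ conservation of $\Mcal$ along $\Phi(t)$ rigorous when $1<s\leq 2$, since $\Mcal$ is defined only as an $L^2(d\mu_0)$-limit (not pointwise on $\Hc^\theta$), while $\Phi(t)$ is itself built through a probabilistic approximation procedure. The workaround is to control simultaneously three error terms---$\|u-u_\Lambda\|_{\Hc^\theta}$ from Lemma~\ref{lem:appflow2}, $\|\Mcal_{\leq \Lambda}-\Mcal\|_{L^2(d\mu_0)}$ from Lemma~\ref{lem-reno-mass}, and the tail bound of Lemma~\ref{lem-Lambda-sub}---exactly as in the proof of invariance of $\mu$ itself, and then argue that the exact conservation $\Mcal_{\leq\Lambda}(\Phi_\Lambda(t)f)=\Mcal_{\leq\Lambda}(f)$ transfers in the limit, yielding the pointwise identity $\mu$-almost surely.
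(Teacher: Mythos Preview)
Your proposal is correct and follows essentially the same route as the paper: both argue via the approximate measures $\mu^{m,\varepsilon}$ (which the paper records as Lemma~\ref{lem-weak-conv-mu-a-eps}), rewriting them as $\frac{1}{Z^{m,\varepsilon}}\one_{\{m-\varepsilon<\Mcal<m+\varepsilon\}}\,d\mu$, deducing their $\Phi(t)$-invariance from Theorem~\ref{theo-Gibbs-meas} plus conservation of $\Mcal$, and then passing to the weak limit $\varepsilon\to 0^+$. The only notable divergence is in emphasis: you devote effort to justifying the $\mu$-a.s.\ conservation of the renormalized mass under $\Phi(t)$ for $1<s\leq 2$ via the approximation $\Phi_\Lambda\to\Phi$, whereas the paper simply invokes ``mass conservation'' at that step; and for the endgame the paper reduces to closed sets bounded in $\Hc^{\theta_1}$ and uses the Lipschitz bounds \eqref{cont-prop}/\eqref{cont-prop-sub-beta} (as in the proof of Theorem~\ref{theo:almo-gwp-supe}), while you opt for the functional characterization~\eqref{inva-prof-equi}. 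Both endgames work here, and your extra care on $\Mcal$-conservation fills in a point the paper treats tersely.
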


The proof is based on the following lemma.
\begin{lemma}[\bf Approximate canonical measures] \label{lem-weak-conv-mu-a-eps} \mbox{} \\
	Let $s>1$, $V$ satisfy Assumption \ref{assu-V}, $m\in \R$, and assume $m>-{\Tr}[h^{-1}]$ if $s>2$. Assume in addition that $s>\frac{8}{5}$ for the focusing nonlinearity. Denote
	\[
	d\mu^{m,\vareps}(u) = \frac{1}{Z^{m,\vareps}} e^{\mp\frac{1}{2}\|u\|^4_{L^4}} d\mu_0^{m,\vareps}(u), \quad Z^{m,\vareps} = \int e^{\mp\frac{1}{2}\|u\|^4_{L^4}} d\mu_0^{m,\vareps}(u). 
	\]
	Then $\mu^{m,\vareps}$ converges weakly to $\mu^m$ as $\vareps \to 0^+$ in the sense that 
	\[
	\lim_{\vareps \to 0^+} \int F(u) d\mu^{m,\vareps}(u) = \int F(u) d\mu^m(u)
	\]
	for all bounded continuous functions $F: \Hc^\theta \to \R$. 
\end{lemma}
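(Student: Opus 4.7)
The plan is to combine the setwise convergence $\mu_0^{m,\vareps} \to \mu_0^m$ of the Gaussian slices, guaranteed by Proposition \ref{prop-gaus-meas-mass}, with uniform-in-$\vareps$ integrability of the nonlinear density, and then read off the weak convergence of the tilted canonical measures. Setting $G(u) := e^{\mp \frac{1}{2}\|u\|_{L^4}^4}$, I write
\[
\int F \, d\mu^{m,\vareps} = \frac{\int F G \, d\mu_0^{m,\vareps}}{\int G \, d\mu_0^{m,\vareps}}, \qquad \int F \, d\mu^m = \frac{\int F G \, d\mu_0^m}{\int G \, d\mu_0^m}.
\]
Since $Z^m > 0$ by Proposition \ref{prop-mu-fix-renor}, it suffices to prove that $\int H \, d\mu_0^{m,\vareps} \to \int H \, d\mu_0^m$ both for $H = G$ and for $H = FG$, where $F$ is any fixed bounded continuous function on $\Hc^\theta$.

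The first observation is general: Proposition \ref{prop-gaus-meas-mass}(2) states that $\mu_0^{m,\vareps}(A) \to \mu_0^m(A)$ for every measurable $A \subset \Hc^\theta$ (setwise convergence), and a standard simple-function approximation upgrades this to $\int H \, d\mu_0^{m,\vareps} \to \int H \, d\mu_0^m$ for every bounded measurable $H$. In the defocusing case $G \leq 1$ is bounded measurable, as is $FG$, so this level of convergence already closes the argument.

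For the focusing case $G$ is unbounded and I would use a truncation argument. Put $G^{(K)}(u) := \min(G(u), K)$; since $G^{(K)}$ and $FG^{(K)}$ are bounded measurable, the previous step yields, for each fixed $K$,
\[
\int F G^{(K)} \, d\mu_0^{m,\vareps} \xrightarrow[\vareps \to 0^+]{} \int F G^{(K)} \, d\mu_0^m.
\]
The remainder is controlled by a uniform integrability bound: there exist $\alpha > 0$ and $\vareps_0 > 0$ with
\[
C_\alpha := \sup_{\vareps \in (0, \vareps_0)} \int G(u)^{1+\alpha} \, d\mu_0^{m,\vareps}(u) < \infty,
\]
whence $|\int F (G - G^{(K)}) \, d\mu_0^{m,\vareps}| \leq \|F\|_\infty C_\alpha K^{-\alpha}$ uniformly in $\vareps$, with an analogous bound for $\mu_0^m$. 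Taking $K$ large and then $\vareps \to 0^+$ closes the argument, both for $H = FG$ and (taking $F \equiv 1$) for the normalization constants $Z^{m,\vareps} \to Z^m$.

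The main work is the uniform integrability bound. I would obtain it by revisiting the proof of Proposition \ref{prop-mu-fix-renor} in the (sub)-harmonic case $\tfrac{8}{5}<s\leq 2$: the whole argument there---layer-cake decomposition in $\lambda$, low/high-frequency split via $P_{\leq \Lambda_0}$ and $P_{>\Lambda_0}$ with $\Lambda_0=(\log\lambda)^l$, and use of the fractional Gagliardo--Nirenberg inequality---relies only on the uniform-in-$\vareps$ tail estimates of Lemmas \ref{lem-mu-fix-renor}, \ref{lem-L4-fix-renor}, and \ref{lem-beta-p-fix-renor}. Replacing $\tfrac{1}{2}$ by $\tfrac{1+\alpha}{2}$ merely rescales the threshold in $\lambda$ and modifies numerical constants; since the exponent constraints on $l,\rho,\gamma,\nu,\beta,p$ used there are all strict, a sufficiently small $\alpha>0$ preserves them and yields $C_\alpha<\infty$. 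In the super-harmonic case $s>2$, where $\mathcal{M}(u)=m$ forces $\|u\|_{L^2}^2=m+\Tr[h^{-1}]$ on the support, the same strategy applies using Lemma \ref{lem-L4-fix-renor} in place of Lemma \ref{lem-beta-p-fix-renor}, exactly as in the corresponding part of Proposition \ref{prop-mu-fix-renor}. This step is the only genuinely delicate point; everything else is a standard setwise-convergence plus uniform-integrability package.
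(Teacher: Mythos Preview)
Your argument is correct and takes a genuinely different route from the paper's. The paper reaches the same conclusion by a \emph{spatial} truncation: writing $w_\Lambda:=P_{\leq\Lambda}u$, it splits $|Z^{m,\vareps}-Z^m|$ into three pieces, handles the middle piece $\big|\int G(w_\Lambda)\,d\mu_0^{m,\vareps}-\int G(w_\Lambda)\,d\mu_0^m\big|$ via layer-cake plus dominated convergence (using that $\{G(w_\Lambda)>\lambda\}$ is a cylindrical event), and controls the two boundary pieces by continuity of $G$ in $L^4$ together with the tail bound $\mu_0^{m,\vareps}(\|P_{>\Lambda}u\|_{L^4}>\nu)\leq Ce^{-c\Lambda^\rho\nu^2}$ and Cauchy--Schwarz against the uniform-in-$\vareps$ bound $\|G\|_{L^2(d\mu_0^{m,\vareps})}\leq C$. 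You instead use a \emph{value} truncation $G^{(K)}=\min(G,K)$, invoke setwise convergence directly for bounded measurable integrands, and absorb the tail via the uniform moment $\sup_\vareps\|G\|_{L^{1+\alpha}(d\mu_0^{m,\vareps})}<\infty$. Both proofs ultimately feed off the same $\vareps$-uniform estimates of Lemmas~\ref{lem-mu-fix-renor}--\ref{lem-beta-p-fix-renor}; the paper deploys them through Cauchy--Schwarz after a finite-dimensional reduction, while you package them as uniform integrability in the spirit of the Vitali convergence theorem. Your route is slightly more streamlined (and makes the defocusing case immediate), whereas the paper's route stays closer to the cylindrical-projection machinery already built in Proposition~\ref{prop-gaus-meas-mass}.
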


\begin{proof}
	We first show that $Z^{m,\vareps} \to Z^m$ as $\vareps \to 0^+$. To simplify the notation, we denote 
	$$G(u)=e^{\mp\frac{1}{2}\|u\|^4_{L^4}}$$
	and 
	$$w_\Lambda:= P_{\leq \Lambda} u.$$
	For $\Lambda \geq \lambda_1$ sufficiently large and $\vareps>0$, we estimate
	\begin{align*}
	|Z^{m,\vareps}-Z^m| &\leq \Big|\int G(u) - G(w_\Lambda) d\mu_0^{m,\vareps}(u)\Big| + \Big|\int G(w_\Lambda) d\mu_0^{m,\vareps}(u) -\int G(w_\Lambda) d\mu_0^m(u)\Big| \\
	&\quad+ \Big|\int G(u)-G(w_\Lambda) d\mu_0^m(u)\Big|=:(\text{I}) + (\text{II}) + (\text{III}).
	\end{align*}
	Regarding (II), we write
	\[
	\int G(w_\Lambda) d\mu_0^{m,\vareps}(u) = \int_0^\infty \mu_0^{m,\vareps} (G(w_\Lambda)>\lambda) d\lambda.
	\]
	For each $\Lambda \geq \lambda_1$, we have $\mu^{m,\vareps}_0(G(w_\Lambda)>\lambda) \to \mu^m(G(w_\Lambda)>\lambda)$ as $\vareps \to 0^+$. In addition, we have for $\vareps>0$ small,
	\begin{align*}
	\mu_0^{m,\vareps}(G(w_\Lambda)>\lambda) &\leq \frac{1}{\lambda^2}\int G^2(w_\Lambda) d\mu_0^{m,\vareps}(u) \\
	&= \frac{1}{\lambda^2} \int e^{\mp\|P_{\leq \Lambda} u\|^4_{L^4}} d\mu_0^{m,\vareps}(u) \\
	&\leq \frac{C\Lambda}{\lambda^2},
	\end{align*} 
	for $\Lambda$ sufficiently large. The dominated convergence theorem implies that for $\Lambda\geq \lambda_1$ sufficiently large, $(\text{II}) \to 0$ as $\vareps \to 0^+$. 
	
	To estimate (I), we let $\delta>0$. By continuity, there exists $\nu>0$ such that if $\|u-w_\Lambda\|_{L^4} < \nu$, then $|G(u)-G(w_\Lambda)|<\frac{1}{2}\delta$. We write
	\[
	(\text{I}) \leq \Big|\int_{\|u-w_\Lambda\|_{L^4} \geq \nu} G(u)-G(w_\Lambda) d\mu_0^{m,\vareps}(u)\Big| + \Big|\int_{\|u-w_\Lambda\|_{L^4}<\nu} G(u)-G(w_\Lambda) d\mu_0^{m,\vareps}(u)\Big|.
	\]
	Since $G(u), G(w_\Lambda) \in L^2(d\mu_0^{m,\vareps})$ uniformly in $\Lambda$ for $\vareps>0$ sufficiently small, we have
	\[
	(\text{I}) \leq \|G(u)-G(w_\Lambda)\|_{L^2(d\mu_0^{m,\vareps})} \left(\mu_0^{m,\vareps}\left(\|u-w_\Lambda\|_{L^4} \geq \nu\right)\right)^{1/2} + \frac{\delta}{2}.
	\]
	By Lemma \ref{lem-L4-fix-renor}, we also have for $\vareps>0$ small,
	\begin{align*}
	\mu_0^{m,\vareps}\left(\|u-w_\Lambda\|_{L^4}\geq \nu\right) = \mu_0^{m,\vareps} \left(\|P_{>\Lambda} u\|_{L^4}>\nu\right) \leq Ce^{-c\Lambda^\rho \nu^2}
	\end{align*}
	for any $0\leq \rho<\frac{s-1}{2s}$. Fix $0<\rho<\frac{s-1}{2s}$, we deduce, for $\Lambda$ large enough, that 
	\[
	\|G(u)-G(w_\Lambda)\|_{L^2(d\mu_0^{m,\vareps})} \left(\mu_0^{m,\vareps}\left(\|u-w_\Lambda\|_{L^4} \geq \nu\right)\right)^{1/2} <\frac{\delta}{2}
	\] 
	for all $\vareps>0$ sufficiently small. This shows that $(\text{I}) \to 0$ as $\Lambda \to \infty$ provided that $\vareps>0$ is taken sufficiently small. A similar argument goes for $(\text{III})$ and we prove $Z^{m,\vareps} \to Z^m$. 
	
	Now we write, for $F$ a test function,
	\begin{align*}
	\int F(u) d\mu^{m,\vareps}(u) &= \frac{1}{Z^{m,\vareps}} \int F(u) G(u) d\mu_0^{m,\vareps}(u) \\
	&= \left(\frac{1}{Z^{m,\vareps}} - \frac{1}{Z^m}\right) \int F(u) G(u) d\mu_0^{m,\vareps}(u) + \frac{1}{Z^m}\int F(u) G(u) d\mu_0^{m,\vareps}(u).
	\end{align*}
	The first term in the right hand side goes to zero as $\vareps \to 0^+$ because $Z^{m,\vareps} \to Z^m$ as $\vareps \to 0^+$ and
	\[
	\int F(u) G(u) d\mu_0^{m,\vareps}(u) \leq \|F\|_{L^\infty}\|G(u)\|_{L^1(d\mu_0^{m,\vareps})} \leq C
	\]
	for some constant $C>0$ independent of $\vareps>0$ small. 
	
	For the second term, we estimate
	\begin{align*}
	\Big|\int F(u) G(u) d\mu_0^{m,\vareps}(u) &-\int F(u)G(u) d\mu_0^m(u)\Big| \\
	&= \Big|\int F(u) (G(u)-G(w_\Lambda)) + G(w_\Lambda) (F(u)-F(w_\Lambda)) d\mu_0^{m,\vareps}(u)\Big| \\
	&\quad + \Big|\int F(w_\Lambda) G(w_\Lambda) d\mu_0^{m,\vareps}(u) - \int F(w_\Lambda) G(w_\Lambda) d\mu_0^m(u)\Big| \\
	&\quad + \Big|\int F(u) (G(u)-G(w_\Lambda)) + G(w_\Lambda) (F(u)-F(w_\Lambda)) d\mu_0^{m}(u)\Big|.
	\end{align*}
	Arguing as above, we deduce that 
	\[
	\int F(u) G(u) d\mu_0^{m,\vareps}(u) \to \int F(u)G(u) d\mu_0^m(u) \text{ as } \vareps \to 0^+.
	\]
	The proof is complete.
\end{proof}

\begin{proof}[Proof of Proposition \ref{prop-inva-fix-renor}] 
	
	{\bf Invariance of the approximate canonical measure.} We first show that $\mu^{m,\vareps}$ is invariant under the flow of \eqref{eq:intro NLS}. In fact, we can rewrite $\mu^{m,\vareps}$ as
	\[
	d\mu^{m,\vareps}(u)=\frac{1}{Z^{m,\vareps}} \mathds{1}_{\{m-\vareps<\Mcal(u)<m+\vareps\}} d\mu(u).
	\]
	Here $\mu$ is the standard Gibbs measure which is invariant under the flow of \eqref{eq:intro NLS}. Now let $A$ be a measurable set in $\Hc^\theta$ with $\theta <\frac{1}{2}-\frac{1}{s}$ as in Theorem \ref{theo-Gibbs-meas}. We have
	\begin{align*}
		\mu^{m,\vareps}(A) &= \frac{1}{Z^{m,\vareps}} \int_A \mathds{1}_{\{m-\vareps<\Mcal(u)<m+\vareps\}} d\mu(u) \\
		&= \frac{1}{Z^{m,\vareps}} \int \mathds{1}_{\{m-\vareps<\Mcal(u)<m+\vareps\} \cap A} d\mu(u) \\
		&= \frac{1}{Z^{m,\vareps}} \mu\left(\{m-\vareps<\Mcal(u)<m+\vareps\} \cap A \right) \\
		&= \frac{1}{Z^{m,\vareps}} \mu\left( \Phi(t) \left(\{m-\vareps<\Mcal(u)<m+\vareps\} \cap A \right)\right) \tag{\text{invariance of $\mu$}} \\
		&=\frac{1}{Z^{m,\vareps}} \mu \left( \{m-\vareps<\Mcal(u)<m+\vareps\} \cap \Phi(t) (A)\right) \tag{mass conservation} \\
		&=\frac{1}{Z^{m,\vareps}} \int_{\Phi(t)(A)} \mathds{1}_{\{m-\vareps<\Mcal(u)<m+\vareps\}} d\mu(u) \\
		&= \mu^{m,\vareps}(\Phi(t)(A)),
	\end{align*}
	where $\Phi(t)$ is the solution map of \eqref{eq:intro NLS}. This shows that $\mu^{m,\vareps}$ is invariant under the flow of \eqref{eq:intro NLS}.
	
	\medskip
	
	\noindent {\bf Invariance of the canonical Gibbs measure.} We now prove the invariance of $\mu^m$ under the flow of \eqref{eq:intro NLS}. By the same reasoning as in the proof of Theorem \ref{theo:almo-gwp-supe} using \eqref{expo-deca-fix-mass}, it suffices to show 
	\[
	\mu^m(U) \leq \mu^m(\Phi(t)(U)) 
	\]
	for any closed set $U$ of $\Hc^\theta$ which is bounded in $\Hc^{\theta_1}$ with $\theta<\theta_1<\frac{1}{2}-\frac{1}{s}$ and for all $t\in \R$. The problem is further reduced to show
	\begin{align} \label{inva-fix-mass}
	\mu^m(U) \leq \mu^m(\Phi(t)(U)), \quad \forall t\in [0,\delta]
	\end{align}
	with $\delta>0$ sufficiently small. By the local theory, for each $\vareps>0$, there exists $0<c\ll 1$ such that
	\begin{align} \label{inva-prof-3}
	\Phi(t)(U+B_{\gamma,c\vareps}) \subset \Phi(t) (U) + B_{\theta,\vareps},
	\end{align}
	where $B_{\theta, \vareps}$ is the ball in $\Hc^\theta$ centered at zero and of radius $\vareps$. Here $\gamma=\theta$ for $s>2$ (see \eqref{cont-prop}) and $\gamma=\beta$ for $1<s\leq 2$ (see \eqref{cont-prop-sub-beta}). Since $\mu^{m,\vareps} \rightharpoonup \mu^m$ weakly as $\vareps \to 0^+$, we have
	\begin{align*}
	\mu^m(U) &\leq \mu^m(U+B_{\gamma,c\vareps}) \\
	&\leq\liminf_{\vareps \to 0^+} \mu^{m,\vareps}(U+B_{\gamma, c\vareps}) \tag{$\mu^{m,\vareps} \rightharpoonup \mu^m$ weakly}\\
	&= \liminf_{\vareps \to 0^+} \mu^{m,\vareps}\left( \Phi(t) (U+B_{\gamma,c\vareps})\right) \tag{\text{invariance of $\mu^{m,\vareps}$}} \\
	&\leq \liminf_{\vareps \to 0^+} \mu^{m,\vareps} \left( \Phi(t)(U) + B_{\theta, \vareps} \right) \tag{due to \eqref{inva-prof-3}} \\
	&\leq \limsup_{\vareps \to 0^+} \mu^{m,\vareps} \left( \Phi(t)(U) + B_{\theta, \vareps} \right) \\
	&\leq \limsup_{\vareps \to 0^+} \mu^{m,\vareps} \left( \left( \Phi(t)(U) + \overline{B}_{\theta, \vareps} \right)\right) \\
	&\leq \mu^m (\Phi(t)(U) + \overline{B}_{\theta, \vareps}). \tag{$\mu^{m,\vareps} \rightharpoonup \mu^m$ weakly}
	\end{align*}
	Letting $\vareps \to 0$, we get \eqref{inva-fix-mass}. This proves the invariance of $\mu^m$ under the flow of \eqref{eq:intro NLS}.
\end{proof}

\begin{proof}[Proof of Theorem \ref{theo-Gibbs-fix-mass}]
	This follows by combining Propositions \ref{prop-mu-fix-renor} and \ref{prop-inva-fix-renor}.
\end{proof}

\begin{remark}[Higher non-linearities] \label{rem-meas-inva-mass-general}
	Thanks to the argument presented in this section and Remark \ref{rem-Gibbs-general}, one can construct the Gibbs measures with a fixed renormalized mass associated to NLS with more general nonlinearity \eqref{eq:NLS-general}. Moreover, based on the invariance of grand-canonical measures mentioned in Remarks \ref{rem-meas-inva-sup} and \ref{rem-meas-inva-sub}, these fixed renormalized mass Gibbs measures are indeed invariant under the flow of \eqref{eq:NLS-general}. \hfill$\diamond$
\end{remark}

\newpage

\appendix

\section{Bounds on the covariance operator}
\label{sec:app}
\setcounter{equation}{0}

\begin{lemma} [\textbf{Schatten norm of the Green function}]\label{lem-trac-h-p}\mbox{}\\
	Let $s>0$ and $p>\frac{1}{2}+\frac{1}{s}$. Then we have
	\begin{align} \label{trac-h-p-prof}
	{\Tr}[h^{-p}] =\sum_{j\geq 1} \lambda_j^{-p}<\infty.	
	\end{align}
\end{lemma}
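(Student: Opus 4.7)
The plan is to reduce the statement to a Weyl-type bound on the eigenvalue counting function and then apply a layer-cake representation.

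\emph{Step 1 (counting bound).} I will show $N(\Lambda) := \#\{j : \lambda_j \leq \Lambda\} \leq C \Lambda^{1/2+1/s}$ for all $\Lambda$ large. The cleanest derivation uses the one-dimensional Lieb--Thirring inequality with moment $\gamma = 1$: for any $E > 0$,
\[ \sum_{\lambda_j < E}(E-\lambda_j) \leq L_{1,1} \int_{\R}(V(x)-E)_-^{3/2} dx. \]
Taking $E = 2\Lambda$ and bounding $2\Lambda - \lambda_j \geq \Lambda$ whenever $\lambda_j \leq \Lambda$ yields
\[ \Lambda\, N(\Lambda) \leq L_{1,1}(2\Lambda)^{3/2}\bigl|\{x \in \R : V(x) \leq 2\Lambda\}\bigr|. \]
By Assumption~\ref{assu-V}(i), $V(x) \geq C^{-1}\scal{x}^s$ for $|x|\geq 1$, so the sub-level set has measure at most $C'\Lambda^{1/s}$ for $\Lambda$ large, producing the claimed bound.

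\emph{Step 2 (layer-cake).} From $\lambda_j^{-p} = p\int_{\lambda_j}^{\infty}\Lambda^{-p-1}\,d\Lambda$ and monotone convergence,
\[ \sum_{j\geq 1}\lambda_j^{-p} = p\int_0^{\infty}\Lambda^{-p-1}N(\Lambda)\,d\Lambda. \]
Because $N(\Lambda)=0$ for $\Lambda < \lambda_1$, the integral is effectively over $[\lambda_1,\infty)$; by Step~1, the integrand is dominated at infinity by $C\Lambda^{-p-1/2+1/s}$, which is integrable if and only if $p > 1/2+1/s$. This establishes~\eqref{trac-h-p-prof}.

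The only non-routine input is the Lieb--Thirring bound of Step~1; the rest is book-keeping. An alternative route would be a Golden--Thompson argument, $\Tr[e^{-th}] \leq (4\pi t)^{-1/2}\int e^{-tV(x)}dx$, plugged into $h^{-p} = \Gamma(p)^{-1}\int_0^{\infty} t^{p-1}e^{-th}dt$, but the Lieb--Thirring path avoids analyzing the $t\to 0$ behavior of the heat trace and is insensitive to the precise shape of $V$ near its minimum.
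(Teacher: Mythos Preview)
Your proof is correct. Both steps are clean: the moment-one Lieb--Thirring inequality gives the Weyl bound $N(\Lambda)\lesssim\Lambda^{1/2+1/s}$ exactly as you write, and the layer-cake formula then reduces finiteness of the trace to integrability of $\Lambda^{-p-1/2+1/s}$ at infinity (the contribution from $[\lambda_1,\Lambda_0]$ being harmless since $N$ is bounded there and $\lambda_1>0$).

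The paper proceeds differently: it invokes directly the negative-power Lieb--Thirring inequality of Dolbeault--Felmer--Loss--Paturel,
\[
\Tr[(h+\lambda_1)^{-p}]\leq \frac{1}{2\pi}\iint_{\R\times\R}\frac{dx\,d\xi}{(|\xi|^2+V(x)+\lambda_1)^p},
\]
and then computes the phase-space integral (first integrating out $\xi$, then checking that $\int(V+\lambda_1)^{1/2-p}\,dx<\infty$ under the stated hypothesis). So the paper bypasses the counting function entirely, at the price of citing a less standard inequality. Your route is arguably more elementary---the moment-one Lieb--Thirring is textbook material---and it has the side benefit of producing the Weyl upper bound $N(\Lambda)\lesssim\Lambda^{1/2+1/s}$, which the paper needs separately later (Lemma~\ref{lem-numb-eigen}, proved there via a result of Rozenbljum). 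The paper's route is shorter once the cited inequality is granted. Your remark about the Golden--Thompson alternative is apt; that would be a third valid path.
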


\begin{proof} 
We follow an argument of \cite[Example 3.2]{LewNamRou-14d}. Let $\lambda_1>0$ be the first eigenvalue of $h$. We have
\[
h+\lambda_1 \leq 2h,
\]
hence
\[
{\Tr}[h^{-p}] \leq 2^p {\Tr}[(h+\lambda_1)^{-p}].
\]
Thanks to a version of Lieb--Thirring's inequality \cite[Theorem 1]{DolFelLosPat-06}, we have
\[
{\Tr}[(h+\lambda_1)^{-p}] \leq \frac{1}{2\pi} \iint_{\R\times \R} \frac{dx d\xi}{(|\xi|^2+V(x)+\lambda_1)^p}.
\]
Using Assumption \ref{assu-V} and the layer-cake representation, one finds that 
$$ \iint_{\R\times \R} \frac{dx d\xi}{(|\xi|^2+V(x)+\lambda_1)^p} = \int_{\R} \left( V(x) + \lambda_1 \right)^{1/2 - p } dx < +\infty$$
under our stated assumptions. Here is another short proof. We have
\begin{align*}
\iint_{\R\times \R} \frac{dx d\xi}{(|\xi|^2+V(x)+\lambda_1)^p} &= \int_{\R} \int_{|x|\leq 1} \frac{dx d\xi}{(|\xi|^2+V(x)+\lambda_1)^p} + \int_{\R} \int_{|x|\geq 1} \frac{dx d\xi}{(|\xi|^2+V(x)+\lambda_1)^p} \\
&\leq \int_{\R} \int_{|x|\leq 1} \frac{dxd\xi}{(|\xi|^2+\lambda_1)^p} + C\int_{\R} \int_{|x|\geq 1} \frac{dxd\xi}{(|\xi|^2+ |x|^s +1)^p} \\
&=(\text{I}) + (\text{II}),
\end{align*}
where $V(x)\geq 0$ for all $x\in \R$ and $V(x) \geq C|x|^s$ for $|x|\geq 1$. Since $p>\frac{1}{2}+\frac{1}{s}$, it is clear that $(\text{I})<\infty$. To see that $(\text{II})<\infty$, we take 
$$\theta =\frac{4}{2+s}$$ 
so that 
$$\frac{1}{2-\theta}=\frac{2}{\theta s} =\frac{1}{2}+\frac{1}{s}.$$ 
By Young's inequality, we have
\begin{align*}
|\xi|^2+|x|^s +1 &\gtrsim (1+|\xi|^2) + (1+|x|^s) \\
&\gtrsim (1+|\xi|)^2 + (1+|x|)^s \\
&\gtrsim (1+|\xi|)^{2-\theta} (1+|x|)^{\frac{\theta s}{2}}.
\end{align*}
It follows that
\begin{align*}
\int_{\R}\int_{|x|\geq 1} \frac{dxd\xi}{(|\xi|^2+|x|^s+1)^p} \lesssim \Big( \int_{\R} \frac{d\xi}{(1+|\xi|)^{(2-\theta)p}}\Big) \Big(\int_{\R} \frac{dx}{(1+|x|)^{\frac{\theta sp}{2}}}\Big) <\infty.
\end{align*}
Note that $(2-\theta)p=\frac{\theta sp}{2}=\frac{p}{\frac{1}{2}+\frac{1}{s}}>1$.
\end{proof}

\section{Basic functional-analytic estimates}
\label{sec:app2}
\setcounter{equation}{0}

We collect here known functional inequalities used repeatedly in the paper. We start with the following norm equivalence due to \cite{YajZha-01}.

\begin{lemma}[\textbf{Norm equivalence}] \mbox{} \\
	Let $s>1$, $V$ satisfy Assumption \ref{assu-V}, $\beta>0$, and $1<p<\infty$. Then we have
	\begin{align} \label{equi-norm}
	\|h^{\beta/2} u\|_{L^p} \sim \|\scal{D}^\beta u\|_{L^p} + \|\scal{x}^{\beta s/2} u\|_{L^p},
	\end{align}
	where $\scal{D}:= \sqrt{1-\partial^2_x}$.	
\end{lemma}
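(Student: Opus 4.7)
The plan is to view $h = -\partial_x^2 + V$ as an elliptic element of a pseudodifferential calculus adapted to the anisotropic scaling between position and momentum that Assumption~\ref{assu-V} imposes. The natural weight is
\[
\Lambda(x,\xi) := \bigl(1 + \xi^2 + \langle x \rangle^s\bigr)^{1/2},
\]
which is comparable to $(1+p(x,\xi))^{1/2}$ where $p(x,\xi) = \xi^2 + V(x)$ is the Weyl symbol of $h$. Condition (ii) of Assumption~\ref{assu-V} yields precisely the symbolic estimates $|\partial_x^j \partial_\xi^k p(x,\xi)| \lesssim \langle x \rangle^{s-j} \langle \xi \rangle^{2-k}$ required to place $p$ in the symbol class of weight $\Lambda^2$ for the slowly varying Weyl--H\"ormander metric $g_{(x,\xi)} = \langle x \rangle^{-2} dx^2 + \langle \xi \rangle^{-2} d\xi^2$ (note that $s>1$ is not needed for the norm equivalence itself; it is Assumption~\ref{assu-V} that matters).

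The argument would proceed in three steps. First, a Seeley-type parametrix construction on the resolvent $(h-z)^{-1}$ produces a representation of $h^{\beta/2}$ as a Weyl pseudodifferential operator with principal symbol $\Lambda^\beta$; this is precisely the content of the calculus developed by Yajima--Zhang \cite{YajZha-01} for this class of potentials. Second, since $\Lambda^\beta$ is pointwise equivalent to $\langle \xi \rangle^\beta + \langle x \rangle^{\beta s/2}$ (respectively the symbols of $\scal{D}^\beta$ and the multiplier $\scal{x}^{\beta s/2}$) and the two ratios have all $g$-derivatives bounded, the operators interpolating between the two norms are of order zero in the calculus. Third, the $L^p$-boundedness ($1<p<\infty$) of order-zero operators in the Weyl--H\"ormander calculus associated to $g$ yields both inequalities comprising \eqref{equi-norm} simultaneously.

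The main obstacle is the $L^p$-boundedness for $p \neq 2$. At $p=2$, one may bypass the pseudodifferential machinery entirely: the operator inequalities
\[
c \bigl(-\partial_x^2 + \langle x \rangle^s + 1\bigr) \leq h \leq C \bigl(-\partial_x^2 + \langle x \rangle^s + 1\bigr)
\]
(the lower bound from coercivity of $V$, the upper bound from Assumption~\ref{assu-V}(i)) together with the L\"owner--Heinz operator monotonicity of $x \mapsto x^{\beta/2}$ for $\beta \leq 2$ -- iterated for larger $\beta$ -- immediately give \eqref{equi-norm} in $L^2$. For general $p$ the operator monotonicity argument breaks down, and I see two viable routes: either carry out a Calder\'on--Zygmund analysis relative to $g$ directly on the symbol of $h^{\beta/2}$, which is technical but standard in this calculus; or represent $h^{\beta/2}$ via the subordination formula as a weighted integral of the heat semigroup $e^{-th}$ and exploit Gaussian-type heat kernel bounds for Schr\"odinger operators with polynomially growing potentials, reducing the problem to classical weighted singular integral estimates. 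In either case the restriction $1<p<\infty$ originates precisely from the $L^p$ boundedness of zeroth-order Weyl operators, failing at the endpoints as expected.
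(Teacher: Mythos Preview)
Your proposal is correct and takes essentially the same approach as the paper: the paper simply cites \cite[Lemma~2.4]{YajZha-01}, where the result is proved via the Weyl--H\"ormander pseudodifferential calculus you outline, and remarks that the same argument extends from $s>2$ to $1<s\le 2$. Your sketch of the mechanism (ellipticity of $h$ for the weight $\Lambda$, representation of $h^{\beta/2}$ as an order-$\beta$ operator, and $L^p$-boundedness of the resulting order-zero compositions) is precisely what that reference carries out, so there is nothing to add.
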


\begin{proof}
	In \cite[Lemma 2.4]{YajZha-01}, the above norm equivalence was proved for $s>2$ using pseudo-differential calculus. However, the same proof applies to $1<s\leq 2$ as well.
\end{proof}

\begin{lemma}[\textbf{Sobolev embedding}]\mbox{}\\ 
	Let $s>1$ and $V$ satisfy Assumption \ref{assu-V}. 
	
	\medskip
	
	\noindent (i) If $1<r<\overline{r}<\infty$ and $\beta>0$ satisfy $\frac{1}{\overline{r}} \geq \frac{1}{r}-\beta$, then $\Wc^{\beta,r}\subset L^{\overline{r}}(\R)$. 
	
	\medskip
	
	\noindent (ii) If $\beta>\frac{1}{r}$, then $\Wc^{\beta,r} \subset L^\infty(\R)$.
\end{lemma}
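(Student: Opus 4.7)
The plan is to reduce everything to the classical Sobolev embedding on $\R$, using the norm equivalence \eqref{equi-norm} from the previous lemma as a black box. Since the norm equivalence gives
\[
\|\scal{D}^\beta u\|_{L^r} \leq C \|h^{\beta/2} u\|_{L^r} = C \|u\|_{\Wc^{\beta,r}},
\]
the $h$-based Sobolev space $\Wc^{\beta,r}$ embeds continuously into the usual translation-invariant Sobolev space $W^{\beta,r}(\R)$ (in fact, $\Wc^{\beta,r}$ is even smaller, reflecting that the potential forces additional polynomial decay).

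For item (i), I would start from the observation above and invoke the classical fractional Sobolev embedding on $\R$: under the hypotheses $1<r<\overline{r}<\infty$ and $\frac{1}{\overline{r}} \geq \frac{1}{r} - \beta$ (in dimension one), one has $W^{\beta,r}(\R) \hookrightarrow L^{\overline{r}}(\R)$ continuously, so
\[
\|u\|_{L^{\overline{r}}} \leq C\|\scal{D}^\beta u\|_{L^r} \leq C\|u\|_{\Wc^{\beta,r}}.
\]
For item (ii), I would apply instead the endpoint case of classical 1D Sobolev embedding: $W^{\beta,r}(\R) \hookrightarrow L^\infty(\R)$ whenever $\beta > \frac{1}{r}$, giving
\[
\|u\|_{L^\infty} \leq C\|\scal{D}^\beta u\|_{L^r} \leq C\|u\|_{\Wc^{\beta,r}}.
\]

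There is no genuine obstacle here; the content of the lemma is entirely packaged into \eqref{equi-norm} (whose proof relies on the pseudo-differential calculus of Yajima--Zhang). The only thing to double-check is that the range of exponents ($1<r<\infty$, $\beta>0$) in \eqref{equi-norm} matches the hypotheses of items (i) and (ii), which it does. Hence the proposed proof is a two-line application of \eqref{equi-norm} followed by standard fractional Sobolev embedding on $\R$, and no new analytical difficulty is introduced.
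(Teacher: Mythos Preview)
Your proposal is correct and matches the paper's approach exactly: the paper's proof is the single sentence ``Direct consequence of embeddings for standard Sobolev spaces on $\R$ and the norm equivalence~\eqref{equi-norm},'' which is precisely what you wrote out in detail.
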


\begin{proof}
	Direct consequence of embeddings for standard Sobolev spaces on $\R$ and the norm equivalence~\eqref{equi-norm}.
\end{proof}

\begin{lemma}[\textbf{Fractional product rule}] \label{lem-prod-rule}\mbox{}\\
	Let $s>1$, $V$ satisfy Assumption \ref{assu-V}, $\beta>0$, and $1<p<\infty$. Then
	\begin{align} \label{prod-rule}
	\|h^{\beta/2}(fg)\|_{L^2} \leq C\|f\|_{L^{q_1}} \|h^{\beta/2} g\|_{L^{q_2}} + C\|g\|_{L^{r_1}} \|h^{\beta/2} f\|_{L^{r_2}}
	\end{align}
	provided that
	\[
	\frac{1}{p} = \frac{1}{q_1}+\frac{1}{q_2} = \frac{1}{r_1}+\frac{1}{r_2}, \quad q_2,r_2 \in (1,\infty), \quad q_1,r_1 \in (1,\infty].
	\]
\end{lemma}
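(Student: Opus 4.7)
The plan is to reduce the claim to the standard Kato--Ponce fractional Leibniz rule on $\R$ by exploiting the norm equivalence~\eqref{equi-norm}, which identifies $\|h^{\beta/2}u\|_{L^p}$ with the sum of a purely differential piece $\|\langle D\rangle^{\beta} u\|_{L^p}$ and a purely multiplicative piece $\|\langle x\rangle^{\beta s/2}u\|_{L^p}$. Since the obstacle $h$ is a pseudo-differential operator of mixed type, attacking~\eqref{prod-rule} head-on is unpleasant, but each half of the equivalent norm is governed by a classical estimate.

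Assuming throughout that $1<p<\infty$ (and the same for $q_1,q_2,r_1,r_2$ as in the statement), I would first apply~\eqref{equi-norm} to bound
\[
\|h^{\beta/2}(fg)\|_{L^p} \lesssim \|\langle D\rangle^{\beta}(fg)\|_{L^p} + \|\langle x\rangle^{\beta s/2}(fg)\|_{L^p}.
\]
For the first term, the standard Kato--Ponce fractional Leibniz inequality on $\R$ yields
\[
\|\langle D\rangle^{\beta}(fg)\|_{L^p}\lesssim \|f\|_{L^{q_1}}\|\langle D\rangle^{\beta}g\|_{L^{q_2}}+\|g\|_{L^{r_1}}\|\langle D\rangle^{\beta}f\|_{L^{r_2}},
\]
under the exponent conditions listed; this is by now classical (see, e.g., Kato--Ponce, and the refinements in Grafakos--Oh and Muscalu--Schlag). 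For the second term, H\"older's inequality gives
\[
\|\langle x\rangle^{\beta s/2}(fg)\|_{L^p}\le \|f\|_{L^{q_1}}\|\langle x\rangle^{\beta s/2}g\|_{L^{q_2}},
\]
and, symmetrically, the same bound with the roles of $f$ and $g$ (and $(q_1,q_2)$ replaced by $(r_1,r_2)$) swapped. Averaging these two estimates against each other I control the weight piece by both terms on the right-hand side of~\eqref{prod-rule}.

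Finally, reassembling and applying~\eqref{equi-norm} in the reverse direction,
\[
\|\langle D\rangle^{\beta}g\|_{L^{q_2}}+\|\langle x\rangle^{\beta s/2}g\|_{L^{q_2}}\lesssim \|h^{\beta/2}g\|_{L^{q_2}},
\]
and analogously for $f$ with exponent $r_2$, yields~\eqref{prod-rule}. The only non-routine step is the Kato--Ponce inequality itself, which is invoked as a black box from harmonic analysis on $\R$; everything else is H\"older plus the equivalence~\eqref{equi-norm}. The exponent constraints $q_2,r_2\in(1,\infty)$, $q_1,r_1\in(1,\infty]$ are exactly what the Kato--Ponce statement demands, so no additional restriction is introduced.
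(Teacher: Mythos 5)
Your proposal is correct and follows essentially the same route as the paper, which simply cites the norm equivalence~\eqref{equi-norm} together with the classical fractional Leibniz rule for $\langle D\rangle^{\beta}$ (quoting Taylor) and leaves the remaining steps — H\"older for the weight piece and reassembly — implicit. Your write-up just makes those routine steps explicit; note also that the $L^2$ on the left-hand side of~\eqref{prod-rule} is evidently a typo for $L^p$, consistent with how you (and the paper, in its applications) use the estimate.
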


\begin{proof}
	Direct consequence of~\eqref{equi-norm} and the following product rule (see e.g.,~\cite[Proposition 1.1 of Chapter 2]{Taylor}):
	\[
	\|\scal{D}^\beta(fg)\|_{L^2} \leq C\|f\|_{L^{q_1}} \|\scal{D}^\beta g\|_{L^{q_2}} + C\|g\|_{L^{r_1}} \|\scal{D}^\beta f\|_{L^{r_2}}.
	\]
\end{proof}

We next recall some Strichartz estimates whose proofs can be found in~\cite{Fujiwara,Carles} for the case $s\leq 2$ and in~\cite{YajZha-04} for $s>2$.

\begin{definition}[\textbf{Strichartz-admissible pairs}]\mbox{}\\
	A pair $(p,q)$ is called Strichartz-admissible if
	\[
	\frac{2}{p}+\frac{1}{q} = \frac{1}{2}, \quad q\in [2,\infty].
	\]
\end{definition}

\begin{proposition}[\textbf{Strichartz estimates}]\mbox{}\\ 
	\noindent (i) \cite{Fujiwara, Carles} Let $1<s\leq 2$ and $(p,q)$ be a Strichartz-admissible pair. Then there exists $C>0$ such that
	\begin{align} \label{homo-str-est-sub}
	\|e^{-ith} f\|_{L^p((-1,1), L^q(\R))} \leq C \|f\|_{L^2(\R)}.
	\end{align}
	Moreover, for any Strichartz-admissible pairs $(p,q)$ and $(a,b)$, there exists $C>0$ such that
	\begin{align}\label{inho-str-est-sub}
	\Big\|\int_0^t e^{-i(t-\tau)h} F(\tau) d\tau\Big\|_{L^p((-1,1), L^q(\R))} \leq C\|F\|_{L^{a'}((-1,1), L^{b'}(\R))}.
	\end{align}
	
	\medskip
	
	\noindent(ii) Let $s>2$, $(p,q)$ be a Strichartz-admissible pair, and $\sigma>\frac{2}{p}\left(\frac{1}{2}-\frac{1}{s}\right)$. Then there exists $C>0$ such that
		\begin{align} \label{homo-str-est-supe}
		\|e^{-ith} f\|_{L^p((-1,1), L^q(\R))} \leq C \|f\|_{\Hc^\sigma}.
		\end{align}
		In particular, we have
		\begin{align}\label{inho-str-est-supe}
		\Big\|\int_0^t e^{-i(t-\tau)h} F(\tau) d\tau\Big\|_{L^p((-1,1), L^q(\R))} \leq C\|F\|_{L^1((-1,1), \Hc^{\sigma})}.
		\end{align}
\end{proposition}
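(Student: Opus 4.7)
The plan is to obtain both parts by combining short-time dispersive bounds on the propagator $e^{-ith}$ with the Keel--Tao $TT^*$ machinery, with the two regimes requiring different parametrix constructions. Since $e^{-ith}$ is unitary on $L^2$ by the spectral theorem, the $L^2 \to L^2$ endpoint is automatic in both cases.

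For part (i), with $1 < s \le 2$, the potential grows at most quadratically and Fujiwara's oscillatory-integral construction yields an explicit parametrix for the kernel $K_t(x,y)$ of $e^{-ith}$ of the form
\[
K_t(x,y) = (2\pi i t)^{-1/2} e^{i S(t,x,y)}\, a(t,x,y), \qquad |t| \le t_0,
\]
with $S$ the classical action connecting $y$ to $x$ in time $t$ and $a$ a uniformly bounded amplitude; as presented by Fujiwara and Carles, this is valid under Assumption~\ref{assu-V} with $s \le 2$ for some $t_0 > 0$ independent of energy. This yields the dispersive bound $\|e^{-ith}\|_{L^1 \to L^\infty} \lesssim |t|^{-1/2}$ for $|t| \le t_0$, and together with $L^2$-unitarity the Keel--Tao lemma produces the homogeneous estimate \eqref{homo-str-est-sub} on $[-t_0, t_0]$; the inhomogeneous bound \eqref{inho-str-est-sub} on the same interval follows by the standard $TT^*$ duality combined with the Christ--Kiselev lemma to truncate to $\int_0^t$. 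To extend to $(-1,1)$, I would cover it by finitely many subintervals of length $t_0$ and splice the estimates using the group law $e^{-ith} = e^{-i(t - t_k)h} e^{-i t_k h}$ with $L^2$-unitarity of $e^{-i t_k h}$ at the matching points.

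For part (ii), with $s > 2$, the classical orbits at energy $\Lambda$ have period of order $\Lambda^{1/s - 1/2} \to 0$ as $\Lambda \to \infty$, so loss-less Strichartz estimates fail globally. Following Yajima--Zhang, I would decompose $f = \sum_j \Delta_j f$ via a Littlewood--Paley partition adapted to $h$, with $\Delta_j$ localizing on the dyadic spectral band $\{\lambda_k \sim 2^j\}$. On each block, construct a semiclassical FIO parametrix for $e^{-ith}$ valid on a time window of length $T(\Lambda) \sim \Lambda^{1/s - 1/2}$, yielding a dispersive bound and hence Strichartz with a $\Lambda$-independent constant on that window. To extend to $(-1,1)$, partition it into $\sim T(\Lambda)^{-1}$ subintervals and concatenate in $\ell^p$, which produces a factor $T(\Lambda)^{-1/p} \sim \Lambda^{(\frac{1}{2} - \frac{1}{s})/p}$ --- precisely the Sobolev loss tolerated by $\sigma > \frac{2}{p}(\frac{1}{2} - \frac{1}{s})$. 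Square-summing the dyadic blocks via Littlewood--Paley (valid in $L^q$ since $q \ge 2$) and choosing $\sigma$ strictly above the threshold absorbs the dyadic sum into a convergent geometric series weighted by $2^{-j\varepsilon}$. The inhomogeneous bound \eqref{inho-str-est-supe} then follows directly from the homogeneous one by Minkowski in time: pulling the integral outside the $L^p_t L^q_x$ norm gives
\[
\Big\|\int_0^t e^{-i(t-\tau)h} F(\tau)\, d\tau\Big\|_{L^p L^q} \le \int_{-1}^{1} \|e^{-i(t-\tau)h} F(\tau)\|_{L^p_t L^q_x}\, d\tau \le C \int_{-1}^{1} \|F(\tau)\|_{\Hc^\sigma}\, d\tau.
\]

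The main technical obstacle is the construction and control of the semiclassical parametrix in part (ii): one must obtain phase and amplitude estimates uniform in the spectral parameter $\Lambda$ for the general class of potentials of Assumption~\ref{assu-V}, which forces a careful use of the pseudodifferential calculus adapted to $h$. Since Yajima--Zhang have already established this machinery in the references cited, I would invoke their results directly rather than re-derive them, which is consistent with the statement being quoted from the literature rather than proved afresh.
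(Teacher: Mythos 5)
Your proposal is correct and follows essentially the same route as the paper: for $1<s\leq 2$ one invokes the Fujiwara/Carles short-time dispersive bound (valid since $|\partial^j V|\lesssim \langle x\rangle^{s-j}$ is bounded for $j\geq 2$), applies Keel--Tao on intervals of length $\sim\delta$, and splices via $L^2$-unitarity; for $s>2$ one quotes the Yajima--Zhang loss-of-derivatives estimates and deduces the inhomogeneous bound by Minkowski exactly as you do. Your accounting of the loss $\Lambda^{(\frac12-\frac1s)/p}$ against $\sigma>\frac{2}{p}(\frac12-\frac1s)$ matches the threshold in the statement, and your remark that the inhomogeneous estimate for $s>2$ is only available in the $L^1_t\Hc^\sigma$ form agrees with the paper's caveat that it may fail for $(a,b)\neq(\infty,2)$.
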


For potentials that grow at most quadratically at infinity, i.e., $|\partial^\alpha V(x)| \leq C_\alpha$ for $|\alpha|\geq 2$, it is known (see \cite{Fujiwara}) that the following dispersive estimate holds
\begin{align} \label{dis-est}
\|e^{-ith} f\|_{L^\infty} \leq C|t|^{-1/2} \|f\|_{L^{1}}, \quad \forall |t|\leq \delta
\end{align}
with some small constant $\delta>0$. Using this and the unitary property, we obtain Strichartz estimates (see e.g., \cite{KeeTao-98}) on the time interval $[-\delta,\delta]$. After dividing $(-1,1)$ into intervals of size $2\delta$ and applying Strichartz estimates for these intervals, we can sum over all sub-intervals to get \eqref{homo-str-est-sub} and \eqref{inho-str-est-sub}. 

For super-quadratic potentials, there is a loss of derivatives in \eqref{homo-str-est-supe}. Moreover, dispersive estimates as in~\eqref{dis-est} are no longer available due to the unboundedness and lack of smoothness of the kernel of $e^{-ith}$ (see \cite{Yajima-96}). Thus the above inhomogeneous Strichartz estimates~\eqref{inho-str-est-supe} may not hold true for $(a,b) \ne (\infty,2)$.


\section{$L^p$-boundedness of the smoothened spectral projectors}
\label{sec:app3}
\setcounter{equation}{0}
We here discuss the boundedness of the smoothened spectral projector $Q_\Lambda$ (defined in \eqref{eq:Q-Lamb}) as an operator on $L^p (\R)$. This comes from known facts that we collect in the following lemma.

\begin{lemma}[\textbf{$L^p$-boundedness of smooth spectral projections}]\mbox{}\\
    Let $1<p<\infty$. Then there exists $C>0$ such that for all $\Lambda \geq \lambda_1$,
    \[
    \|Q_\Lambda\|_{L^p \to L^p} \leq C.
    \]
\end{lemma}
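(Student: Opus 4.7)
The plan is to establish $L^p$-boundedness via the semi-classical functional calculus for $h$, exploiting Assumption~\ref{assu-V}. The starting point is the Helffer--Sj\"ostrand representation
\[
\chi(h/\Lambda) = -\frac{1}{\pi} \int_{\mathbb{C}} \bar{\partial} \tilde{\chi}(z) (z - h/\Lambda)^{-1} dL(z),
\]
where $\tilde{\chi}$ is an almost analytic extension of $\chi$ compactly supported near $[-1,1]$ and $dL$ is Lebesgue measure on $\mathbb{C}$. This reduces the problem to establishing $L^p$-bounds on the rescaled resolvent $(z - h/\Lambda)^{-1}$ that are integrable against $\bar{\partial}\tilde{\chi}$ in $z$.

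First I would place $h$ into a Weyl--H\"ormander pseudo-differential calculus with symbol $a(x,\xi)=\xi^2 + V(x)$. Under Assumption~\ref{assu-V}, $a$ lies in a symbol class $S(m,g)$ with weight $m = 1+\xi^2+V(x)$ and slowly varying metric $g = \langle x\rangle^{-2}dx^2 + \langle \xi \rangle^{-2}d\xi^2$. Standard functional calculus results in such classes (cf.\ Helffer--Robert theory) then imply that $\chi(h/\Lambda)$ is itself a pseudo-differential operator, with a Weyl symbol $b_\Lambda(x,\xi)$ supported in $\{a(x,\xi) \lesssim \Lambda\}$ and satisfying seminorm estimates uniform in $\Lambda$ after the natural rescaling of phase space.

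The second step is to invoke $L^p$-boundedness for pseudo-differential operators in the resulting class---via the Calder\'on--Vaillancourt theorem on $L^2$ together with a Littlewood--Paley/Marcinkiewicz argument, or directly through Calder\'on--Zygmund-type kernel estimates on the integral kernel of $Q_\Lambda$. The rescaling ensures that the seminorms controlling these bounds are independent of $\Lambda$, yielding $\|Q_\Lambda\|_{L^p\to L^p}\leq C$ uniformly.

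The main obstacle is exactly the $\Lambda$-uniformity of the symbolic estimates: the support of the symbol of $\chi(h/\Lambda)$ grows as $\Lambda\to\infty$, so a naive quantization produces unbounded operator norms. The fix is to introduce a $\Lambda$-dependent rescaling $(x,\xi)\mapsto (\Lambda^{-1/s}x,\Lambda^{-1/2}\xi)$ which turns the problem into one in a standard semi-classical calculus with small parameter $\varepsilon\sim \Lambda^{-(1/2+1/s)}$. Here is where the full force of Assumption~\ref{assu-V}(ii), $|\partial^j V(x)|\lesssim \langle x\rangle^{s-j}$, is used: it guarantees that the derivatives of the rescaled symbol remain bounded uniformly in $\Lambda$, placing it in a bounded subset of the standard pseudo-differential class and making the $L^p$-boundedness theorem applicable with a $\Lambda$-independent constant.
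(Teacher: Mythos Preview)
Your overall strategy—place $\chi(h/\Lambda)$ in a pseudo-differential calculus via functional calculus and then invoke an $L^p$-boundedness theorem—is the same as the paper's. The paper, however, achieves the $\Lambda$-uniformity much more directly: it simply verifies that the symbol $p_\Lambda(x,\xi)=\chi\big((\xi^2+V(x))/\Lambda\big)$ lies in the H\"ormander class $S(1,g)$ with $g=\langle x\rangle^{-2}dx^2+\langle\xi\rangle^{-2}d\xi^2$, with seminorms bounded \emph{uniformly} in $\Lambda$. The key observation is that on $\supp p_\Lambda$ one has $V(x)\le\Lambda$ and $|\xi|^2\le\Lambda$; combined with Assumption~\ref{assu-V}(ii) this yields $|\partial_x^j q_\Lambda|\le C_j\langle x\rangle^{-j}$ and $|\partial_\xi^k q_\Lambda|\le C_k\langle\xi\rangle^{-k}$ directly (the support constraint converts the $\Lambda^{-1}$ prefactor into the needed decay in $x$ or $\xi$), and Fa\`a di Bruno finishes. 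No rescaling is needed, and the argument works for all $s>1$ at once.

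Your proposed global rescaling $(x,\xi)\mapsto(\Lambda^{-1/s}x,\Lambda^{-1/2}\xi)$ has a genuine gap for $1<s<2$. After rescaling, the potential part becomes $W_\Lambda(y)=\Lambda^{-1}V(\Lambda^{1/s}y)$, and
\[
|\partial_y^{\,j} W_\Lambda(y)|=\Lambda^{-1+j/s}\,|V^{(j)}(\Lambda^{1/s}y)|\le C_j\,\Lambda^{-1+j/s}\langle\Lambda^{1/s}y\rangle^{s-j}.
\]
When $j>s$ the factor $\langle\Lambda^{1/s}y\rangle^{s-j}$ is at most $1$, so the best available bound is $C_j\Lambda^{-1+j/s}\to\infty$. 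Already for $j=2$ and $1<s<2$ the second $y$-derivative of the rescaled symbol blows up near $y=0$ (where $V$ is smooth but not homogeneous of degree $s$, so $V''(0)$ is generically nonzero and the point $(0,\eta)$ with $|\eta|\sim 1$ lies in $\supp\chi'$). Hence the rescaled symbol does not sit in a bounded subset of any standard class, and your appeal to an $L^p$-boundedness theorem with $\Lambda$-independent constant fails. A single dilation cannot tame a potential that is not globally homogeneous; the H\"ormander metric $g$ above is precisely what accounts for the different behavior near and away from the origin, and working directly in $S(1,g)$ as the paper does is the clean fix.
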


\begin{proof}
    The proof follows from the $L^p$-boundedness of pseudo-differential operators with symbol in $S(1,g)$ (see e.g., \cite[Theorem 22.3]{Wong-14}). Here 
    $$g=\frac{dx^2}{\scal{x}^2} + \frac{d\xi^2}{\scal{\xi}^2}$$ 
    is a metric and $S(1,g)$ is the H\"ormander symbol class consisting of functions $a \in C^\infty(\mathbb{R}^2)$ such that
    \[
    |\partial^j_x \partial^k_\xi a(x,\xi)|\leq C_{jk} \scal{x}^{-j} \scal{\xi}^{-k}, \quad \forall (x,\xi) \in \mathbb{R}^2.
    \]
    We write the symbol of $Q_\Lambda$ as 
    $$p_\Lambda(x,\xi) = \chi(q_\Lambda(x,\xi))$$
    with 
    $$q_\Lambda(x,\xi) =\Lambda^{-1}(\xi^2 +V(x)),$$
    and we will show that $p_\Lambda \in S(1,g)$ uniformly in $\Lambda \geq \lambda_1$. To see this, we observe that for $j,k\geq 0$, there exist $C_j,C_k>0$ independent of $\Lambda$ such that
    \begin{align} \label{est-q-Lambda}
        |\partial^j_xq_\Lambda(x,\xi)|\leq C_j\scal{x}^{-j}, \quad |\partial^k_\xi q_\Lambda(x,\xi)|\leq C_k\scal{\xi}^{-k}
    \end{align}
    for all $(x,\xi)$ in the support of $p_\Lambda$. Note that the mixed derivatives 
    $$\partial^j_x\partial^k_\xi q_\Lambda(x,\xi)=0$$
    for $j, k \ne 0$. In fact, for the $x$-derivative, it is straightforward when $j=0$. For $j\geq 1$, we have $\partial^j_x q_\Lambda(x,\xi) = \Lambda^{-1} \partial^j_x V(x)$. By Assumption~\ref{assu-V}, we have
    $$
    |\partial^j_xq_\Lambda(x,\xi)| \leq C_j \Lambda^{-1} \scal{x}^{s-j} \leq C_j \scal{x}^{-j}.
    $$
    Actually, for $|x|\leq 1$, we have $\scal{x}^s \leq 2^{s/2}$ and $\Lambda^{-1} \leq \lambda_1^{-1}$, hence $\Lambda^{-1} \scal{x}^s \leq C$. On the other hand, for $|x|\geq 1$, since $V(x)\leq \Lambda$ on the support of $p_\Lambda$, the assumption on $V$ gives $\frac{1}{C} \scal{x}^s \leq V(x) \leq \Lambda$, hence $\Lambda^{-1}\scal{x}^s \leq C$. For the $\xi$-derivative, it is obvious when $k=0$ and $k\geq 3$ since $\partial^k_\xi q_\Lambda(x,\xi)=0$ for $k\geq 3$. For $k=1,2$, we have $\partial^k_\xi q_\Lambda(x,\xi) = 2\Lambda^{-1} \xi^{2-k}$. On the support of $p_\Lambda$, we have $|\xi|\leq \Lambda^{\frac{1}{2}}$, hence
    $$
    |\partial^k_\xi q_\Lambda(x,\xi)|\leq C\Lambda^{-\frac{k}{2}} \leq C\scal{\xi}^{-k}. 
    $$
    Here, to obtain the second estimate, we have used the fact that
    $$
    \frac{\lambda_1}{1+\lambda_1} \Lambda^{-1} \leq (1+\Lambda)^{-1} \leq \scal{\xi}^{-2}
    $$
    as $\Lambda \geq \lambda_1$ and $|\xi|^2\leq \Lambda$.
    
    The result now follows from \eqref{est-q-Lambda} and the Fa\`a di Bruno formula saying that $\partial^j_x g(f(x))$ is a linear combination of 
    $$
    g^{(n)}(f(x)) \left(f'(x)\right)^{n_1} \left(f''(x)\right)^{n_2} \cdots \left(f^{(n)}(x)\right)^{n_j},
    $$
    where $n=n_1 +\cdots + n_j$ and the sum is over all partitions of $j$, i.e., all $j$-tuples $(n_1, \cdots, n_j)$ satisfying $n_1 + 2n_2 +\cdots +j n_j =j$. 
\end{proof}

\section{An estimate on the number of eigenvalues}
\label{sec:app4}
\setcounter{equation}{0}

We are interested in the number of eigenvalues of the Schr\"odinger operator $h=-\partial^2_x+V(x)$ below a certain energy threshold which is needed in the construction of Gaussian measure conditioned on mass (see Lemma \ref{lem-f-Lamb}). 

\begin{lemma}[\textbf{Cwikel-Lieb-Rozenbljum law}] \label{lem-numb-eigen}\mbox{}\\
	Let $s>1$ and $V$ satisfy Assumption \ref{assu-V}. Then, for $\Lambda>0$ sufficiently large, we have that
	\[
	c \Lambda^{\frac{1}{2}+\frac{1}{s}} \leq \#\{\lambda_j : \lambda_j\leq \Lambda\} \leq C \Lambda^{\frac{1}{2}+\frac{1}{s}} 
	\]
	for fixed constants $c,C >0.$
\end{lemma}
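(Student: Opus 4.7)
The plan is to obtain the upper bound by a Lieb--Thirring inequality and the lower bound by a variational (Dirichlet bracketing) argument. Throughout, we use the convention $N(\Lambda) := \#\{\lambda_j : \lambda_j \leq \Lambda\}$.

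\smallskip

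\noindent\textbf{Upper bound.} I would apply the one-dimensional Lieb--Thirring inequality at exponent $\gamma = 1/2$,
\[
\sum_{e_j < 0} |e_j|^{1/2} \leq L_{1/2} \int_\R W_-(x)\,dx,
\]
to the operator $-\partial_x^2 + V(x) - 2\Lambda$, whose negative eigenvalues are $\lambda_j - 2\Lambda$ with $\lambda_j \leq 2\Lambda$. This yields
\[
\sum_{\lambda_j \leq 2\Lambda} (2\Lambda - \lambda_j)^{1/2} \leq L_{1/2} \int_\R (2\Lambda - V(x))_+ \, dx.
\]
Restricting the sum on the left to $\lambda_j \leq \Lambda$ (so $(2\Lambda - \lambda_j)^{1/2} \geq \Lambda^{1/2}$), one obtains $N(\Lambda)\,\Lambda^{1/2}$ on the left. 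For the right-hand side, Assumption~\ref{assu-V} gives $V(x) \geq C^{-1}\scal{x}^s$ for $|x|\geq 1$, so $V(x) \leq 2\Lambda$ forces $|x| \leq C'\Lambda^{1/s}$ for $\Lambda$ large; on this region the integrand is bounded by $2\Lambda$, yielding an integral of order $\Lambda^{1+1/s}$. Dividing by $\Lambda^{1/2}$ gives $N(\Lambda) \leq C\Lambda^{1/2 + 1/s}$.

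\smallskip

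\noindent\textbf{Lower bound.} I would construct, for $\Lambda$ large, a family of orthogonal test functions of cardinality $\gtrsim \Lambda^{1/2+1/s}$ on which the quadratic form of $h$ is $\leq \Lambda$, then invoke the min--max principle. Concretely, by Assumption~\ref{assu-V} there exists $c_0>0$ such that $V(x) \leq \Lambda/2$ on the interval $I_\Lambda := [-(c_0\Lambda)^{1/s}, (c_0\Lambda)^{1/s}]$ for $\Lambda$ large enough. Partition $I_\Lambda$ into sub-intervals $J_k$ of equal length $\ell = A\Lambda^{-1/2}$, where the constant $A$ will be chosen large. The number of such intervals is $\sim \Lambda^{1/s+1/2}$. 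On each $J_k$, let $\phi_k$ be the first Dirichlet eigenfunction of $-\partial_x^2$, extended by zero, with Dirichlet eigenvalue $\pi^2/\ell^2 = \pi^2 A^{-2}\Lambda$. The $\phi_k$ have pairwise disjoint supports and are thus orthogonal in $L^2(\R)$, and
\[
\langle \phi_k, h \phi_k\rangle \leq \left(\frac{\pi^2}{A^{2}} + \frac{1}{2}\right)\Lambda\,\|\phi_k\|_{L^2}^2 \leq \Lambda\,\|\phi_k\|_{L^2}^2
\]
for $A$ chosen large enough (e.g.\ $A = 2\pi$). The min--max principle then yields $N(\Lambda) \geq c\Lambda^{1/2+1/s}$.

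\smallskip

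\noindent\textbf{Main obstacle.} No single step is particularly hard given the tools already used in the paper (Lieb--Thirring, Assumption~\ref{assu-V}, and standard variational methods). The most delicate point is keeping the constants $C^{-1}\scal{x}^s \leq V(x) \leq C\scal{x}^s$ in play to identify the correct classically allowed region for both bounds: the upper bound uses the lower bound on $V$ (to truncate the integration region) and the lower bound uses the upper bound on $V$ (to ensure $V\leq \Lambda/2$ on a long enough interval). Taking $\Lambda$ sufficiently large absorbs the $|x|\leq 1$ discrepancy in Assumption~\ref{assu-V}.
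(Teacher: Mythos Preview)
Your proof is correct and takes a genuinely different, more self-contained route than the paper. The paper invokes Rozenbljum's asymptotic result
\[
N(\Lambda) \propto \int_{\R} (\Lambda - V(x))_+^{1/2}\,dx
\]
which requires verifying three technical hypotheses (V1)--(V3) on $V$ (a doubling condition on the sublevel-set measure, a local comparability condition, and a regularity condition on increments), and then bounds the phase-space integral from above and below by $\Lambda^{1/2+1/s}$. Your argument bypasses this black box entirely: for the upper bound you apply the one-dimensional Lieb--Thirring inequality at $\gamma = 1/2$ to $-\partial_x^2 + V - 2\Lambda$ (one could equally use any $\gamma \geq 1/2$ to avoid the endpoint case), and for the lower bound you give an explicit Dirichlet-bracketing construction with disjointly supported test functions on the classically allowed region. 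Both steps are standard and the constants track cleanly through Assumption~\ref{assu-V}. Your approach is shorter and uses only tools already present in the paper's toolbox, at the cost of not yielding the sharper two-sided asymptotic $N(\Lambda) \sim c\,\Lambda^{1/2+1/s}$ that Rozenbljum provides; but since the lemma only claims order-of-magnitude bounds, nothing is lost.
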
 

\begin{proof}
	To see this, we recall the following result due to Rozenbljum \cite{Rozenbljum-74}. Let $V(x)\geq 1$ and $V(x) \to +\infty$ as $|x|\to \infty$. Assume that the following conditions hold:
	
	\medskip
	
	\noindent (V1) $\sigma(2\Lambda) \leq C \sigma(\Lambda)$, where $\sigma(\Lambda):=|\{x\in \R : V(x) <\Lambda\}|$.
	
	\medskip
	
	\noindent (V2)  $V(x) \leq CV(y)$ almost everywhere when $|x-y|<1$.
	
	\medskip
	
	\noindent (V3) There exist a continuous function $\eta(t)\geq 0$, $0\leq t<1$, $\eta(0)=0$, and an index $\beta \in [0,1/2)$ such that
	\[
	\int_{|x-y|\leq 1 \atop |x+z-y|\leq 1} |V(x+z) -V(x)|dx <\eta(|z|) |z|^{2\beta} (V(y))^{1+\beta}
	\]
	for any $y,z\in \mathbb{R}$ and $|z|<1$. Then, for $\Lambda$ sufficiently large, we have that
	\begin{align} \label{N-Lamb}
		\#\{\lambda_j : \lambda_j \leq \Lambda\} \propto \int_{\mathbb{R}} (\Lambda- V(x))_+^{1/2} dx,
	\end{align}
	where $(f(x))_+:= \max\{f(x),0\}$. 
	
	We will check that the above conditions are fulfilled for $V$ as in Assumption \ref{assu-V}. First, we observe that by using the change of variable $u(t,x)\mapsto e^{-iat} u(t,x)$ for \eqref{eq:intro NLS} with some constant $a>0$, we can assume (in addition to Assumption \ref{assu-V}) that $V(x)\geq 1$ for all $x\in \mathbb{R}$. From this, we infer that there exists $C\geq 1$ such that
	\begin{align} \label{prop-V}
		\frac{1}{C}\langle x \rangle^s \leq V(x)\leq C\langle x\rangle^s, \quad \forall x\in \mathbb{R}.
	\end{align}
	
	Let us now check the conditions (V1)-(V3). 
	
	$\bullet$ Checking (V1):  From \eqref{prop-V}, $V(x)< \Lambda$ implies $|x|<\langle x \rangle <(C\Lambda)^{1/s}$, hence
	\begin{align}\label{sigma-Lamb-1}
	\sigma(\Lambda)\leq |\{x\in \mathbb{R} : |x|< (C\Lambda)^{1/s}\}|.
	\end{align}
	On the other hand, for $|x|<\left(\frac{1}{C2^{s/2}} \Lambda\right)^{1/s}$, we have for $\Lambda$ large,
	$$
	\langle x \rangle =\sqrt{1+|x|^2} <\sqrt{1+\left(\frac{1}{C2^{s/2}}\Lambda\right)^{2/s}}\leq \left(\frac{1}{C2^{s/2}}\Lambda\right)^{1/s} 2^{1/2}
	$$
	hence, by \eqref{prop-V},
	$$
	V(x)\leq C\langle x \rangle^s <C\left(\frac{1}{C}\Lambda\right)=\Lambda.
	$$
	This shows that 
	\begin{align} \label{sigma-Lamb-2}
	\left|\left\{x\in \mathbb{R} : |x|<\left(\frac{1}{C2^{s/2}}\Lambda\right)^{1/s} \right\}\right| \leq \sigma(\Lambda).
	\end{align}
	From \eqref{sigma-Lamb-1} and \eqref{sigma-Lamb-2}, we get (V1).
	
	$\bullet$ Checking (V2): We have
	$$
	V(x) \sim \langle x \rangle^s \leq \langle y \rangle^s \langle x-y \rangle^s \leq C\langle y\rangle^s \sim CV(y)
	$$
	for all $|x-y|\leq 1$.
	
	$\bullet$ Checking (V3): By Taylor's formula, we have
	$$
	|V(x+z)-V(x)| \leq |z|\int_0^1|V'(x+tz)|dt,
	$$
	which, by Assumption \ref{assu-V}, yields
	$$
	|V(x+z)-V(x)|\leq C|z| \int_0^1 \langle x+tz\rangle^{s-1}dt.
	$$
	As $s>1$, we infer that
	$$
	|V(x+z)-V(x)|\leq C |z|\langle x \rangle^{s-1} \langle z \rangle^{s-1}.
	$$
	Now denote 
	$$
	\Omega := \{x \in \mathbb{R} : |x-y|\leq 1, |x+z-y|\leq 1 \}.
	$$
	For $x\in \Omega$, we have $|x|\leq 1+|y|< 2\langle y\rangle$.	In particular,
	$$
	|V(x+z)-V(x)|\leq C|z| \langle y\rangle^{s-1}\langle z \rangle^{s-1}, \quad \forall x\in \Omega
	$$
	and 
	$$
	|\Omega|\leq |\{x\in \mathbb{R} : |x|< 2\langle y \rangle\}| \sim \langle y\rangle.
	$$
	Therefore, we obtain
	$$
	\int_{|x-y|\leq 1 \atop |x+z-y|\leq 1} |V(x+z) -V(x)|dx < C |z| \langle y \rangle^{s} \langle z \rangle^{s-1} \leq C |z| V(y)
	$$
	for all $y, z \in \mathbb{R}^d$ with $|z|<1$. This is (V3) with $\eta(t) =t$ and $\beta=0$.
	
	We have so far proved that \eqref{N-Lamb} holds for $V$ as in Assumption \ref{assu-V}. The claim of the lemma follows, as we now explain. For an upper bound, we use \eqref{sigma-Lamb-1} to get
	$$
	\int_{\mathbb{R}} (\Lambda- V(x))_+^{1/2} dx = \int_{V(x)\leq \Lambda} (\Lambda-V(x))^{1/2} dx \leq (2\Lambda)^{1/2} \sigma(\Lambda) \leq C\Lambda^{1/2+1/s}.
	$$
	For a lower bound, we estimate for a constant $\nu>0$ small to be chosen later,
	$$
	\int_{\mathbb{R}}(\Lambda-V(x))_+^{1/2}dx \geq \int_{\nu \Lambda<V(x)\leq \Lambda/2} (\Lambda-V(x))^{1/2}dx \geq \left(\frac{\Lambda}{2}\right)^{1/2} (\sigma(\Lambda/2)-\sigma(\nu \Lambda)).
	$$
	Thanks to \eqref{sigma-Lamb-1} and \eqref{sigma-Lamb-2}, we deduce
	\begin{align*}
		\int_{\mathbb{R}}(\Lambda-V(x))_+^{1/2}dx &\geq\left(\frac{\Lambda}{2}\right)^{1/2} (\sigma(\Lambda/2)-\sigma(\nu \Lambda))\\
		&\geq \left(\frac{\Lambda}{2}\right)^{1/2}\left(\left|B\left(0,\left(\frac{1}{C2^{s/2}\frac{\Lambda}{2}}\right)^{1/s}\right)\right|-|B(0,C\nu \Lambda)^{1/s}|\right) \\
		&\geq C\Lambda^{1/2+1/s}
	\end{align*}
	provided that $\nu>0$ is taken sufficiently small, where $B(0,R)$ is the segment $[-R,R]$. 
\end{proof}
%
%
%

\newpage


\begin{thebibliography}{10}
	
	\bibitem{Bourgain-94}
	{\sc Bourgain, J.}
	\newblock Periodic nonlinear {S}chr\"odinger equation and invariant measures.
	\newblock {\em Comm. Math. Phys. 166}, 1 (1994), 1--26.
	
	\bibitem{Bourgain-96}
	{\sc Bourgain, J.}
	\newblock Invariant measures for the {2D}-defocusing nonlinear {S}chr\"odinger
	equation.
	\newblock {\em Comm. Math. Phys. 176\/} (1996), 421--445.
	
	\bibitem{Bourgain-97}
	{\sc Bourgain, J.}
	\newblock {Invariant measures for the Gross-Pitaevskii equation}.
	\newblock {\em J. Math. Pures Appl. 76}, 8 (1997), 649--702.
	
	\bibitem{BouBul-14a}
	{\sc Bourgain, J., and Bulut, A.}
	\newblock {Almost sure global well posedness for the radial nonlinear
		Schr\"odinger equation on the unit ball I: the 2D case}.
	\newblock {\em Annales I. H. Poincare (C) 31}, 6 (2014), 1267--1288.
	
	\bibitem{BouBul-14b}
	{\sc Bourgain, J., and Bulut, A.}
	\newblock {Almost sure global well posedness for the radial nonlinear
		{S}chr{\"o}dinger equation on the unit ball II: the 3D case}.
	\newblock {\em J. Eur. Math. Soc. 16\/} (2014), 1289--1325.
	
	\bibitem{Brereton}
	{\sc Brereton, J.~T.}
	\newblock Invariant measure construction at a fixed mass.
	\newblock {\em Nonlinearity 32\/} (2019), 496--558.
	
	\bibitem{BreMir-19}
	{\sc Br{\'e}zis, H., and Mironescu, P.}
	\newblock Where {S}obolev interacts with {G}agliardo-{N}irenberg.
	\newblock {\em J. Funct. Anal. 277\/} (2019), 2839--2864.
	
	\bibitem{BurThoTzv-10}
	{\sc {Burq}, N., {Thomann}, L., and {Tzvetkov}, N.}
	\newblock {Long time dynamics for the one dimensional non linear Schr\"odinger
		equation}.
	\newblock {\em Ann. Inst. Fourier (Grenoble) 63}, 6 (2013), 2137--2198.
	
	\bibitem{BurTzv-07}
	{\sc Burq, N., and Tzvetkov, N.}
	\newblock Invariant measure for a three dimensional nonlinear wave equation.
	\newblock {\em Int. Math. Res. Not. 2007\/} (2007), art. ID rnm108.
	
	\bibitem{BurTzv-08b}
	{\sc Burq, N., and Tzvetkov, N.}
	\newblock Random data {C}auchy theory for supercritical wave equations. {II}.
	{A} global existence result.
	\newblock {\em Invent. Math. 173}, 3 (2008), 477--496.
	
	\bibitem{CacSuz-14}
	{\sc Cacciafesta, F., and {de Suzzoni}, A.-S.}
	\newblock Invariant measure for the {S}chr\"odinger equation on the real line.
	\newblock {\em J. Funct. Anal. 269}, 1 (2015), 271--324.
	
	\bibitem{Carlen-10}
	{\sc Carlen, E.}
	\newblock Trace inequalities and quantum entropy: {A}n introductory course.
	\newblock In {\em Entropy and the Quantum\/} (2010), R.~Sims and D.~Ueltschi,
	Eds., vol.~529 of {\em Contemporary Mathematics}, American Mathematical
	Society, pp.~73--140.
	\newblock Arizona School of Analysis with Applications, March 16-20, 2009,
	University of Arizona.
	
	\bibitem{CarFroLeb-16}
	{\sc Carlen, E., Fr\"ohlich, J., and Lebowitz, J.}
	\newblock {Exponential relaxation to equilibrium for a one-dimensional focusing
		non-linear Schr\"odinger equation with noise}.
	\newblock {\em Comm. Math. Phys. 342}, 1 (2016), 303--332.
	
	\bibitem{CarFroLebWan-19}
	{\sc Carlen, E., Fr\"ohlich, J., Lebowitz, J., and Wang, W.-M.}
	\newblock Quantitative bounds on the rate of approach to equilibrium for some
	one-dimensional stochastic nonlinear schr\"odinger equations.
	\newblock {\em Nonlinearity 32}, 4 (2019), 1352--1374.
	
	\bibitem{Carles}
	{\sc Carles, R.}
	\newblock Linear vs nonlinear effects for nonlinear {S}chr\"odinger equations
	with potential.
	\newblock {\em Commun. Contemp. Math. 7\/} (2005), 483--508.
	
	\bibitem{BouDebFuk-18}
	{\sc de~Bouard, A., Debussche, A., and Fukuizumi, R.}
	\newblock Long time behavior of {G}ross-{P}itaevskii equation at positive
	temperature.
	\newblock {\em SIAM J. Math. Anal. 50}, 6 (2018).
	
	\bibitem{Suzzoni-11}
	{\sc {de Suzzoni}, A.-S.}
	\newblock Invariant measure for the cubic wave equation on the unit ball of
	$\mathbb{R}^3$.
	\newblock {\em Dyn. Partial Differ. Equ. 8}, 2 (2011), 127--147.
	
	\bibitem{Deng}
	{\sc Deng, Y.}
	\newblock Two-dimensional nonlinear {S}chr\"odinger equation with random radial
	data.
	\newblock {\em Anal. PDE 5}, 5 (2012), 913--960.
	
	\bibitem{DenNahYue-19}
	{\sc Deng, Y., Nahmod, A.~R., and Yue, H.}
	\newblock Invariant {G}ibbs measures and global strong solutions for nonlinear
	{S}chr\" odinger equations in dimension two.
	\newblock arXiv:1910.08492, 2019.
	
	\bibitem{DenNahYue-21}
	{\sc Deng, Y., Nahmod, A.~R., and Yue, H.}
	\newblock Invariant {G}ibbs measure and global strong solutions for the
	{Hartree NLS} equation in dimension three.
	\newblock {\em J. Math. Phys. 62\/} (2021), 031514.
	
	\bibitem{DenNahYue-22}
	{\sc Deng, Y., Nahmod, A.~R., and Yue, H.}
	\newblock Random tensors, propagation of randomness, and nonlinear dispersive
	equations.
	\newblock {\em Invent. Math. 228}, 2 (2022), 539--686.
	
	\bibitem{DolFelLosPat-06}
	{\sc Dolbeault, J., Felmer, P., Loss, M., and Paturel, E.}
	\newblock Lieb-{T}hirring type inequalities and {G}agliardo-{N}irenberg
	inequalities for systems.
	\newblock {\em J. Funct. Anal. 238}, 1 (2006), 193--220.
	
	\bibitem{Friedlander-85}
	{\sc Friedlander, L.}
	\newblock An invariant measure for the equation $u_{tt} - u_{xx} + u^3 = 0$.
	\newblock {\em Comm. Math. Phys. 98}, (1985), 1--16.
	
	\bibitem{FroKnoSchSoh-16}
	{\sc Fr\"ohlich, J., Knowles, A., Schlein, B., and Sohinger, V.}
	\newblock Gibbs measures of nonlinear {S}chr\"odinger equations as limits of
	quantum many-body states in dimensions $d\leq 3$.
	\newblock {\em Comm. Math. Phys.. 356\/} (2017), 883--980.
	
	\bibitem{FroKnoSchSoh-17}
	{\sc Fr\"ohlich, J., Knowles, A., Schlein, B., and Sohinger, V.}
	\newblock A microscopic derivation of time-dependent correlation functions of
	the {1D} cubic nonlinear {Schr\"odinger} equation.
	\newblock {\em Adv. Math. 353\/} (2019), 67--115.
	
	\bibitem{FroKnoSchSoh-22}
	{\sc Fr\"ohlich, J., Knowles, A., Schlein, B., and Sohinger, V.}
	\newblock The {E}uclidean $\varphi ^4_2$ theory as a limit of an interacting
	bose gas.
	\newblock arXiv:2201.07632, 2022.
	
	\bibitem{FroKnoSchSoh-20}
	{\sc Fr\"ohlich, J., Knowles, A., Schlein, B., and Sohinger, V.}
	\newblock The mean-field limit of quantum {B}ose gases at positive temperature.
	\newblock {\em J. Amer. Math. Soc. 35\/} (2022), 995--1030.
	
	\bibitem{Fujiwara}
	{\sc Fujiwara, D.}
	\newblock A construction of the fundamental solution for the {S}chr\"odinger
	equation.
	\newblock {\em J. Analyse Math. 35\/} (1979), 41--96.
	
	\bibitem{GliJaf-87}
	{\sc Glimm, J., and Jaffe, A.}
	\newblock {\em Quantum Physics: A Functional Integral Point of View}.
	\newblock Springer-Verlag, 1987.
	
	\bibitem{KeeTao-98}
	{\sc Keel, M., and Tao, T.}
	\newblock Endpoint {S}trichartz estimates.
	\newblock {\em Amer. J. Math. 120}, 5 (1998), 955--980.
	
	\bibitem{KocTat-05}
	{\sc Koch, H., and Tataru, D.}
	\newblock {$L^p$ eigenfunction bounds for the Hermite operator}.
	\newblock {\em Duke Math. J. 128}, 2 (2005), 369--392.
	
	\bibitem{LebRosSpe-88}
	{\sc Lebowitz, J.~L., Rose, H.~A., and Speer, E.~R.}
	\newblock Statistical mechanics of the nonlinear {S}chr\"odinger equation.
	\newblock {\em J. Stat. Phys. 50}, 3-4 (1988), 657--687.
	
	\bibitem{LewNamRou-14d}
	{\sc Lewin, M., Nam, P., and Rougerie, N.}
	\newblock Derivation of nonlinear {G}ibbs measures from many-body quantum
	mechanics.
	\newblock {\em J. {\'E}c. Polytech. Math. 2}, 1 (2016), 553--606.
	
	\bibitem{LewNamRou-17}
	{\sc Lewin, M., Nam, P., and Rougerie, N.}
	\newblock Gibbs measures based on {1D} (an)harmonic oscillators as mean-field
	limits.
	\newblock {\em J. Math. Phys. 59}, 4 (2018).
	
	\bibitem{LewNamRou-20}
	{\sc Lewin, M., Nam, P., and Rougerie, N.}
	\newblock {Classical field theory limit of many-body quantum Gibbs states in 2D
		and 3D}.
	\newblock {\em Invent. Math. 224\/} (2021), 315--444.
	
	\bibitem{LieLos-01}
	{\sc Lieb, E.~H., and Loss, M.}
	\newblock {\em Analysis}, 2nd~ed., vol.~14 of {\em Graduate Studies in
		Mathematics}.
	\newblock American Mathematical Society, Providence, RI, 2001.
	
	\bibitem{LieSeiSolYng-05}
	{\sc Lieb, E.~H., Seiringer, R., Solovej, J.~P., and Yngvason, J.}
	\newblock {\em The mathematics of the {B}ose gas and its condensation}.
	\newblock Oberwolfach {S}eminars. Birkh{\"a}user, 2005.
	
	\bibitem{NOBS}
	{\sc Nahmod, A.~R., Oh, T., Rey-Bellet, L., and Staffilani, G.}
	\newblock Invariant weighted {W}iener measures and almost sure global
	well-posedness for the periodic derivative {NLS}.
	\newblock {\em J. Eur. Math. Soc. 14\/} (2011), 1275--1330.
	
	\bibitem{Nirenberg-59}
	{\sc Nirenberg, L.}
	\newblock On elliptic partial differential equations.
	\newblock {\em Ann. Scu. Norm. Sup. di Pisa 13}, 2 (1959), 115--162.
	
	\bibitem{Nirenberg-66}
	{\sc Nirenberg, L.}
	\newblock An extended interpolation inequality.
	\newblock {\em Ann. Scu. Norm. Sup. di Pisa 20}, 4 (1966), 733--737.
	
	\bibitem{Oh-SIAM}
	{\sc Oh, T.}
	\newblock Invariance of the {G}ibbs measure for the
	{Schr\"odinger-Benjamin-Ono} system.
	\newblock {\em SIAM J. Math. Anal. 41\/} (2009), 2207--2225.
	
	\bibitem{Oh-DIE}
	{\sc Oh, T.}
	\newblock Invariant {G}ibbs measures and a.s. global well-posedness for coupled
	{KdV} systems.
	\newblock {\em Differential and Integral Equations 22\/} (2009), 637--668.
	
	\bibitem{OhSoTo-22}
	{\sc Oh, T., and Sosoe, P., and Tolomeo, L.}
	\newblock Optimal integrability threshold for {G}ibbs measures associated with focusing {NLS} on the torus..
	\newblock {\em Invent. Math. 227}, (2022), no. 3, 1323--1429.
		
	\bibitem{OhQua-13}
	{\sc Oh, T., and Quastel, J.}
	\newblock {On invariant Gibbs measures conditioned on mass and momentum}.
	\newblock {\em J. Math. Soc. Japan 65}, 1 (2013), 13--35.
	
	\bibitem{PRT}
	{\sc Poiret, A., Robert, D., and Thomann, L.}
	\newblock Probabilistic global well-posedness for the supercritical nonlinear
	harmonic oscillator.
	\newblock {\em Anal. PDE 7}, 4 (2014), 997--1026.
	
	\bibitem{RobSeoTolWan-22}
	{\sc Robert, T., and Seong, K., and Tolomeo, L., and Wang, Y.}
	\newblock {Focusing Gibbs measures with harmonic potential}.
	\newblock arXiv:2212.11386, 2022.
	
	\bibitem{Rougerie-EMS}
	{\sc Rougerie, N.}
	\newblock {Scaling limits of bosonic ground states, from many-body to nonlinear
		Schr\"odinger}.
	\newblock {\em EMS Surveys in Mathematical Sciences 7}, 2 (2020), 253--408.
	
	\bibitem{RouSoh-22}
	{\sc Rout, A., and Sohinger, V.}
	\newblock {A microscopic derivation of Gibbs measures for the 1D focusing cubic
		nonlinear Schr\"odinger equation}.
	\newblock arXiv:2206.03392, 2022.
	
	\bibitem{Rozenbljum-74}
	{\sc Rozenbljum, G.~V.}
	\newblock Asymptotics of the eigenvalues of the {S}chr{\"o}dinger operator.
	\newblock {\em Math. USSR Sbornik 22\/} (1974), 349--371.
	
	\bibitem{Schatten-60}
	{\sc Schatten, R.}
	\newblock {\em Norm Ideals of Completely Continuous Operators}, vol.~2 of {\em
		Ergebnisse der Mathematik und ihrer Grenzgebiete}.
	\newblock Folge, 1960.
	
	\bibitem{Schlein-08}
	{\sc Schlein, B.}
	\newblock Derivation of effective evolution equations from microscopic quantum
	dynamics.
	\newblock {\em arXiv eprints\/} (2008).
	\newblock Lecture Notes for a course at ETH Zurich.
	
	\bibitem{Simon-74}
	{\sc Simon, B.}
	\newblock {\em The {$P(\Phi )_{2}$} {E}uclidean (quantum) field theory}.
	\newblock Princeton University Press, Princeton, N.J., 1974.
	\newblock Princeton Series in Physics.
	
	\bibitem{Simon-79}
	{\sc Simon, B.}
	\newblock {\em Trace ideals and their applications}, vol.~35 of {\em London
		Mathematical Society Lecture Note Series}.
	\newblock Cambridge University Press, Cambridge, 1979.
	
	\bibitem{Skorokhod-74}
	{\sc Skorokhod, A.}
	\newblock {\em Integration in {H}ilbert space}.
	\newblock Ergebnisse der Mathematik und ihrer Grenzgebiete. Springer-Verlag,
	1974.
	
	\bibitem{Sohinger-22}
	{\sc Sohinger, V.}
	\newblock {A microscopic derivation of Gibbs measures for nonlinear
		Schr{\"o}dinger equations with unbounded interaction potentials}.
	\newblock {\em Int. Math. Res. Not. 180\/} (2022), 14964--15063.
	
	\bibitem{Taylor}
	{\sc Taylor, M.~E.}
	\newblock {\em {Tools for {PDEs}: {P}seudodifferential operators,
			{P}aradifferential operators, and {L}ayer potentials}}, vol.~80 of {\em
		Mathematical Surveys and Monographs}.
	\newblock AMS, 2000.
	
	\bibitem{Thomann-09}
	{\sc Thomann, L.}
	\newblock Random data {C}auchy problem for supercritical {S}chr\"odinger
	equations.
	\newblock {\em Ann. Inst. H. Poincar\'e Anal. Non Lin\'eaire 26}, 6 (2009),
	2385--2402.
	
	\bibitem{ThoTzv-10}
	{\sc Thomann, L., and Tzvetkov, N.}
	\newblock Gibbs measure for the periodic derivative nonlinear {S}chr\"odinger
	equation.
	\newblock {\em Nonlinearity 23}, 11 (2010), 2771.
	
	\bibitem{Tzvetkov-06}
	{\sc Tzvetkov, N.}
	\newblock Invariant measures for the nonlinear {Schr\"odinger} equation on the
	disc.
	\newblock {\em Dyn. Partial Differ. Equ. 3}, 2 (2006), 111--160.
	
	\bibitem{Tzvetkov-08}
	{\sc Tzvetkov, N.}
	\newblock Invariant measures for the defocusing nonlinear {S}chr\"odinger
	equation.
	\newblock {\em Ann. Inst. Fourier (Grenoble) 58}, 7 (2008), 2543--2604.
	
	\bibitem{Wong-14}
	{\sc Wong, M.~W.}
	\newblock {\em An Introduction to Pseudo-differential Operators, 3rd edition},
	vol.~6 of {\em Series on Analysis, Applications and Computation}.
	\newblock World Scientific, York University, Canada, 2014.
	
	\bibitem{Yajima-96}
	{\sc Yajima, K.}
	\newblock Smoothness and non-smoothness of the fundamental solution of time
	dependent {Schr\"odinger} equations.
	\newblock {\em Comm. Math. Phys. 181\/} (1996), 605--629.
	
	\bibitem{YajZha-01}
	{\sc Yajima, K., and Zhang, G.}
	\newblock {Smoothing property for Schr\"odinger equations with potential
		superquadratic at infinity}.
	\newblock {\em Comm. Math. Phys. 221}, 3 (2001), 537--590.
	
	\bibitem{YajZha-04}
	{\sc Yajima, K., and Zhang, G.}
	\newblock Local smoothing property and {Strichartz} inequality for
	{Schr\"odinger} equations with potential superquadratic at infinity.
	\newblock {\em J. Differential Equations 202\/} (2004), 81--110.
	
	\bibitem{ZhaYajLiu-06}
	{\sc Zhang, G., Yajima, K., and Liu, F.}
	\newblock ${H}^s$ solutions for nonlinear {Schr\"odinger} equations with
	potentials superquadratic at infinity.
	\newblock {\em Phys. Lett. A 356}, 2 (2006), 95--98.
	
	\bibitem{Zhidkov-91}
	{\sc Zhidkov, P.}
	\newblock An invariant measure for the nonlinear {S}chr{\"o}dinger equation.
	\newblock {\em (Russian) Dokl. Akad. Nauk SSSR 317}, (1991), no. 3, 543--546; translation in {\em Soviet Math. Dokl. 43}, (1991), no. 2, 431--434.
	
	\bibitem{Zhidkov-94}
	{\sc Zhidkov, P.}
	\newblock An invariant measure for a nonlinear wave equation.
	\newblock {\em Nonlinear Anal. 22} (1994), no. 3, 319--325.
	
\end{thebibliography}

\end{document}